\documentclass{article} 
\usepackage{iclr2026_conference,times}

\usepackage{amsmath,amsfonts,bm}

\def\eqref#1{equation~\ref{#1}}

\def\1{\bm{1}}

\DeclareMathAlphabet{\mathsfit}{\encodingdefault}{\sfdefault}{m}{sl}
\SetMathAlphabet{\mathsfit}{bold}{\encodingdefault}{\sfdefault}{bx}{n}

\usepackage{hyperref}
\usepackage{url}
\usepackage{booktabs}       
\usepackage{amsfonts}       
\usepackage{nicefrac}       
\usepackage{microtype}      
\usepackage{xcolor}         
\usepackage{wrapfig}
\usepackage{caption}
\usepackage{pifont}         
\usepackage{bbding}
\usepackage{makecell}
\usepackage{multirow}
\usepackage{arydshln}
\usepackage{longtable} 
\usepackage{float}
\usepackage{colortbl}
\usepackage{bm}
\usepackage[table]{xcolor}
\definecolor{mygray}{gray}{.9}
\definecolor{mypink}{rgb}{.99,.91,.95}
\definecolor{mycyan}{cmyk}{.3,0,0,0}

\usepackage{amsmath}
\usepackage{amssymb}
\usepackage{mathtools}
\usepackage{amsthm}
\usepackage{arydshln} 
\usepackage{algorithm}
\usepackage{algorithmic}
\usepackage{enumerate}

\usepackage{xcolor}
\definecolor{mycyan}{RGB}{204,255,255}      
\usepackage[most]{tcolorbox}
\usepackage{xcolor}
\definecolor{sebg}{RGB}{255,250,220}  
\definecolor{gtbg}{RGB}{220,250,220}  

\usepackage{subcaption}
\theoremstyle{plain}
\newtheorem{theorem}{Theorem}[section]
\newtheorem{proposition}[theorem]{Proposition}
\newtheorem{lemma}[theorem]{Lemma}
\newtheorem{corollary}[theorem]{Corollary}
\theoremstyle{definition}

\newtheorem{assumption}[theorem]{Assumption}
\theoremstyle{remark}
\newtheorem{remark}[theorem]{Remark}

\usepackage[most]{tcolorbox}

\tcolorboxenvironment{theorem}{
	enhanced,
	colback=mycyan!30,           
	colframe=mycyan!80!black,    
	boxrule=0.5pt,               
	arc=2mm,                     
	outer arc=2mm,
	left=4pt, right=4pt, top=4pt, bottom=4pt,
	fonttitle=\bfseries,
	before skip=10pt,            
	after skip=10pt,
}

\title{SUN-DSBO: A Structured Unified Framework for Nonconvex Decentralized Stochastic Bilevel Optimization}

\author{
  \textbf{Yaoshuai Ma}$^{1,2}$, \textbf{Xiao Wang}$^{3}$, \textbf{Wei Yao}$^{1,4}$, and \textbf{Jin Zhang}$^{1,4}$ \\
  $^1$Southern University of Science and Technology \\
  $^2$Pengcheng Laboratory \\
  $^3$Sun Yat-sen University \\
  $^4$National Center for Applied Mathematics Shenzhen \\
  \texttt{yma95475@gmail.com, wangx936@mail.sysu.edu.cn} \\
  \texttt{\{yaow, zhangj9\}@sustech.edu.cn}
}

\iclrfinalcopy 
\begin{document}

\maketitle

\begin{abstract}
	Decentralized stochastic bilevel optimization (DSBO) is a powerful tool for various machine learning tasks, including decentralized meta-learning and hyperparameter tuning. Existing DSBO methods primarily address problems with strongly convex lower-level objective functions. However, nonconvex objective functions are increasingly prevalent in modern deep learning. In this work, we introduce SUN-DSBO, a Structured Unified framework for Nonconvex DSBO, in which both the upper- and lower-level objective functions may be nonconvex. Notably, SUN-DSBO offers the flexibility to incorporate decentralized stochastic gradient descent or various techniques for mitigating data heterogeneity, such as gradient tracking (GT). We demonstrate that SUN-DSBO-GT, an adaptation of the GT technique within our framework, achieves a linear speedup with respect to the number of agents. This is accomplished without relying on restrictive assumptions, such as gradient boundedness or any specific assumptions regarding gradient heterogeneity. Numerical experiments validate the effectiveness of our method.
\end{abstract}

\section{Introduction}

Decentralized stochastic bilevel optimization (DSBO) offers an effective framework for multiple agents collaborating to solve nested optimization problems over decentralized communication networks. It has garnered increasing interest, with numerous applications in decentralized meta-learning~\citep{kayaalp2022dif,yang2024communication}, multi-agent reinforcement learning~\citep{JMLR:v24:20-700,zheng2024safe}, hyperparameter  tuning \citep{yang2022decentralized,zhu2024sparkle}, and decentralized adversarial training \citep{sinha2017certifying,liu2020decentralized}. 
In distributed optimization, the original centralized optimization faces a bottleneck communication problem, particularly when the network is large \citep{lian2017can}.

In this work, we consider the following DSBO problem over a decentralized communication network of $n$ agents:
\begin{eqnarray}\label{DSBO}
	\begin{aligned}
		\operatorname*{min}_{x\in \mathbb{R}^{d_x},\, y\in  \mathbb{R}^{d_y}} F(x,y):=\frac{1}{n}\sum_{i=1}^{n}f_{i}(x,y) 
		\qquad\qquad (\text{upper-level})\\ 
		\mathrm{s.t.} \quad 
		y\in \operatorname*{argmin}_{\tilde{y} \in \mathbb{R}^{d_y}}  G(x,\tilde{y}):=\frac{1}{n}\sum_{i=1}^{n}g_{i}(x,\tilde{y}), \qquad\qquad (\text{lower-level})
	\end{aligned} 
\end{eqnarray}
where the local (private) objective functions $f_i$ and $g_i$ for the $i$-th agent are defined as:
\begin{equation*}
	f_i(x,y):=\mathbb{E}_{\xi_{f,i}}{\left[f_i{\left(x,y;\xi_{f,i}\right)}\right]},
	\quad
	g_i(x,y):={E}_{\xi_{g,i}}\big[g_i(x,y;\xi_{g,i})\big].
\end{equation*}
Above, we do not assume that $g_i(x,y)$  is strongly convex in $y$, which is a common assumption in most existing DSBO methods. Consequently, the model in problem (\ref{DSBO}) can accommodate a wide range of practical scenarios in which the lower-level problem is nonconvex. Since the lower-level optimal solution may not be unique, the model in (\ref{DSBO}) adopts the framework of optimistic bilevel optimization \citep{liu2021investigating,zhang2024introduction}.

When the lower-level problem is strongly convex, it admits a unique optimal solution, denoted by $y^*(x)$, which is smooth under mild smoothness assumptions. 
In such cases, by employing various decentralized strategies to efficiently estimate the gradient of $F(x,y^*(x))$ (referred to as the hypergradient in bilevel optimization), the DSBO problem can be tackled using several efficient methods. These methods can generally be categorized into two approaches: the approximate implicit differentiation (AID)-based approach \citep{yang2022decentralized,lu2022stochastic,chen2023decentralized,chen2024decentralized,gao2023convergence,dong2023single,chenlocally,kong2024decentralized,zhu2024sparkle} and the value function-based approach \citep{wang2024fully}. 
For instance, \citet{zhu2024sparkle} propose SPARKLE, a unified single-loop primal-dual framework for DSBO problems, and explore the advantages of various strategies (including mixed strategies) for mitigating data heterogeneity within SPARKLE, such as gradient tracking (GT)~\citep{xu2015augmented,di2016next,nedic2017achieving}, EXTRA~\citep{shi2015extra}, and Exact-Diffusion (ED)~\citep{yuan2018exact, li2019decentralized, yuan2020influence}. 
SPARKLE is an AID-based method that utilizes second-order derivative information to approximate the hypergradient. 
Another line of research has focused on addressing DSBO problems using only first-order derivative information. For example, \citet{wang2024fully} propose DSGDA-GT, a novel algorithm that requires only first-order oracles with GT to solve the DSBO problem. 

Although the above DSBO methods are developed from the perspective of the hypergradient, they can also be applied to DSBO problems with potentially nonconvex lower-level objectives, albeit without theoretical guarantees.  
Therefore, a natural and practical question arises:
\textit{Can we develop an efficient algorithm with provable convergence for nonconvex decentralized stochastic bilevel optimization?}
\subsection{First attempt: regularization}
Observe that most existing DSBO methods incorporate a regularization term into the lower-level objective in several experiments to enhance convexity. This observation motivates a straightforward approach to the problem at hand: when dealing with a learning task formulated as a DSBO problem with a nonconvex lower-level objective, one may first add a regularization term to its lower-level objective and then apply an existing DSBO algorithm. This approach raises several important questions: Does it work effectively? What are the implications of introducing such regularization?

We present our main findings through illustrative experiments. For the setting, we adopt the widely used data hyper-cleaning task on a corrupted FashionMNIST dataset, employing a two-layer MLP model for training~\citep{kong2024decentralized,zhu2024sparkle}. A quadratic regularization term, $\alpha \|y\|^2$, is consistently incorporated into the lower-level objective. 
In the experiments, we gradually increase the regularization parameter $\alpha$ from 0.001 to 0.1. The test accuracy of two representative algorithms, SPARKLE-GT~\citep{zhu2024sparkle} and DSGDA-GT~\citep{wang2024fully}, is presented in Figure~\ref{dcalpha}. These results highlight a key dilemma in regularization parameter tuning: 
\textit{a large $\alpha$ leads to worse performance, while a small $\alpha$ does not guarantee strong convexity. }
Consequently, a significant gap remains between the study of DSBO in strongly convex and nonconvex settings.   

\begin{figure*}[h]
	\centering
	\begin{subfigure}[b]{0.99\textwidth}
		\includegraphics[width=\textwidth]{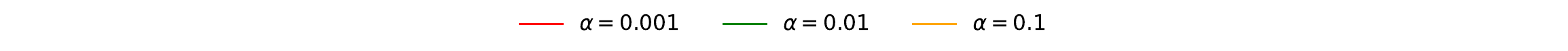}
	\end{subfigure}\\
	\centering
	\begin{subfigure}[b]{0.325\textwidth}
		\includegraphics[width=\textwidth]{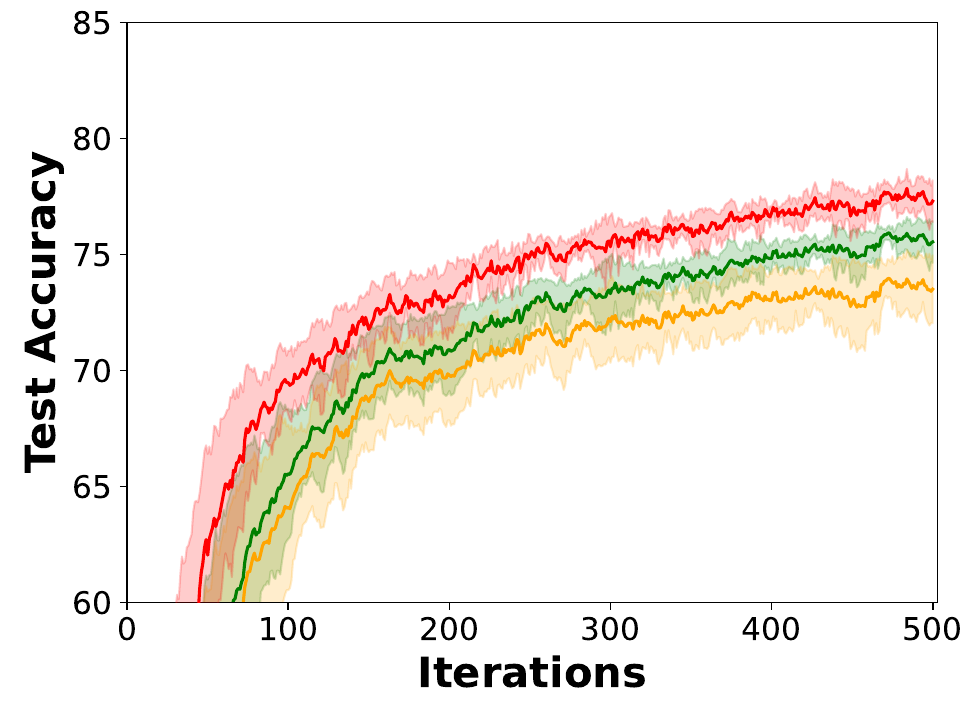}
		\caption{SPARKLE-GT \citep{zhu2024sparkle}}
	\end{subfigure}
	\begin{subfigure}[b]{0.325\textwidth}
		\includegraphics[width=\textwidth]{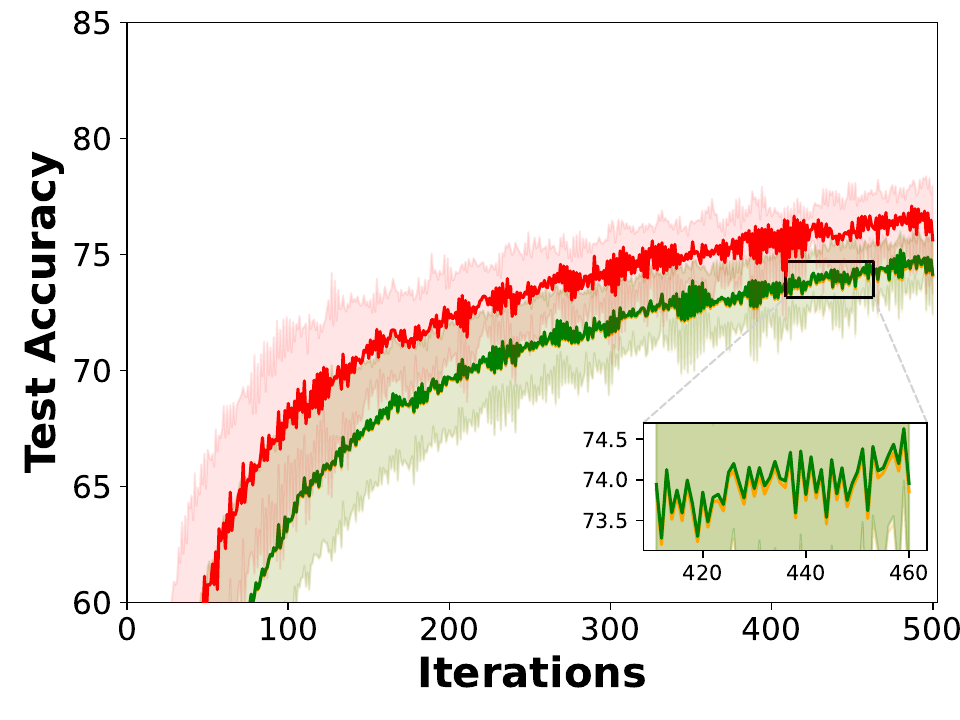}
		\caption{DSGDA-GT \citep{wang2024fully}}
	\end{subfigure}
	\begin{subfigure}[b]{0.325\textwidth} 
		\includegraphics[width=\textwidth]{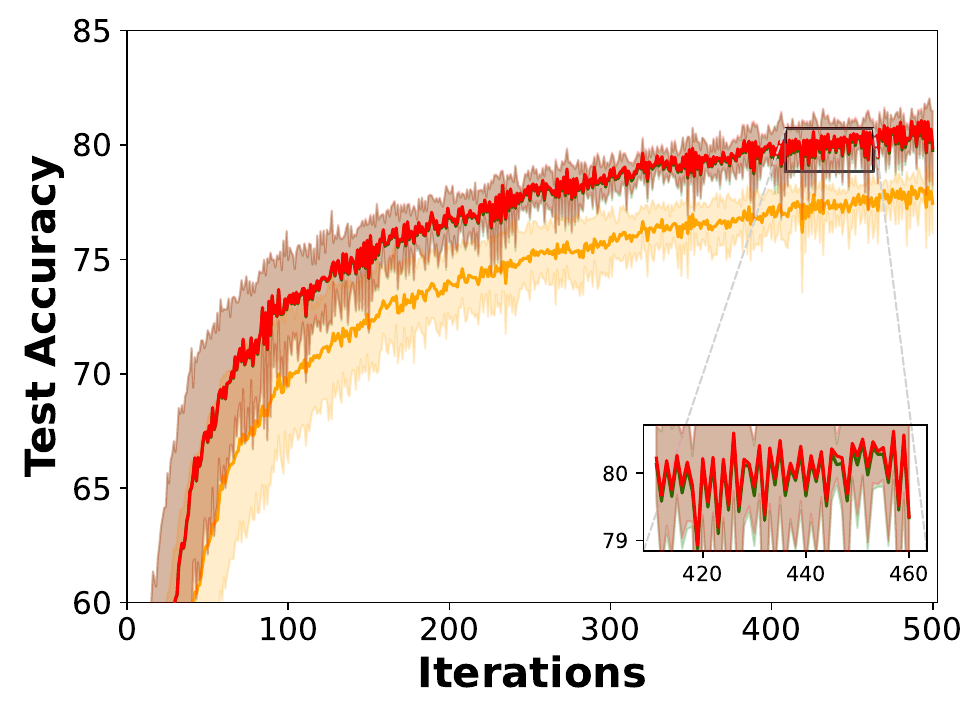}
		\caption{SUN-DSBO-GT (ours)}
	\end{subfigure}
	\caption{ 
		Comparison of the algorithms on hyper-cleaning with different regularization parameters.
	}
	\label{dcalpha}
\end{figure*}

\subsection{Main contributions}
The goal of this work is to advance the current understanding of solving DSBO problems with strongly convex lower-level structures to encompass a broader class that includes potentially nonconvex lower-level problems.
The main contributions are summarized as follows: 


  \begin{itemize}
  \item We introduce SUN-DSBO, a Structured Unified framework for Nonconvex DSBO, in which the update directions for the variables depend linearly on both the upper- and lower-level objective functions. This design supports various decentralized strategies, stochastic estimators, and techniques for mitigating data heterogeneity. Specifically, within this framework, we propose SUN-DSBO-SE, an adaptation of decentralized stochastic gradient descent, and SUN-DSBO-GT, an adaptation of the GT technique. 
    \item Theoretically, we provide a finite-time convergence analysis for SUN-DSBO-SE/GT under more relaxed assumptions than those required by existing methods. In particular, we show that SUN-DSBO-GT achieves linear speedup with respect to the number of agents, without relying on restrictive conditions such as gradient boundedness or specific assumptions about gradient heterogeneity. A detailed comparison is provided in Table~\ref{tablecompar} in Appendix \ref{DBOwork}. 
	\item We conduct numerical experiments to compare the proposed algorithms with state-of-the-art baselines and perform ablation studies for SUN-DSBO-SE and SUN-DSBO-GT. The empirical results demonstrate the effectiveness of our approach (see, e.g., Figure~\ref{dcalpha}). 
\end{itemize}
\paragraph{Related works.}  
In general, both centralized and decentralized bilevel optimization problems with nonconvex or non-strongly convex lower-level (LL) objectives are computationally intractable without additional assumptions or relaxations \citep{kwon2023penalty}.
In centralized bilevel optimization, various techniques have been proposed to address the inherent difficulty, including AID-based methods \citep{huang2023momentum,xiao2023alternating} and value function-based penalty methods \citep{liu2022bome,shen2023penalty} under the LL Polyak-Łojasiewicz (PL) condition, as well as Moreau envelope-based penalty methods \citep{liu2024moreau} for problems with general nonconvex LL objectives. 
Despite advances in centralized bilevel optimization, developing efficient and provably convergent algorithms for DSBO is far from a straightforward extension of their centralized counterparts, as it requires addressing data heterogeneity and ensuring consensus among multiple agents (see, e.g., \citep{yang2022decentralized,zhu2024sparkle,wang2024fully}). 
Most existing DSBO methods assume that the lower-level objectives are strongly convex. Recently, \citet{qin2025duet} investigated decentralized bilevel optimization in a personalized DSBO setting, relaxing the strong convexity assumption to mere convexity. 
The key idea is to introduce a diminishing quadratic regularization to the LL objective in order to obtain a good approximation and retain the uniqueness of the solution to the augmented LL problem.
Consequently, convexity remains essential in their approach, which cannot be directly applied to our problem~(\ref{DSBO}) with potentially nonconvex lower-level objectives. Building upon the Moreau envelope–based penalty method 
by \citet{liu2024moreau}, we propose an alternative procedure.
Further details are provided in Appendix \ref{pfsketch}, which discusses the challenges and nontrivial aspects of our analysis to  underscore our contributions. A comprehensive review of additional relevant works on centralized and decentralized bilevel optimization is presented in Appendix \ref{relatedworkapp}.

\section{Proposed method
}
\label{gen_inst}

\subsection{Preliminaries}\label{prelim}
This work builds upon the Moreau envelope-based penalty method proposed in \citet{liu2024moreau} for centralized deterministic bilevel optimization.
First, problem (\ref{DSBO}) can be reformulated as follows: 
\begin{equation}\label{reDSBO}
	\min_{(x,y)\in  \mathbb{R}^{d_{x}}\times  \mathbb{R}^{d_{y}}}
	F(x,y) \quad \mbox{s.t.}\quad  G(x,y)-V_\gamma(x,y) \leq 0,
\end{equation}
where $V_\gamma$ is the Moreau envelope of $G$, defined as
\begin{equation}\label{Morenv-0}
	V_\gamma(x,y):=\min_{\theta\in \mathbb{R}^{d_{y}}}\left\{G(x,\theta)+\frac{1}{2\gamma}\|\theta-y\|^{2}\right\},
	\quad\mathrm{for}\ \gamma >0.
\end{equation}
Since $G(x,y)-V_\gamma(x,y) \geq 0$ by the definition of $V_\gamma$, we follow \citet{kwon2023penalty,liu2024moreau} and consider the following penalty formulation to solve the constrained optimization problem (\ref{reDSBO}):
\begin{equation}\label{Upsct-0}
	\min_{(x,y)\in  \mathbb{R}^{d_{x}}\times \mathbb{R}^{d_{y}}}
	~\Psi_{\mu} (x,y):= \mu F(x,y)+ G(x,y)-V_\gamma(x,y),
\end{equation}
where $\mu>0$ is either sufficiently small or gradually diminishing. 

We observe that this reformulation possesses two favorable properties that facilitate the development and analysis of efficient algorithms for nonconvex DSBO.\\
\textbf{(i)} When \( G(x,\cdot) \) is \( L_{2} \)-smooth for any \( x \), then for each \( \gamma \in (0,\frac{1}{2L_{2}}) \), the reformulated problem (\ref{reDSBO}) is equivalent to the problem:
$\min_{x,y} F(x,y)~\text{s.t.}~
\nabla_{y} G(x,y)=0$. 
Notably, this formulation is independent of $\gamma$ and remains equivalent to the original problem under either convexity or the PL condition of the lower-level objective (see Theorem A.1 of \citet{liu2024moreau}).  More detailed discussions of the relationship between problem~(\ref{DSBO}) and problem~(\ref{Upsct-0}) is provided in Appendix~\ref{DSBOandup}. \\
\textbf{(ii)} Unlike \citet{liu2024moreau}, we further reformulate problem (\ref{Upsct-0}) equivalently as the  min-max problem:  
\begin{equation}\label{min-max}
	\min_{(x,y)\in  \mathbb{R}^{d_{x}}\times \mathbb{R}^{d_{y}}} \max_{\theta\in \mathbb{R}^{d_{y}}}
	~\left\{\mu F(x,y)+ G(x,y)-G(x,\theta)-\frac{1}{2\gamma}\|\theta-y\|^{2}\right\}.
\end{equation}
Notably, under the condition in (i), this is a nonconvex–strongly concave minimax optimization problem. Moreover, the objective function depends linearly on both the upper- and lower-level objectives of problem (\ref{DSBO}), which facilitates the derivation of stochastic estimates. 
Recall that $F(x,y):=\frac{1}{n}\sum_{i=1}^{n}f_{i}(x,y)$, $G(x,y):=\frac{1}{n}\sum_{i=1}^{n}g_{i}(x,y)$ in problem (\ref{min-max}), where $f_i(x,y):=\mathbb{E}_{\xi_{f,i}}{\left[f_i{\left(x,y;\xi_{f,i}\right)}\right]}$, and $g_i(x,y):=\mathbb{E}_{\xi_{g,i}}\big[g_i(x,y;\xi_{g,i})\big]$.


\subsection{
	A unified framework for decentralized  stochastic bilevel optimization 
}\label{sunframework}

Since problem (\ref{Upsct-0}) serves as an approximation of problem (\ref{reDSBO}) as $\mu\rightarrow0$ (see, e.g., Theorem A.3 in \citet{liu2024moreau}), decentralized minimax optimization methods (e.g., \citet{liu2020decentralized,xian2021faster}) can be directly applied to the min-max reformulation (\ref{min-max}) with a small value of $\mu$. However, this approach may result in slower convergence (see, e.g., \citet{kwon2023fully,kwon2023penalty} for centralized bilevel optimization). To address this, we instead gradually decrease the penalty parameters $\{\mu_k\}$, with $\mu_k \to 0$ as the iteration index $k$ increases.

For the $k$-th iteration, let $x_i^{k}$, $y_i^{k}$, and $\theta_i^{k}$ represent the local primal and dual variables maintained by the $i$-th agent. Suppose the communication network is described by a weight matrix $\mathbf{W} = (w_{ij}) \in \mathbb{R}^{n \times n}$, where $w_{ij} = 0$ if agents $i$ and $j$ are not connected. 
We now present \textbf{SUN-DSBO}, a \underline{S}tructured \underline{U}nified framework for \underline{N}onconvex \underline{D}ecentralized \underline{S}tochastic \underline{B}ilevel \underline{O}ptimization. 
In this framework, each local agent repeatedly performs the following steps: 
\begin{enumerate}[(I)]
	\item \textbf{Stochastic Gradient Computation:} 
	Each agent $i$ computes unbiased or biased stochastic estimators $\hat{D}_{\theta,i}^{k}$, $\hat{D}_{x,i}^{k}$, and $\hat{D}_{y,i}^{k}$ for the descent directions $\tilde{D}_{\theta,i}^{k}$, $\tilde{D}_{x,i}^{k}$, and $\tilde{D}_{y,i}^{k}$, as derived from the min-max reformulation (\ref{min-max}). These estimators are calculated using either vanilla mini-batch gradients or advanced techniques that incorporate acceleration and variance reduction: 
	\begin{subequations}\label{egrd}
		\begin{align}
			\tilde{D}_{\theta,i}^{k} &= \nabla_{y} g_{i}(x_i^{k}, \theta_i^{k}) + \frac{1}{\gamma} (\theta_i^{k} - y_i^{k}),  \label{egrda} \\
			\tilde{D}_{x,i}^{k} &= \mu_k \nabla_{x} f_{i}(x_i^{k}, y_i^{k}) + \nabla_{x} g_{i}(x_i^{k}, y_i^{k}) - \nabla_{x} g_{i}(x_i^{k}, \theta_i^{k}),  \label{egrdb} \\
			\tilde{D}_{y,i}^{k} &= \mu_k \nabla_{y} f_{i}(x_i^{k}, y_i^{k}) + \nabla_{y} g_{i}(x_i^{k}, y_i^{k}) - \frac{1}{\gamma}(y_i^{k} - \theta_i^{k}). \label{egrdc}
		\end{align}
	\end{subequations}
	
	\item \textbf{Gradient Estimator Update:} Communicate with neighbors and update the gradient estimators $\hat{D}_{\theta,i}^{k}$, $\hat{D}_{x,i}^{k}$, and $\hat{D}_{y,i}^{k}$ to $D_{\theta,i}^{k}$, $D_{x,i}^{k}$ and $D_{y,i}^{k}$ using decentralized techniques such as GT, EXTRA, and Exact-Diffusion (ED), as well as  mixing strategies~\citep{zhu2024sparkle}.
	
	\item \textbf{Local Variable Update:} Communicate with neighbors and update the dual and primal variables using decentralized techniques, such as GT:
	\begin{eqnarray}\label{updatesun}
		(\theta_i^{k+1},x_i^{k+1},y_i^{k+1})=\sum_{j=1}^nw_{ij}\Big(\theta_j^k-{\lambda_\theta^{k}} D_{\theta,j}^{k}, x_j^k-{\lambda_x^{k}} D_{x,j}^{k}, y_j^k-{\lambda_y^{k}} D_{y,j}^{k}\Big).
	\end{eqnarray}
\end{enumerate}
As a summary, \( \hat{D}_{\Diamond,i}^{k} \) is a stochastic approximation of descent directions \( \tilde{D}_{\Diamond,i}^{k} \), and we use \( \hat{D} \) to construct the update direction \( D_{\Diamond,i}^{k} \) through decentralized techniques, \(\Diamond \in \{x,y ,\theta\}\).

Notably, SUN-DSBO offers the flexibility to incorporate stochastic gradient estimators and decentralized techniques. 
In the following sections, we study two specific examples without sub-loops to maintain good communication efficiency.

\textbf{SUN-DSBO-SE:} A \underline{S}imple and \underline{E}asily implementable algorithm within the SUN-DSBO framework. 
It is an adaptation of decentralized stochastic gradient descent within SUN-DSBO (summarized in Algorithm~\ref{sun-dsbo-se}), without the use of any advanced decentralized techniques.

\begin{algorithm}[tb]
	\caption{SUN-DSBO-SE}
	\label{sun-dsbo-se}
	\begin{algorithmic}[1]  
		\STATE {\bfseries Input:} penalty parameters $\{\mu_{k}\}$,  $\{\theta_{i}^{0}, x_{i}^{0}, y_{i}^{0}\}$, 
		learning rates $\{\lambda_\theta^{k},  \lambda_x^{k},\lambda_y^{k}\}$, weight matrix $\mathbf{W}$.
		\FOR{iteration $k=0,1,\ldots,K-1$}
		\FOR{each node $i=1,\ldots,n$}
		\STATE Sample $\xi_{g,i}^{k}$ and $\xi_{f,i}^{k}$, and compute 
		\begin{align}\label{gradoracle}
			\begin{aligned}
				\hat{D}_{\theta,i}^{k} &= \nabla_{y}g_{i}(x_i^{k},\theta_i^{k};\xi_{g,i}^{k}) + \frac{1}{\gamma}(\theta_i^{k} - y_i^{k}),  \\
				\hat{D}_{x,i}^{k} &= \mu_{k} \nabla_{x}f_{i}(x_i^{k},y_i^{k};\xi_{f,i}^{k}) + \nabla_{x}g_{i}(x_i^{k},y_i^{k};\xi_{g,i}^{k}) - \nabla_{x}g_{i}(x_i^{k},\theta_{i}^{k};\xi_{g,i}^{k}),  \\
				\hat{D}_{y,i}^{k} &= \mu_{k} \nabla_{y}f_{i}(x_i^{k},y_i^{k};\xi_{f,i}^{k}) + \nabla_{y}g_{i}(x_i^{k},y_i^{k};\xi_{g,i}^{k}) - \frac{1}{\gamma}(y_i^{k} - \theta_i^{k}). 
			\end{aligned}
		\end{align}
		
		\STATE Communicate with neighbors and update the dual and primal variables:
		\begin{eqnarray*}
			(\theta_i^{k+1},x_i^{k+1},y_i^{k+1})=\sum_{j=1}^nw_{ij}\Big(\theta_j^k-{\lambda_\theta^{k}} \hat{D}_{\theta,j}^{k}, x_j^k-{\lambda_x^{k}} \hat{D}_{x,j}^{k}, y_j^k-{\lambda_y^{k}} \hat{D}_{y,j}^{k}\Big).
		\end{eqnarray*}
		\ENDFOR
		\ENDFOR
	\end{algorithmic}
\end{algorithm}




\textbf{SUN-DSBO-GT:} An algorithm with \underline{G}radient \underline{T}racking to mitigate data heterogeneity. 
This is an adaptation of the GT technique within SUN-DSBO-SE (summarized in Algorithm~\ref{sun-dsbo-gt}). 


\begin{algorithm}[tb]
	\caption{SUN-DSBO-GT}
	\label{sun-dsbo-gt}
	\begin{algorithmic}[1]  
		\STATE {\bfseries Input:} penalty parameters $\{\mu_{k}\}$,  $\{\theta_{i}^{0}, x_{i}^{0}, y_{i}^{0}\}$, 
		learning rates $\{\lambda_\theta^{k},  \lambda_x^{k},\lambda_y^{k}\}$, weight matrix $\mathbf{W}$, $\hat{D}_{\theta,i}^{-1}=0$, $\hat{D}_{x,i}^{-1}=0$, $\hat{D}_{y,i}^{-1}=0$, $D_{\theta,i}^{-1}=0$, $D_{x,i}^{-1}=0$, $D_{y,i}^{-1}=0$. 
		\FOR{iteration $k=0,1,\ldots,K-1$}
		\FOR{each node $i=1,\ldots,n$}
		\STATE Sample $\xi_{g,i}^{k}$ and $\xi_{f,i}^{k}$, and compute $\hat{D}_{\theta,i}^{k}$, $\hat{D}_{x,i}^{k}$, and $\hat{D}_{y,i}^{k}$ using  (\ref{gradoracle}). 
		
		\STATE Communicate with neighbors and update $D_{\theta,i}^{k}$, $D_{x,i}^{k}$, and $D_{y,i}^{k}$ using GT: 
		\begin{eqnarray*}
			D_{\Diamond,i}^{k}
			=\sum_{j=1}^nw_{ij}\left(
			D_{\Diamond,j}^{k-1}
			+\hat{D}_{\Diamond,j}^{k}-\hat{D}_{\Diamond,j}^{k-1}\right),
		\end{eqnarray*}
		where \( \Diamond \in \{\theta, x, y\} \) represents the variable being tracked.
		
		\STATE Communicate with neighbors and update the dual and primal variables:
		\begin{eqnarray*}
			(\theta_i^{k+1},x_i^{k+1},y_i^{k+1})=\sum_{j=1}^nw_{ij}\Big(\theta_j^k-{\lambda_\theta^{k}} D_{\theta,j}^{k}, x_j^k-{\lambda_x^{k}} D_{x,j}^{k}, y_j^k-{\lambda_y^{k}} D_{y,j}^{k}\Big).
		\end{eqnarray*}
		\ENDFOR
		\ENDFOR
	\end{algorithmic}
\end{algorithm}

\paragraph{Other possible algorithmic designs.} 
Note that SUN-DSBO adopts the adapt-then-combine (ATC) structure in this work.
In fact, it can also be implemented using alternative decentralized structures, as discussed in \citet{sayed2014diffusion, zhu2024sparkle}. More details can be found in Appendix~\ref{moreofSUN-DSBO}.

\section{Theoretical analysis}\label{secTA}

\subsection{General assumptions}\label{genass}

Throughout this work, we assume that the upper-level objective \(F\) is bounded below by \(\underline{F}\).
\begin{assumption}[Smoothness]\label{ass1}
	For each $i\in[n]$, the objective functions \(f_i\) and \(g_i\) are continuously differentiable and \(L_1\)- and \(L_2\)-smooth in $x$ and $y$, respectively. 
\end{assumption}

\begin{assumption}[Stochastic oracle]\label{ass2}
	For each \(i \in [n]\) and i.i.d. samples $\xi_{f,i}$ and $\xi_{g,i}$, \(\nabla f_i(x,y;\xi_{f,i})\) and \(\nabla g_i(x,y;\xi_{g,i})\) are unbiased estimators of \(\nabla f_i(x,y)\) and \(\nabla g_i(x,y)\) with bounded variances, respectively, i.e., \(\mathbb{E}\|\nabla f_i(x,y) - \nabla f_i(x,y;\xi_{f,i})\|^2 \leq \delta_f^2\) and \(\mathbb{E}\|\nabla g_i(x,y) - \nabla g_i(x,y;\xi_{g,i})\|^2 \leq \delta_g^2\).
\end{assumption}

\begin{assumption}[Network topology]\label{ass3}
	The weight matrix \( \mathbf{W} \in \mathbb{R}^{n \times n}\) for the communication network is non-negative, symmetric, and doubly stochastic. Its eigenvalues satisfy \(1 = \lambda_1 > \lambda_2 \geq \cdots \geq \lambda_n\), with connectivity parameter \(\rho := \max\{|\lambda_2|, |\lambda_n|\} < 1\).
\end{assumption}

Assumption~\ref{ass3} on the weight matrix is crucial for ensuring the convergence of decentralized algorithms and is commonly made in the literature (e.g., \citet{chen2023decentralized,chen2024decentralized,zhu2024sparkle} for DSBO). 
The parameter $\rho$ measures the connectivity of the communication network: a smaller $\rho$ (closer to zero) indicates better connectivity. 
In the decentralized optimization literature (e.g., \citet{lian2017can,lu2021optimal,tang2018d,lu2019gnsd}), $1 - \rho$ corresponds to the spectral gap of the weight matrix $\mathbf{W}$ and is used to ensure the decay of the consensus error.

\subsection{Theoretical Results}\label{secmainth}
We now present the key convergence results for two specific instances of SUN-DSBO, in terms of the primal function \(\Psi_{\mu}(x, y)\) defined in (\ref{Upsct-0}), using a feasible stationarity measure. A comprehensive discussion about this stationarity measure can be found in Remark~\ref{reST} in the Appendix \ref{atr}. Additionally, we provide the convergence analysis in terms of the consensus error, formally defined as: 
\(\Delta^k:= \frac{1}{n}\sum_{i=1}^{n} \mathbb{E}\left(\| x_{i}^{k} - \bar{x}^{k} \|^{2}+\| y_{i}^{k} - \bar{y}^{k} \|^{2}+\| \theta_{i}^{k} - \bar{\theta}^{k} \|^{2}\right)\), 
where \(\bar{\Diamond}^{k} := \frac{1}{n} \sum_{i=1}^{n} \Diamond_{i}^{k}\) denotes the average over all agents with \( \Diamond \in \{\theta, x, y\} \).
\paragraph{Results for SUN-DSBO-SE.}
Since SUN-DSBO-SE does not employ advanced techniques for mitigating data heterogeneity, we adopt the bounded gradient dissimilarity condition on the objective functions, which is weaker than the bounded gradient assumptions used in \citet{chen2024decentralized, yang2022decentralized, lu2022stochastic, chen2023decentralized}.

\begin{assumption}[Gradient dissimilarity]\label{ass4}
	There exist constants \(\sigma_f, \sigma_g \geq 0\) such that for all \((x, y)\), 
	\(
	\frac{1}{n} \sum_{i=1}^n \|\nabla f_i(x,y) - \nabla F(x,y)\|^2 \leq \sigma_f^2\) and
	\(
	\frac{1}{n} \sum_{i=1}^n \|\nabla g_i(x,y) - \nabla G(x,y)\|^2 \leq \sigma_g^2
	\).
\end{assumption}


Under the above assumptions, we establish the convergence rate of SUN-DSBO-SE.

\begin{theorem}[Convergence rate of \textbf{SUN-DSBO-SE}]
	\label{main-sunse}
	Under Assumptions~\ref{ass1}–\ref{ass4}, let $\mu_k = \mu_0(k+1)^{-p}$, where $\mu_0 > 0$, $p \in (0,1/4)$, and $\gamma \in (0, \frac{1}{2L_{2}})$. Choose learning rates as follows: 
	\(
	\lambda_\theta^k = c_{\theta} n^{1/2}K^{-1/2} \),  
	\(\lambda_x^k = \lambda_y^k = c_\lambda \lambda_\theta^k
	\), 
	where $c_{\theta}, c_{\lambda}$ are positive constants that satisfy the conditions in Lemma~\ref{lyafuncdes-se}.
	Then, for SUN-DSBO-SE, we have 
	\begin{align*}
		&\frac{1}{K} \sum_{k=0}^{K-1} \mathbb{E}\|\nabla \Psi_{\mu_{k}} (\bar{x}^k, \bar{y}^k)\|^2 = 
		\mathcal{O}\left( \frac{\mathcal{C}_{1}}{n^{1/2} K^{1/2}} \right) 
		+ \mathcal{O}\left( \frac{n(\delta_f^2 + \delta_g^2)}{(1-\rho)^{2}K} \right) 
		+ \mathcal{O}\left( \frac{n(\sigma_f^2 + \sigma_g^2)}{(1-\rho)^{2}K} \right),
	\end{align*}
	where $\mathcal{C}_{1}$ is a positive constant depending on $\delta_f^2$, $\delta_g^2$, $\sigma_f^{2}$, and $\sigma_g^{2}$, but independent of $n$ and $(1-\rho)$.
\end{theorem}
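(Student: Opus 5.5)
The plan is a Lyapunov (potential-function) analysis on the averaged iterates, driven by the min-max structure of (\ref{min-max}). For fixed $(x,y)$ and $\gamma\in(0,\frac{1}{2L_2})$, the map $\theta\mapsto G(x,\theta)+\frac{1}{2\gamma}\|\theta-y\|^2$ is $(\frac1\gamma-L_2)$-strongly convex and $(\frac1\gamma+L_2)$-smooth. Hence it has a unique minimizer $\theta^*(x,y)$, which is Lipschitz in $(x,y)$. Danskin's theorem then gives $\nabla_x V_\gamma=\nabla_x G(x,\theta^*)$ and $\nabla_y V_\gamma=(y-\theta^*)/\gamma$, and $V_\gamma$ is smooth. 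So $\nabla\Psi_{\mu}(x,y)$ is exactly $(\tilde D_x,\tilde D_y)$ averaged over agents and evaluated at $\theta=\theta^*(x,y)$, and $\Psi_\mu$ is $L_\Psi$-smooth with $L_\Psi$ independent of $\mu\le\mu_0$.

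Because $\mathbf{W}$ is doubly stochastic, the averages evolve as centralized stochastic gradient steps with averaged directions:
\[\bar x^{k+1}=\bar x^k-\lambda_x^k\tfrac1n\textstyle\sum_i\hat D^k_{x,i},\]
and similarly for $\bar y,\bar\theta$.

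I would define the potential
\[\mathcal V_k=\Psi_{\mu_k}(\bar x^k,\bar y^k)-\mu_k\underline F+a\|\bar\theta^k-\theta^*(\bar x^k,\bar y^k)\|^2+b\,\Delta^k\]
and prove a one-step descent of the form
\[\mathbb E\mathcal V_{k+1}\le \mathbb E\mathcal V_k-c\lambda_x^k\mathbb E\|\nabla\Psi_{\mu_k}(\bar x^k,\bar y^k)\|^2+\text{error terms}.\]
This is the content I expect Lemma~\ref{lyafuncdes-se} to deliver. The derivation has three parts.

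First, apply the descent lemma for $\Psi_{\mu_k}$. The mismatch between the averaged direction and $\nabla\Psi_{\mu_k}$ is bounded by $L\|\bar\theta^k-\theta^*\|^2+L\Delta^k$ via $L_1,L_2$-smoothness. Unbiasedness contributes a variance term $\lambda^2(\delta_f^2+\delta_g^2)/n$; the $1/n$ here is where linear speedup comes from. Changing $\mu_k\to\mu_{k+1}$ adds $(\mu_k-\mu_{k+1})(F-\underline F)$. I would handle this by shifting with $\underline F$, so the increment is nonpositive since $F\ge\underline F$ and $\mu_k$ is decreasing.

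Second, the $\theta$-tracking error contracts. Strong convexity gives a contraction factor $(1-\lambda_\theta(\frac1\gamma-L_2))$. Drift of $\theta^*$ is controlled by its Lipschitz constant times $\lambda_x^2\|\text{direction}\|^2$, and consensus and variance terms appear as before.

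Third, bound the consensus error. With $\mathbf J=\frac1n\mathbf 1\mathbf 1^\top$, I would use $\|(\mathbf W-\mathbf J)z\|\le\rho\|z\|$ and Young's inequality to get
\[\Delta^{k+1}\le\tfrac{1+\rho}{2}\Delta^k+\tfrac{C\lambda^2}{1-\rho}\big(L^2\Delta^k+\|\nabla\Psi\|^2+\|\bar\theta-\theta^*\|^2+\sigma_f^2+\sigma_g^2\big)+C\lambda^2(\delta_f^2+\delta_g^2).\]
Assumption~\ref{ass4} is used precisely here, to bound $\frac1n\sum_i\|\nabla f_i-\nabla F\|^2$ and the analogous $g$ term.

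Choosing $c_\lambda$ small relative to $\lambda_\theta$-contraction, and $\lambda_\theta\lesssim(1-\rho)$ so that the coupled terms are absorbed, I would set $a,b$ to make all cross terms nonpositive. This choice is the main obstacle. The consensus recursion injects heterogeneity with a $1/(1-\rho)$ amplification, and the absorption must be done so that the constant in the leading term is independent of $n$ and $\rho$. That forces the step-size condition $c_\theta n^{1/2}K^{-1/2}\le c(1-\rho)$, i.e.\ $K$ large.

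Telescoping over $k=0,\dots,K-1$ and dividing by $K\lambda_x$ leaves three contributions.
\begin{itemize}
\item The initial-potential term $\mathcal V_0/(K\lambda)=\mathcal O(1/\sqrt{nK})$.
\item The variance term $\lambda(\delta^2)/n=\mathcal O(1/\sqrt{nK})$; these two together form $\mathcal C_1/\sqrt{nK}$.
\item The consensus-induced terms $\lambda^2(\delta^2+\sigma^2)/(1-\rho)^2$, which, after dividing by $\lambda$ and summing the $\Delta$ contributions, give $\mathcal O(n(\delta^2+\sigma^2)/((1-\rho)^2K))$.
\end{itemize}

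The restriction $p<1/4$ enters through $\mu_k$-dependent constants. Lower-boundedness of $\Psi_{\mu_k}$ holds only up to $\mu_k\underline F$, and the smoothness constants involve $\mu_k$. The products $\mu_k$-drift $\times$ $\lambda^{-1}$ must sum to $o(\sqrt{K/n})$. I would verify this by bounding $\sum_k(\mu_k-\mu_{k+1})\le\mu_0$ and checking that any residual $\mu_k^{-1}$-weighted terms are dominated, using $p<1/4$.
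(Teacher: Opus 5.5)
You follow the paper's proof architecture. Your potential is $\Phi_{\mu_k}(\bar x^k,\bar y^k)+a\|\bar\theta^k-\theta_\gamma^*(\bar x^k,\bar y^k)\|^2$ plus step-size-weighted consensus errors; you drop the penalty change via the $\underline F$-shift and monotone $\mu_k$; and you telescope with $\lambda_\theta=c_\theta\sqrt{n/K}$. Your consensus recursion, which keeps the $L^2\Delta^k$ feedback from evaluating local gradients at local iterates, is actually more careful than Lemma~\ref{vardec-se}.

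\textbf{The gap: the $\theta$-tracking recursion.} You say the drift of $\theta_\gamma^*$ costs $L_\theta^2\lambda_x^2\|\text{direction}\|^2$ and that variance enters ``as before'', at order $\lambda^2/n$. That does not go through.
\begin{itemize}
\item Under Assumption~\ref{ass1} alone, $\theta_\gamma^*$ is only Lipschitz (Lemma~\ref{gaptheta}). With $\bar z^k=(\bar x^k,\bar y^k)$, the cross term $2\langle\bar\theta^{k+1}-\theta_\gamma^*(\bar z^k),\theta_\gamma^*(\bar z^k)-\theta_\gamma^*(\bar z^{k+1})\rangle$ therefore cannot be neutralized by conditioning on $\mathcal F_k$.
\item The only available bound on $\|\mathbb E[\theta_\gamma^*(\bar z^{k+1})\mid\mathcal F_k]-\theta_\gamma^*(\mathbb E[\bar z^{k+1}\mid\mathcal F_k])\|$ is of order $L_\theta\lambda_x(\mu_0\delta_f+\delta_g)/\sqrt n$, not $\mathcal O(\lambda_x^2)$.
\item You are forced into Young's inequality with a weight $\delta_k$. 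Keeping $(1+\delta_k)(1-\eta\lambda_\theta)<1$ requires $\delta_k\lesssim\eta\lambda_\theta$.
\item The drift then enters as $(1+\delta_k^{-1})L_\theta^2\lambda_x^2\mathbb E\|\bar d_z^k\|^2\approx\frac{L_\theta^2c_\lambda^2}{\eta}\lambda_\theta\,\mathbb E\|\bar d_z^k\|^2$, where $\bar d_z^k=(\bar d_x^k,\bar d_y^k)$.
\item Its variance part contributes order $a\frac{L_\theta^2c_\lambda^2}{\eta}\lambda_\theta\frac{\mu_0^2\delta_f^2+\delta_g^2}{n}$ per iteration. This is first order in the step size.
\item After telescoping and dividing by $K\lambda_x$, it leaves a term of order $\frac{aL_\theta^2c_\lambda}{\eta}\cdot\frac{\mu_0^2\delta_f^2+\delta_g^2}{n}$. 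For fixed $c_\lambda$ this does not decay with $K$, so the claimed $\mathcal O(1/\sqrt{nK})$ does not follow.
\end{itemize}
Copying the paper's bookkeeping will not help. Its constant $M_1$ in (\ref{boundM-se}) places this variance at order $(\lambda_\theta^k)^2$ by using $1+\delta_k^{-1}\le 2$. That is false for $\delta_k=\frac{\lambda_\theta^k\eta}{4(1-\lambda_\theta^k\eta/2)}$: the stated condition $\lambda_\theta^k\le\frac{4}{3\eta}$ gives $\delta_k\le 1$, hence $1+\delta_k^{-1}\ge 2$.

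\textbf{A repair using only first-order smoothness.} Track the inner error through the value gap $E_k:=h(\bar z^k,\bar\theta^k)-V_\gamma(\bar z^k)$, with $h(x,y,\theta):=G(x,\theta)+\frac{1}{2\gamma}\|\theta-y\|^2$, instead of through $\|\bar\theta^k-\theta_\gamma^*(\bar z^k)\|^2$.
\begin{itemize}
\item Both $h$ and $V_\gamma$ are smooth (Lemmas~\ref{MEineq}, \ref{gaptheta} and \ref{prophi_k}), so $E$ is jointly smooth in $(z,\theta)$.
\item Its one-step expansion therefore has first-order terms with $\mathcal F_k$-measurable coefficients. Their conditional means involve only $\tilde{\bar d}_x^k,\tilde{\bar d}_y^k,\tilde{\bar d}_\theta^k$.
\item Noise enters only through the quadratic remainder, of order $\lambda_\theta^2(\mu_0^2\delta_f^2+\delta_g^2)/n$.
\item Strong convexity gives $\frac{\eta}{2}\|\bar\theta^k-\theta_\gamma^*(\bar z^k)\|^2\le E_k\le\frac{1}{2\eta}\|\nabla_\theta h(\bar z^k,\bar\theta^k)\|^2$. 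So $E_k$ contracts at rate $\eta\lambda_\theta$ up to consensus terms, and it still dominates the $\|\bar\theta^k-\theta_\gamma^*\|^2$ mismatch in your $\Psi$-descent.
\item The coupling term $\lambda_xL\|\bar\theta^k-\theta_\gamma^*(\bar z^k)\|\,\|(\tilde{\bar d}_x^k,\tilde{\bar d}_y^k)\|$ is $\mathcal F_k$-measurable. It is absorbed by taking $c_\lambda$ small, as you intended.
\end{itemize}
Two alternatives also work. You can assume Lipschitz Hessians, so that $\theta_\gamma^*$ has a Lipschitz Jacobian and the noise cross term becomes $\mathcal O(\lambda^2)$. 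Or you can let $\lambda_x/\lambda_\theta\to 0$, at a cost in rate.

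Finally, your last paragraph is unnecessary. After the $\underline F$ shift the $\mu_k$-change is nonpositive, and every $\mu_k$-dependent constant is bounded by its value at $\mu_0$. So $p<1/4$ plays no role: the paper's argument uses only monotonicity and $\mu_k\le\mu_0$, and its appendix version allows $p=0$.
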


\emph{Linear speedup and consensus error.} 
The convergence rate $\mathcal{O}(1 / (n^{1/2} K^{1/2}))$ in Theorem~\ref{main-sunse} demonstrates that SUN-DSBO-SE achieves asymptotic linear speedup with respect to the number $n$ of agents. 
The proof of Theorem~\ref{main-sunse} is provided in Appendix~\ref{PF_SE}, where we also establish an upper bound for the consensus error:
\begin{equation*}
\frac{1}{K}\sum_{k=0}^{K-1} \Delta^k = 
\mathcal{O}\left( \frac{\mathcal{C}_{1}}{K^{1/2}} \right) 
+ \mathcal{O}\left( \frac{n^{3/2}(\delta_f^2 + \delta_g^2)}{(1-\rho)^{2} K} \right) 
+ \mathcal{O}\left( \frac{n^{3/2}(\sigma_f^2 + \sigma_g^2)}{(1-\rho)^{2}K} \right).
\end{equation*} 
The proof of Theorem~\ref{main-sunse} is provided in Appendix~\ref{PF_SE}.
For further theoretical results, see Appendix~\ref{atracr}. 
Using the non-asymptotic rate established in Theorem~\ref{main-sunse}, we derive the sample and transient iteration complexities of SUN-DSBO-SE. The sample complexity refers to the total number of stochastic gradient evaluations required to achieve an \(\epsilon\)-stationary solution, 
\(\mathbb{E} \| \nabla \Psi_{\mu_k} (\bar{x}^{k}, \bar{y}^{k}) \|^{2} \leq \epsilon\),
while the transient iteration complexity refers to the number of transient iterations required for the algorithm to achieve the asymptotic linear speedup~\citep{kong2024decentralized,zhu2024sparkle}.




\begin{corollary}
	In the context of Theorem~\ref{main-sunse}, the sample complexity of SUN-DSBO-SE is $\mathcal{O}\left(1 / \epsilon^2\right)$, while the transient iteration complexity is $\mathcal{O}\left(\max\big\{n^3 / (1 - \rho)^4, n^3(\sigma_f^2 + \sigma_g^2)^{2} / (1 - \rho)^4\big\}\right)$.
\end{corollary}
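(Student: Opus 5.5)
The corollary follows from Theorem~\ref{main-sunse} by direct arithmetic on its three terms; no new analysis is needed. I treat $\delta_f,\delta_g,\sigma_f,\sigma_g$, $\mu_0$, $p$, $\gamma$ and $L_1,L_2$ as fixed constants. Only the dependence on $\epsilon$, $n$, $K$ and $1-\rho$ is tracked, as the statement requires.

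\textbf{Sample complexity.} Each iteration of Algorithm~\ref{sun-dsbo-se} uses one sample $\xi_{f,i}^k,\xi_{g,i}^k$ per agent. This is $\mathcal{O}(1)$ stochastic gradient evaluations per agent per iteration. For the average over $k$ in Theorem~\ref{main-sunse} to be at most $\epsilon$, it suffices that each of the three terms is at most $\epsilon/3$.

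The first term requires $\mathcal{C}_1/(n^{1/2}K^{1/2})\lesssim\epsilon$, that is, $K\gtrsim \mathcal{C}_1^2/(n\epsilon^2)$. The second and third terms require $K\gtrsim n(\delta_f^2+\delta_g^2+\sigma_f^2+\sigma_g^2)/((1-\rho)^2\epsilon)$. For small $\epsilon$ the $\epsilon^{-2}$ requirement dominates.

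Averaging over $k$ gives $\min_k \mathbb{E}\|\nabla\Psi_{\mu_k}(\bar x^k,\bar y^k)\|^2\le\epsilon$, and choosing $k$ uniformly at random gives the same bound in expectation. Hence $\epsilon$-stationarity is reached after $K=\mathcal{O}(1/(n\epsilon^2))$ iterations per agent. This is $\mathcal{O}(1/\epsilon^2)$ samples per agent, and $\mathcal{O}(1/\epsilon^2)$ in total up to the convention of counting per agent versus in aggregate, consistent with linear speedup. Under either convention the claimed $\mathcal{O}(1/\epsilon^2)$ holds.

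\textbf{Transient iteration complexity.} This is the smallest $K$ beyond which the linear-speedup term $\mathcal{C}_1/(n^{1/2}K^{1/2})$ dominates the two remaining terms. I compare each remaining term with the leading one, treating $\mathcal{C}_1$ as an $\mathcal{O}(1)$ constant.

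First, $n\delta^2/((1-\rho)^2K)\le 1/(n^{1/2}K^{1/2})$ is equivalent to $K^{1/2}\ge n^{3/2}\delta^2/(1-\rho)^2$. Squaring gives $K\ge n^3/(1-\rho)^4$, with the fixed variances absorbed into the constant.

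Second, $n(\sigma_f^2+\sigma_g^2)/((1-\rho)^2K)\le 1/(n^{1/2}K^{1/2})$ gives $K\ge n^3(\sigma_f^2+\sigma_g^2)^2/(1-\rho)^4$. Here the heterogeneity dependence is kept explicit.

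Taking the maximum of the two thresholds yields the stated $\mathcal{O}\big(\max\{n^3/(1-\rho)^4,\,n^3(\sigma_f^2+\sigma_g^2)^2/(1-\rho)^4\}\big)$.

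\textbf{Main point of care.} The only delicate step is the bookkeeping of $\mathcal{C}_1$. It depends on $\delta^2$ and $\sigma^2$ but, by Theorem~\ref{main-sunse}, is independent of $n$ and $1-\rho$. So it may be absorbed into constants without changing the $n$ and $(1-\rho)$ exponents. One must also keep the explicit $(\sigma_f^2+\sigma_g^2)^2$ factor in the second threshold rather than absorbing it.
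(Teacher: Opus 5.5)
Your proposal is correct and follows the same route as the paper. Like the paper, you read the sample complexity off the leading $\mathcal{C}_1/(n^{1/2}K^{1/2})$ term, giving $K=\mathcal{O}(\epsilon^{-2})$, and get the transient complexity by requiring $1/(n^{1/2}K^{1/2})$ to dominate the two $n/((1-\rho)^2K)$-type terms and squaring. Your per-agent versus aggregate bookkeeping ($K=\mathcal{O}(1/(n\epsilon^2))$ iterations, so $nK=\mathcal{O}(\epsilon^{-2})$ samples in total) is more explicit than the paper's ``when $K\gg n$'' shortcut, and it reaches the same conclusion.
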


\paragraph{Results for SUN-DSBO-GT.}
Thanks to the gradient tracking technique used in SUN-DSBO-GT, we present its convergence result, demonstrating that it achieves linear speedup without relying on any assumptions of gradient heterogeneity. 

\begin{theorem}
	\label{main-sungt}
	Under Assumptions~\ref{ass1}–\ref{ass3}, let $\mu_0 > 0$, $p \in (0,1/4)$, $\mu_k = \mu_0 (k+1)^{-p}$, and $\gamma \in (0, \frac{1}{2L_{2}})$. Choose 
\(
\lambda_\theta^k = c_{\theta} n^{1/2}K^{-1/2}\),   
\(\lambda_x^k = \lambda_y^k = c_\lambda \lambda_\theta^k
\), 
where $c_{\theta}, c_{\lambda} $ are positive constants that satisfy the conditions in Lemma~\ref{lyafuncdes-gt}.
Then, for SUN-DSBO-GT, we have 
\begin{align*}
	&\frac{1}{K}\sum_{k=0}^{K-1} \mathbb{E}\|\nabla \Psi_{\mu_k} (\bar{x}^k, \bar{y}^k)\|^2 
	= \mathcal{O}\left( \frac{\mathcal{C}_{2}}{n^{1/2} K^{1/2}} \right) 
	+ \mathcal{O}\left( \frac{n(\delta_f^2 + \delta_g^2)}{(1-\rho)^{4}K} \right); \\
	&\frac{1}{K}\sum_{k=0}^{K-1} \Delta^k 
	= \mathcal{O}\left( \frac{\mathcal{C}_{2}}{K^{1/2}} \right) 
	+ \mathcal{O}\left( \frac{n^{3/2}(\delta_f^2 + \delta_g^2)}{(1-\rho)^{4}K} \right),
\end{align*}
where $\mathcal{C}_{2}$ is a positive constant depending on $\delta_f^2$, and $\delta_g^2$, but independent of $n$ and $(1-\rho)$.
\end{theorem}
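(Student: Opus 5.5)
The plan is a Lyapunov (potential-function) analysis on the averaged iterates, with Lemma~\ref{lyafuncdes-gt} supplying the one-step descent. Since $\mathbf{W}$ is doubly stochastic and GT is initialized with $\hat{D}^{-1}=D^{-1}=0$, the averages obey the tracking invariant $\bar{D}_{\Diamond}^k=\frac1n\sum_i\hat{D}_{\Diamond,i}^k$. Hence $(\bar\theta^k,\bar x^k,\bar y^k)$ performs an inexact stochastic gradient descent–ascent step on the min–max reformulation (\ref{min-max}) with parameter $\mu_k$.

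The error of each averaged step splits into three parts:
\begin{itemize}
\item zero-mean noise with variance $\mathcal{O}((\delta_f^2+\delta_g^2)/n)$ after averaging over agents;
\item a consensus bias controlled by $L_1,L_2,1/\gamma$ times $\Delta^k$;
\item the gap between $\bar\theta^k$ and the unique maximizer $\theta^*(\bar x^k,\bar y^k)$.
\end{itemize}
The maximizer is unique because $\gamma<\frac1{2L_2}$ makes the inner problem $(\frac1\gamma-L_2)$-strongly concave. I will use the fact that $\Psi_{\mu}$ has gradient $\mu\nabla F+\nabla G-\nabla_{(x,y)}G(x,\theta^*)$ in $x$ and $(y-\theta^*)/\gamma$ in $y$, and is $L_\Psi$-smooth with $L_\Psi$ independent of $\mu\le\mu_0$.

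The potential is
\[
\mathcal{V}^k=\Psi_{\mu_k}(\bar x^k,\bar y^k)-\mu_k\underline F+a\|\bar\theta^k-\theta^*(\bar x^k,\bar y^k)\|^2+b\,\Delta^k+c\,\Delta_D^k,
\]
where $\Delta_D^k$ is the tracking error $\frac1n\sum_i\|D_{\Diamond,i}^k-\bar D_{\Diamond}^k\|^2$. The steps are as follows.
\begin{enumerate}
\item A descent inequality for $\Psi_{\mu_k}$ along the averaged step. The change of the penalty parameter costs $(\mu_k-\mu_{k+1})(F-\underline F)$, which telescopes; this is why $F-\underline F$ enters the potential, and I expect $\mu_k F$ stays bounded along the iterates.
\item A contraction for the dual error. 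Strong concavity gives a factor $1-\Theta(\lambda_\theta)$, plus drift terms from the Lipschitz map $\theta^*$ that are $\mathcal{O}(\lambda_x^2/\lambda_\theta)$ times the primal step. The constant $c_\lambda$ must be small to absorb these.
\item The consensus recursion $\Delta^{k+1}\le\rho\Delta^k+\frac{\lambda^2}{1-\rho}\mathcal{O}(\Delta_D^k)$.
\item The GT recursion $\Delta_D^{k+1}\le\rho\Delta_D^k+\frac{1}{1-\rho}\mathcal{O}\bigl(\mathbb{E}\|\hat D^{k+1}-\hat D^k\|^2\bigr)$. The increment is bounded by $L^2$ times the iterate movement plus $\mathcal{O}(n(\delta_f^2+\delta_g^2))$ noise, and contains no $\sigma$ terms, which is why Assumption~\ref{ass4} is not needed.
\end{enumerate}
Choosing $a,b,c$ with $b\sim\frac{1}{1-\rho}$ and $c\sim\frac{\lambda^2}{(1-\rho)^3}$ makes the cross terms cancel. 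This yields $\mathcal{V}^{k+1}\le\mathcal{V}^k-\Theta(\lambda)\mathbb{E}\|\nabla\Psi_{\mu_k}\|^2-\Theta(\lambda)\,\Delta^k+\lambda^2\frac{\delta^2}{n}+\frac{\lambda^3 n\,\delta^2}{(1-\rho)^4}$, which is exactly the content of Lemma~\ref{lyafuncdes-gt}.

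Telescoping over $k=0,\dots,K-1$ and dividing by $K\lambda$ gives
\[
\frac1K\sum\mathbb{E}\|\nabla\Psi_{\mu_k}\|^2\lesssim\frac{\mathcal{V}^0-\inf\mathcal V}{K\lambda}+\frac{\lambda\delta^2}{n}+\frac{\lambda^2n\delta^2}{(1-\rho)^4}.
\]
Substituting $\lambda=c_\theta n^{1/2}K^{-1/2}$ turns the first two terms into $\mathcal{O}(\mathcal C_2/(n^{1/2}K^{1/2}))$ and the third into $\mathcal{O}(n^2\delta^2/((1-\rho)^4K))$. The exact power of $n$ in the last term depends on whether the GT noise term carries $\lambda^2/n$ or $\lambda^2$ per agent; I would track this carefully to land on the stated $n/(1-\rho)^4K$. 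The bound on $\frac1K\sum\Delta^k$ follows from the same telescoped inequality, now keeping the $\Theta(\lambda)\Delta^k$ term: dividing by $K\lambda$ but with the $n$-normalization of $\Delta$ gives the factor $n^{1/2}$ larger, i.e.\ $\mathcal{O}(\mathcal C_2/K^{1/2})$ and $n^{3/2}$ in the second term.

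The main obstacle is step 1 combined with the time-varying $\mu_k$. Two things need care there. First, the Lyapunov function must stay bounded below uniformly in $k$; this needs $\mu_k\downarrow$ together with $\Psi_\mu-\mu\underline F\ge0$ (because $G-V_\gamma\ge0$). Second, the extra term $(\mu_k-\mu_{k+1})$, together with $\mu_k$-dependence in $\theta^*$-drift and noise, must not destroy the rate. Here $\mu_k-\mu_{k+1}=\mathcal{O}(\mu_0 p\,k^{-1-p})$ is summable, and $p<1/4$ keeps the $\mu_k$-weighted constants from degrading the $K^{-1/2}$ rate. I would first verify that all $\mu$-dependent constants are bounded by their values at $\mu_0$, then absorb the drift using the restriction on $c_\lambda$.
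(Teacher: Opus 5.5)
Your architecture matches the paper's. Theorem~\ref{mainthgtapp} is proved by combining the one-step descent of Lemma~\ref{lyafuncdes-gt} (a potential built from $\Phi_{\mu_k}$, the $\theta$-gap, and the consensus and tracking errors) with the telescoping argument of Theorem~\ref{mainthseapp}. You read off $\|\nabla\Psi_{\mu_k}\|^2$ directly from the descent step rather than a posteriori from the $\lambda^{-1}\|\mathbb{E}[\bar x^{k+1}-\bar x^{k}]\|^2$ terms; that difference is inessential.

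The genuine gap is in your weights, and it is also the source of the power of $n$ you left open. The consensus bias enters the $\Phi$-descent and the $\theta$-recursion only with coefficient $\mathcal{O}(\lambda)$. So the consensus weight should be $b\sim\lambda/(1-\rho)$, which is the paper's $a_2^k=\tau_2\lambda_x^k$ with $\tau_2\sim(1-\rho)^{-1}$. Absorbing $b\,\frac{\lambda^2}{1-\rho}\Delta_D^k$ then forces $c\sim\lambda^3/(1-\rho)^3$.

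Your choice $b\sim1/(1-\rho)$ forces $c\gtrsim\lambda^2/(1-\rho)^3$. The tracking recursion then injects $\frac{c}{1-\rho}\,\mathcal{O}(\delta_f^2+\delta_g^2)=\mathcal{O}\bigl(\lambda^2(\delta_f^2+\delta_g^2)/(1-\rho)^4\bigr)$ per step. After dividing by $K\lambda$ this is $\mathcal{O}\bigl(n^{1/2}(\delta_f^2+\delta_g^2)/((1-\rho)^4K^{1/2})\bigr)$, which kills linear speedup. Your final inequality with a $\lambda^3$ noise term therefore does not follow from the weights you chose.

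With the corrected weights, normalize $\Delta_D^k$ by $1/n$ exactly like $\Delta^k$. The noise in $\frac1n\sum_i\mathbb{E}\|\hat D_i^{k+1}-\hat D_i^k\|^2$ is then $\mathcal{O}(\mu_0^2\delta_f^2+\delta_g^2)$, not $\mathcal{O}(n(\delta_f^2+\delta_g^2))$. The tracking contribution becomes $\lambda^3(\delta_f^2+\delta_g^2)/(1-\rho)^4$, which is the $(\lambda_\theta^k)^3M_5/(1-\rho)$ part of $\epsilon^{k}_{\mathrm{sto,gt}}$. Substituting $\lambda^2=c_\theta^2n/K$ gives exactly the stated $n/((1-\rho)^4K)$. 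The $\lambda^2/n$ term comes only from the variance of the averaged direction $\frac1n\sum_i\hat D_i^k$, and that is where linear speedup originates.

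On the time-varying penalty, you guard the wrong place and miss the real one. Inside the potential there is no cost. Since $F\ge\underline F$ and $\mu_{k+1}\le\mu_k$, we have $\Phi_{\mu_{k+1}}\le\Phi_{\mu_k}$ pointwise, so the parameter change is simply dropped. You need neither a bound on $\mu_kF$ along the iterates nor summability of $\mu_k-\mu_{k+1}$. The restriction $p<1/4$ is never used: Theorem~\ref{mainthgtapp} allows $p\in[0,1/4)$, with every $\mu$-dependent constant evaluated at $\mu_0$.

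Where $\mu_k$ does bite is your Step~4. The increment $\hat D_{x,i}^{k+1}-\hat D_{x,i}^{k}$ (and its $y$-analogue) contains $(\mu_{k+1}-\mu_k)\nabla_x f_i(x_i^k,y_i^k;\xi_{f,i}^k)$. Its deviation from the network average is a gradient-heterogeneity term, so ``Lipschitz times movement plus noise, no $\sigma$ terms'' is false for $p>0$. The term can be made lower order because $(\mu_k-\mu_{k+1})^2=\mathcal{O}(k^{-2-2p})$:
\begin{itemize}
\item bound $\|\nabla f_i(x_i^k,y_i^k)\|$ by its value at $(\bar x^0,\bar y^0)$, plus $L_1$ times the local consensus error and $\|(\bar x^k,\bar y^k)-(\bar x^0,\bar y^0)\|$;
\item absorb the resulting path-length term after telescoping, using $\sup_j\sum_{k>j}k(\mu_k-\mu_{k+1})^2<\infty$.
\end{itemize}
This is an extra argument. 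The derivation of Lemma~\ref{lyafuncdes-gt} treats the increment exactly as you do, so citing the lemma does not close it.

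Lastly, $\Delta^k$ already carries the $1/n$ normalization. The telescoped inequality therefore bounds $\frac1K\sum_k\Delta^k$ at the same order as the gradient residual. The theorem's consensus bound, weaker by a factor $n^{1/2}$, then follows trivially, not from an extra normalization factor.
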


\begin{corollary}
	Under the conditions of Theorem~\ref{main-sungt}, the sample complexity of SUN-DSBO-GT is $\mathcal{O}\left(1 / \epsilon^2\right)$, while the transient iteration complexity is $\mathcal{O}\left(n^3 / (1 - \rho)^8\right)$.
\end{corollary}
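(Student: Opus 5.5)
The corollary follows from the rate in Theorem~\ref{main-sungt} by solving for $K$; no new analysis is needed. We write the bound as $\frac{1}{K}\sum_{k=0}^{K-1}\mathbb{E}\|\nabla \Psi_{\mu_k}(\bar{x}^k,\bar{y}^k)\|^2 \le A_1 (nK)^{-1/2} + A_2\, n\,(1-\rho)^{-4} K^{-1}$, where $A_1 = \mathcal{O}(\mathcal{C}_2)$ and $A_2 = \mathcal{O}(\delta_f^2+\delta_g^2)$ are independent of $n$ and $1-\rho$. Because the left-hand side is an average over $k$, some iterate $k$ (or one drawn uniformly from $\{0,\dots,K-1\}$) satisfies $\mathbb{E}\|\nabla \Psi_{\mu_k}(\bar x^k,\bar y^k)\|^2$ at most this average. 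That iterate is an $\epsilon$-stationary point once each of the two terms is at most $\epsilon/2$.

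\emph{Sample complexity.} The first term is at most $\epsilon/2$ when $K \gtrsim A_1^2/(n\epsilon^2)$. The second is at most $\epsilon/2$ when $K \gtrsim A_2 n/((1-\rho)^4\epsilon)$. For small $\epsilon$ the first requirement dominates, so $K = \mathcal{O}(1/(n\epsilon^2))$ iterations suffice. Each iteration uses one stochastic sample of $\nabla f_i$ and $\nabla g_i$ at each of the $n$ agents, which is $\mathcal{O}(n)$ oracle calls in total. The total sample count is therefore $nK = \mathcal{O}(1/\epsilon^2)$; equivalently, each agent uses $\mathcal{O}(1/(n\epsilon^2))$ samples. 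Either way the stated $\mathcal{O}(1/\epsilon^2)$ holds, with the network-dependent term contributing only a lower-order $\mathcal{O}(n^2/((1-\rho)^4\epsilon))$ overhead.

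\emph{Transient iterations.} Linear speedup holds once the dominant term $A_1/\sqrt{nK}$ is at least the network-dependent term $A_2 n/((1-\rho)^4K)$. Rearranging gives $\sqrt{K} \ge (A_2/A_1)\, n^{3/2}/(1-\rho)^4$, i.e. $K \ge (A_2/A_1)^2\, n^3/(1-\rho)^8$. Since $A_1$ and $A_2$ do not depend on $n$ or $\rho$, the transient iteration complexity is $\mathcal{O}(n^3/(1-\rho)^8)$. The same comparison applied to the consensus bound in Theorem~\ref{main-sungt} ($\mathcal{C}_2 K^{-1/2}$ versus $n^{3/2}(1-\rho)^{-4}K^{-1}$) gives the identical threshold, so the stationarity and consensus requirements agree.

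The only delicate point is bookkeeping. We must check that the constant $\mathcal{C}_2$ from Theorem~\ref{main-sungt} is bounded away from zero uniformly in $n$ and $\rho$; this is what guarantees that the ratio $A_2/A_1$ carries no hidden $n$- or $\rho$-dependence. This follows from the theorem's assertion that $\mathcal{C}_2$ is independent of $n$ and $1-\rho$, together with the choice of $c_\theta, c_\lambda$ in Lemma~\ref{lyafuncdes-gt}.
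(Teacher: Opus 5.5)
Your proposal is correct and follows the paper's route. The paper proves this corollary by deferring to the proof of Corollary~\ref{corollary-sunse}: it reads the $\mathcal{O}(\epsilon^{-2})$ sample complexity off the dominant $K^{-1/2}$ term, and it gets the transient threshold from $1/\sqrt{nK}\gtrsim n/((1-\rho)^4K)$, i.e.\ $K\gtrsim n^3/(1-\rho)^8$, exactly as you do. The only difference is bookkeeping in the sample count: you track the $n$-dependence explicitly ($K=\mathcal{O}(1/(n\epsilon^2))$ iterations times $n$ samples per iteration), whereas the paper simply takes $K=\mathcal{O}(\epsilon^{-2})$ under $K\gg n$, and both give $\mathcal{O}(\epsilon^{-2})$.
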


Although SUN-DSBO-GT does not make any assumptions about gradient heterogeneity, it requires slightly more communication, memory, and computational costs compared to SUN-DSBO-SE. These increased costs may impact practical performance, which will be evaluated in the next section. 




\section{Experiments}\label{experiments}

In this section, we conduct experiments to evaluate the effectiveness of the proposed algorithms on real-world machine learning tasks. 
We compare the proposed SUN-DSBO-SE and SUN-DSBO-GT with the AID-based algorithms SLDBO~\citep{dong2023single}, D-SOBA~\citep{kong2024decentralized}, SPARKLE-EXTRA/GT~\citep{zhu2024sparkle}, and the value function-based algorithm DSGDA-GT~\citep{wang2024fully}. 
These experiments are conducted across multiple agents connected by diverse communication networks. 
We use the Dirichlet distribution to create heterogeneous training and validation data for the agents, following the methodology of \citet{lin2021quasi,zhu2024sparkle}. The degree of heterogeneity is controlled by the parameter \(\mathcal{H} > 0\), where smaller values of \(\mathcal{H}\) correspond to more severe non-i.i.d. partitions. Each experiment is repeated 10 times to improve statistical reliability, and we report the mean results with standard deviations depicted as shaded areas. Further experimental details are provided in Appendix~\ref{details}.  




\paragraph{Data hyper-cleaning.} 
We consider a data hyper-cleaning problem in a decentralized setting \citep{zhu2024sparkle}, with the problem formulation and experimental setup detailed in Appendix~\ref{details:dc}. 
We conduct experiments on Fashion-MNIST dataset \citep{xiao2017fashion} across different corruption rates (\texttt{cr}) and various values of the heterogeneity parameter $\mathcal{H}$. 

\begin{figure*}[h]
	\centering
	\begin{subfigure}[b]{0.99\textwidth}
		\includegraphics[width=\textwidth]{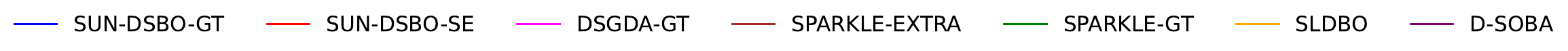}
	\end{subfigure}\\
	\centering
	\begin{subfigure}[b]{0.325\textwidth}
		\includegraphics[width=\textwidth]{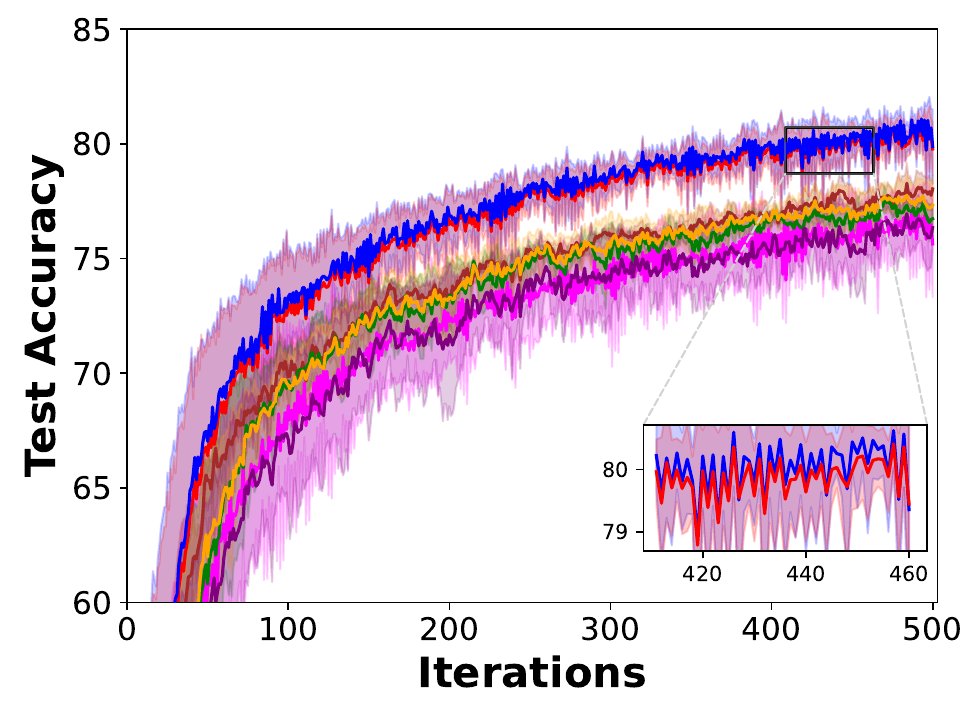}
		\caption{$\mathcal{H}=0.1$}
	\end{subfigure}
	\begin{subfigure}[b]{0.325\textwidth}
		\includegraphics[width=\textwidth]{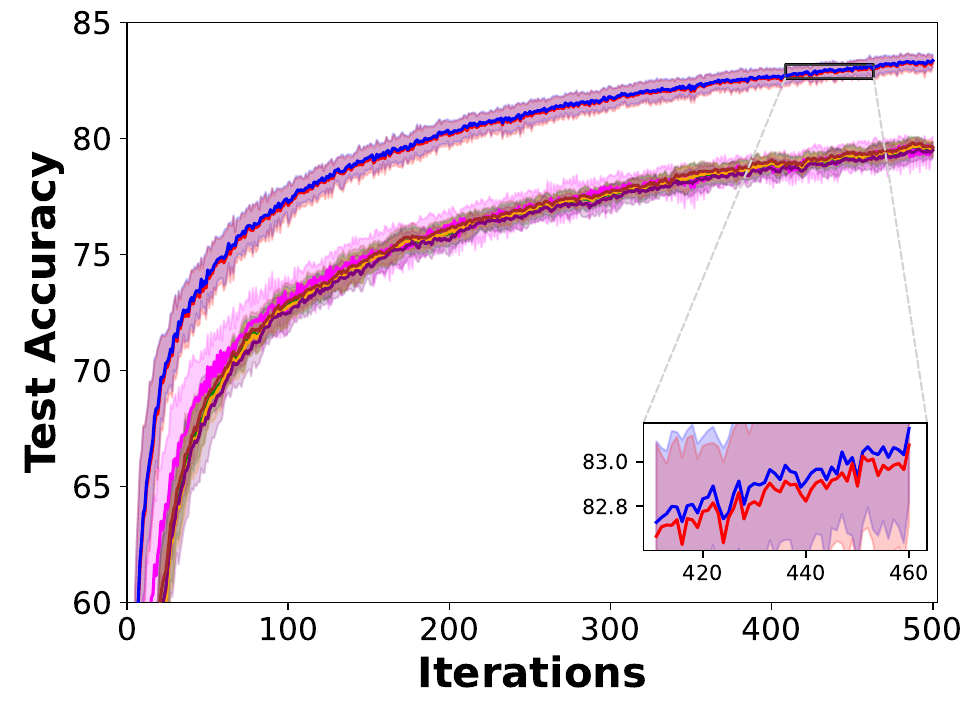}
		\caption{$\mathcal{H}=0.5$}
	\end{subfigure}
	\begin{subfigure}[b]{0.325\textwidth} 
		\includegraphics[width=\textwidth]{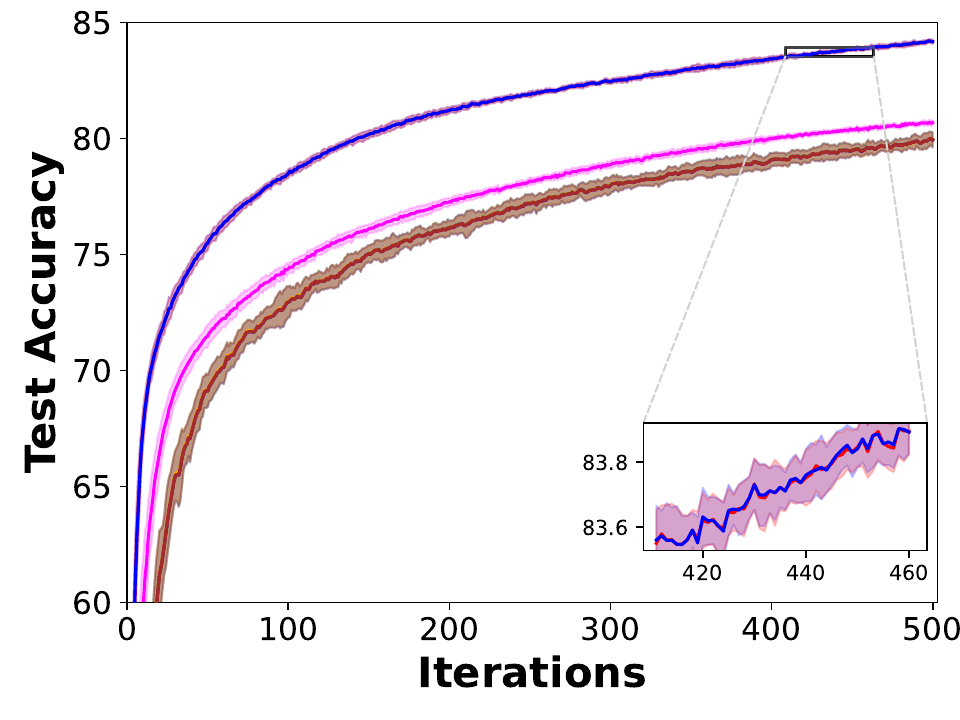}
		\caption{$\mathcal{H}=1$}
	\end{subfigure}
	\caption{
		Comparison of algorithms for data hyper-cleaning under different values of the heterogeneity parameter $\mathcal{H}$ with a corruption rate of \(\texttt{cr} = 0.3\).
	}
	\label{dcd}
\end{figure*}

In Figure~\ref{dcd}, we evaluate all algorithms across different values of the heterogeneity parameter \(\mathcal{H} \in \{0.1, 0.5, 1\}\), using a corruption rate of \(\texttt{cr} = 0.3\). It can be observed that SUN-DSBO-GT consistently outperforms the other algorithms, with SUN-DSBO-SE achieving the second-best performance. 
It should be noted that Figure~\ref{dcd}(a) demonstrates the effectiveness of the GT and EXTRA techniques in mitigating data heterogeneity, as evidenced by the comparisons between SUN-DSBO-SE and SUN-DSBO-GT, as well as D-SOBA and SPARKLE-EXTRA/GT.




\begin{figure*}[h]
	\centering
	\begin{subfigure}[b]{0.99\textwidth}
		\includegraphics[width=\textwidth]{dc_cir/legend-dc-b.pdf}
	\end{subfigure}\\
	\centering
	\begin{subfigure}[b]{0.325\textwidth}
		\includegraphics[width=\textwidth]{dc_cir/dec_cir_d0.1p0.001r.pdf}
		\caption{ \(\texttt{cr} = 0.3 \)}
	\end{subfigure}
	\begin{subfigure}[b]{0.325\textwidth}
		\includegraphics[width=\textwidth]{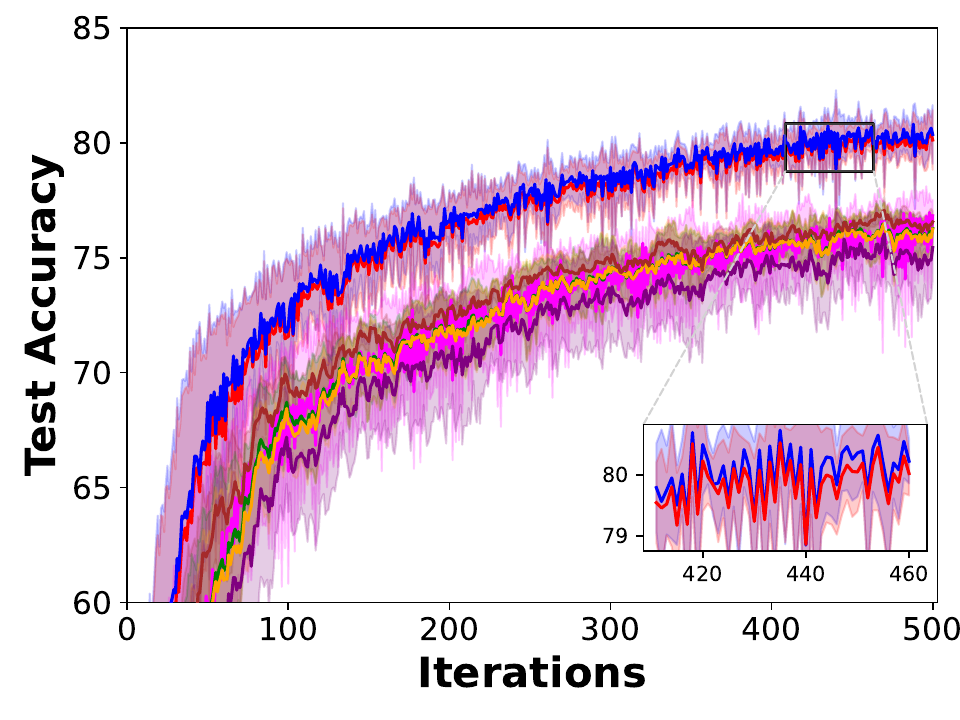}
		\caption{\(\texttt{cr} = 0.45 \)}
	\end{subfigure}
	\begin{subfigure}[b]{0.325\textwidth} 
		\includegraphics[width=\textwidth]{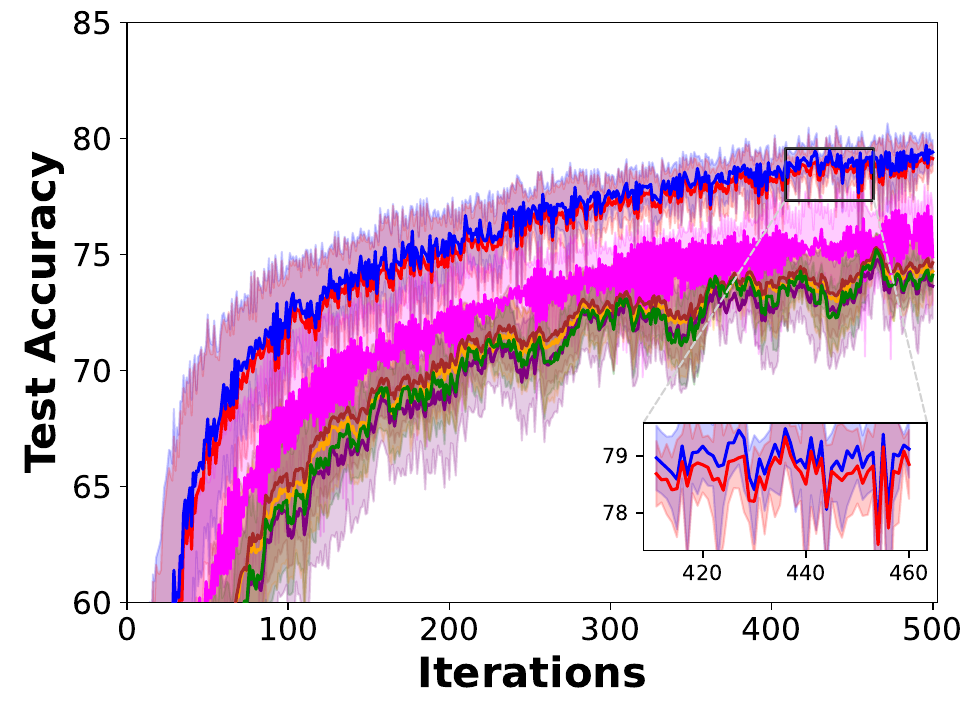}
		\caption{\(\texttt{cr} = 0.6 \)}
	\end{subfigure}
	\caption{ 
		Comparison of algorithms for data hyper-cleaning across different corruption rates (\texttt{cr}) with a heterogeneity parameter of $\mathcal{H} = 0.1$.
	} 
	\label{dccr}
\end{figure*}

\begin{figure*}[ht]
	\centering
	\begin{subfigure}[b]{0.99\textwidth}
		\includegraphics[width=\textwidth]{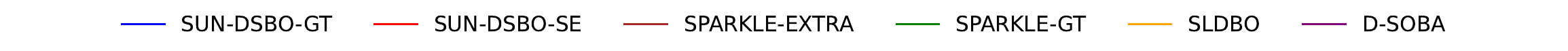}
	\end{subfigure}\\
	\centering
	\begin{subfigure}[b]{0.325\textwidth}
		\includegraphics[width=\textwidth]{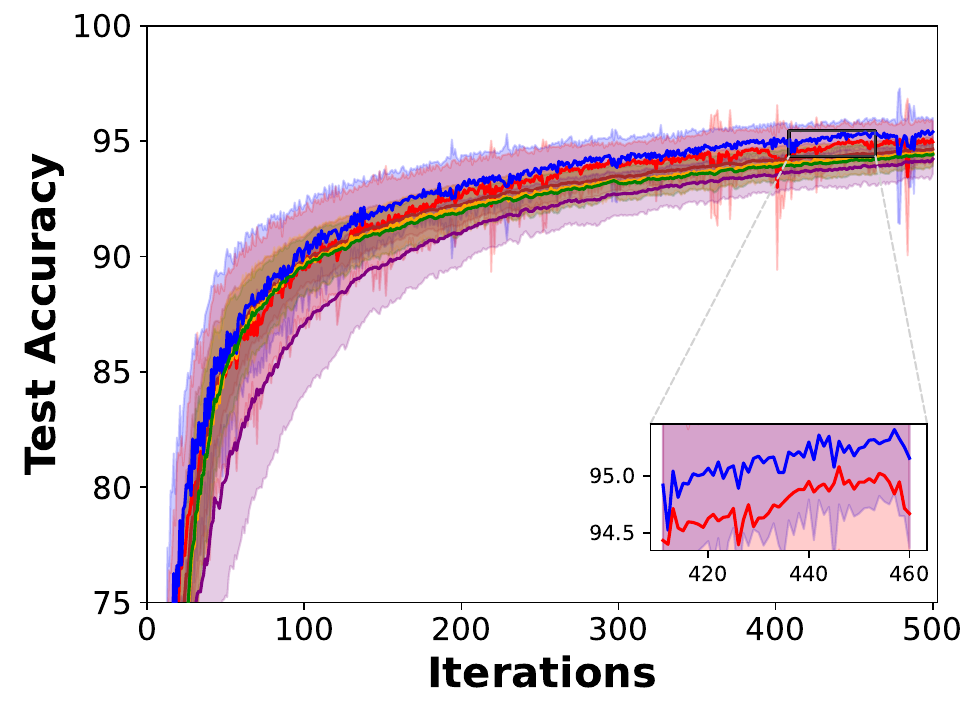}
		\caption{$\mathcal{H}=0.1$, MLP on MNIST}
	\end{subfigure}
	\begin{subfigure}[b]{0.325\textwidth}
		\includegraphics[width=\textwidth]{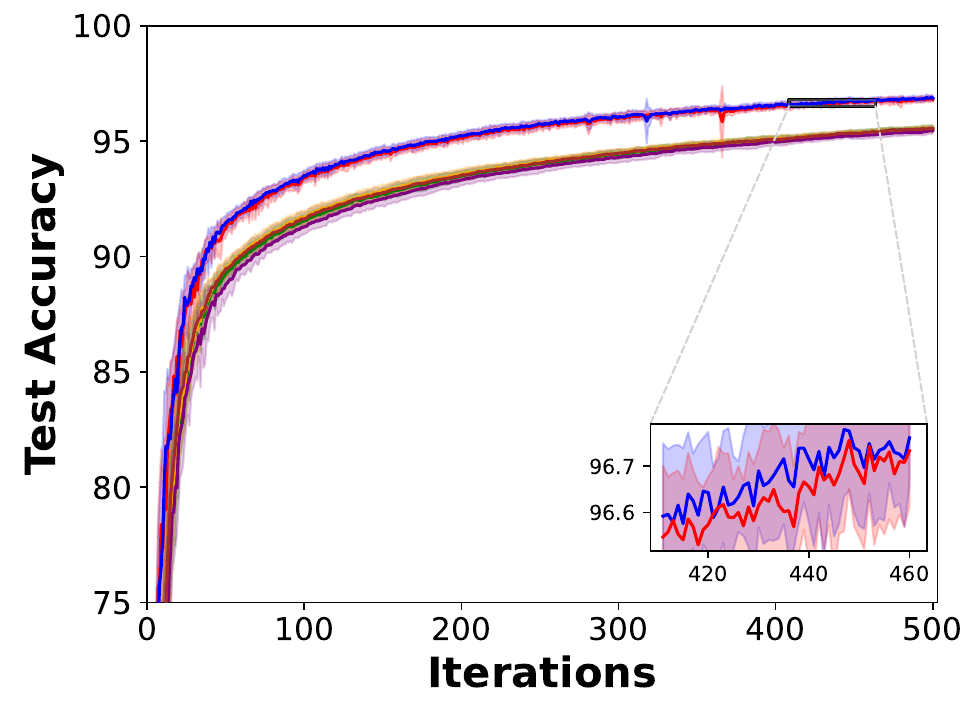}
		\caption{$\mathcal{H}=0.5$, MLP on MNIST}
	\end{subfigure}
	\begin{subfigure}[b]{0.325\textwidth} 
		\includegraphics[width=\textwidth]{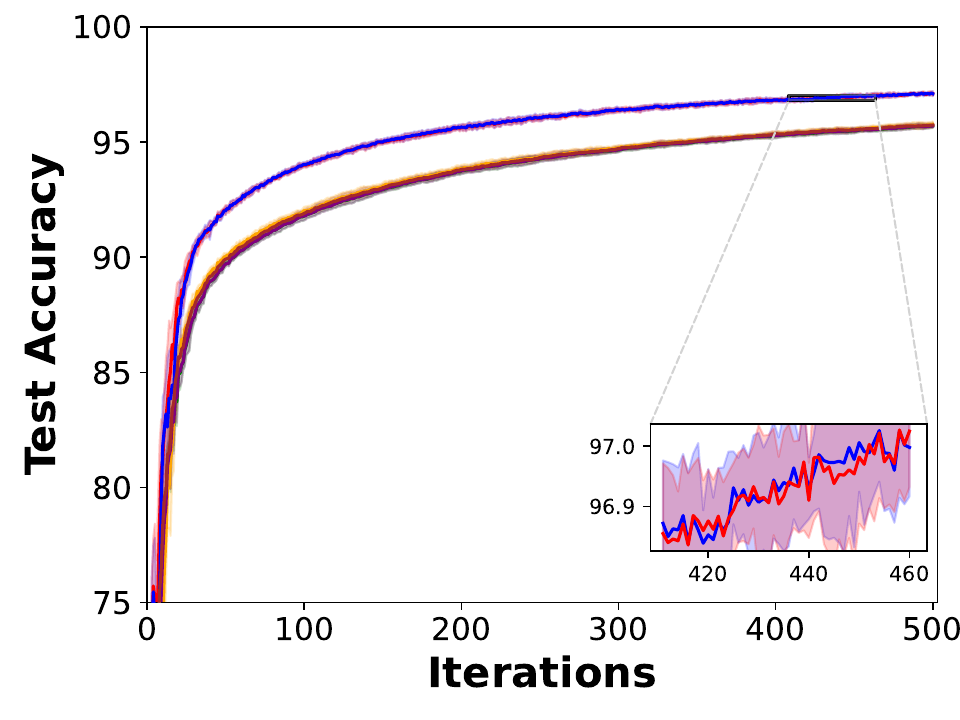}
		\caption{$\mathcal{H}=1$, MLP on MNIST}
	\end{subfigure}
	\centering
	\begin{subfigure}[b]{0.325\textwidth}
		\includegraphics[width=\textwidth]{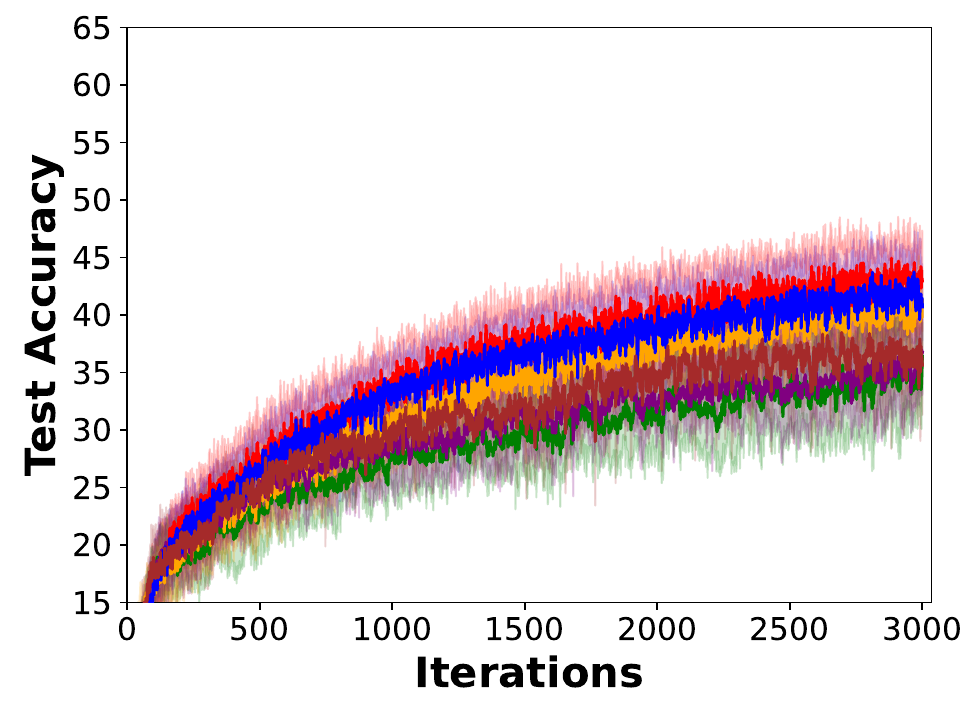}
		\caption{$\mathcal{H}=0.1$, CNN on CIFAR-10}
	\end{subfigure}
	\begin{subfigure}[b]{0.325\textwidth}
		\includegraphics[width=\textwidth]{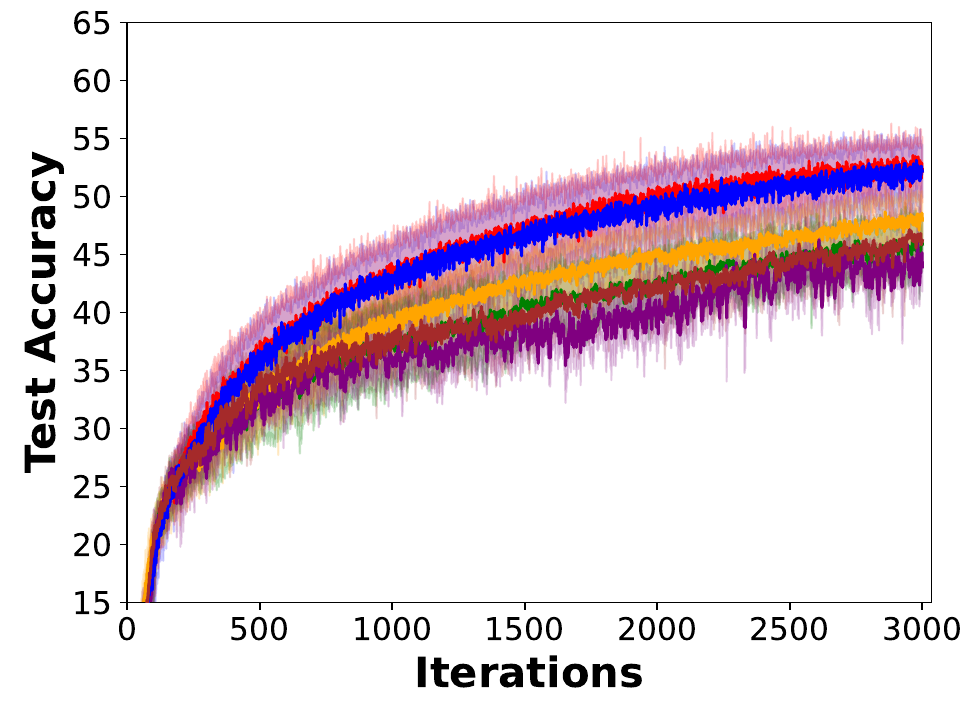}
		\caption{$\mathcal{H}=0.5$, CNN on CIFAR-10}
	\end{subfigure}
	\begin{subfigure}[b]{0.325\textwidth} 
		\includegraphics[width=\textwidth]{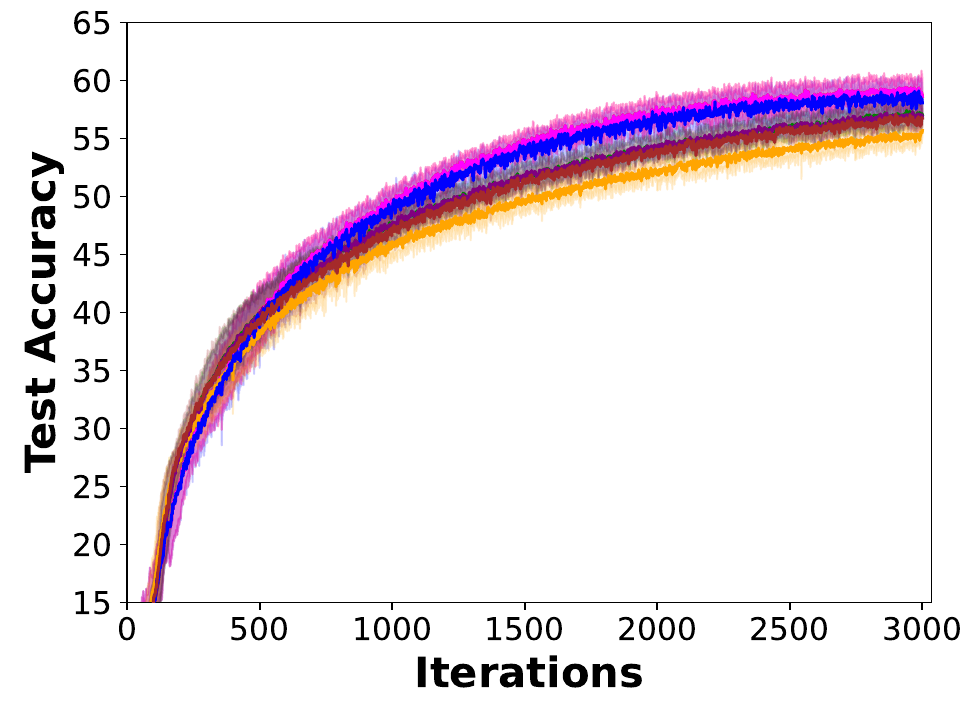}
		\caption{$\mathcal{H}=1$, CNN on CIFAR-10}
	\end{subfigure}
	\caption{ 
		Comparison of algorithms for hyper-representation under varying values of the heterogeneity parameter $\mathcal{H}$.
	}
	\label{hrr}
\end{figure*}

In Figure~\ref{dccr}, we further compare the algorithms in a data-heterogeneous scenario while varying the corruption rates $\texttt{cr} \in \{0.3, 0.45, 0.6\}$. It is observed that SUN-DSBO-GT and SUN-DSBO-SE consistently achieve higher accuracy and faster convergence. 
Moreover, Figure~\ref{dccr}(a)-(c) show that the performance gap between SUN-DSBO-SE and SUN-DSBO-GT remains small across different $\texttt{cr}$ values, with SUN-DSBO-GT demonstrating greater stability. 
Appendix~\ref{add:dcht} provides a comparison of these algorithms in terms of runtime and presents results comparing them against single-level algorithms to demonstrate the benefits of bilevel optimization modeling and algorithms.






\paragraph{Hyper-representation.}
We further demonstrate the effectiveness of SUN-DSBO on a hyper-representation task~\citep{franceschi2018bilevel,tarzanagh2022fednest} within a decentralized setting. This task involves a meta-learning problem where a representation and a header are jointly learned on training and validation datasets distributed across multiple agents. 
The problem formulation and experimental setup are provided in Appendix~\ref{details:hr}. We conduct experiments using a 2-layer MLP on MNIST~\citep{lecun1998gradient} and a 7-layer CNN on CIFAR-10~\citep{krizhevsky2009learning}, respectively.


We first evaluate all algorithms on MNIST benchmark using a two-layer MLP, as shown in Figure~\ref{hrr}(a)-(c). When data heterogeneity is severe (i.e., smaller $\mathcal{H}$), both SUN-DSBO-SE and SUN-DSBO-GT exhibit stronger oscillations compared to other methods. Nonetheless, SUN-DSBO-GT consistently achieves the fastest convergence and highest accuracy across all values of $\mathcal{H}$. 
We further compare the algorithms on the more challenging CIFAR-10 dataset using a 7-layer CNN. The results in Figure~\ref{hrr}(d)–(f) illustrate that SUN-DSBO-SE and SUN-DSBO-GT consistently outperform the other algorithms across different values of the heterogeneity parameter. Surprisingly, SUN-DSBO-SE achieves better accuracy and faster convergence than SUN-DSBO-GT. 
\paragraph{Ablation study.}  
We compare the proposed algorithms on the hyper-cleaning task using Fashion-MNIST with different communication networks, characterized by the connectivity parameter $\rho$, where larger values indicate weaker connectivity. As $\rho$ increases in Figure~\ref{dctopo}, performance of both SUN-DSBO-SE and SUN-DSBO-GT decreases, consistent with Theorems~\ref{main-sunse} and~\ref{main-sungt}. Notably, SUN-DSBO-GT demonstrates better robustness across varying weight matrices, highlighting its ability to mitigate the adverse effects of limited connectivity. Additional ablation studies are in Appendix~\ref{add:as}. 

\begin{figure*}[h]
	\centering
	\includegraphics[width=0.99\linewidth]{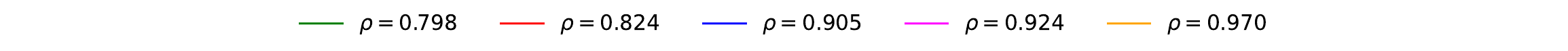}
	\begin{subfigure}[b]{0.325\linewidth}
		\includegraphics[width=\linewidth]{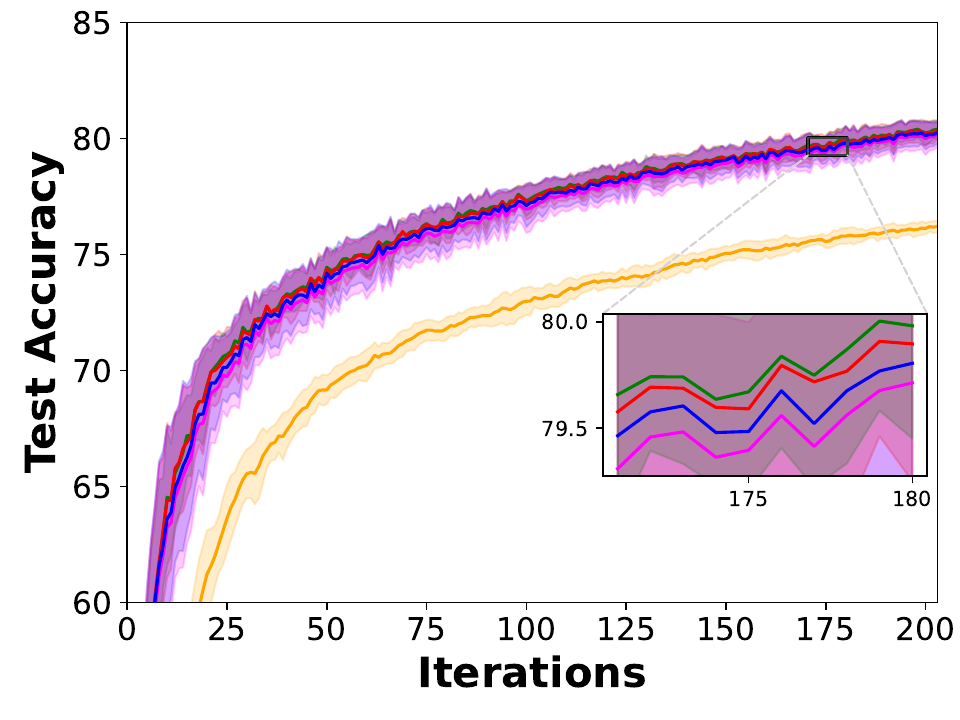}
		\caption{SUN-DSBO-SE}
	\end{subfigure}
	\begin{subfigure}[b]{0.325\linewidth}
		\includegraphics[width=\linewidth]{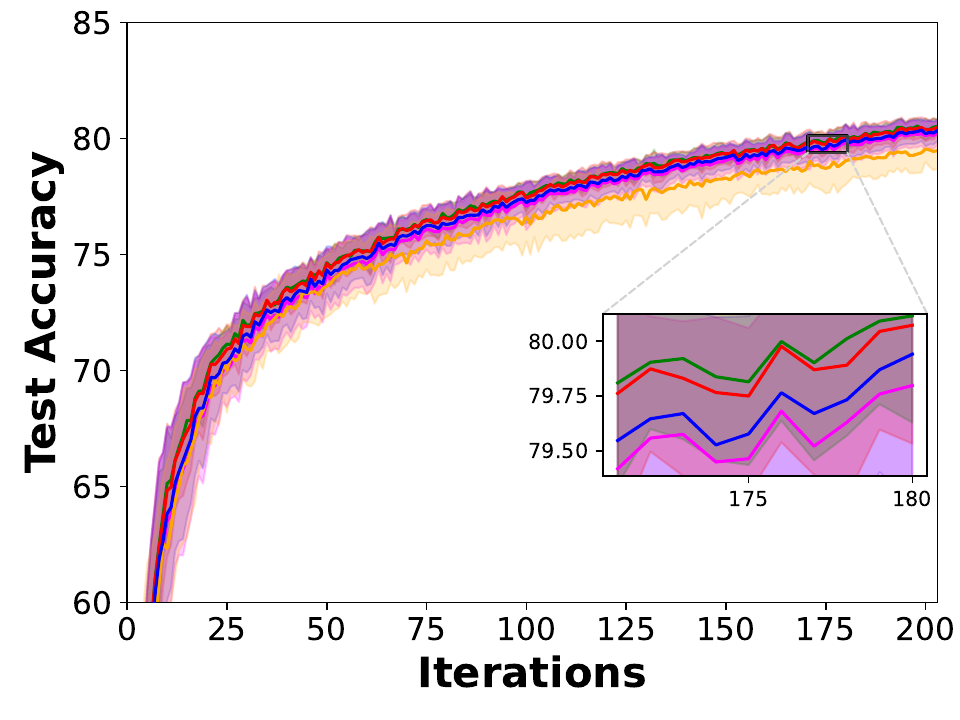}
		\caption{SUN-DSBO-GT}
	\end{subfigure}
	
	\caption{
		Data hyper-cleaning is performed with varying values of $\rho$, which quantifies the connectivity of the communication network. A smaller $\rho$ indicates stronger connectivity.
	}
	\label{dctopo}
\end{figure*}


\section{Conclusion}\label{conclusion}
This paper introduces SUN-DSBO, a flexible framework for nonconvex DSBO that supports various decentralized strategies and requires only first-order stochastic gradient oracles. Convergence analysis and experimental results for two specific instances of SUN-DSBO demonstrate its effectiveness. While these results highlight the potential of the proposed framework, several issues remain unexplored, which may represent promising directions for future research.


\textit{Unified and Improved Convergence Analysis.}
The current analysis is limited to the gradient tracking (GT) technique and does not explore the potential advantages of mixed strategies. 
Thus, developing a more unified convergence analysis for additional variants of SUN-DSBO, similar to the approach used in \citet{zhu2024sparkle} for the SPARKLE algorithm, represents an interesting direction for future work. 
Moreover, since the lower-level objective in DSBO may be nonconvex, this prevents directly applying the improved convergence analysis from single-level decentralized stochastic algorithms (e.g., \citep{NEURIPS2021_5f25fbe1}) to SUN-DSBO. It would also be interesting in future work to extend the rich theoretical results from single-level settings to SUN-DSBO, potentially improving its theoretical guarantees.
\newpage
\section*{Reproducibility statement}
To ensure reproducibility of our work, we provide all necessary resources. The source code for the SUN-DSBO algorithms is available at the following link: \url{https://anonymous.4open.science/r/SUN-DSBO-1616}. The datasets used are publicly available, and the data preprocessing steps are described in Section \ref{experiments} in the main text and Appendix \ref{details}. Detailed experimental configurations, hyperparameters, and training procedures are also included. The proof sketch and detailed proofs for SUN-DSBO-SE and SUN-DSBO-GT are provided in Appendices \ref{pfsketch}, \ref{PF_SE}, and \ref{PF_GT}.

\bibliography{ref_v2.bib}
\bibliographystyle{iclr2026_conference}

\newpage

\appendix
\section*{\centering \Large Appendix}
\addcontentsline{toc}{section}{Appendix} 

\begin{itemize}
	\item \textbf{Expanded related work in Section \ref{relatedworkapp}}.
	\vspace{4pt}	
	\item \textbf{ Additional experiments in Section \ref{addexperiments}}.
	\vspace{4pt} 
	\begin{itemize}
		\item \textbf{ Data hyper-cleaning in Subsection \ref{add:dcht}}.
		\vspace{4pt}
		\item \textbf{ Hyper-representation in Subsection \ref{add:hrht}}.
		\vspace{4pt}
		\item \textbf{ Ablation study in Subsection \ref{add:as}}.
        \vspace{4pt}
         \item \textbf{ Alternative tabular presentation in Subsection \ref{atp}.}
	\end{itemize}
	\vspace{4pt}
	
	\item \textbf{Details of experiments in Section \ref{details}}.
	\vspace{4pt}
	\begin{itemize}
        \item \textbf{Hyperparameter tuning strategy in Subsection \ref{details:hptune}}.
		\vspace{4pt}
        \item \textbf{ Data hyper-cleaning in Subsection \ref{details:dc}}.
		\vspace{4pt}
		\item \textbf{ Hyper-representation in Subsection \ref{details:hr}}.
		\vspace{4pt}
		\item \textbf{ Ablation study in Subsection \ref{details:as}}.
	\end{itemize}
	\vspace{4pt}
	\item \textbf{Additional theoretical results  in Section~\ref{atr}}.
    \vspace{4pt}
    \begin{itemize}
     \item \textbf{The relationship between problem (\ref{DSBO}) and problem (\ref{Upsct-0}) in Subsection \ref{DSBOandup}}.
         \vspace{4pt}
      \item \textbf{Additional convergence rate results in Subsection \ref{atracr}}.
    \end{itemize}
	\vspace{4pt}
	\item \textbf{Proof sketch in Section \ref{pfsketch}}.
	\vspace{4pt}
	\begin{itemize}
	    \item \textbf{Main challenges in Subsection \ref{mainchall}.}
        \vspace{4pt}
        \item \textbf{Key steps in Subsection \ref{keystep}.}
	\end{itemize}
    \vspace{4pt}
	\item \textbf{ Proofs of SUN-DSBO-SE in Section \ref{PF_SE}}.
	\vspace{4pt}
	\begin{itemize}
		\item \textbf{ Notations in Subsection \ref{notationse}}.
		\vspace{4pt}
		\item \textbf{ Preliminary lemmas in Subsection \ref{prelemmase}}.
		\vspace{4pt}
		\item \textbf{ Convergence analysis in Subsection \ref{caappse}}.
	\end{itemize}
	\vspace{4pt}
	\item \textbf{ Proofs of SUN-DSBO-GT in Section \ref{PF_GT}}.
	\vspace{4pt}
	\begin{itemize}
		\item \textbf{ Notations in Subsection \ref{notationgt}}.
		\vspace{4pt}
		\item \textbf{ Preliminary lemmas in Subsection \ref{prelemmagt}}.
		\vspace{4pt}
		\item \textbf{ Convergence analysis in Subsection \ref{caappgt}}.
	\end{itemize}
    \vspace{4pt}
	\item \textbf{More details of SUN-DSBO in Section \ref{moreofSUN-DSBO}}.
\end{itemize}
\newpage

\section{Expanded related work}\label{relatedworkapp}

\subsection{Decentralized (stochastic) optimization}  \label{DOwork}

The study of decentralized optimization has its roots in early research on parallel and distributed computation~\citep{tsitsiklis1984problems, bertsekas1989parallel}, which primarily focused on simple averaging-based algorithms without accounting for data or system heterogeneity. With the advancement of large-scale and distributed learning, a variety of methods have been developed to address decentralized consensus optimization problems~\citep{nedic2009distributed, nedic2014distributed, sayed2014adaptation, lian2017can}. However, these approaches generally lack explicit mechanisms to handle heterogeneous data.

To address the challenges posed by heterogeneous data distributions, a range of algorithms have been developed that incorporate explicit correction mechanisms. Methods such as EXTRA~\citep{shi2015extra}, Exact Diffusion (ED)~\citep{yuan2018exact}, and $D^2$~\citep{tang2018d} utilize correction techniques based on two consecutive iterates to estimate the update direction. These algorithms, along with their variants and alternative interpretations~\citep{mokhtari2016dsa, mokhtari2016dqm, yuan2018exact, li2019decentralized, yuan2020influence}, are designed to reduce steady-state error and enhance convergence behavior under non-i.i.d. data conditions. Concurrently, gradient tracking (GT)-based methods~\citep{nedic2017achieving, nedic2017geometrically, pu2021distributed, song2024optimal} have been proposed to mitigate the discrepancy between local gradients and the global optimization direction. These approaches maintain auxiliary variables to track the global gradient and can be integrated into decentralized updates to improve stability and accuracy in heterogeneous settings. 

Recent research has also incorporated practical system considerations, including communication efficiency~\citep{liu2025two}, robustness to inexact or noisy updates~\citep{shah2025stochastic}, and privacy preservation in decentralized learning~\citep{cheng2024privacy}. These developments have substantially expanded the applicability of decentralized stochastic optimization in real-world machine learning systems.

\subsection{Decentralized (stochastic) bilevel optimization}\label{DBOwork}

A comparison between representative decentralized stochastic bilevel optimization (DSBO) methods and our approach is provided in Table~\ref{tablecompar}. Most existing DSBO algorithms assume that the lower-level problem is strongly convex, which ensures the uniqueness of the solution and facilitates the computation of the hypergradient~\citep{kearns1990computational}. A key challenge in this setting is computing or approximating the inverse Hessian matrix associated with the global lower-level objective. 

Several methods address this challenge by introducing surrogate techniques to approximate the inverse Hessian. For example, \citet{yang2022decentralized} employ a Neumann series expansion, while \citet{chen2023decentralized} and \citet{chen2024decentralized} introduce the JHIP and HIGP oracles, respectively. Methods such as SLAM and DAGM enforce solution consensus among agents by penalizing deviations from a shared variable. Most of these approaches adopt a double-loop structure, which can increase both computational burden and communication overhead. To overcome these limitations, recent works have proposed single-loop alternatives aimed at improving computational and communication efficiency. For instance, \citet{dong2023single}, \citet{kong2024decentralized}, and \citet{zhu2024sparkle} adopt the SOBA framework~\citep{dagreou2022framework}, where the inverse-Hessian-related linear system is reformulated as a quadratic optimization problem. Although these methods eliminate the inner loop, they still depend on second-order information. As noted in~\citet{dagreou2024compute}, computing Hessian-vector products can be significantly more expensive—in both time and memory—than gradient computations, particularly in deep learning applications. To eliminate second-order computations entirely, \citet{wang2024fully} propose a fully first-order DSBO method. However, their approach still assumes strong convexity of the lower-level objective to ensure the well-posedness of the hypergradient. 

In a different direction, \citet{qin2025duet} propose an algorithm that avoids both strong convexity assumptions and the use of second-order information. Their framework considers personalized bilevel optimization, in which each agent maintains an independent lower-level objective. Related work by \citet{qiu2023diamond} also investigates this setting, but under the assumption of strong convexity. A representative formulation of personalized DSBO is given by
\begin{align*}
	\min_{x_i \in \mathbb{R}^{d_x},\, y_i \in \mathcal{S}^*(x_i)} \frac{1}{n} \sum_{i=1}^{n} f_i(x_i, y_i)
	~~ \text{s.t.} ~~
	y_i \in \mathcal{S}^*(x_i) := \arg\min_{y_i \in \mathbb{R}^{d_y}} g_i(x_i, y_i),~
	x_i = x_{i'}, \text{ if } w_{ii'} \ne 0,
\end{align*}
where $f_i$ and $g_i$ denote the upper- and lower-level objectives of agent $i$, and $W = [w_{ii'}]$ is the communication weight matrix. 

\begin{table}[ht]
	\caption{
		Comparison of the proposed algorithms with closely related DSBO methods.
		``Convexity'' refers to the convexity of the lower-level objective: S.C. (strongly convex), N.C. (non-convex). 
``Heterogeneity'' refers to the gradient heterogeneity assumption for the upper/lower-level objectives: 
    BG (bounded gradient), 
    BG* (bounded gradient for all $(x, y^*(x))$), 
    BGD (bounded gradient dissimilarity), and Free (no assumption required). 
    $\text{Free}^{-}$ indicates the reliance on certain heterogeneity conditions, but distinct from the BG, BG*, and BGD assumptions.
		``1-order'' and ``2-order'' represent the Lipschitz continuity of the first- and second-order derivatives of the objectives. 
		We use ``Grad'', ``HVP'', and ``JVP'' to denote oracles of gradients, Hessian-vector products, Jacobian-vector products, respectively. Note that here we only compare the corresponding \textit{conditions}, not the convergence rates and complexity, as we use different stationarity measures, making direct comparison not feasible.
	}
	\centering
	\setlength{\tabcolsep}{4pt}
	\begin{tabular}{cccccc}
		\toprule
		Method &  Convexity  & Heterogeneity & Smoothness & Oracles  \\
		\toprule
		DSBO~\citep{chen2024decentralized}     & S.C.  & BG/$\text{Free}^{-}$ & 1,2-order  & Grad, HVP, JVP  \\
		Gossip DSBO~\citep{yang2022decentralized} & S.C. & BG/$\text{Free}^{-}$  & 1,2-order & Grad, HVP, JVP \\
		SLAM~\citep{lu2022stochastic}          & S.C.  & BG/$\text{Free}^{-}$  & 1,2-order  & Grad, HVP, JVP \\
		MA-DSBO~\citep{chen2023decentralized}  & S.C.  & BG/BGD & 1,2-order  & Grad, HVP, JVP  \\
		VRDBO~\citep{gao2023convergence}       & S.C.  & BG/Free & 1,2-order & Grad, HVP, JVP  \\
		LDP-DSBO~\citep{chenlocally}           & S.C.  & BG/Free & 1,2-order  & Grad, HVP, JVP   \\
		D-SOBA~\citep{kong2024decentralized}   & S.C.  & BG*/BGD & 1,2-order & Grad, HVP, JVP  \\
		SPARKLE~\citep{zhu2024sparkle}         & S.C.  & BG*/Free & 1,2-order & Grad, HVP, JVP  \\
		DSGDA-GT~\citep{wang2024fully}         & S.C.  & BG/Free & 1,2-order & Grad    \\
		\rowcolor{mypink}
		\textbf{SUN-DSBO-SE (ours)}  & N.C. & BGD/BGD & 1-order  & Grad  \\
		\rowcolor{mypink}
		\textbf{SUN-DSBO-GT (ours)}  & N.C. & Free/Free & 1-order  & Grad  \\
		\bottomrule
	\end{tabular}
	\footnotesize
    \begin{flushleft}
\noindent\textbf{Note:} For methods using $\text{Free}^{-}$ assumption:
\end{flushleft}
	\begin{itemize}
		\item \citet{chen2024decentralized}: Assumes data on each local device are i.i.d.
		\item \citet{yang2022decentralized}: Assumes $\nabla_{y} g_i(x,y;\xi_{g,i})$ has bounded second-order moments, i.e., $\mathbb{E}[\|\nabla_{y} g_i(x,y;\xi_{g,i})\|^{2}] \leq C_{g}$ for some constant $C_{g}$.
		\item \citet{lu2022stochastic}: Assumes there exists $L_{g}$ such that 
		$\left\|\nabla_{22}^2f_i(x_i,y_i)-\frac{1}{m}\sum_{i=1}^m\nabla_{22}^2f_i(x_i,y_i^{\prime})\right\|\leq L\|y_i-y_i^{\prime}\|$, $\forall x_i,y_i,y_i^{\prime}$.
	\end{itemize}
	\label{tablecompar}
\end{table}

\subsection{Moreau envelope-based methods}\label{MEwork}  
In the single-agent setting, Moreau envelope-based reformulations have been extensively explored to address nonconvex lower-level problems. For example, \citet{gao2023moreau} proposed a double-loop algorithm based on a weakly convex reformulation using the Moreau envelope, while \citet{liu2024moreau} developed a single-loop gradient-based method tailored to unconstrained problems. Extensions to constrained lower-level problems have been investigated in~\citet{yaoconstrained,yao2024overcoming}, where proximal Lagrangian value function formulations are introduced. However, these approaches are limited to deterministic and centralized optimization settings.


\section{Additional experiments}\label{addexperiments}

\subsection{Data hyper-cleaning}\label{add:dcht}

\begin{figure*}[t]
	\centering
	\begin{subfigure}[b]{0.99\textwidth} 
		\includegraphics[width=\textwidth]{dc_cir/legend-dc-b.pdf}
	\end{subfigure}\\
	\centering
	\begin{subfigure}[b]{0.325\textwidth} 
		\includegraphics[width=\textwidth]{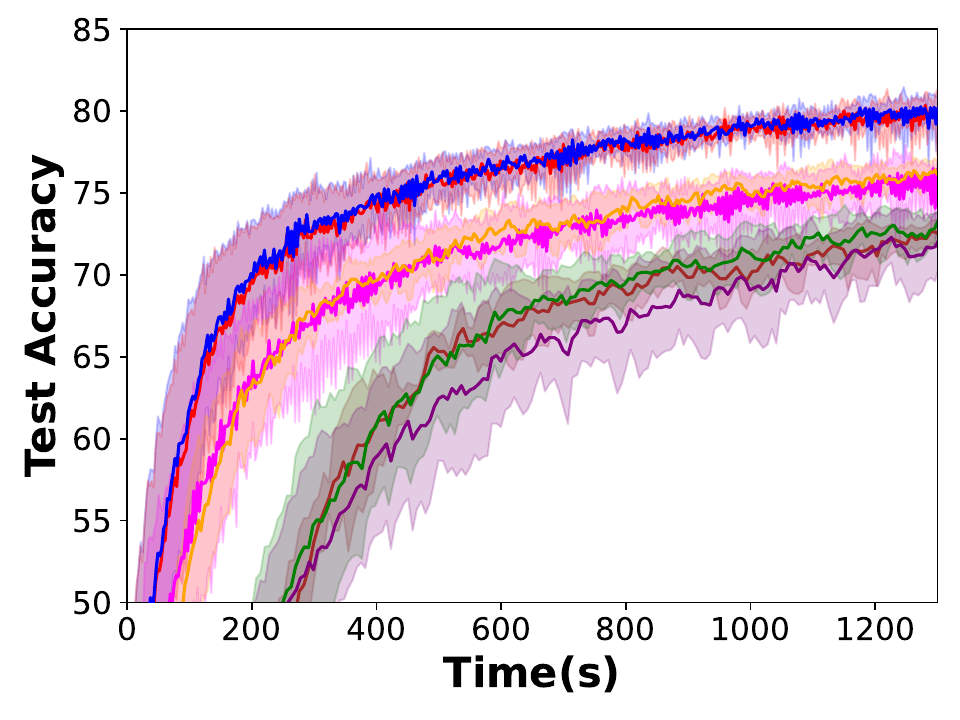}
		\caption{$\mathcal{H}=0.1$}
	\end{subfigure}
	\begin{subfigure}[b]{0.325\textwidth}
		\includegraphics[width=\textwidth]{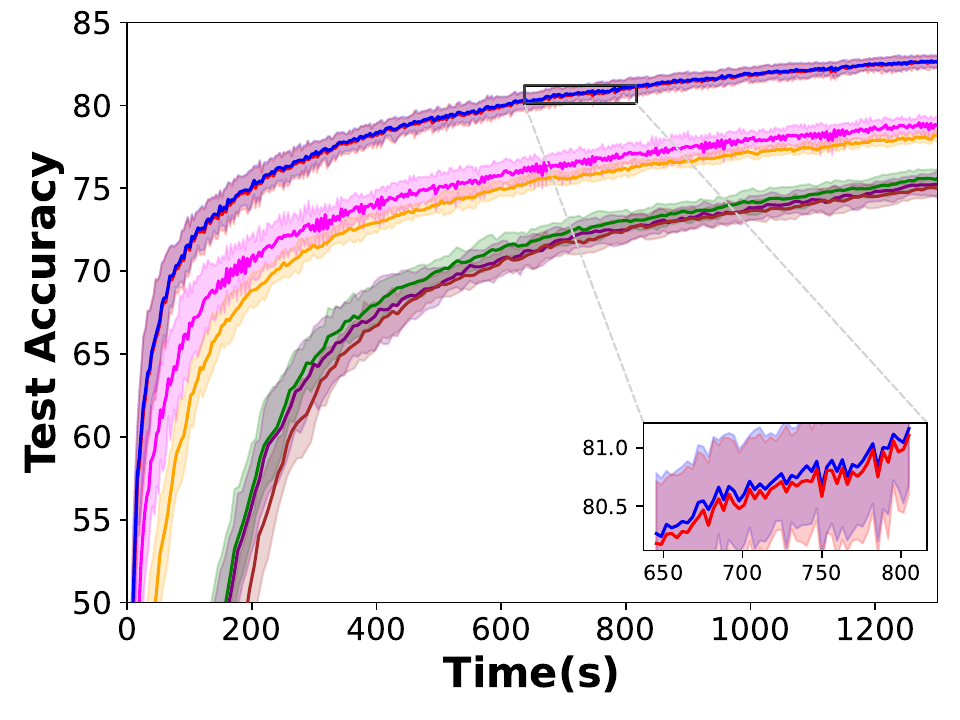}
		\caption{$\mathcal{H}=0.5$}
	\end{subfigure}
	\begin{subfigure}[b]{0.325\textwidth}
		\includegraphics[width=\textwidth]{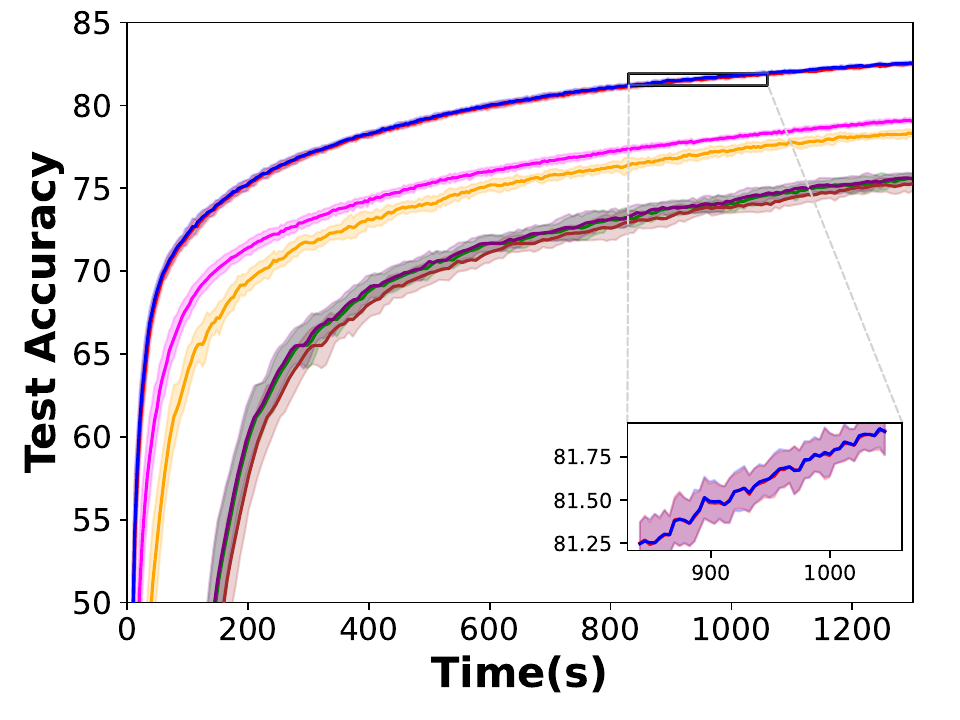}
		\caption{$\mathcal{H}=1$}
	\end{subfigure}
	\caption{ Comparison of algorithms over runtime for data hyper-cleaning under different values of the heterogeneity parameter $\mathcal{H}$.}
	\label{dcht}
\end{figure*}

In Figure~\ref{dcht}, we compare the convergence accuracy of each algorithm in terms of runtime under different heterogeneity levels~\(\mathcal{H}\). We observe that SUN-DSBO-GT consistently achieves the fastest convergence, followed by SUN-DSBO-SE across all heterogeneity settings. Comparing Figures~\ref{dcd} and~\ref{dcht}, we note that on the Fashion-MNIST dataset, fully first-order algorithms exhibit lower runtime costs than AID-based approaches.
\begin{figure*}[t]
	\centering
	\begin{subfigure}[b]{0.99\textwidth} 
		\includegraphics[width=\textwidth]{dc_cir/legend-dc-b.pdf}
	\end{subfigure}\\
	\begin{subfigure}[b]{0.325\textwidth} 
		\includegraphics[width=\textwidth]{dc_cir/dec_cir_d0.1p0.001t.pdf}
		\caption{\texttt{cr} = 0.3}
	\end{subfigure}
	\begin{subfigure}[b]{0.325\textwidth}
		\includegraphics[width=\textwidth]{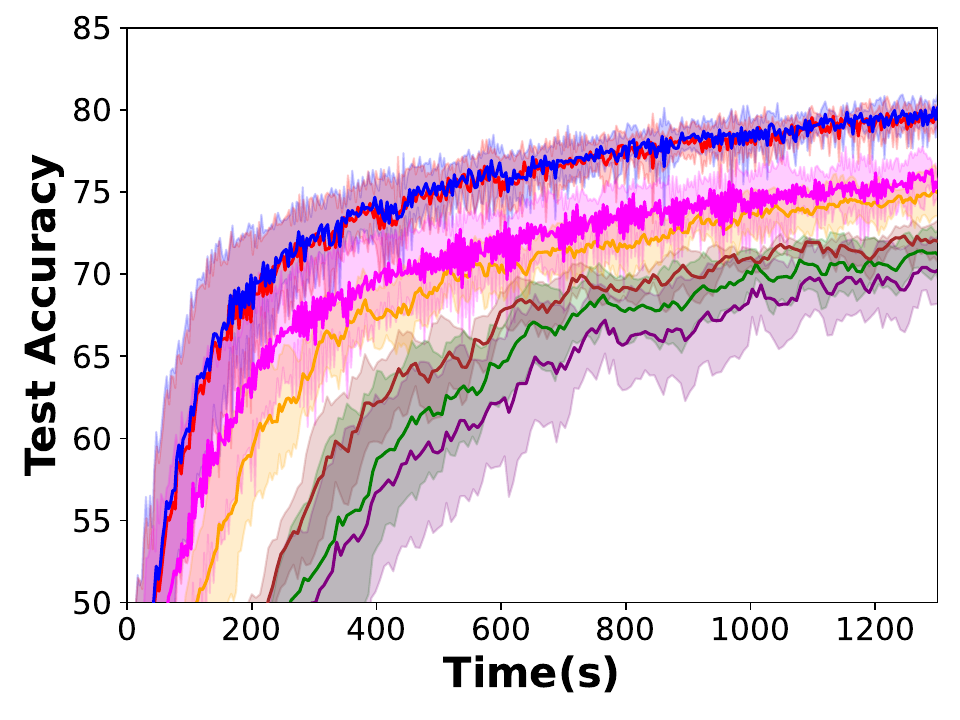}
		\caption{\texttt{cr} = 0.45}
	\end{subfigure}
	\begin{subfigure}[b]{0.325\textwidth}
		\includegraphics[width=\textwidth]{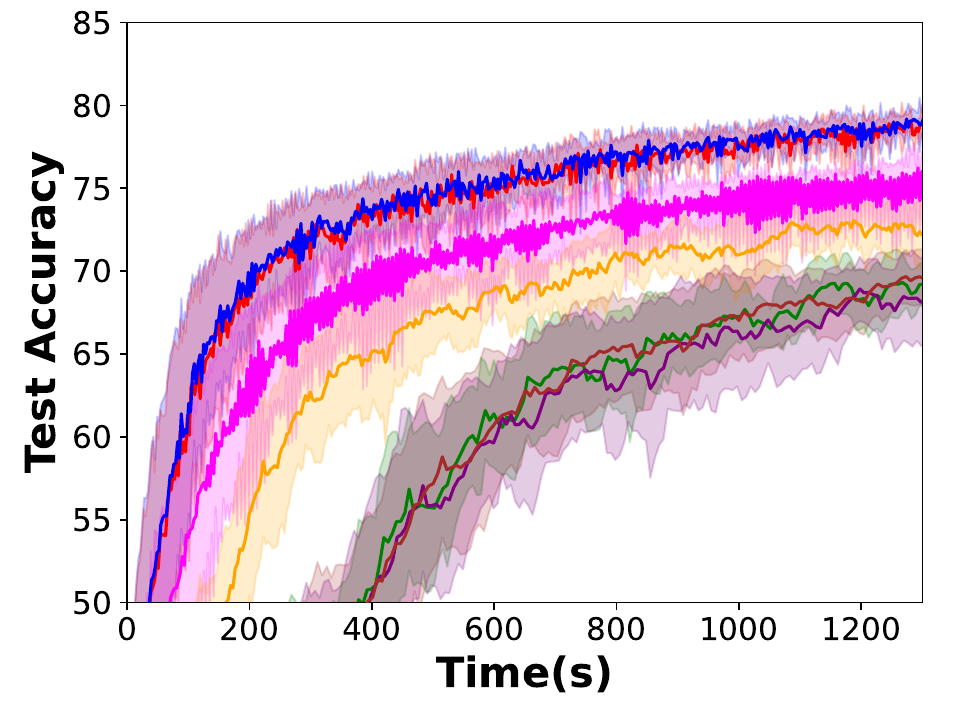}
		\caption{\texttt{cr} = 0.6}
	\end{subfigure}
	\caption{ Comparison of algorithms over runtime for data hyper-cleaning under different corruption rate of \(\texttt{cr}\).}
	\label{dccrt}
\end{figure*}

Figure~\ref{dccrt} further evaluates the performance over runtime under varying corruption rates \texttt{cr}. SUN-DSBO-GT maintains the best efficiency, followed by SUN-DSBO-SE, both outperforming the baselines. Moreover, as shown in Figures~\ref{dccrt}(a), (b), and (c), higher levels of data heterogeneity consistently lead to weaker performance over runtime, highlighting the challenge of handling both corruption and heterogeneity simultaneously.

\begin{figure*}[h]
	\centering
	\begin{subfigure}[b]{0.99\textwidth} 
		\includegraphics[width=\textwidth]{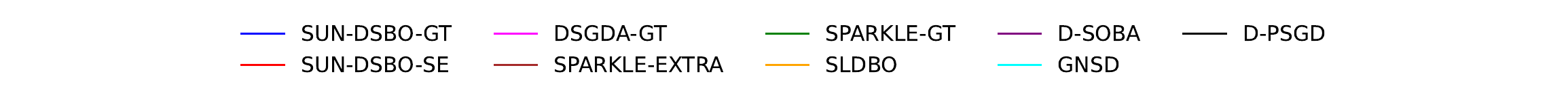}
	\end{subfigure}\\
	\centering
	\begin{subfigure}[b]{0.325\textwidth}
		\includegraphics[width=\textwidth]{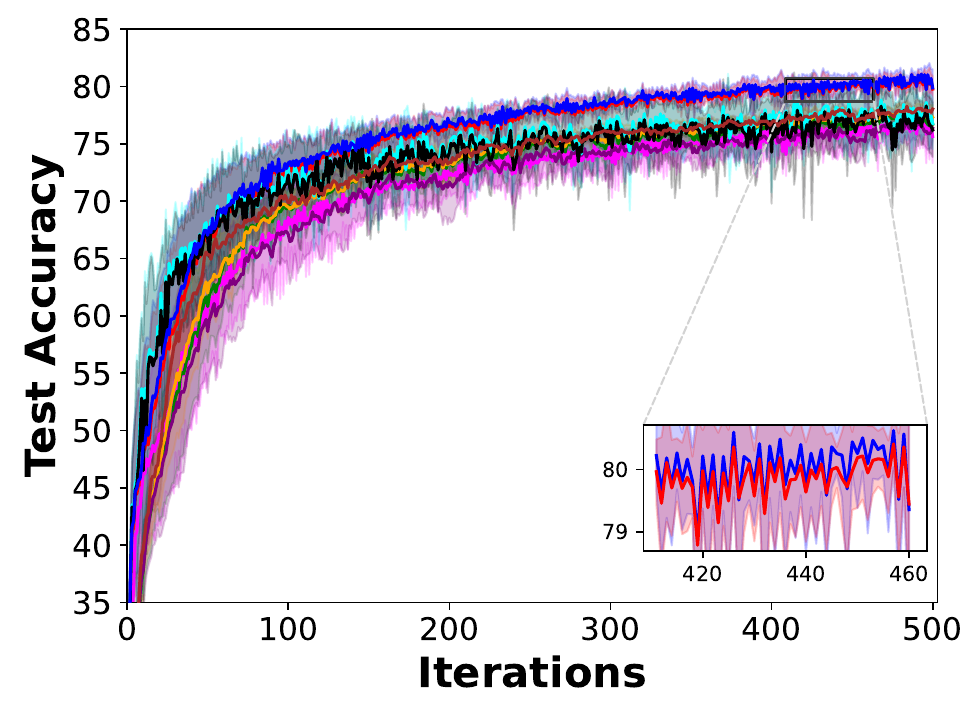}
		\caption{$\texttt{cr}=0.3$}
	\end{subfigure}
	\begin{subfigure}[b]{0.325\textwidth}
		\includegraphics[width=\textwidth]{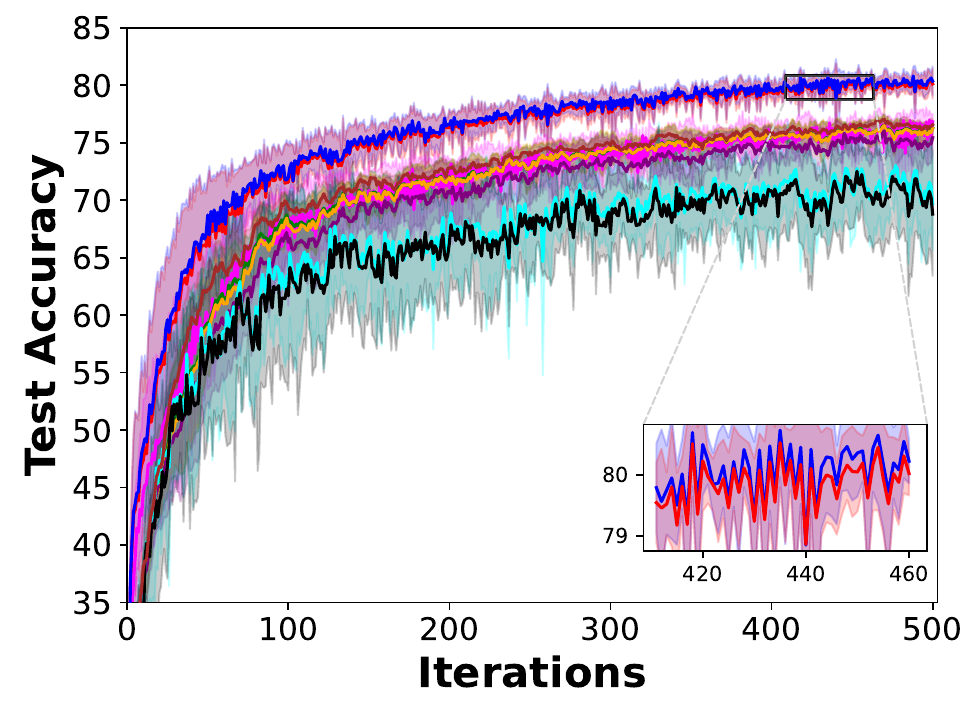}
		\caption{$\texttt{cr}=0.45$}
	\end{subfigure}
	\begin{subfigure}[b]{0.325\textwidth}
		\includegraphics[width=\textwidth]{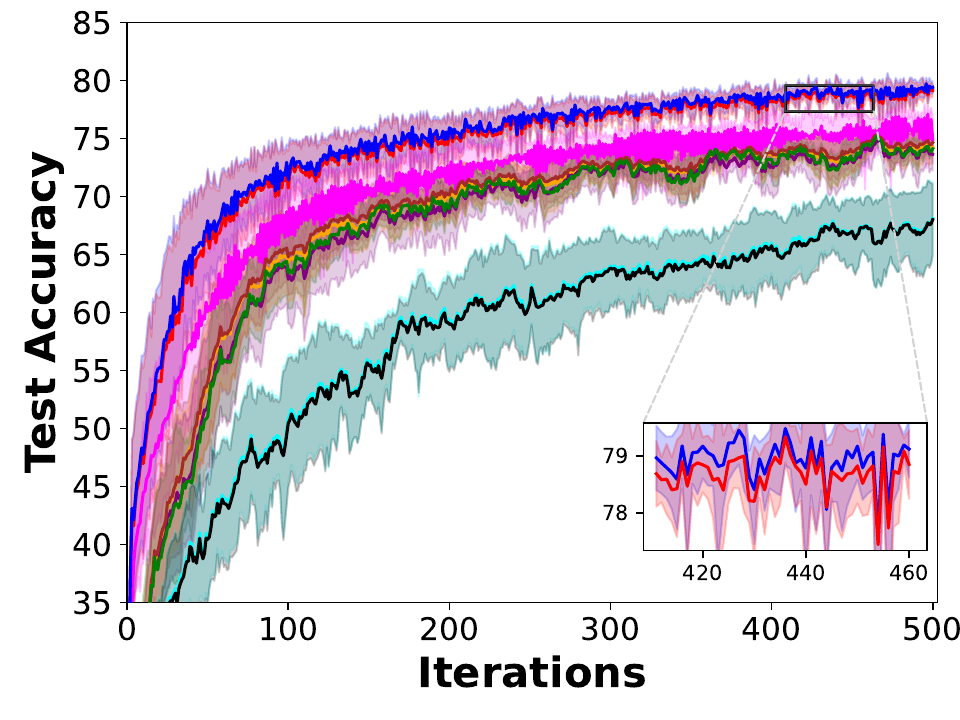}
		\caption{$\texttt{cr}=0.6$}
	\end{subfigure}
	\caption{Performance of data hyper-cleaning with single-level algorithms under corruption rates $\texttt{cr} = 0.3$, $0.45$, and $0.6$.}
	
	\label{dcsl}
\end{figure*}
In Figure~\ref{dcsl}, we additionally compare with two single-level algorithms, D-PSGD~\citep{lian2017can} and GNSD~\citep{lu2019gnsd}, under different corruption rates (\texttt{cr}) on the Fashion-MNIST dataset. The training set here is formed by merging the corrupted training data with the validation set. From Figure~\ref{dcsl}(a), we observe that when the label corruption rate is relatively low, both D-PSGD and GNSD maintain reasonable performance, although still inferior to SUN-DSBO-SE and SUN-DSBO-GT. However, as the corruption rate increases in Figures~\ref{dcsl}(b)--(c), the performance of D-PSGD and GNSD weakens, highlighting the necessity of employing bilevel optimization to enhance robustness under severe label noise.

\subsection{Hyper-representation}\label{add:hrht}
\begin{figure*}[h]
	\centering
	\begin{subfigure}[b]{0.325\textwidth}
		\includegraphics[width=\textwidth]{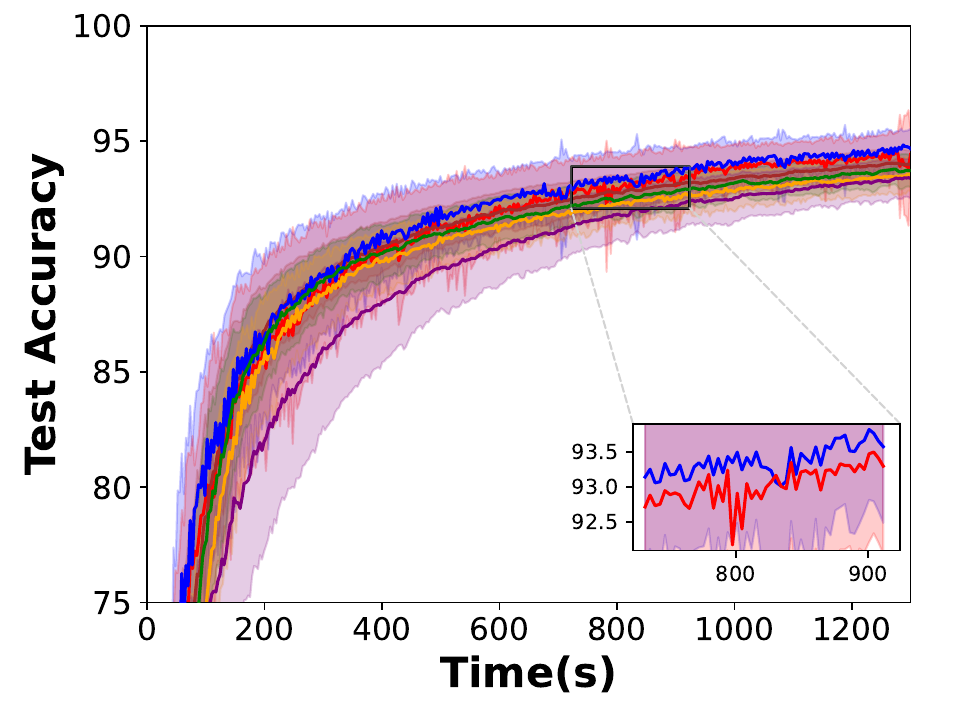}
		\caption{$\mathcal{H}=0.1$}
	\end{subfigure}
	\begin{subfigure}[b]{0.325\textwidth}
		\includegraphics[width=\textwidth]{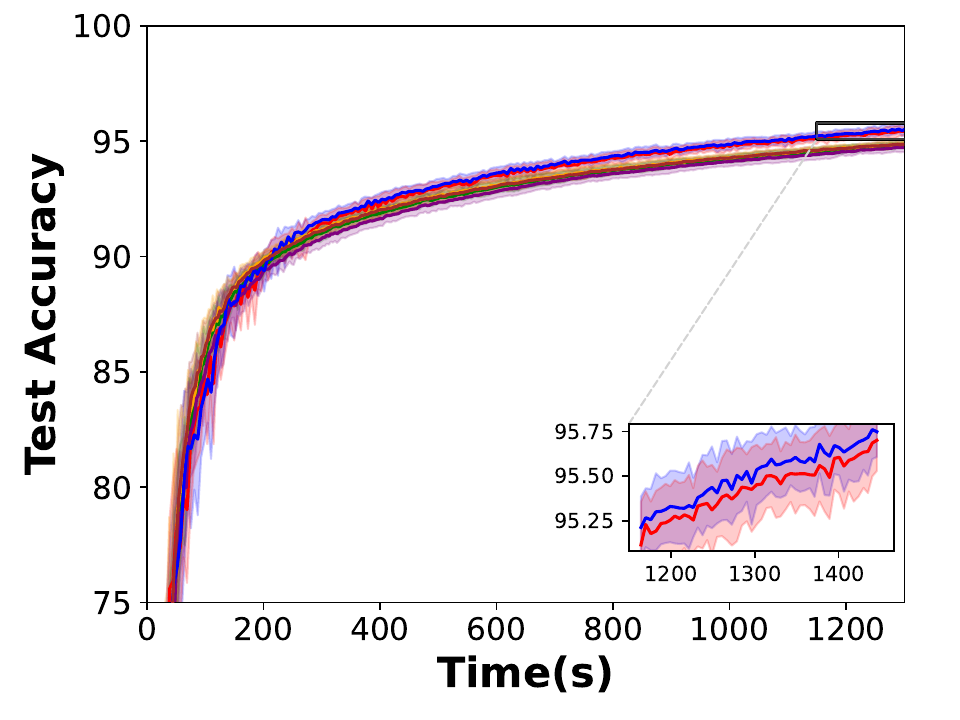}
		\caption{$\mathcal{H}=0.5$}
	\end{subfigure}
	\begin{subfigure}[b]{0.325\textwidth} 
		\includegraphics[width=\textwidth]{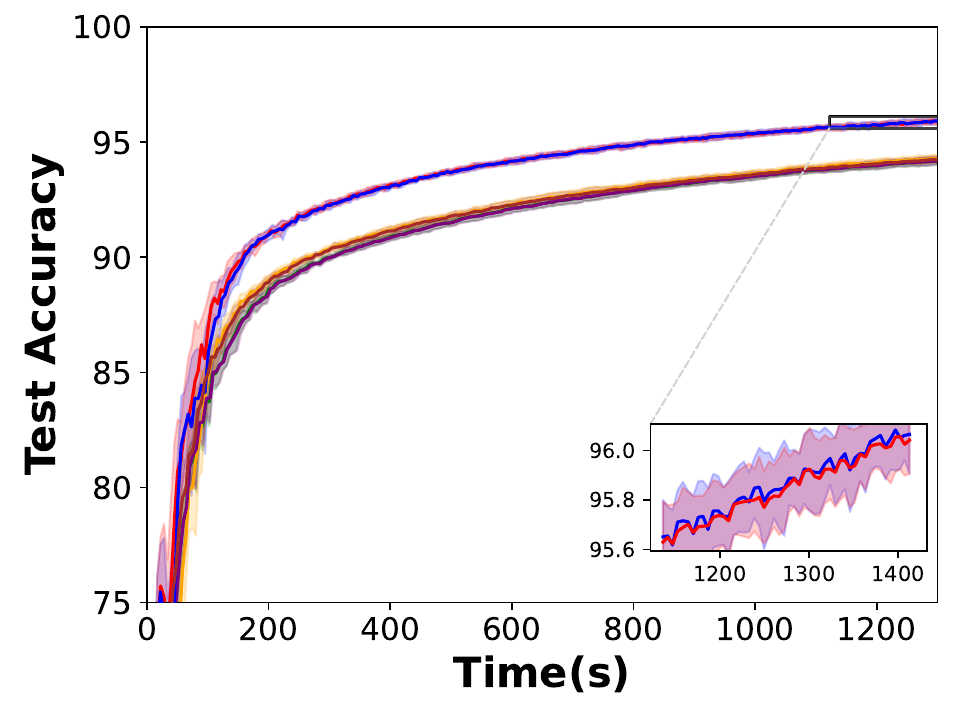}
		\caption{$\mathcal{H}=1$}
	\end{subfigure}
	\centering
	\begin{subfigure}[b]{0.325\textwidth}
		\includegraphics[width=\textwidth]{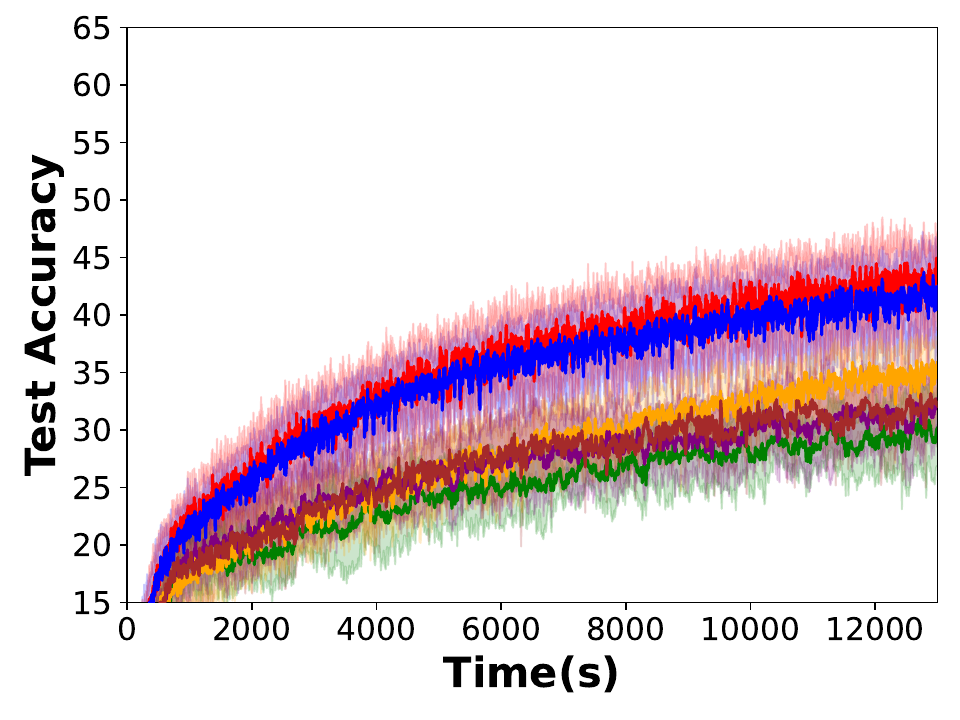}
		\caption{$\mathcal{H}=0.1$}
	\end{subfigure}
	\begin{subfigure}[b]{0.325\textwidth}
		\includegraphics[width=\textwidth]{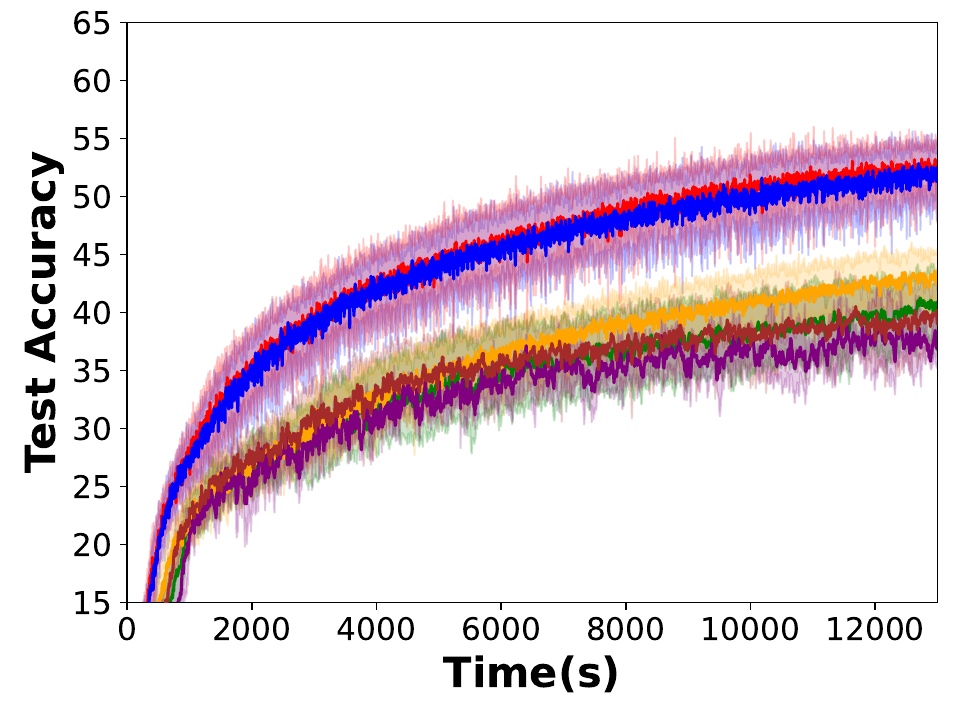}
		\caption{$\mathcal{H}=0.5$}
	\end{subfigure}
	\begin{subfigure}[b]{0.325\textwidth} 
		\includegraphics[width=\textwidth]{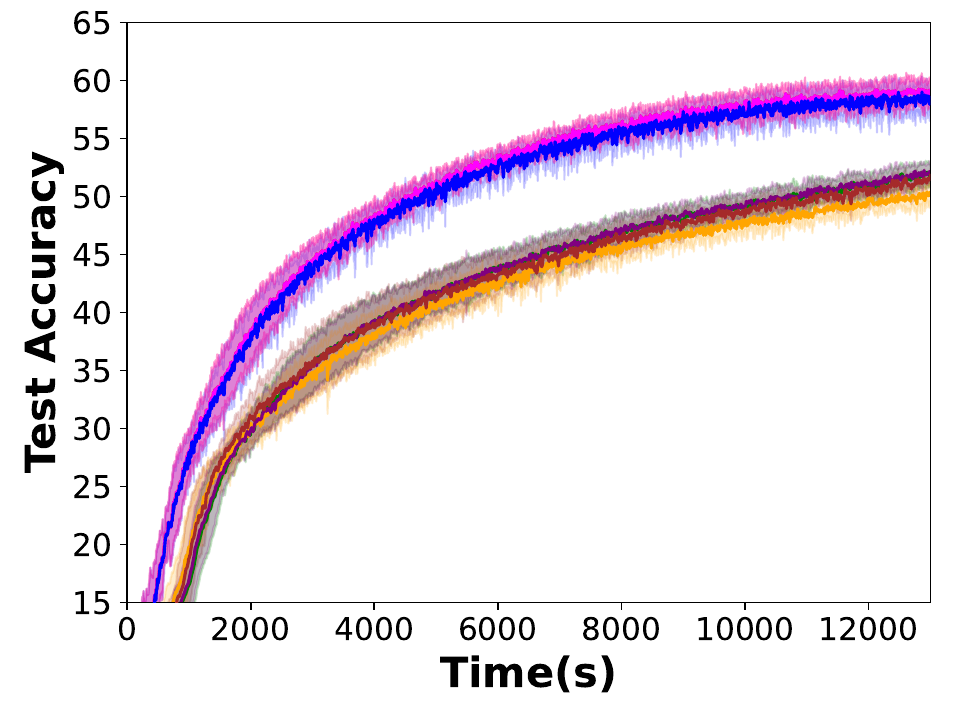}
		\caption{$\mathcal{H}=1$}
	\end{subfigure}
	\caption{ Comparison of algorithms over runtime for hyper-representation with different  heterogeneity parameter \(\mathcal{H}\).}
	\label{hrht}
\end{figure*}
In Figure~\ref{hrht}, we compare the performance over runtime of all algorithms in terms of the accuracy achieved on the hyper-representation learning task, using a 2-layer MLP on MNIST and a 7-layer CNN on CIFAR-10, as described in Section~\ref{experiments}. 

First, Figures~\ref{hrht}(a)--(c) present the results on MNIST using a 2-layer MLP. Across all levels of heterogeneity~\(\mathcal{H}\), SUN-DSBO-GT demonstrates the most efficient convergence in terms of runtime. Interestingly, a comparison between Figures~\ref{hrr}(a)--(c) and~\ref{hrht}(a)--(c) reveals that, on the MNIST dataset, fully first-order algorithms do not exhibit lower runtime costs than AID-based approaches.

Second, Figures~\ref{hrht}(d)--(f) present the results on CIFAR-10 using a 7-layer CNN. Once again, SUN-DSBO-GT demonstrates the best performance over runtime across all heterogeneity levels. By comparing Figures~\ref{hrr}(d)--(f) and~\ref{hrht}(d)--(f), we observe that in this larger and more complex task, fully first-order algorithms consistently incur lower runtime costs than AID-based counterparts, which aligns with the expected computational overheads associated with deeper neural architectures.

\subsection{Ablation studies}\label{add:as}

\paragraph{Comparison of SUN-DSBO-SE and SUN-DSBO-GT.} We compare our proposed SUN-DSBO-SE and SUN-DSBO-GT algorithms on the task of hyperparameter optimization for logistic regression, following the experimental setting in~\citet{chen2023decentralized, wang2024fully}. This task aims to tune the coefficient of a quadratic regularization term in logistic regression models across multiple datasets. Additional implementation and data generation details are provided in Appendix~\ref{details:as}.

First, we consider a bilevel hyperparameter optimization problem for binary classification on synthetic data. The synthetic dataset is generated following the procedure described in~\citet{chen2023decentralized, wang2024fully}, originally proposed in the single-agent setting~\citep{pedregosa2016hyperparameter, grazzi2020iteration}. Additional details regarding the experimental setup are provided in Appendix~\ref{details:as}. From Figure~\ref{hpcirsyn}, we make the following observations. While SUN-DSBO-GT exhibits slightly faster convergence than SUN-DSBO-SE in terms of iteration count in Figure~\ref{hpcirsyn}(c), SUN-DSBO-SE consistently outperforms in terms of both runtime and communication cost (measured by communication blocks\footnote{A ``communication block'' refers to a unit used in the \texttt{tracemalloc} module in Python (see \url{https://docs.python.org/3/library/tracemalloc.html}) to track memory usage. Following~\citet{chen2023decentralized}, we use the number of communicated blocks between agents as a proxy for communication overhead.}), as shown in Figures~\ref{hpcirsyn}(a)--(b).

\begin{figure*}[t]
	\centering
	\begin{subfigure}[b]{0.99\textwidth} 
		\includegraphics[width=\textwidth]{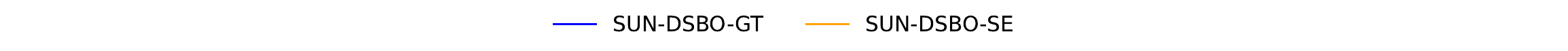}
	\end{subfigure}\\
	\centering
	\begin{subfigure}[b]{0.325\textwidth}
		\includegraphics[width=\textwidth]{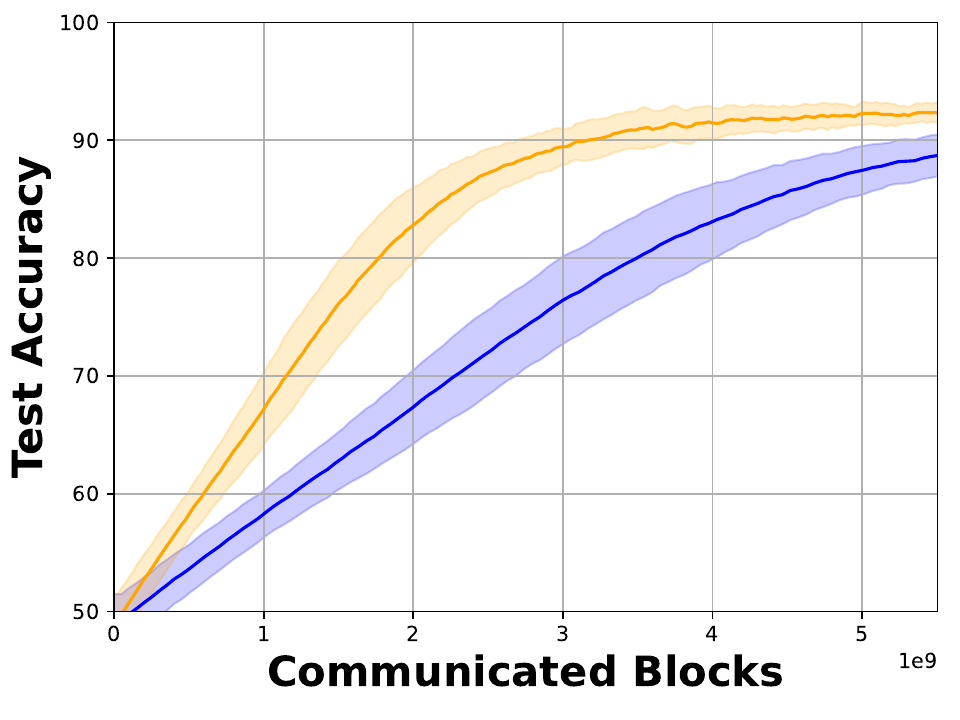}
	\end{subfigure}
	\begin{subfigure}[b]{0.325\textwidth}
		\includegraphics[width=\textwidth]{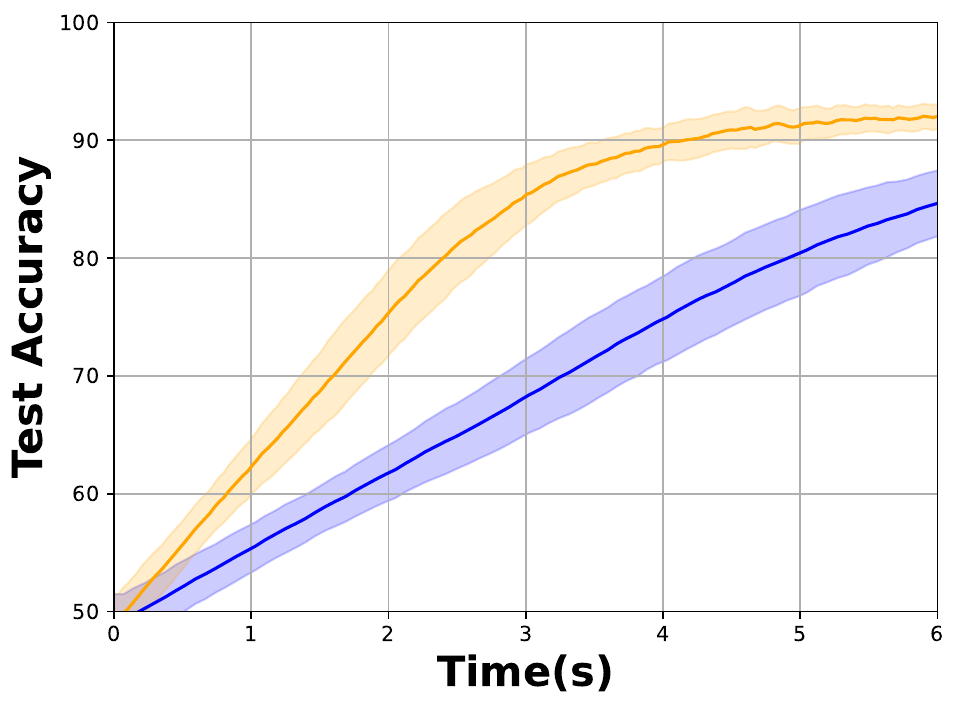}
	\end{subfigure}
	\begin{subfigure}[b]{0.325\textwidth} 
		\includegraphics[width=\textwidth]{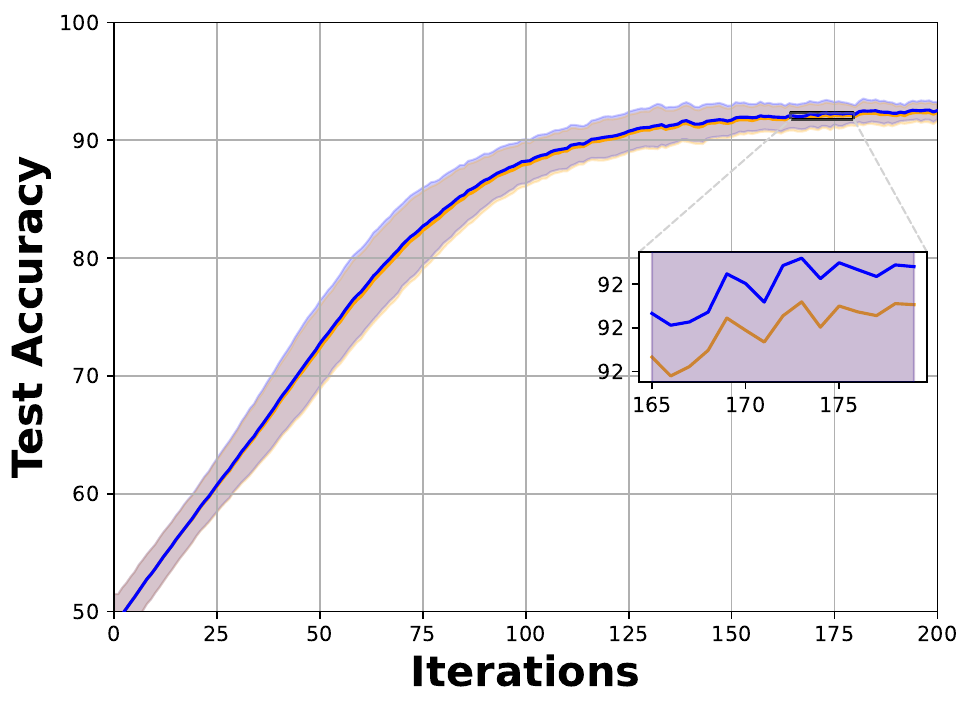}
	\end{subfigure}
	\caption{Test accuracy of $\ell_2$-regularized logistic regression on synthetic data with ring graph.}
	\label{hpcirsyn}
\end{figure*}

Second, compared to the synthetic task, we consider a more realistic and challenging scenario: hyperparameter optimization for a linear classifier on the MNIST dataset, following the setup in~\citet{chen2024decentralized, wang2024fully}. The problem formulation and implementation details are provided in Appendix~\ref{details:as}.
As shown in Figure~\ref{hpcirm}, SUN-DSBO-SE consistently achieves superior performance in terms of runtime and communication efficiency (Figures~\ref{hpcirm}(a)--(b)). Although SUN-DSBO-GT converges slightly faster in terms of iteration count (Figure~\ref{hpcirm}(c)), it incurs higher communication and computational overheads.

\begin{figure*}[h]
	\centering
	\begin{subfigure}[b]{0.99\textwidth} 
		\includegraphics[width=\textwidth]{hps/legend-hp-a.pdf}
	\end{subfigure}\\
	\centering
	\begin{subfigure}[b]{0.32\textwidth}
		\includegraphics[width=\textwidth]{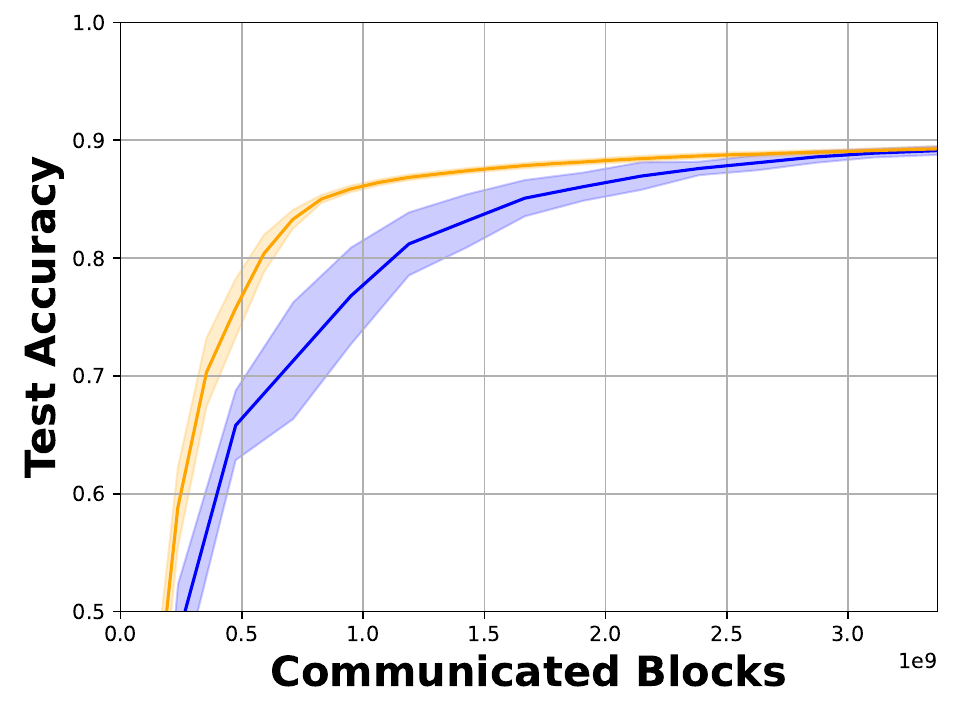}
	\end{subfigure}
	\begin{subfigure}[b]{0.32\textwidth}
		\includegraphics[width=\textwidth]{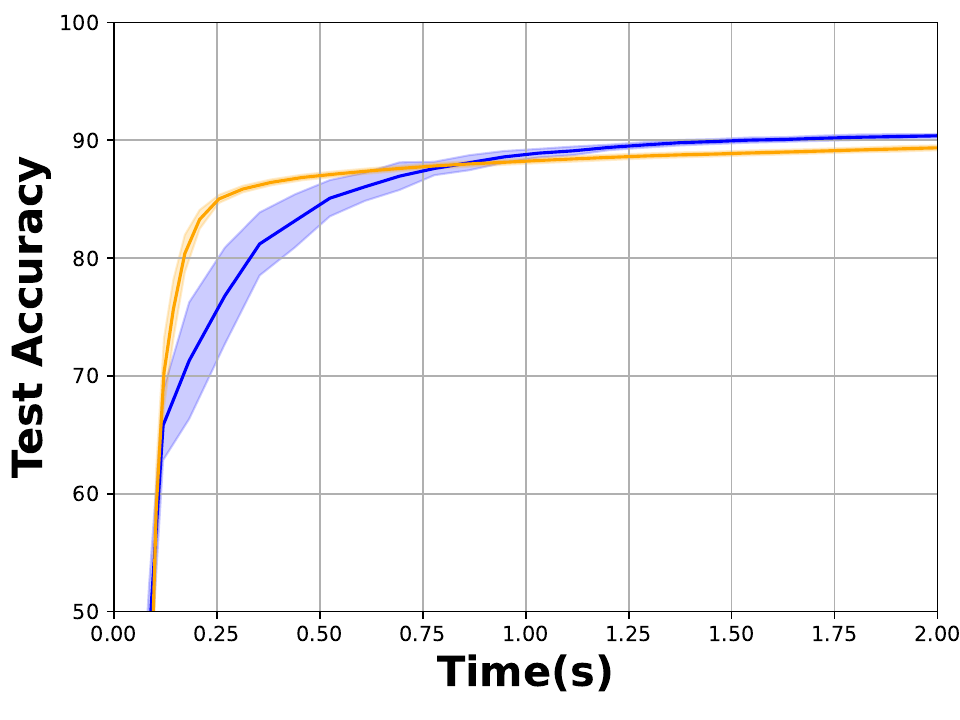}
	\end{subfigure}
	\begin{subfigure}[b]{0.32\textwidth} 
		\includegraphics[width=\textwidth]{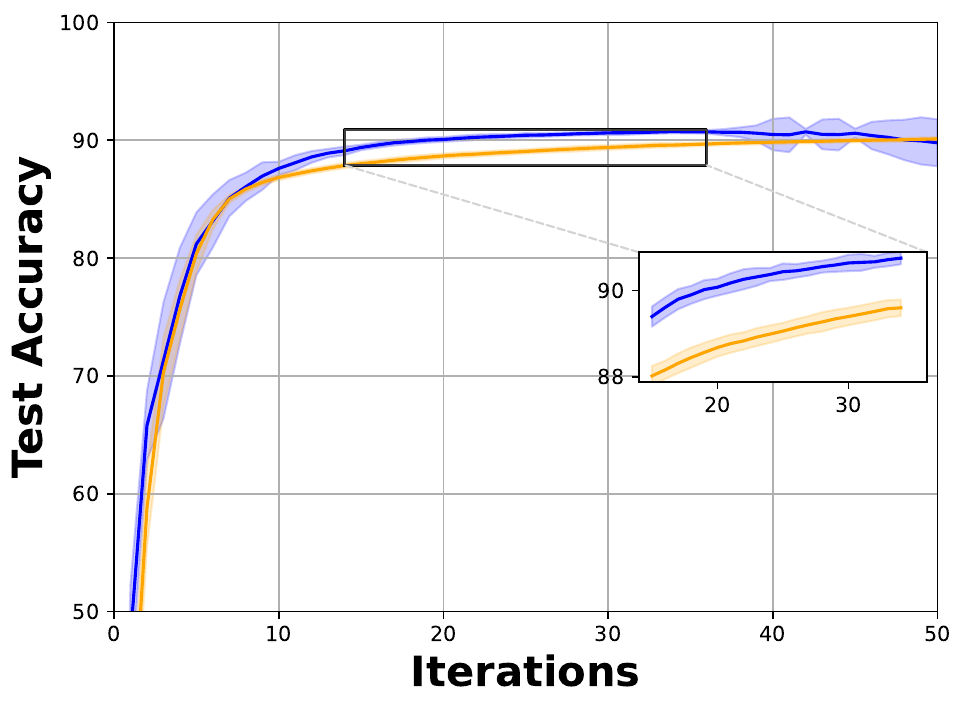}
	\end{subfigure}
	\caption{Test accuracy of $l_2$-regularized logistic regression on MNIST with ring graph.}
	\label{hpcirm}
\end{figure*}

\begin{figure*}[h]
	\centering
	\includegraphics[width=1\linewidth]{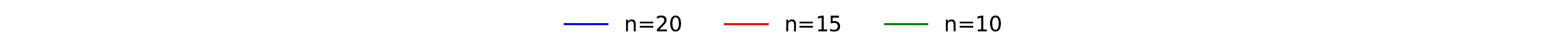}
	\begin{subfigure}[b]{0.325\linewidth}
		\includegraphics[width=\linewidth]{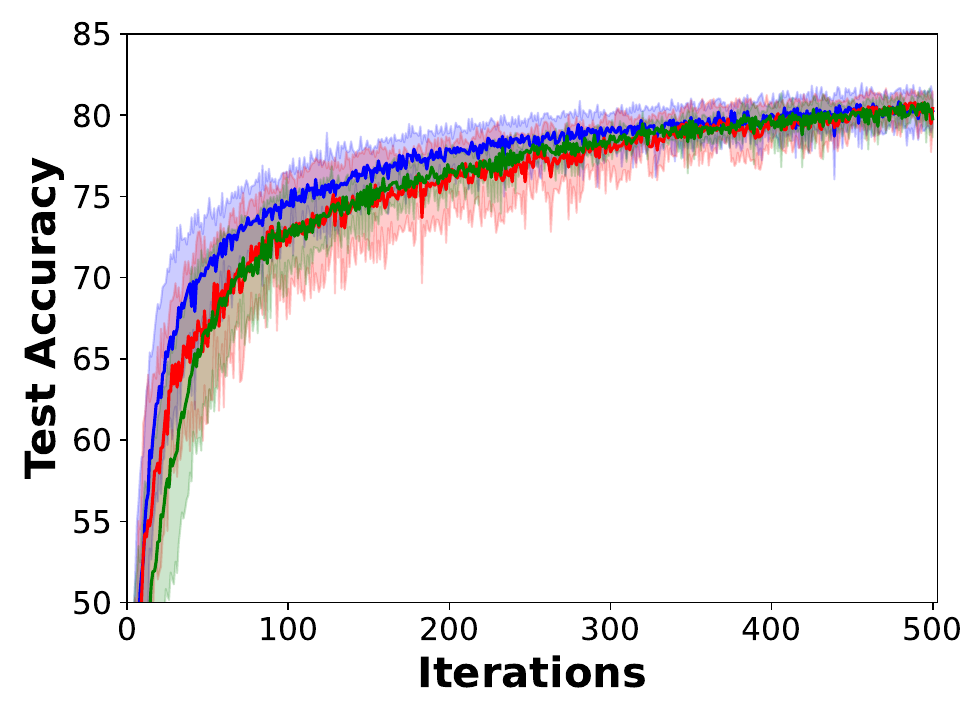}
		\caption{SUN-DSBO-SE}
	\end{subfigure}
	\begin{subfigure}[b]{0.325\linewidth}
		\includegraphics[width=\linewidth]{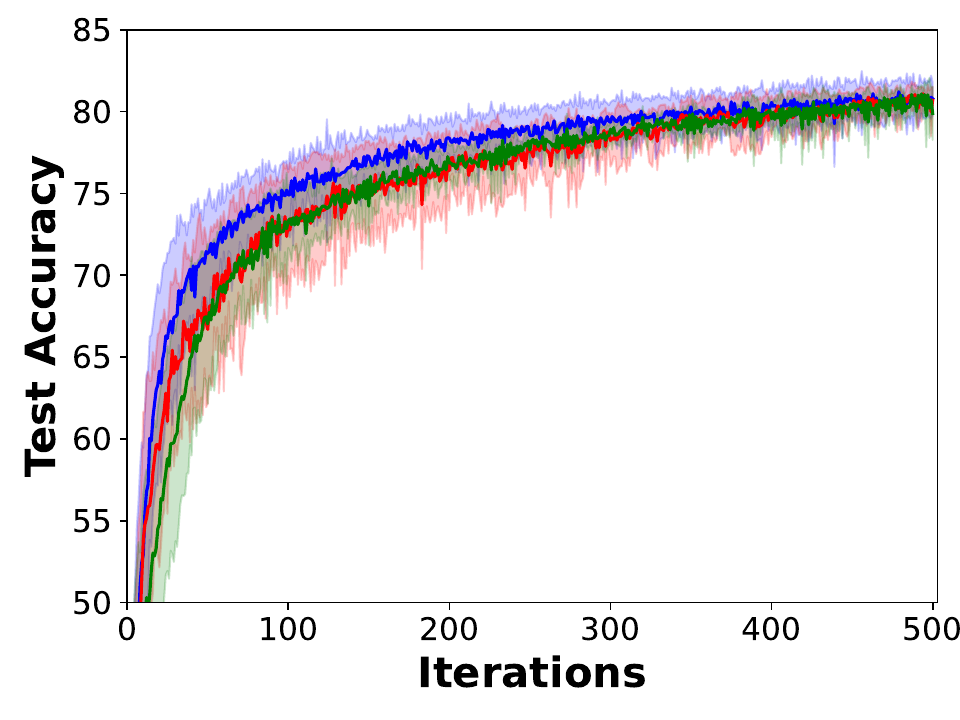}
		\caption{SUN-DSBO-GT}
	\end{subfigure}
	\caption{Data hyper-cleaning with different number of agents.}
	\label{dcnum}
\end{figure*}

\paragraph{Results with different number of agents.}
We evaluate the proposed algorithms on the data hyper-cleaning task using Fashion-MNIST under different numbers of agents, with $n \in \{10, 15, 20\}$. As shown in Figure~\ref{dcnum}, both Sun-DSBO-SE and Sun-DSBO-GT exhibit improved performance as the number of agents $n$ increases, consistent with the trends suggested by Theorems~\ref{main-sunse} and~\ref{main-sungt}.

\begin{table}[h]
\caption{Performance of SUN-DSBO-SE and SUN-DSBO-GT on the hyper-representation task using MNIST under different values of $p$.}
\centering
\renewcommand{\arraystretch}{1.25}
\setlength{\tabcolsep}{6pt}
\begin{tabular}{c|ccccc}
\toprule
\textbf{$p$ / Iter } & 100 & 200 & 300 & 400 & 500 \\
\midrule
\multicolumn{6}{c}{\textbf{SUN-DSBO-SE}} \\
\midrule
0      & 93.42 $\pm$ 0.23 & \textbf{95.19 $\pm$ 0.18} & \textbf{96.00 $\pm$ 0.19} & 96.51 $\pm$ 0.17 & 96.81 $\pm$ 0.09 \\
0.0001 & \textbf{93.43 $\pm$ 0.25} & 95.19 $\pm$ 0.19 & 95.95 $\pm$ 0.20 & 96.52 $\pm$ 0.17 & \textbf{96.82 $\pm$ 0.11} \\
0.001  & 93.42 $\pm$ 0.26 & 95.19 $\pm$ 0.21 & \textbf{96.00 $\pm$ 0.23} & 96.54 $\pm$ 0.14 & 96.81 $\pm$ 0.12 \\
0.01   & 93.38 $\pm$ 0.26 & 95.11 $\pm$ 0.19 & 95.92 $\pm$ 0.22 & \textbf{96.55 $\pm$ 0.14} & 96.75 $\pm$ 0.18 \\
0.1    & 92.95 $\pm$ 0.26 & 94.67 $\pm$ 0.22 & 95.52 $\pm$ 0.21 & 96.13 $\pm$ 0.16 & 96.43 $\pm$ 0.12 \\
\midrule
\multicolumn{6}{c}{\textbf{SUN-DSBO-GT}} \\
\midrule
0      & \underline{\textbf{93.53 $\pm$ 0.22}} & \underline{95.23 $\pm$ 0.19} & \underline{\textbf{96.06 $\pm$ 0.17}} & \underline{96.58 $\pm$ 0.13} & \underline{96.86 $\pm$ 0.12} \\
0.0001 & \underline{93.51 $\pm$ 0.23} & \underline{\textbf{95.25 $\pm$ 0.18}} & \underline{96.06 $\pm$ 0.16} & \underline{96.58 $\pm$ 0.14} & \underline{\textbf{96.87 $\pm$ 0.13}} \\
0.001  & \underline{\textbf{93.53 $\pm$ 0.23}} & \underline{95.24 $\pm$ 0.18} & \underline{96.05 $\pm$ 0.19} & \underline{\textbf{96.59 $\pm$ 0.12}} & \underline{96.87 $\pm$ 0.13} \\
0.01   & \underline{93.48 $\pm$ 0.24} & \underline{95.20 $\pm$ 0.17} & \underline{96.00 $\pm$ 0.21} & \underline{96.55 $\pm$ 0.13} & \underline{96.85 $\pm$ 0.14} \\
0.1    & \underline{93.06 $\pm$ 0.22} & \underline{94.77 $\pm$ 0.17} & \underline{95.58 $\pm$ 0.19} & \underline{96.21 $\pm$ 0.13} & \underline{96.51 $\pm$ 0.15} \\
\bottomrule
\end{tabular}
\label{tablehrp}
\end{table}

\paragraph{Results with different $p$ in Theorems \ref{main-sunse} and \ref{main-sungt}.}
We evaluate the proposed algorithms on the hyper-representation task using MNIST under different values of $p$, with $p \in {0, 0.0001, 0.001, 0.01, 0.1}$. As shown in Table~\ref{tablehrp}, both SUN-DSBO-SE and SUN-DSBO-GT exhibit relatively stable performance when $p=0, 0.0001,$ and $0.001$. However, when comparing $p=0.001, 0.01,$ and $0.1$, we observe that the performance of both algorithms tends to deteriorate as $p$ increases.

\begin{table}[h]
\caption{Performance of SUN-DSBO-SE and SUN-DSBO-GT on MNIST hyper-representation task under different $\gamma$.}
\centering
\renewcommand{\arraystretch}{1.25}
\setlength{\tabcolsep}{6pt}
\begin{tabular}{c|ccccc}
\toprule
\textbf{$\gamma$ / Iter} & 100 & 200 & 300 & 400 & 500 \\
\midrule
\multicolumn{6}{c}{\textbf{SUN-DSBO-SE}} \\
\midrule
50    & 93.42 $\pm$ 0.26 & 95.19 $\pm$ 0.21 & 96.00 $\pm$ 0.23 & \textbf{96.54 $\pm$ 0.14} & \textbf{96.81 $\pm$ 0.12} \\
5     & \textbf{94.22 $\pm$ 0.21} & \textbf{95.87 $\pm$ 0.21} & \textbf{96.24 $\pm$ 0.30} & 95.82 $\pm$ 1.70 & 96.57 $\pm$ 0.26 \\
0.5   & 93.64 $\pm$ 0.23 & 95.39 $\pm$ 0.23 & 95.13 $\pm$ 1.59 & 94.43 $\pm$ 1.89 & 95.70 $\pm$ 0.78 \\
0.05  & 92.85 $\pm$ 0.29 & 95.01 $\pm$ 0.24 & 94.50 $\pm$ 2.04 & 95.83 $\pm$ 0.63 & 95.59 $\pm$ 0.77 \\
0.005 & 92.43 $\pm$ 0.76 & 93.79 $\pm$ 0.42 & 93.44 $\pm$ 1.48 & 93.34 $\pm$ 1.65 & 94.45 $\pm$ 0.30 \\
\midrule
\multicolumn{6}{c}{\textbf{SUN-DSBO-GT}} \\
\midrule
50    & \underline{93.53 $\pm$ 0.23} & \underline{95.24 $\pm$ 0.18} & \underline{96.05 $\pm$ 0.19} & \underline{96.59 $\pm$ 0.12} & \underline{\textbf{96.87 $\pm$ 0.13}} \\
5     & \underline{\textbf{94.34 $\pm$ 0.35}} & \underline{\textbf{95.96 $\pm$ 0.17}} & \underline{\textbf{96.53 $\pm$ 0.18}} & \underline{\textbf{96.73 $\pm$ 0.27}} & \underline{96.61 $\pm$ 0.34} \\
0.5   & \underline{93.73 $\pm$ 0.21} & \underline{95.47 $\pm$ 0.29} & \underline{95.82 $\pm$ 0.64} & \underline{95.95 $\pm$ 1.59} & \underline{95.75 $\pm$ 0.52} \\
0.05  & \underline{92.92 $\pm$ 0.27} & \underline{95.07 $\pm$ 0.21} & \underline{95.80 $\pm$ 0.22} & \underline{95.87 $\pm$ 0.65} & \underline{95.53 $\pm$ 1.39} \\
0.005 & \underline{92.78 $\pm$ 0.53} & \underline{93.74 $\pm$ 0.66} & \underline{94.37 $\pm$ 0.40} & \underline{94.52 $\pm$ 0.35} & \underline{94.43 $\pm$ 1.47} \\
\bottomrule
\end{tabular}
\label{tablehrgamma}
\end{table}

\paragraph{Results with different $\gamma$ in (\ref{min-max}).}
We evaluate the proposed algorithms on the hyper-representation task using MNIST under different values of $\gamma$, with $\gamma \in \{50, 5, 0.5, 0.05, 0.005\}$. As shown in Table~\ref{tablehrgamma}, both SUN-DSBO-SE and SUN-DSBO-GT achieve comparable accuracies across these choices, indicating that the performance is largely insensitive to $\gamma$. This observation is consistent with our theoretical claim in Section~\ref{prelim}, item (i), that problem~(\ref{min-max}) is independent of $\gamma$. However, we also note that when $\gamma$ becomes smaller, the accuracy tends to degrade and the results become more oscillatory. We suspect this behavior may be due to the limited range of step sizes and other parameters used in the experiments.

\paragraph{Results with dynamic topology.}
We evaluate the proposed algorithms on the hyper-representation task using MNIST under a dynamic topology. The topology construction process involves first creating a connected symmetric adjacency matrix by randomly selecting \( m \) neighbors, followed by generating a doubly stochastic matrix using Metropolis-Hastings weights. The value of \( m \) can be set in two ways: fixed or random. In our experiments, as shown in Figure \ref{hrvtopo}(a), we use a static value of \( m = 2 \), while in Figures \ref{hrvtopo}(c) and (d), \( m \) is chosen randomly between 2 and 4.

We observe the following:

From Figure \ref{hrvtopo}(a), we can see that (i) SUN-DSBO-GT performs the best, followed by SUN-DSBO-SE in the dynamic topology environment with a fixed \( m \). This suggests that the inclusion of gradient tracking in SUN-DSBO-GT helps improve performance by better adapting to the dynamic topology. (ii) From Figure \ref{hrvtopo}(b), we observe that SUN-DSBO-SE/GT exhibits better robustness in more complex environments, especially when the heterogeneity parameter \( \mathcal{H} \) is lower (i.e., \( \mathcal{H} = 0.5 \)), and when the topology is dynamic with random values of \( m \) (ranging from 2 to 4).

\begin{figure*}[t]
	\centering
	\begin{subfigure}[b]{0.99\textwidth} 
		\includegraphics[width=\textwidth]{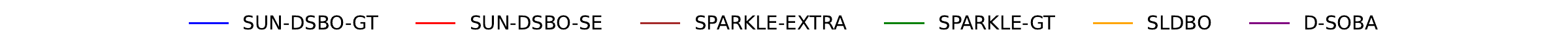}
	\end{subfigure}\\
	\begin{subfigure}[b]{0.325\textwidth} 
		\includegraphics[width=\textwidth]{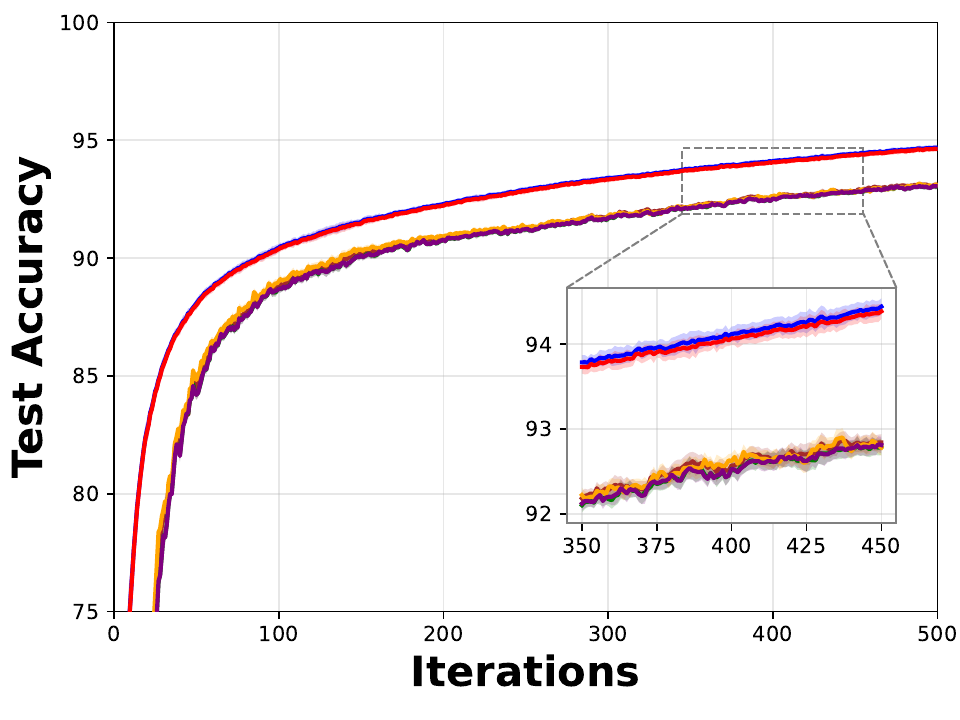}
		\caption{$\mathcal{H}=1$, fixed $m$ = 2}
	\end{subfigure}
    	\begin{subfigure}[b]{0.325\textwidth} 
		\includegraphics[width=\textwidth]{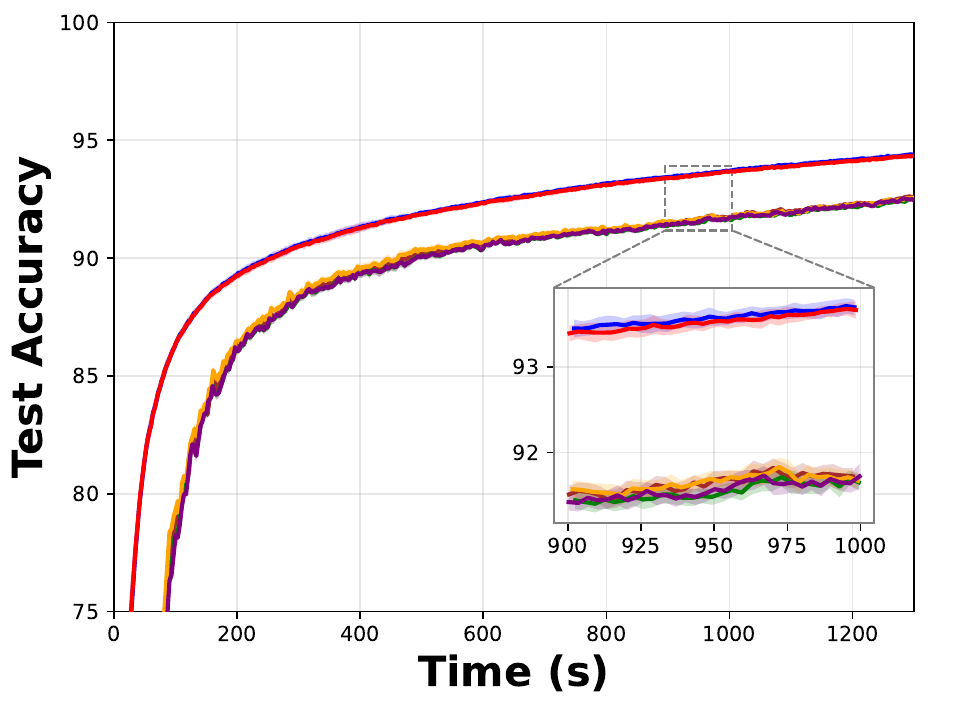}
		\caption{$\mathcal{H}=1$, fixed $m$ = 2}
	\end{subfigure}\\
    	\begin{subfigure}[b]{0.325\textwidth} 
		\includegraphics[width=\textwidth]{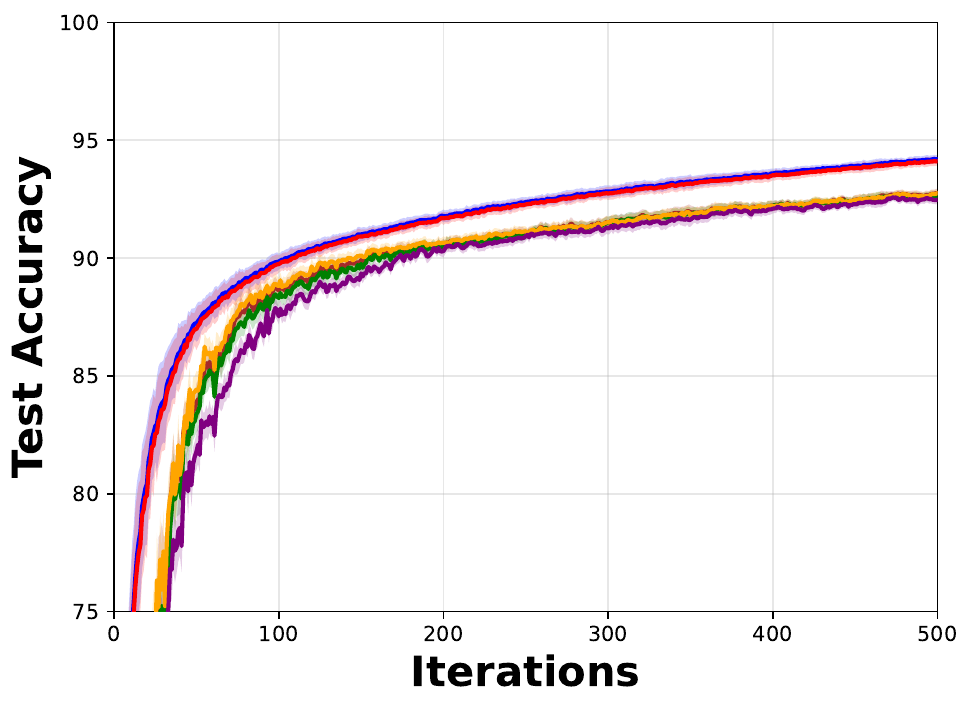}
		\caption{$\mathcal{H}=0.5$, random $2 \leq m \leq 4$}
	\end{subfigure}
    	\begin{subfigure}[b]{0.325\textwidth} 
		\includegraphics[width=\textwidth]{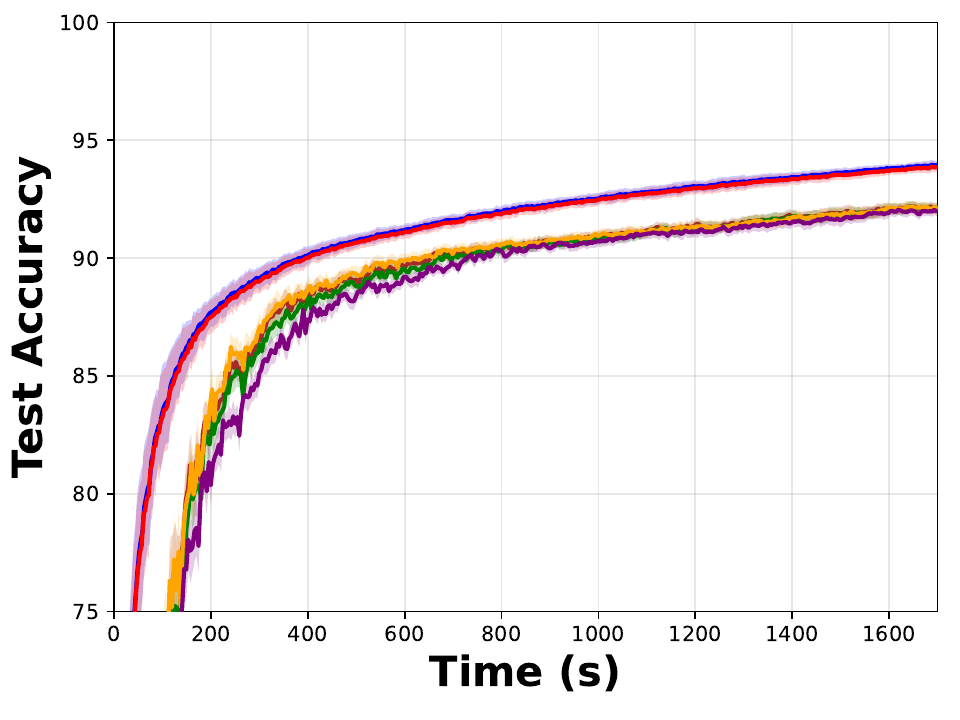}
		\caption{$\mathcal{H}=0.5$, random $2 \leq m \leq 4$}
	\end{subfigure}
	\caption{Comparison of algorithms over runtime for data hyper-representation under different dynamic topologies with varying heterogeneity. DSGDA-GT is not included due to a sudden decline in performance later, which might be due to the selection of inappropriate hyperparameters.}
	\label{hrvtopo}
\end{figure*}

\paragraph{Results with SUN-DSBO extended Algorithms.}
In Figure~\ref{dccre}, we further compare the algorithms generated by the SUN-DSBO framework with a heterogeneity parameter of \( \mathcal{H} = 0.1 \), while varying the corruption rates \( \texttt{cr} \in \{0.3, 0.45, 0.6\} \). Specifically, we introduce SUN-DSBO-ED, which incorporates ED, SUN-DSBO-EXTRA, which incorporates EXTRA, and the hybrid strategies SUN-DSBO-ED-GT and SUN-DSBO-EXTRA-GT (where the upper-level variables use ED or EXTRA and the remaining variables use GT). Here, SUN-DSBO-ED and SUN-DSBO-EXTRA are consistent with Figure~\ref{dccr}, allowing us to compare the new SUN-DSBO algorithms with other algorithms by referencing Figure~\ref{dccr}.

From Figure~\ref{dccre}, we can observe that SUN-DSBO-ED and SUN-DSBO-EXTRA exhibit slightly faster convergence in the early stages, but their final accuracy is slightly lower compared to SUN-DSBO-GT. The hybrid strategies, SUN-DSBO-ED-GT and SUN-DSBO-EXTRA-GT, do not show significant improvements over the individual methods. Compared to algorithms incorporating heterogeneity correction techniques, SUN-DSBO-SE, which does not incorporate these techniques, shows more fluctuation in its performance.

\begin{figure*}[h]
	\centering
	\begin{subfigure}[b]{0.99\textwidth}
		\includegraphics[width=\textwidth]{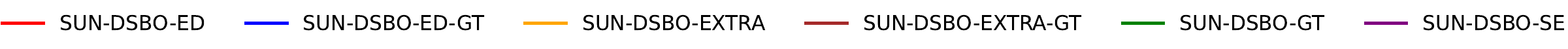}
	\end{subfigure}\\
	\centering
	\begin{subfigure}[b]{0.325\textwidth}
		\includegraphics[width=\textwidth]{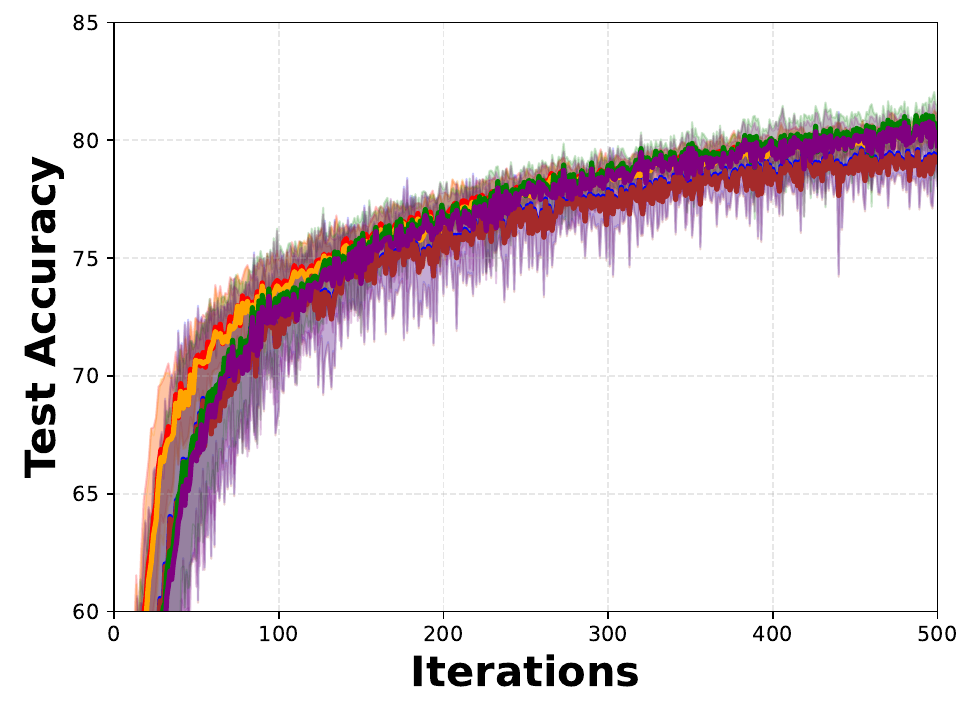}
		\caption{ \(\texttt{cr} = 0.3 \)}
	\end{subfigure}
	\begin{subfigure}[b]{0.325\textwidth}
		\includegraphics[width=\textwidth]{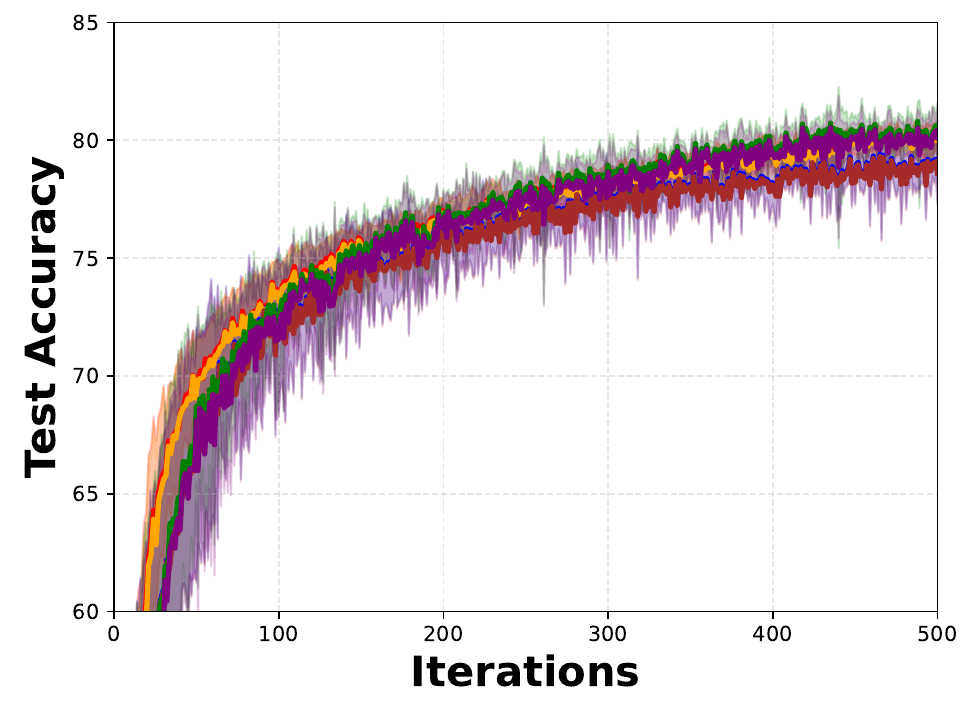}
		\caption{\(\texttt{cr} = 0.45 \)}
	\end{subfigure}
	\begin{subfigure}[b]{0.325\textwidth} 
		\includegraphics[width=\textwidth]{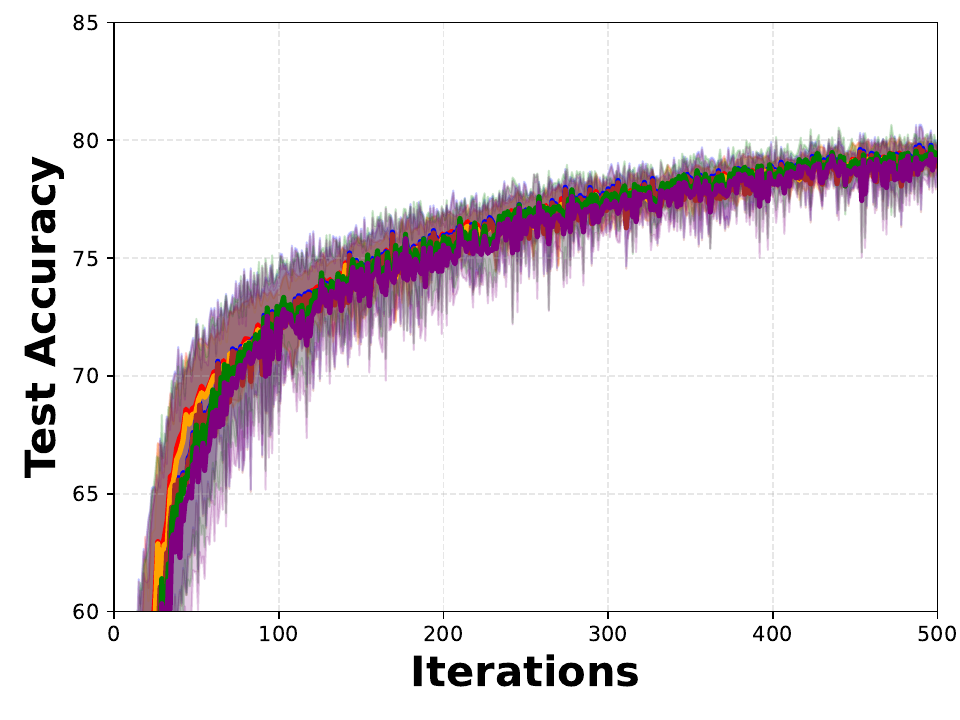}
		\caption{\(\texttt{cr} = 0.6 \)}
	\end{subfigure}
\caption{ 
    Comparison of SUN-DSBO variants for data hyper-cleaning across different corruption rates (\texttt{cr}) with a heterogeneity parameter of \( \mathcal{H} = 0.1 \). 
}

	\label{dccre}
\end{figure*}

\begin{figure*}[h]
	\centering
	\begin{subfigure}[b]{0.325\textwidth}
		\includegraphics[width=\textwidth]{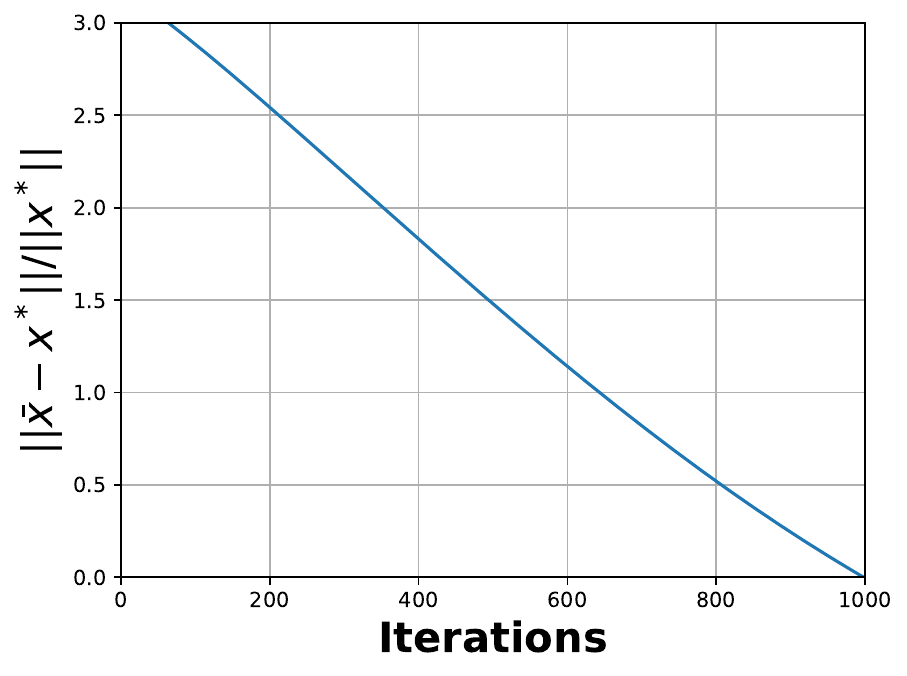}
        	\caption{SUN-DSBO-SE}
	\end{subfigure}
	\centering
	\begin{subfigure}[b]{0.325\textwidth}
		\includegraphics[width=\textwidth]{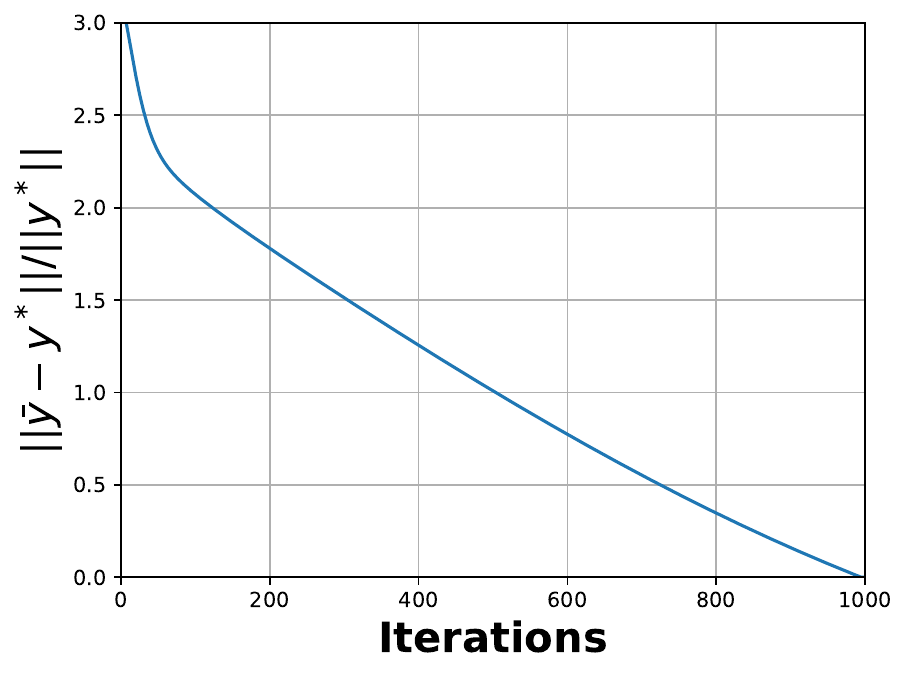}
		\caption{SUN-DSBO-SE}
	\end{subfigure}\\
	\begin{subfigure}[b]{0.325\textwidth}
		\includegraphics[width=\textwidth]{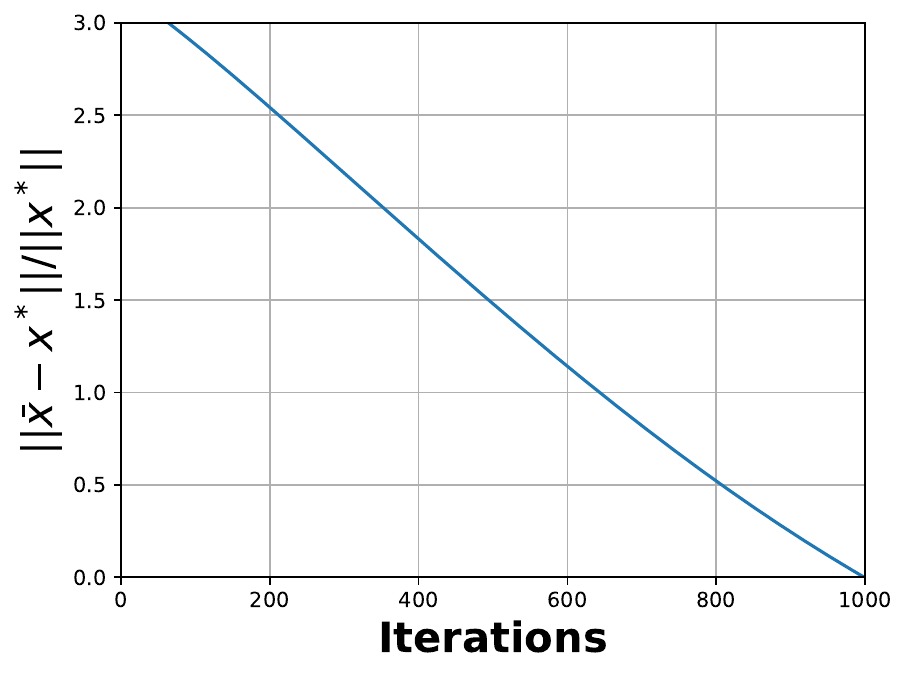}
		\caption{SUN-DSBO-GT}
	\end{subfigure}
	\begin{subfigure}[b]{0.325\textwidth} 
		\includegraphics[width=\textwidth]{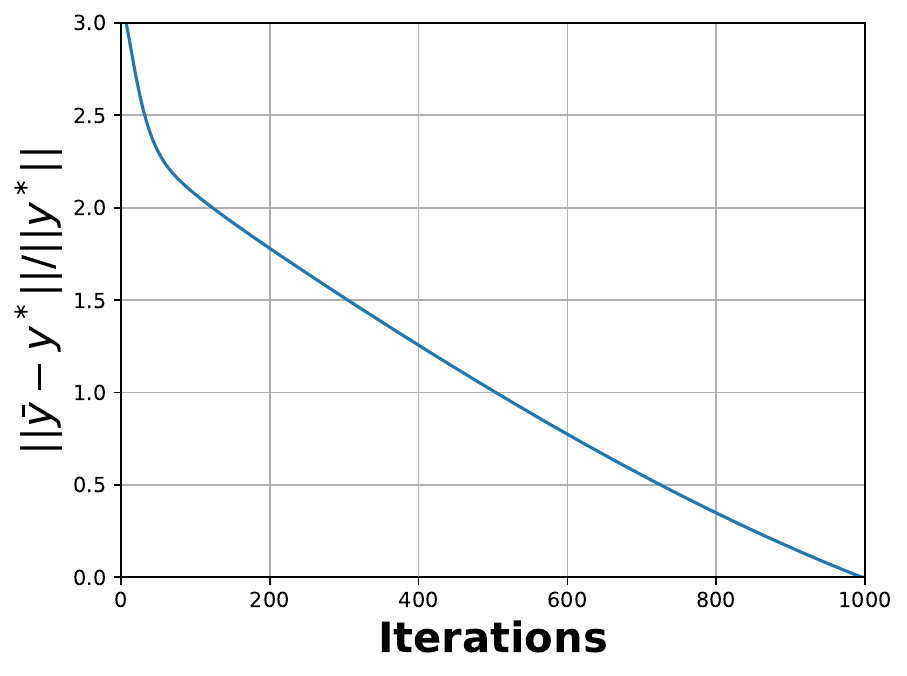}
		\caption{SUN-DSBO-GT}
	\end{subfigure}
\caption{ 
    Illustrating the convergence curves of SUN-DSBO-SE/GT using the criteria \( \frac{\| \bar{x} - x^* \|}{\| x^* \|} \) and \( \frac{\| \bar{y} - y^* \|}{\| y^* \|} \), under the LL merely convex case, where \( \bar{x} \) and \( \bar{y} \) represent the average outputs of all agents with respect to \( x \) and \( y \).
}

	\label{toyopt}
\end{figure*}
\paragraph{Synthetic numerical verification on lower-level merely convex case.} 
We demonstrate the effectiveness of the proposed method on a toy example in the LL merely convex case, expressed as follows:
\begin{align}
    \begin{aligned}
        & \min_{x \in \mathbb{R}^{N}, y = (y_{1}, y_{2}) \in \mathbb{R}^{2N}} \sum_{i=0}^{n-1} \left[ \frac{1}{2} \left\| A_i x - y_{2} \right\|^2 + \frac{1}{2} \left\| B_i y_{1} - \mathbf{e} \right\|^2 \right] \\
     \text{s.t.} & \quad   y = (y_1, y_2) \in \arg \min_{(y_1, y_2) \in \mathbb{R}^{N}} \sum_{i=0}^{n-1} \frac{1}{2} \left\| B_i y_{1} \right\|^2 - (A_i x)^\top y_{1},
    \end{aligned}
\end{align}
where \( \mathbf{e} \) is the all-ones vector with a dimension depending on the problem, and \( A_i \) and \( B_i \) are node-specific diagonal matrices whose diagonal entries are all equal to \( a_i \) and \( b_i \), respectively.
 Here, we take \( a_i = 1 + 0.1i \) and \( b_i = 1 + 0.05i \) for \( i = 0, \dots, n-1 \).We use a 5-agent ring network in which each edge has weight \( \frac{1}{3} \), and we set \( N = 10 \).
 For this experiment, we set the decaying coefficient to \( \mu_k := 0.1(1+k)^{-0.01} \), the penalty parameter to \( \gamma = 10 \), and the step sizes to \([0.01, 0.01, 0.1]\). The optimal solution for the lower-level problem is approximately \( (1.43 \mathbf{e}, 0.84 \mathbf{e}, 1.58 \mathbf{e}) \).
 Since the lower-level admits an explicit solution, we substitute it into the upper-level and solve using a least-squares approximation. As shown in Figure~\ref{toyopt}, our proposed SUN-DSBO-SE/GT gradually approaches the correct solution.

\subsection{Alternative tabular presentation of main experiments}\label{atp}
For completeness, we provide the experimental results from the main text in tabular form, corresponding to the figures presented earlier.

Table~\ref{tab:dcalpha} reports the results corresponding to Figure~\ref{dcalpha}, presented in tabular form for clarity. 

Table~\ref{tab:dcd} reports the results corresponding to Figure~\ref{dcd}, presented in tabular form for clarity. 

Table~\ref{tab:dccr} reports the results corresponding to Figure~\ref{dccr}, presented in tabular form for clarity. 

Tables~\ref{tab:hrr1} and~\ref{tab:hrr2}  report the results corresponding to Figure~\ref{hrr}, presented in tabular form for clarity.

Table~\ref{tab:dctopo} reports the results corresponding to Figure~\ref{dctopo}, presented in tabular form for clarity.
\begin{table}[ht]
\caption{Comparison of algorithms on hyper-cleaning with different regularization parameters, corresponding to Figure~\ref{dcalpha}. }
\centering
\renewcommand{\arraystretch}{1.2}
\setlength{\tabcolsep}{8pt}
\begin{tabular}{c|ccccc}
\toprule
\textbf{$\alpha$ / Iter} & 100 & 200 & 300 & 400 & 500 \\
\midrule
\multicolumn{6}{c}{\textbf{SPARKLE-GT}~\citep{zhu2024sparkle}} \\
\midrule
0.001 & \textbf{69.48 $\pm$ 1.39} & \textbf{73.24 $\pm$ 1.39} & \textbf{75.34 $\pm$ 1.35} & \textbf{76.80 $\pm$ 0.28} & \textbf{77.30 $\pm$ 0.90} \\
0.01  & 65.55 $\pm$ 2.81 & 70.90 $\pm$ 1.78 & 73.32 $\pm$ 1.62 & 74.83 $\pm$ 0.54 & 75.54 $\pm$ 0.90 \\
0.1   & 64.10 $\pm$ 3.57 & 69.81 $\pm$ 1.95 & 72.11 $\pm$ 1.78 & 73.05 $\pm$ 1.12 & 73.50 $\pm$ 1.46 \\
\midrule
\multicolumn{6}{c}{\textbf{DSGDA-GT}~\citep{wang2024fully}} \\
\midrule
0.001 & \textbf{68.59 $\pm$ 2.27} & \textbf{72.26 $\pm$ 1.29} & \textbf{74.75 $\pm$ 1.23} & \textbf{76.09 $\pm$ 0.55} & \textbf{75.63 $\pm$ 2.32} \\
0.01  & 63.56 $\pm$ 4.24 & 69.67 $\pm$ 2.27 & 72.24 $\pm$ 1.51 & 73.26 $\pm$ 1.49 & 74.09 $\pm$ 1.74 \\
0.1   & 63.61 $\pm$ 4.22 & 69.70 $\pm$ 2.26 & 72.31 $\pm$ 1.50 & 73.34 $\pm$ 1.46 & 74.15 $\pm$ 1.71 \\
\midrule
\multicolumn{6}{c}{\textbf{SUN-DSBO-GT} (ours)} \\
\midrule
0.001 & \textbf{\underline{73.32 $\pm$ 1.65}} & \textbf{\underline{76.86 $\pm$ 1.06}} & \textbf{\underline{78.92 $\pm$ 0.46}} & \textbf{\underline{79.56 $\pm$ 0.94}} & \textbf{\underline{79.89 $\pm$ 1.62}} \\
0.01  & \underline{73.28 $\pm$ 1.68} & \underline{76.79 $\pm$ 1.08} & \underline{78.84 $\pm$ 0.43} & \underline{79.45 $\pm$ 1.00} & \underline{79.77 $\pm$ 1.71} \\
0.1   & \underline{69.46 $\pm$ 3.36} & \underline{74.02 $\pm$ 1.62} & \underline{75.98 $\pm$ 0.88} & \underline{76.95 $\pm$ 1.10} & \underline{77.46 $\pm$ 1.32} \\
\bottomrule
\end{tabular}
\label{tab:dcalpha}
\end{table}

\newpage
\begin{table}[H]
\caption{Comparison of algorithms for data hyper-cleaning under different values of the heterogeneity parameter $\mathcal{H}$ with a corruption rate of \(\texttt{cr} = 0.3\), corresponding to Figure~\ref{dcd}.}
\centering
\renewcommand{\arraystretch}{1.2}
\setlength{\tabcolsep}{6pt}
\begin{tabular}{c|ccccc}
\toprule
\textbf{Algorithm / Iter} & 100 & 200 & 300 & 400 & 500 \\
\midrule
\multicolumn{6}{c}{$\mathcal{H} = 0.1$} \\
\midrule
SUN-DSBO-GT      & \textbf{73.32 ± 1.65} & \textbf{76.86 ± 1.06} & \textbf{78.92 ± 0.46} & \textbf{79.56 ± 0.94} & \textbf{79.89 ± 1.62} \\
SUN-DSBO-SE      & 72.87 ± 1.79 & 76.48 ± 1.31 & 78.74 ± 0.37 & 79.38 ± 1.02 & 79.77 ± 1.15 \\
DSGDA-GT         & 68.59 ± 2.27 & 72.26 ± 1.29 & 74.75 ± 1.23 & 76.09 ± 0.55 & 75.63 ± 2.32 \\
SPARKLE-GT       & 69.67 ± 1.56 & 73.39 ± 1.36 & 74.98 ± 1.53 & 76.71 ± 0.30 & 76.73 ± 0.48 \\
SLDBO            & 69.48 ± 1.35 & 73.31 ± 1.66 & 75.52 ± 1.53 & 76.80 ± 0.47 & 77.32 ± 1.09 \\
D-SOBA           & 67.11 ± 3.29 & 71.68 ± 3.11 & 74.01 ± 2.12 & 75.43 ± 0.96 & 76.35 ± 1.03 \\
SPARKLE-EXTRA    & 69.85 ± 1.24 & 73.42 ± 1.31 & 75.98 ± 0.28 & 76.84 ± 0.30 & 78.03 ± 0.59 \\
\midrule
\multicolumn{6}{c}{$\mathcal{H} = 0.5$} \\
\midrule
SUN-DSBO-GT      & \textbf{77.24 ± 0.91} & \textbf{80.21 ± 0.55} & \textbf{81.79 ± 0.46} & \textbf{82.65 ± 0.32} & \textbf{83.36 ± 0.26} \\
SUN-DSBO-SE      & 77.11 ± 0.92 & 80.12 ± 0.60 & 81.75 ± 0.47 & 82.60 ± 0.33 & 83.32 ± 0.28 \\
DSGDA-GT         & 73.09 ± 0.95 & 76.02 ± 0.95 & 77.94 ± 0.51 & 78.91 ± 0.36 & 79.61 ± 0.47 \\
SPARKLE-GT       & 72.83 ± 0.77 & 75.91 ± 0.57 & 77.54 ± 0.48 & 78.94 ± 0.43 & 79.52 ± 0.30 \\
SLDBO            & 72.77 ± 0.67 & 75.97 ± 0.46 & 77.59 ± 0.33 & 79.01 ± 0.35 & 79.56 ± 0.29 \\
D-SOBA           & 72.49 ± 0.86 & 75.60 ± 0.69 & 77.40 ± 0.64 & 78.73 ± 0.44 & 79.49 ± 0.35 \\
SPARKLE-EXTRA    & 73.02 ± 0.74 & 76.12 ± 0.44 & 77.67 ± 0.35 & 79.09 ± 0.36 & 79.61 ± 0.25 \\
\midrule
\multicolumn{6}{c}{$\mathcal{H} = 1$} \\
\midrule
SUN-DSBO-GT      & \underline{\textbf{78.48 ± 0.19}} & \underline{\textbf{81.21 ± 0.12}} & \underline{\textbf{82.44 ± 0.10}} & \underline{\textbf{83.42 ± 0.11}} & \underline{\textbf{84.17 ± 0.10}} \\
SUN-DSBO-SE      & \underline{78.48 ± 0.19} & \underline{81.21 ± 0.12} & \underline{82.45 ± 0.10} & \underline{83.42 ± 0.10} & \underline{\textbf{84.18 ± 0.08}} \\
DSGDA-GT         & \underline{74.39 ± 0.16} & \underline{77.28 ± 0.18} & \underline{78.86 ± 0.17} & \underline{80.01 ± 0.09} & \underline{80.68 ± 0.10} \\
SPARKLE-GT       & \underline{72.96 ± 0.67} & \underline{76.11 ± 0.33} & \underline{77.98 ± 0.36} & \underline{79.09 ± 0.26} & \underline{79.95 ± 0.33} \\
SLDBO            & \underline{72.99 ± 0.59} & \underline{76.12 ± 0.25} & \underline{78.00 ± 0.32} & \underline{79.08 ± 0.22} & \underline{79.94 ± 0.27} \\
D-SOBA           & \underline{72.95 ± 0.68} & \underline{76.10 ± 0.33} & \underline{77.98 ± 0.35} & \underline{79.08 ± 0.29} & \underline{79.93 ± 0.33} \\
SPARKLE-EXTRA    & \underline{72.95 ± 0.68} & \underline{76.10 ± 0.33} & \underline{77.97 ± 0.35} & \underline{79.10 ± 0.28} & \underline{79.96 ± 0.32} \\
\bottomrule
\end{tabular}
\label{tab:dcd}
\end{table}

\begin{table}[H]
\caption{Comparison of algorithms for data hyper-cleaning across different corruption rates (\texttt{cr}) with a heterogeneity parameter of $\mathcal{H} = 0.1$, corresponding to Figure~\ref{dccr}.}
\centering
\renewcommand{\arraystretch}{1.2}
\setlength{\tabcolsep}{6pt}
\begin{tabular}{c|ccccc}
\toprule
\textbf{Algorithm / Iter} & 100 & 200 & 300 & 400 & 500 \\
\midrule
\multicolumn{6}{c}{$ \texttt{cr} = 0.3$} \\
\midrule
SUN-DSBO-GT      & \underline{\textbf{73.32 ± 1.65}} & \underline{\textbf{76.86 ± 1.06}} & \underline{\textbf{78.92 ± 0.46}} & \underline{\textbf{79.56 ± 0.94}} & \underline{\textbf{79.89 ± 1.62}} \\
SUN-DSBO-SE      & \underline{72.87 ± 1.79} & \underline{76.48 ± 1.31} & \underline{78.74 ± 0.37} & \underline{79.38 ± 1.02} & \underline{79.77 ± 1.15} \\
DSGDA-GT         & \underline{68.59 ± 2.27} & \underline{72.26 ± 1.29} & \underline{74.75 ± 1.23} & \underline{76.09 ± 0.55} & \underline{75.63 ± 2.32} \\
SPARKLE-GT       & \underline{69.67 ± 1.56} & \underline{73.39 ± 1.36} & \underline{74.98 ± 1.53} & \underline{76.71 ± 0.30} & \underline{76.73 ± 0.48} \\
SLDBO            & \underline{69.48 ± 1.35} & \underline{73.31 ± 1.66} & \underline{75.52 ± 1.53} & \underline{76.80 ± 0.47} & \underline{77.32 ± 1.09} \\
D-SOBA           & \underline{67.11 ± 3.29} & \underline{71.68 ± 3.11} & \underline{74.01 ± 2.12} & \underline{75.43 ± 0.96} & \underline{76.35 ± 1.03} \\
SPARKLE-EXTRA    & \underline{69.85 ± 1.24} & \underline{73.42 ± 1.31} & \underline{75.98 ± 0.28} & \underline{76.84 ± 0.30} & \underline{78.03 ± 0.59} \\
\midrule
\multicolumn{6}{c}{$ \texttt{cr} = 0.45$} \\
\midrule
SUN-DSBO-GT      & \textbf{71.98 ± 2.79} & \textbf{76.55 ± 1.39} & \textbf{78.94 ± 0.76} & \textbf{79.68 ± 0.77} & \textbf{80.36 ± 1.28} \\
SUN-DSBO-SE      & 71.55 ± 3.29 & 76.31 ± 1.51 & 78.64 ± 0.70 & 79.48 ± 0.90 & 80.12 ± 1.30 \\
DSGDA-GT         & 65.71 ± 4.90 & 71.07 ± 2.44 & 74.24 ± 1.41 & 75.49 ± 1.05 & 76.82 ± 0.69 \\
SPARKLE-EXTRA    & 69.42 ± 0.99 & 72.82 ± 0.53 & 74.76 ± 0.58 & 76.08 ± 0.65 & 76.56 ± 0.46 \\
SPARKLE-GT       & 68.26 ± 1.87 & 72.02 ± 1.34 & 74.34 ± 1.32 & 75.69 ± 0.82 & 76.27 ± 0.56 \\
SLDBO            & 67.77 ± 1.81 & 71.96 ± 1.31 & 74.37 ± 1.40 & 75.68 ± 0.93 & 76.24 ± 0.64 \\
D-SOBA           & 66.33 ± 2.40 & 70.77 ± 2.34 & 73.05 ± 2.15 & 75.07 ± 0.92 & 75.44 ± 1.40 \\
\midrule
\multicolumn{6}{c}{$ \texttt{cr} = 0.6$} \\
\midrule
SUN-DSBO-GT      & \textbf{71.42 ± 2.95} & \textbf{75.92 ± 0.99} & \textbf{77.61 ± 0.57} & \textbf{78.53 ± 0.61} & \textbf{79.41 ± 0.52} \\
SUN-DSBO-SE      & 70.73 ± 3.20 & 75.55 ± 1.06 & 77.22 ± 0.62 & 78.28 ± 0.65 & 79.14 ± 0.53 \\
DSGDA-GT         & 66.60 ± 3.34 & 71.04 ± 2.82 & 73.49 ± 1.81 & 74.13 ± 2.86 & 74.91 ± 2.08 \\
SPARKLE-EXTRA    & 64.79 ± 3.31 & 70.13 ± 2.01 & 72.71 ± 1.37 & 73.89 ± 1.46 & 74.64 ± 0.85 \\
SPARKLE-GT       & 63.42 ± 3.07 & 68.97 ± 1.30 & 72.15 ± 1.19 & 73.21 ± 1.37 & 74.11 ± 0.32 \\
SLDBO            & 64.29 ± 3.45 & 69.85 ± 2.16 & 72.41 ± 1.61 & 73.69 ± 1.63 & 74.27 ± 1.25 \\
D-SOBA           & 62.87 ± 3.84 & 68.61 ± 3.32 & 72.30 ± 1.75 & 72.73 ± 2.39 & 73.64 ± 1.41 \\
\bottomrule
\end{tabular}
\label{tab:dccr}
\end{table}
\newpage
\begin{table}[H]
\caption{
Comparison of algorithms for hyper-representation with varying values of the heterogeneity parameter $\mathcal{H}$, corresponding to Figures~\ref{hrr}~(a), (b), and (c).
}
\centering
\renewcommand{\arraystretch}{1.2}
\begin{tabular}{c|ccccc}
\toprule
\textbf{Algorithm / Iter} & 100 & 200 & 300 & 400 & 500 \\
\midrule
\multicolumn{6}{c}{$\mathcal{H} = 0.1$, MLP on MNIST} \\
\midrule
SUN-DSBO-GT      & \textbf{90.25 ± 1.96} & \textbf{92.98 ± 1.41} & \textbf{94.22 ± 0.90} & \textbf{94.76 ± 0.93} & \textbf{95.40 ± 0.61} \\
SUN-DSBO-SE      & 89.64 ± 2.29 & 92.55 ± 1.63 & 94.02 ± 0.85 & 94.33 ± 0.86 & 94.96 ± 0.91 \\
DSGDA-GT         & 73.70 ± 2.91 & 81.80 ± 2.71 & 82.84 ± 5.01 & 72.34 ± 13.77 & 61.69 ± 12.22 \\
SPARKLE-EXTRA    & 89.51 ± 1.26 & 91.88 ± 0.91 & 93.18 ± 0.70 & 93.88 ± 0.67 & 94.43 ± 0.53 \\
SPARKLE-GT       & 89.78 ± 1.08 & 92.24 ± 0.88 & 93.49 ± 0.59 & 94.13 ± 0.60 & 94.65 ± 0.51 \\
SLDBO            & 89.61 ± 1.36 & 92.01 ± 0.97 & 93.23 ± 0.75 & 94.05 ± 0.66 & 94.48 ± 0.52 \\
D-SOBA           & 87.06 ± 2.99 & 91.01 ± 1.44 & 92.70 ± 0.94 & 93.53 ± 0.73 & 94.22 ± 0.65 \\
\midrule
\multicolumn{6}{c}{$\mathcal{H} = 0.5$, MLP on MNIST} \\
\midrule
SUN-DSBO-GT      & \textbf{93.53 ± 0.23} & \textbf{95.24 ± 0.18} & \textbf{96.05 ± 0.19} & \textbf{96.59 ± 0.12} & \textbf{96.87 ± 0.13} \\
SUN-DSBO-SE      & 93.42 ± 0.26 & 95.19 ± 0.21 & 96.00 ± 0.23 & 96.54 ± 0.14 & 96.81 ± 0.12 \\
DSGDA-GT         & 78.96 ± 3.54 & 85.10 ± 1.33 & 87.52 ± 0.70 & 88.63 ± 0.47 & 87.15 ± 2.51 \\
SPARKLE-EXTRA    & 91.63 ± 0.30 & 93.53 ± 0.22 & 94.48 ± 0.18 & 95.09 ± 0.16 & 95.55 ± 0.13 \\
SPARKLE-GT       & 91.56 ± 0.27 & 93.49 ± 0.19 & 94.43 ± 0.18 & 95.07 ± 0.13 & 95.54 ± 0.15 \\
SLDBO            & 91.69 ± 0.32 & 93.58 ± 0.23 & 94.46 ± 0.19 & 95.11 ± 0.09 & 95.55 ± 0.11 \\
D-SOBA           & 91.29 ± 0.32 & 93.32 ± 0.23 & 94.29 ± 0.22 & 94.96 ± 0.14 & 95.46 ± 0.14 \\
\midrule
\multicolumn{6}{c}{$\mathcal{H} = 1$, MLP on MNIST} \\
\midrule
SUN-DSBO-GT      & \underline{\textbf{94.04 ± 0.11}} & \underline{\textbf{95.66 ± 0.12}} & \underline{\textbf{96.42 ± 0.10}} & \underline{\textbf{96.83 ± 0.08}} & \underline{\textbf{97.10 ± 0.12}} \\
SUN-DSBO-SE      & \underline{94.01 ± 0.10} & \underline{95.66 ± 0.11} & \underline{96.39 ± 0.11} & \underline{96.82 ± 0.09} & \underline{97.11 ± 0.10} \\
DSGDA-GT         & \underline{81.47 ± 0.16} & \underline{86.24 ± 0.18} & \underline{88.12 ± 0.12} & \underline{89.15 ± 0.11} & \underline{89.60 ± 0.23} \\
SPARKLE-EXTRA    & \underline{91.91 ± 0.16} & \underline{93.82 ± 0.13} & \underline{94.72 ± 0.14} & \underline{95.37 ± 0.11} & \underline{95.74 ± 0.11} \\
SPARKLE-GT       & \underline{91.80 ± 0.20} & \underline{93.76 ± 0.15} & \underline{94.67 ± 0.11} & \underline{95.29 ± 0.13} & \underline{95.71 ± 0.09} \\
SLDBO            & \underline{92.03 ± 0.18} & \underline{93.86 ± 0.16} & \underline{94.75 ± 0.10} & \underline{95.32 ± 0.10} & \underline{95.77 ± 0.13} \\
D-SOBA           & \underline{91.79 ± 0.20} & \underline{93.76 ± 0.15} & \underline{94.67 ± 0.11} & \underline{95.29 ± 0.13} & \underline{95.71 ± 0.08} \\
\bottomrule
\end{tabular}
\label{tab:hrr1}
\end{table}

\begin{table}[H]
\caption{
Comparison of algorithms for hyper-representation with varying heterogeneity parameter $\mathcal{H}$, corresponding to Figures~\ref{hrr}~(d), (e), and (f).
}
\centering
\renewcommand{\arraystretch}{1.2}
\begin{tabular}{c|ccccc}
\toprule
\textbf{Algorithm / Iter} & 500 & 1000 & 1500 & 2000 & 3000 \\
\midrule
\multicolumn{6}{c}{$\mathcal{H} = 0.1$, CNN on CIFAR-10} \\
\midrule
SUN-DSBO-GT      & 26.86 $\pm$ 2.30 & \textbf{32.79 $\pm$ 2.55} & 36.96 $\pm$ 2.63 & 39.11 $\pm$ 2.66 & 40.72 $\pm$ 2.93 \\
SUN-DSBO-SE      & \textbf{27.64 $\pm$ 3.44} & 32.58 $\pm$ 2.06 & \textbf{37.35 $\pm$ 2.63} & \textbf{40.04 $\pm$ 2.83} & \textbf{42.97 $\pm$ 4.37}
\\
SPARKLE-EXTRA    & 23.90 $\pm$ 1.64 & 29.38 $\pm$ 2.67 & 30.54 $\pm$ 4.37 & 33.85 $\pm$ 3.81 & 35.64 $\pm$ 4.34 \\
SPARKLE-GT       & 23.39 $\pm$ 2.72 & 27.79 $\pm$ 2.33 & 29.75 $\pm$ 3.41 & 30.84 $\pm$ 2.47 & 36.40 $\pm$ 2.98 \\
SLDBO            & 24.13 $\pm$ 2.40 & 30.50 $\pm$ 2.15 & 34.83 $\pm$ 2.25 & 36.65 $\pm$ 1.96 & 41.27 $\pm$ 2.77 \\
D-SOBA           & 25.55 $\pm$ 1.23 & 28.76 $\pm$ 3.81 & 31.62 $\pm$ 2.54 & 31.89 $\pm$ 3.01 & 36.75 $\pm$ 3.69 \\
\midrule
\multicolumn{6}{c}{$\mathcal{H} = 0.5$, CNN on CIFAR-10} \\
\midrule
SUN-DSBO-GT      & 35.09 $\pm$ 4.43 & 41.68 $\pm$ 3.78 & 45.77 $\pm$ 3.23 & 48.66 $\pm$ 2.63 & 52.19 $\pm$ 2.30 \\
SUN-DSBO-SE      & \textbf{36.17 $\pm$ 2.59} & \textbf{42.42 $\pm$ 3.12} & \textbf{46.80 $\pm$ 3.27} & \textbf{49.99 $\pm$ 1.42} & \textbf{52.31 $\pm$ 2.85} \\
SPARKLE-EXTRA    & 33.13 $\pm$ 2.93 & 37.69 $\pm$ 1.99 & 39.66 $\pm$ 1.68 & 42.67 $\pm$ 2.40 & 46.19 $\pm$ 1.66 \\
SPARKLE-GT       & 32.72 $\pm$ 3.24 & 37.14 $\pm$ 2.73 & 40.77 $\pm$ 2.69 & 42.75 $\pm$ 2.64 & 45.90 $\pm$ 2.36 \\
SLDBO            & 32.87 $\pm$ 3.00 & 39.13 $\pm$ 2.35 & 43.17 $\pm$ 2.08 & 45.08 $\pm$ 2.12 & 47.97 $\pm$ 1.95 \\
D-SOBA           & 32.64 $\pm$ 0.74 & 35.55 $\pm$ 1.97 & 38.68 $\pm$ 2.20 & 40.11 $\pm$ 2.57 & 43.94 $\pm$ 2.13 \\
\midrule
\multicolumn{6}{c}{$\mathcal{H} = 1$, CNN on CIFAR-10} \\
\midrule
SUN-DSBO-GT      & \underline{38.69 $\pm$ 3.45} & \underline{49.19 $\pm$ 1.38} & \underline{53.40 $\pm$ 1.07} & \underline{56.22 $\pm$ 1.61} & \underline{58.07 $\pm$ 1.23} \\
SUN-DSBO-SE      & \underline{\textbf{39.64 $\pm$ 3.07}} & \underline{\textbf{49.70 $\pm$ 1.00}} & \underline{\textbf{54.56 $\pm$ 1.14}} & \underline{\textbf{57.34 $\pm$ 1.36}} & \underline{\textbf{58.65 $\pm$ 1.52}} \\
SPARKLE-EXTRA    & \underline{39.16 $\pm$ 2.58} & \underline{47.32 $\pm$ 1.31} & \underline{51.15 $\pm$ 0.63} & \underline{53.75 $\pm$ 0.80} & \underline{56.73 $\pm$ 0.86} \\
SPARKLE-GT       & \underline{39.56 $\pm$ 2.22} & \underline{47.74 $\pm$ 1.06} & \underline{51.69 $\pm$ 0.62} & \underline{54.22 $\pm$ 0.99} & \underline{57.05 $\pm$ 0.80} \\
SLDBO            & \underline{38.37 $\pm$ 1.75} & \underline{45.79 $\pm$ 1.09} & \underline{49.40 $\pm$ 0.74} & \underline{51.84 $\pm$ 1.32} & \underline{55.68 $\pm$ 0.60} \\
D-SOBA           & \underline{39.62 $\pm$ 2.15} & \underline{47.59 $\pm$ 1.10} & \underline{51.84 $\pm$ 0.62} & \underline{54.21 $\pm$ 0.90} & \underline{56.92 $\pm$ 0.86} \\
\bottomrule
\end{tabular}
\label{tab:hrr2}
\end{table}

\begin{table}[H]
\caption{
		Data hyper-cleaning is performed with varying values of $\rho$, which quantifies the connectivity of the communication network, 
	corresponding to Figure~\ref{dctopo}.}
\centering
\renewcommand{\arraystretch}{1.2}
\begin{tabular}{c|ccccc}
\toprule
\textbf{$\rho$ / Iter} & 100 & 200 & 300 & 400 & 500 \\
\midrule
\multicolumn{6}{c}{SUN-DSBO-SE} \\
\midrule
$\rho=0.798$ & \textbf{77.32 ± 0.65} & \textbf{80.24 ± 0.53} & \textbf{81.76 ± 0.34} & \textbf{82.60 ± 0.25} & \textbf{83.32 ± 0.25} \\
$\rho=0.824$ & 77.28 ± 0.78 & 80.22 ± 0.54 & 81.72 ± 0.37 & 82.60 ± 0.26 & 83.26 ± 0.23 \\
$\rho=0.905$ & 77.11 ± 0.92 & 80.12 ± 0.60 & 81.75 ± 0.47 & 82.60 ± 0.33 & \textbf{83.32 ± 0.28} \\
$\rho=0.924$ & 76.95 ± 1.02 & 79.98 ± 0.68 & 81.66 ± 0.47 & 82.49 ± 0.37 & 83.22 ± 0.29 \\
$\rho=0.970$ & 72.99 ± 0.59 & 76.12 ± 0.25 & 78.00 ± 0.32 & 79.08 ± 0.22 & 79.94 ± 0.27 \\
\midrule
\multicolumn{6}{c}{SUN-DSBO-GT} \\
\midrule
$\rho=0.798$ & \underline{\textbf{77.48 ± 0.61}} & \underline{\textbf{80.39 ± 0.46}} & \underline{\textbf{81.84 ± 0.31}} & \underline{\textbf{82.72 ± 0.23}} & \underline{\textbf{83.38 ± 0.28}} \\
$\rho=0.824$ & \underline{77.43 ± 0.75} & \underline{80.38 ± 0.50} & \underline{81.82 ± 0.35} & \underline{82.72 ± 0.22} & \underline{83.38 ± 0.25} \\
$\rho=0.905$ & \underline{77.24 ± 0.91} & \underline{80.21 ± 0.55} & \underline{81.79 ± 0.46} & \underline{82.65 ± 0.32} & \underline{83.36 ± 0.26} \\
$\rho=0.924$ & \underline{77.07 ± 1.02} & \underline{80.12 ± 0.61} & \underline{81.71 ± 0.48} & \underline{82.58 ± 0.34} & \underline{83.29 ± 0.29} \\
$\rho=0.970$ & \underline{76.62 ± 1.04} & \underline{79.55 ± 0.55} & \underline{81.01 ± 0.57} & \underline{81.75 ± 0.59} & \underline{82.33 ± 0.59} \\
\bottomrule
\end{tabular}
\label{tab:dctopo}
\end{table}

\newpage
\subsection{Meta-learning}
We consider a decentralized meta-learning problem, inspired by \cite{zhu2024sparkle}, with the problem formulation and experimental setup detailed in Appendix~\ref{details:meta}.
 The goal of the task is to learn a model that can generalize to new, unseen tasks by training on a variety of tasks with limited data. Our experiments are conducted on the following benchmark datasets: Omniglot \citep{Brenden2015} and MiniImageNet \citep{NIPS2016_90e13578}, both using a 4-layer CNN backbone, with different channel widths (64 for Omniglot, 32 for MiniImageNet). Omniglot is a few-shot learning dataset consisting of 1,623 handwritten characters from 50 different alphabets.  MiniImageNet is a subset of the larger ImageNet dataset \citep{deng2009imagenet}, containing 100 classes with 600 images per class.

In this section, we evaluate SUN-DSBO-GT/SE, SPARKLE-EXTRA, SPARKLE-GT, SLDBO, and D-SOBA using a heterogeneity parameter \( \mathcal{H} = 1 \) on ring topologies in a 5-way-5-shot setting. The task sampling strategy is as follows: for training, validation, and testing, we sample 20k, 200, and 200 tasks, respectively. Each task consists of 5-way 5-shot, with support and query sets containing 5×2 images each. During evaluation, the results are averaged over multiple tasks.
As shown in Figure \ref{meta} (a) and Table \ref{meta_O}, all algorithms perform similarly on the Omniglot dataset, with SUN-DSBO-GT being slightly faster in the early stages. However, neither SUN-DSBO-GT nor SUN-DSBO-SE achieves higher accuracy later on compared to the other algorithms. SPARKLE-GT achieves the highest accuracy, though the difference from the other algorithms is minimal, as most algorithms have relatively converged and stabilized.
As shown in Figure \ref{meta} (b) and Table \ref{meta_M}, SUN-DSBO-GT performs better overall on the MiniImageNet. However, SUN-DSBO-SE achieves the highest peak accuracy, with SUN-DSBO-GT securing the second-highest peak accuracy during the iterative process, as shown in Figure \ref{meta} (b).

\begin{figure*}[t]
	\centering
	\begin{subfigure}[b]{0.99\textwidth}
		\includegraphics[width=\textwidth]{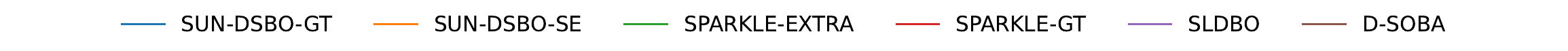}
	\end{subfigure}\\
	\centering
	\begin{subfigure}[b]{0.325\linewidth}
		\includegraphics[width=\linewidth]{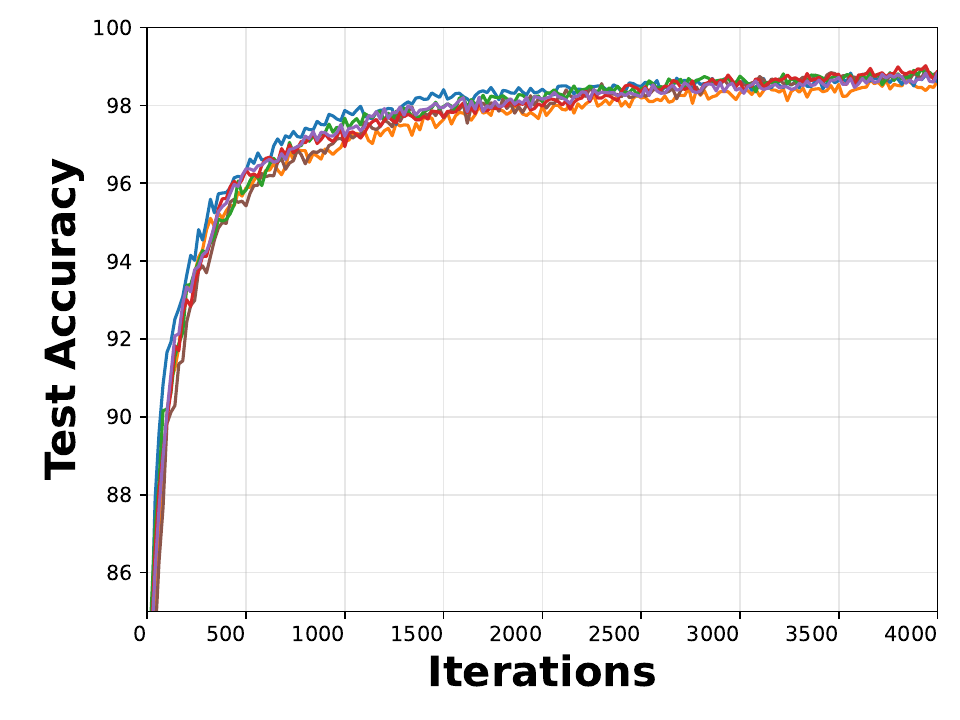}
		\caption{Omniglot}
	\end{subfigure}
	\begin{subfigure}[b]{0.325\linewidth}
		\includegraphics[width=\linewidth]{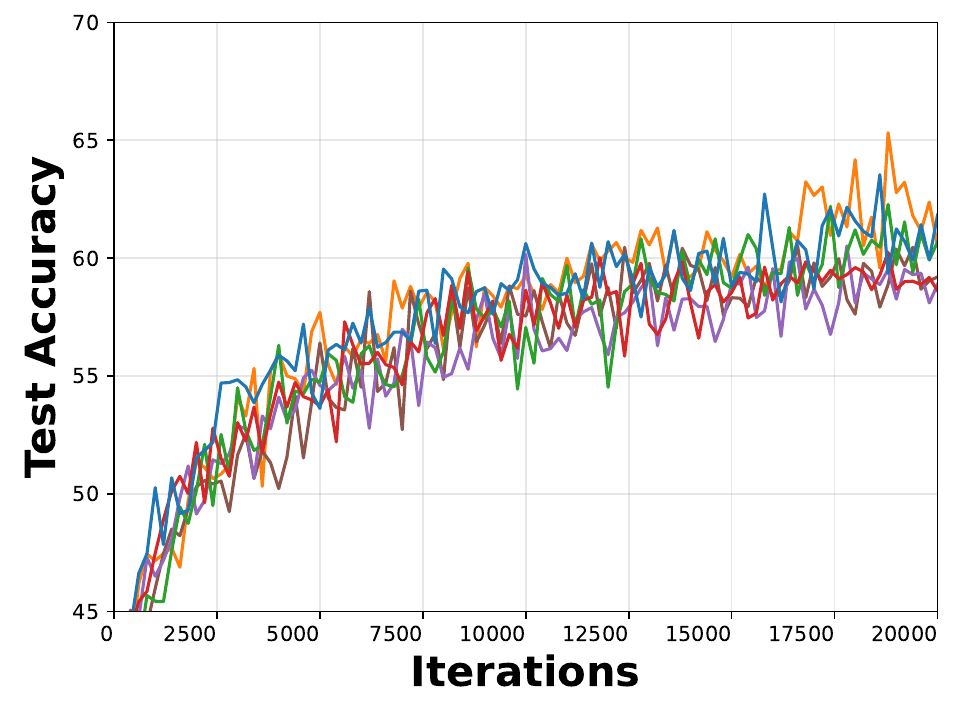}
		\caption{MiniImageNet}
	\end{subfigure}
	\caption{Comparison of algorithms for meta-learning on the Omniglot and MiniImageNet datasets.}

	\label{meta}
\end{figure*}

\begin{table}[t]
\centering
\caption{Comparison of algorithms for meta-learning on Omniglot dataset.}\label{meta_O}
\begin{tabular}{c|ccccc}
\hline
Method & 800 & 1600 & 2400 & 3200 & 4000 \\
\hline
SUN-DSBO-GT    & \textbf{97.42} & \textbf{98.25} & 98.35 & 98.51 & 98.60 \\
SUN-DSBO-SE    & 96.84 & 97.80 & 97.98 & 98.34 & 98.62 \\
SPARKLE-EXTRA  & 97.19 & 98.21 & \textbf{98.39} & 98.62 & 98.81 \\
SPARKLE-GT     & 97.04 & 98.06 & 98.36 & 98.64 & \textbf{98.88} \\
SLDBO          & 97.21 & 98.20 & 98.31 & 98.45 & 98.84 \\
D-SOBA         & 96.50 & 98.04 & 98.22 & \textbf{98.68} & 98.85 \\
\hline
\end{tabular}
\end{table}

\begin{table}[h]
\centering
\caption{Comparison of algorithms for meta-learning on the MiniImageNet datasets.}\label{meta_M}
\begin{tabular}{c|ccccc}
\hline
Method & 4000 & 8000 & 12000 & 16000 & 20000 \\
\hline
SUN-DSBO-GT    & 55.88 & \textbf{59.54} & \textbf{60.70} & \textbf{60.42} & \textbf{61.82} \\
SUN-DSBO-SE    & 55.94 & 56.16 & 60.30 & 59.42 & 60.54 \\
SPARKLE-EXTRA  & \textbf{56.30} & 56.02 & 54.52 & 59.40 & 60.68 \\
SPARKLE-GT     & 54.74 & 56.72 & 58.46 & 58.22 & 58.62 \\
SLDBO          & 54.10 & 54.94 & 55.90 & 59.56 & 58.88 \\
D-SOBA         & 50.22 & 54.84 & 58.76 & 58.44 & 59.20 \\
\hline
\end{tabular}
\end{table}

\section{Details of experiments}\label{details}
\subsection{Hyperparameter tuning strategy} \label{details:hptune}
\textit{In our experiments, hyperparameter tuning is not overly complex and does not significantly exceed the complexity of hyperparameter adjustment in recent popular algorithms presented in \citet{zhu2024sparkle,wang2024fully}.} Specifically, our hyperparameters include the proximal parameter $\gamma$, penalty parameter $\mu_k$, and the corresponding step-sizes $\lambda_{x}^{k}$, $\lambda_{y}^{k}$, $\lambda_{\theta}^{k}$.
Our analysis reveals that different hyperparameters require varying levels of tuning precision. The proximal parameter $\gamma$ has minimal impact on algorithm performance, as demonstrated in item (i) of Subsection \ref{prelim} and the corresponding experimental results in ``Results with different $\gamma$ in (\ref{min-max})'' (Subsection \ref{add:as}).
For the penalty parameter $\mu_k$, we observe that the initial value $\mu_0$ has significantly less influence than the power parameter $p$. Smaller values of $p$ tend to yield better experimental performance, which is related to the stationarity measure as discussed in Remark \ref{reST}. When $p$ is relatively small, its specific value has little effect on performance, a finding corroborated by our experimental results in ``Results with different $p$'' corresponding to Theorems \ref{main-sunse} and \ref{main-sungt}.
Based on these observations, we adopt a two-tier tuning strategy:
\begin{itemize}
    \item \textit{Coarse tuning}: For the proximal parameter $\gamma$ and penalty parameter $\mu_k$, we employ coarse grid search with large intervals, as these parameters are relatively insensitive.
    \item \textit{Fine tuning}: For the step-sizes $\lambda_{x}^{k}$, $\lambda_{y}^{k}$, $\lambda_{\theta}^{k}$, we perform fine-grained grid search with small intervals. Similarly, the algorithms in \citet{zhu2024sparkle,wang2024fully} also require fine-tuning of three step-sizes. Additionally, the algorithm in \citet{zhu2024sparkle} requires tuning three momentum coefficients, while the algorithm in \citet{wang2024fully} may require tuning an inner loop iteration count.
\end{itemize}
The above analysis demonstrates that our algorithm's hyperparameter tuning is not excessively complex and maintains comparable tuning requirements to existing popular methods.

Next, we present the specific configurations used in the experiments outlined in Section \ref{experiments} and \ref{addexperiments}. 
\subsection{Data hyper-cleaning} \label{details:dc}
We evaluate the proposed algorithms on the data hyper-cleaning task~\citep{zhu2024sparkle} using Fashion-MNIST. The dataset contains 60,000 training and 10,000 test images. We randomly split the training set into 50,000 training and 10,000 validation samples. A corruption rate \texttt{cr} is applied to introduce noise into the training labels, while the validation set remains clean. The goal is to learn a weighting policy over the noisy training data such that the model trained on the weighted dataset generalizes well to the validation set.

This task can be formulated as a decentralized bilevel optimization (DBO) problem, where the upper level optimizes the data-cleaning policy, and the lower level solves the client-specific training objectives under the learned policy:
\begin{equation}\label{dcrmodel}
	\begin{aligned}
		&\min_{\boldsymbol{\psi},\boldsymbol{w}}F(\boldsymbol{\psi},\boldsymbol{w})=\frac{1}{n}\sum_{i=1}^{n}\frac{1}{|\mathcal{D}_{\mathrm{val}}^{i}|}\sum_{(\mathbf{x}_{j},y_{j})\in\mathcal{D}_{\mathrm{val}}^{i}}\mathcal{L}\big(h(\mathbf{x}_{j}^{\top};\boldsymbol{w}),y_{j}\big), \\
		&\text{s.t.}\ \boldsymbol{w}\in\arg\min_{\boldsymbol{w}^{\prime}}f(\boldsymbol{\psi},\boldsymbol{w}^{\prime})=\frac{1}{n}\sum_{i=1}^{n}\frac{1}{|\mathcal{D}_{\mathrm{tr}}^{i}|}\sum_{(\mathbf{x}_{j},y_{j})\in\mathcal{D}_{\mathrm{tr}}^{i}}\sigma(\psi_{j})\mathcal{L}\big(h(\mathbf{x}_{j}^{\top};\boldsymbol{w}^{\prime}),y_{j}\big)+\alpha\|\boldsymbol{w}^{\prime}\|^{2},
	\end{aligned}
\end{equation}
where $\mathcal{D}_{\mathrm{tr}}^{i}$ and $\mathcal{D}_{\mathrm{val}}^{i}$ denote the local training and validation datasets of client $i$, respectively; $(\mathbf{x}_j, y_j)$ is the $j$-th data sample and label; $\sigma(\cdot)$ is the sigmoid function; $\mathcal{L}$ is the cross-entropy loss; $h$ is a two-layer MLP with a 300-dimensional hidden layer and ReLU activation; and $\boldsymbol{w}$ represents the model parameters. The regularization parameter is set to $\alpha = 0.001$, and the batch size is 50.

All experiments in this subsection were conducted using Python 3.7 on a machine with an Intel(R) Xeon(R) Gold 5218R CPU @ 2.10GHz and an NVIDIA A100 GPU (40GB memory).

\paragraph{Settings under varying $\alpha$.}  
In Figure~\ref{dcalpha}, we evaluate all methods using 10 agents arranged in a ring communication topology in Figure \ref{topo} (a). The weight matrix \(W = (w_{ij})\) is doubly stochastic, with
\[
w_{i,i} = a, \quad w_{i,i\pm1} = \tfrac{1-a}{2}, \quad \text{and } a = 0.5,
\]
while all other entries are set to zero. We set the heterogeneity parameter of the Dirichlet distribution to $\mathcal{H} = 0.1$, the label corruption rate to $\texttt{cr} = 0.3$, and vary the regularization parameter $\alpha \in \{0.001, 0.01, 0.1\}$.

For the algorithmic configurations, SPARKLE-GT incorporates a moving-average term with weight 0.2; DSGDA-GT uses a penalty parameter $\nu = 0.1$; and SUN-DSBO-GT adopts a decaying sequence $\mu_{k} := 2(1+k)^{-0.001}$ and sets $1/\gamma = 0.015$. The specific step sizes used for each method are summarized in Table~\ref{tabledcp}. For SUN-DSBO-GT, the step sizes are given as $[\lambda_{x}^{k}, \lambda_{y}^{k}, \lambda_{\theta}^{k}]$; for DSGDA-GT and SPARKLE-GT, the step sizes are given as $[\lambda_{x}^{k}, \lambda_{y}^{k}, \lambda_{z}^{k}]$.

\begin{table}[h]
	\caption{Step sizes used in the data hyper-cleaning task under different $\alpha$ values.}
	\centering
	\begin{tabular}{|l|c|c|c|}
		\hline
		$\alpha$ & SUN-DSBO-GT & DSGDA-GT & SPARKLE-GT \\
		\hline
		0.001 & [0.03, 0.02, 0.01] & [0.02, 0.02, 0.01] & [0.03, 0.03, 0.03] \\
		0.01  & [0.02, 0.02, 0.01] & [0.02, 0.01, 0.01] & [0.02, 0.02, 0.03] \\
		0.1   & [0.02, 0.02, 0.01] & [0.02, 0.01, 0.01] & [0.02, 0.02, 0.03] \\
		\hline
	\end{tabular}
	\label{tabledcp}
\end{table}

\paragraph{Settings under varying $\mathcal{H}$.}  
In Figures~\ref{dcd} and~\ref{dcht}, we evaluate all methods using 10 agents connected via a ring communication topology. The weight matrix \(W = (w_{ij})\) is doubly stochastic, defined as
\[
w_{i,i} = a, \quad w_{i,i\pm1} = \tfrac{1-a}{2}, \quad \text{with } a = 0.5,
\]
and all other entries are set to zero. The heterogeneity parameter of the Dirichlet distribution is varied as $\mathcal{H} \in \{0.001, 0.01, 0.1\}$, while the corruption rate is fixed to $\texttt{cr} = 0.3$ and the regularization parameter to $\alpha = 0.001$.

The step sizes are configured as follows. For D-SOBA (with moving-average term 0.2), the steps are $[\lambda_{x}^{k}, \lambda_{y}^{k}, \lambda_{z}^{k}] = [0.03, 0.03, 0.03]$; for SLDBO, $[0.03, 0.03, 0.03]$; for SPARKLE-EXTRA/GT (with moving-average term 0.2), $[0.03, 0.03, 0.03]$; for DSGDA-GT (with penalty parameter $\nu = 0.1$), $[0.02, 0.02, 0.01]$; and for SUN-DSBO-GT/SE (with decaying coefficient $\mu_k := 2(1+k)^{-0.001}$ and $\gamma = 0.015$),  the step sizes are $[\lambda_{x}^{k}, \lambda_{y}^{k}, \lambda_{\theta}^{k}] = [0.03, 0.02, 0.01]$.

\paragraph{Settings under varying \texttt{cr}.}  
In Figures~\ref{dccr},~\ref{dccrt}, and~\ref{dcsl}, we evaluate the effect of different corruption rates. All methods are tested with 10 agents connected via a ring communication topology, where the weight matrix \(W = (w_{ij})\) is doubly stochastic, defined as
\[
w_{i,i} = a, \quad w_{i,i\pm1} = \tfrac{1-a}{2}, \quad \text{with } a = 0.5,
\]
and all other entries are zero. The heterogeneity parameter of the Dirichlet distribution is fixed at $\mathcal{H} = 0.1$, while the corruption rate varies as $\texttt{cr} \in \{0.3, 0.45, 0.6\}$ and the regularization parameter is set to $\alpha = 0.001$.

The step sizes are configured as follows. For D-SOBA (with moving-average term 0.2), the steps are $[\lambda_{x}^{k}, \lambda_{y}^{k}, \lambda_{z}^{k}] = [0.03, 0.03, 0.03]$; for SLDBO, $[0.03, 0.03, 0.03]$; for SPARKLE-EXTRA/GT (with moving-average term 0.2), $[0.03, 0.03, 0.03]$; for DSGDA-GT (with penalty parameter $\nu = 0.1$), $[0.02, 0.02, 0.01]$; and for SUN-DSBO-GT/SE (with decaying coefficient $\mu_k := 2(1+k)^{-0.001}$ and $\gamma = 0.015$), the step sizes are $[\lambda_{x}^{k}, \lambda_{y}^{k}, \lambda_{\theta}^{k}] = [0.03, 0.02, 0.01]$. For D-PSGD and GNSD, the step size $\lambda_{x}^{k}$ is set to 0.1.

\paragraph{Settings under different communication topologies.}  
All experiments are conducted with 10 agents under various communication topologies, including exponential, ring, and line graphs.

For the \emph{exponential topology} in Figure \ref{topo} (a), the weight matrix \(W = (w_{ij})\) is doubly stochastic and defined as
\[
w_{i,m} = 
\begin{cases} 
	\displaystyle \frac{1}{8}, & \text{if there exists } j \in \{1, \dots, 4\} \text{ such that } m \equiv i \pm (2^j - 1) \pmod{n}, \\
	\displaystyle \frac{1}{4}, & \text{if } m = i, \\
	0, & \text{otherwise},
\end{cases}
\]
where \( \pm \) denotes symmetric bidirectional connections. Each self-loop has weight \( \frac{1}{4} \), and each pair of neighbors shares total weight \( \frac{1}{4} \), split evenly as \( \frac{1}{8} \) in each direction.

For the \emph{ring topology} in Figure \ref{topo} (b), the weight matrix is defined as
\[
w_{i,i} = a, \quad w_{i,i\pm1} = \tfrac{1-a}{2}, \quad a \in \{0.2, 0.4, 0.5\},
\]
with all other entries set to zero.

For the \emph{line topology} in Figure \ref{topo} (c), the communication matrix is given by
\[
w_{i,j} = 
\begin{cases}
	1 & \text{if } (i=0 \land j=1) \lor (i=n-1 \land j=n-2), \\
	0.5 & \text{if } 1 \leq i \leq n-2 \text{ and } |j - i| = 1, \\
	0 & \text{otherwise},
\end{cases}
\]
where \( \land \) denotes logical AND and \( \lor \) denotes logical OR.
The corresponding connectivity parameter $\rho$ for these topologies takes values in $\{0.798, 0.824, 0.905, 0.924, 0.970\}$. 

The heterogeneity parameter for the Dirichlet distribution is set to $\mathcal{H} = 0.5$, with corruption rate $\texttt{cr} = 0.3$ and regularization parameter $\alpha = 0.001$. For SUN-DSBO-GT and SUN-DSBO-SE, we adopt a decaying coefficient $\mu_k := 2(1+k)^{-0.001}$ and fixed $\gamma = 0.015$.  The step sizes are $[\lambda_{x}^{k}, \lambda_{y}^{k}, \lambda_{\theta}^{k}] = [0.03, 0.02, 0.01]$.
\begin{figure*}[h]
	\centering
	\begin{subfigure}[b]{0.32\textwidth}
		\includegraphics[width=\textwidth]{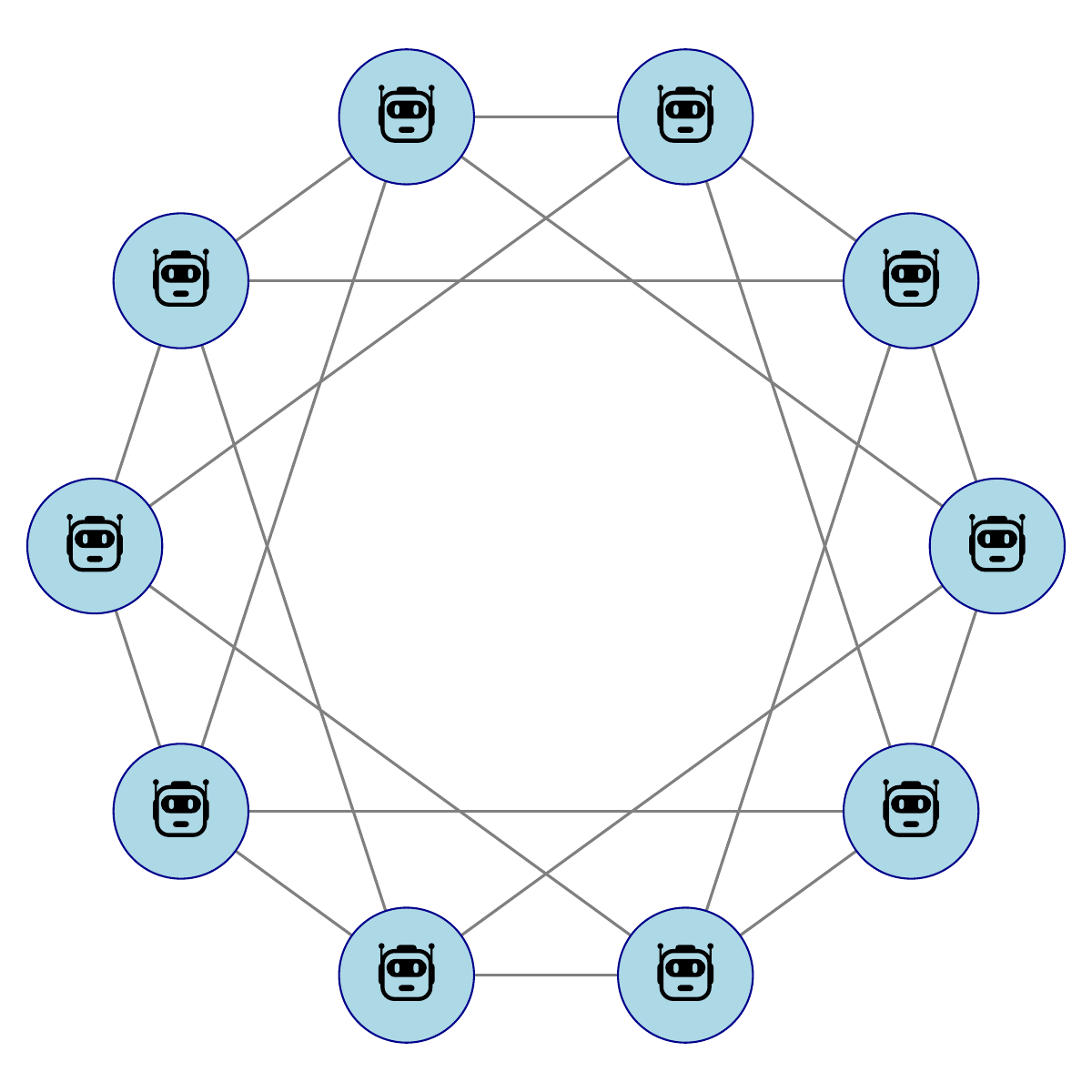}
		\caption{Exponential topology}
	\end{subfigure}
	\begin{subfigure}[b]{0.32\textwidth}
		\includegraphics[width=\textwidth]{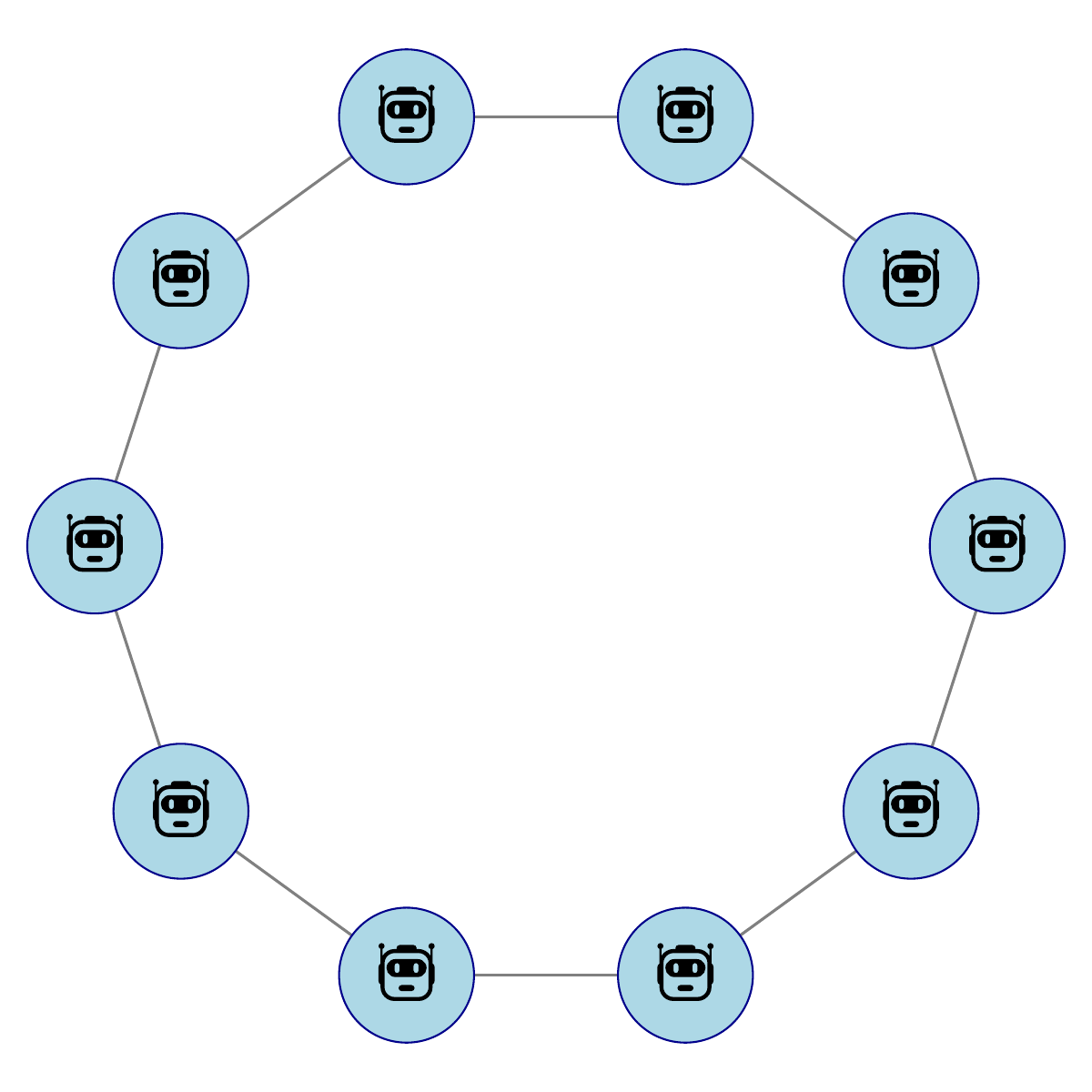}
		\caption{Ring topology}
	\end{subfigure}
	\begin{subfigure}[b]{0.32\textwidth} 
		\includegraphics[width=\textwidth]{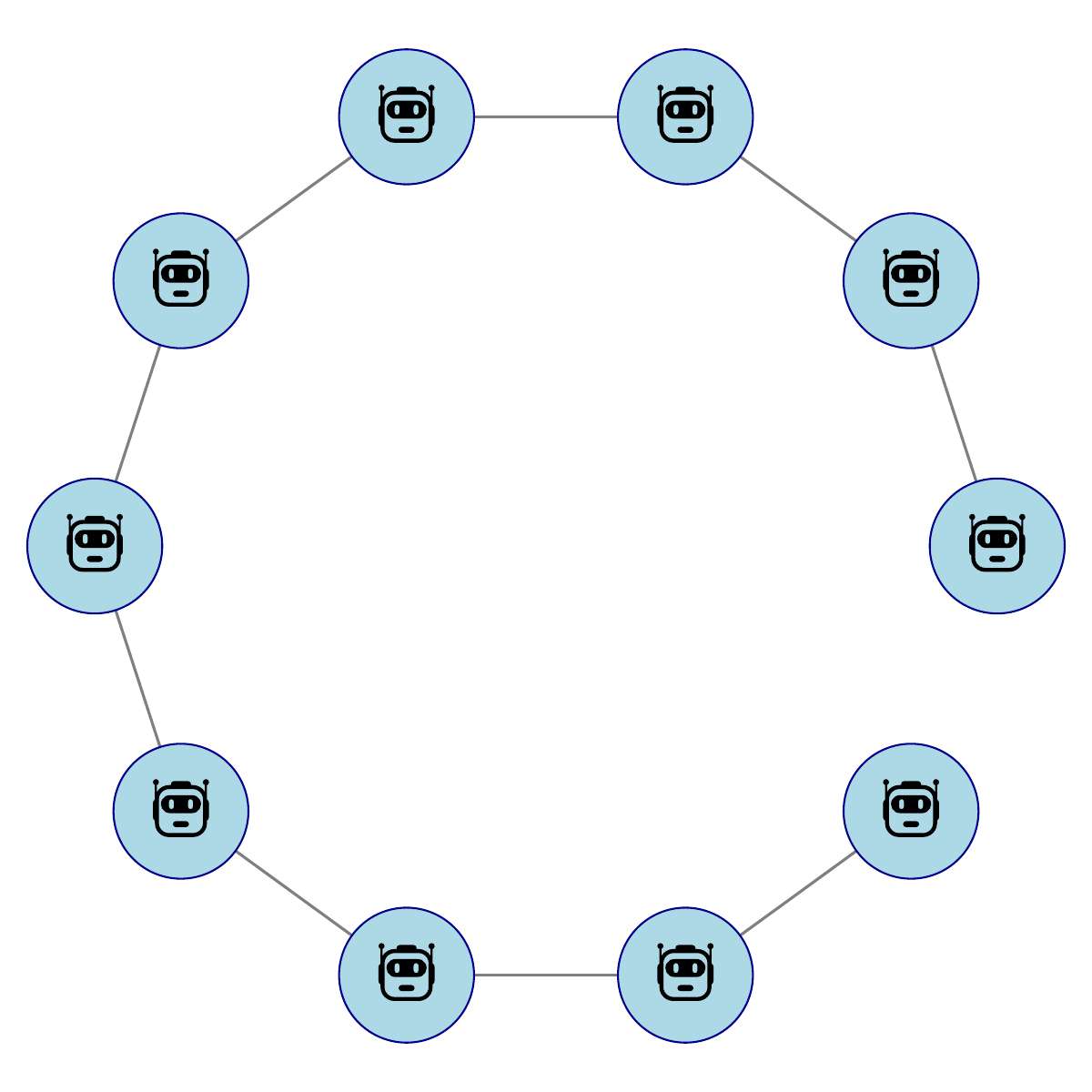}
		\caption{Line topology}
	\end{subfigure}
	\caption{
		Illustration of the communication topologies used in the experiments: (a) exponential graph with multi-hop connections, (b) ring graph with uniform nearest-neighbor links, and (c) line graph with fixed endpoints. All graphs are undirected and designed to ensure a doubly stochastic communication matrix.
	}
	\label{topo}
\end{figure*}

\paragraph{Settings with different number $n$ of agents.} In Figure  \ref{dcnum} ,we evaluate all methods using 10 agents connected via a ring communication topology. The weight matrix \(W = (w_{ij})\) is doubly stochastic, defined as
\[
w_{i,i} = a, \quad w_{i,i\pm1} = \tfrac{1-a}{2}, \quad \text{with } a = 0.5,
\]
and all other entries are set to zero. We set the heterogeneity parameter of the Dirichlet distribution to $\mathcal{H} = 0.1$, while fixing the corruption rate to $\texttt{cr} = 0.3$ and the regularization parameter to $\alpha = 0.001$. The number of agents is varied as $n \in \{10, 15, 20\}$. For Sun-DSBO-GT and Sun-DSBO-SE, we use a decaying penalty coefficient $\mu_k := 2(1+k)^{-0.001}$ and set $\gamma = 50$. The step sizes are fixed as $[\lambda_{x}^{k}, \lambda_{y}^{k}, \lambda_{\theta}^{k}] = [0.03, 0.02, 0.01]$.

\paragraph{Settings with SUN-DSBO Extended Algorithms.} 
The hyperparameter settings are the same as those in Figure~\ref{dccr}.

\subsection{Hyper-representation}\label{details:hr}
Inspired by \citet{franceschi2018bilevel, tarzanagh2022fednest}, we consider a hyper-representation problem designed to optimize a classification model in a two-stage process under a decentralized setting. The outer-level objective learns a shared representation by minimizing validation loss, while the inner-level objective learns a local classifier on the training data. The bilevel formulation is given as:
\begin{equation}
	\begin{aligned}
		&\min_{x,\,y^{*}}\frac{1}{n}\sum_{i=1}^{n}f_{\mathrm{ce}}(x, y^{*}(x);\, \mathcal{D}_{\mathrm{val}}^{i}) \\
		&\text{s.t.}\quad y^{*}(x) \in \arg\min_{y} \frac{1}{n}\sum_{i=1}^{n} f_{\mathrm{ce}}(x, y;\, \mathcal{D}_{\mathrm{tr}}^{i}) +\alpha\|y\|^{2},
	\end{aligned}
\end{equation}
where \(x\) denotes the parameters of the representation (backbone) network, and \(y\) the parameters of the classifier (head). Each client \(i\) has its own local training dataset \(\mathcal{D}_{\mathrm{tr}}^{i}\) and validation dataset \(\mathcal{D}_{\mathrm{val}}^{i}\).

The cross-entropy loss is defined as:
\begin{equation*}
	f_{\mathrm{ce}}(x, y; \mathcal{D}) := -\frac{1}{|\mathcal{D}|} \sum_{d_m \in \mathcal{D}} \log \frac{\exp\left(h_{l_m}(x, y; d_m)\right)}{\sum_{c=1}^{C} \exp\left(h_c(x, y; d_m)\right)},
\end{equation*}
where \(C\) is the number of classes, \(d_m\) is the \(m\)-th data point with label \(l_m\), and \(h(x, y; d_m) = [h_1(x, y; d_m), \dots, h_C(x, y; d_m)]^\top \in \mathbb{R}^C\) denotes the model output (logits) for input \(d_m\).

All experiments in this subsection were conducted using Python 3.7 on a machine equipped with an Intel(R) Xeon(R) Gold 5218R CPU @ 2.10GHz and an NVIDIA A100 GPU with 40GB of memory.

\paragraph{Settings with MNIST and MLP.} 
In Figures~\ref{hrr}(a)--(c) and~\ref{hrht}(a)--(c), we evaluate the proposed algorithms on the hyper-representation problem using the MNIST dataset. The dataset contains 60,000 training and 10,000 test images. We randomly split the training set into 50,000 training and 10,000 validation samples. The model is a two-layer MLP with a 200-dimensional hidden layer and ReLU activation.

All methods are evaluated using 10 agents arranged in a ring communication topology. The batch size is set to 30. The weight matrix \(W = (w_{ij})\) is doubly stochastic, with
\[
w_{i,i} = a, \quad w_{i,i\pm1} = \tfrac{1-a}{2}, \quad \text{and } a = 0.5,
\]
while all other entries are set to zero. We set the heterogeneity parameter of the Dirichlet distribution to \(\mathcal{H} \in \{0.1, 0.5, 1\}\), the label corruption rate to \(\texttt{cr} = 0.3\), and the regularization parameter to \(\alpha = 0.001\).

The step sizes are configured as follows. For D-SOBA (with moving-average term 0.2), the steps are \([0.03, 0.03, 0.03]\); for SLDBO, \([0.03, 0.03, 0.03]\); for SPARKLE-EXTRA/GT (with moving-average term 0.2), \([0.03, 0.03, 0.03]\); for DSGDA-GT (with penalty parameter \(\nu = 0.1\)), \([0.02, 0.02, 0.01]\); and for SUN-DSBO-GT (with decaying coefficient \(\mu_k := 2(1+k)^{-0.001}\) and \(\gamma = 50\)), \([0.03, 0.02, 0.01]\).

\paragraph{Settings with CIFAR and CNN.} 
In Figures~\ref{hrr}(d)--(f) and~\ref{hrht}(d)--(f), we evaluate the proposed algorithms on the hyper-representation problem using the CIFAR-10 dataset. The dataset contains 60,000 training and 10,000 test images. We randomly split the training set into 50,000 training and 10,000 validation samples. The model is a 7-layer CNN~\citep{lecun1998gradient}.

All methods are evaluated using 10 agents arranged in a ring communication topology. The batch size is set to 30. The weight matrix \(W = (w_{ij})\) is doubly stochastic, with
\[
w_{i,i} = a, \quad w_{i,i\pm1} = \tfrac{1-a}{2}, \quad \text{and } a = 0.5,
\]
while all other entries are set to zero. We set the heterogeneity parameter of the Dirichlet distribution to \(\mathcal{H} \in \{0.1, 0.5, 1\}\), the label corruption rate to \(\texttt{cr} = 0.3\), and the regularization parameter to \(\alpha = 0.001\).

The step sizes are configured as follows. For D-SOBA (with moving-average term 0.2), the steps are \([0.1, 0.1, 0.05]\); for SLDBO, \([0.1, 0.1, 0.05]\); for SPARKLE-EXTRA/GT (with moving-average term 0.2), \([0.1, 0.1, 0.05]\); and for SUN-DSBO-GT/SE (with decaying coefficient \(\mu_k := (1+k)^{-0.001}\) and \(\gamma = 0.02\)), \([0.05, 0.05, 0.05]\).
For the ablation study on the effect of the hyperparameter $p$ (Table~\ref{tablehrp}), we set the decaying coefficient as $\mu_k := (1+k)^{-0.001}$ and $\gamma = 0.02$, and use step sizes $[0.05, 0.05, 0.05]$ for the hyper-representation task.
For the ablation study on the effect of the hyperparameter $\gamma$ (Table~\ref{tablehrgamma}), we consider the following step sizes and $\mu_k$ schedules in the hyper-representation task:
\begin{table}[h]
    \caption{Step sizes used in the hyper-representation task under different $\gamma$ values.}
    \centering
    \begin{tabular}{|c|c|c|c|c|}
        \hline
        $\gamma$ & 5 & 0.5 & 0.05 & 0.005 \\
        \hline
        $\mu_k$ & $2(1+k)^{-0.001}$ & $2(1+k)^{-0.001}$ & $2(1+k)^{-0.001}$ & $(1+k)^{-0.001}$ \\
        \hline
        Step sizes & [0.15, 0.15, 0.15] & [0.07, 0.07, 0.07] & [0.07, 0.015, 0.015] & [0.07, 0.015, 0.015] \\
        \hline
    \end{tabular}
\end{table}

\subsection{Ablation studies}\label{details:as}

Following~\citet{chen2023decentralized,chen2024decentralized}, we study decentralized hyperparameter optimization for \(\ell^2\)-regularized logistic regression. The objective is to identify the optimal regularization coefficient \(\pi\) under the constraint that the lower-level model parameters \(\tau^*(\pi)\) minimize the training loss.

\paragraph{Settings with synthetic data.}  
We first evaluate the performance on heterogeneous synthetic data~\citep{pedregosa2016hyperparameter,grazzi2020iteration}. On node \(i\), the upper- and lower-level objectives are defined as
\begin{align*}
	f_{i}\bigl(\pi,\tau^*(\pi)\bigr)
	&= \sum_{(x_{e},y_{e})\in\mathcal{D}_{i}^{\mathrm{val}}} \mathcal{L}\bigl(y_{e}\,x_{e}^{\mathsf{T}}\tau^*(\pi)\bigr),\\
	g_{i}(\pi,\tau)
	&= \sum_{(x_{e},y_{e})\in\mathcal{D}_{i}^{\mathrm{tr}}}  \mathcal{L} \bigl(y_{e}\,x_{e}^{\mathsf{T}}\tau\bigr)
	+ \tfrac{1}{2}\,\tau^{\mathsf{T}} \mathrm{diag}\bigl(e^{\pi}\bigr)\, \tau,
\end{align*}
where \( \mathcal{L}(x) = \log(1 + e^{-x})\), and \(\mathrm{diag}(v)\) denotes a diagonal matrix formed from vector \(v\). The lower-level optimal parameter is given by \(\tau^*(\pi) = \arg\min_{\tau} g_i(\pi, \tau)\).

We generate a ground-truth parameter \(\tau^* \in \mathbb{R}^s\) and a noise vector \(\beta \in \mathbb{R}^t\). Each feature vector \(x_e\) on node \(i\) is drawn from the Gaussian distribution \(\mathcal{N}(0, i^2)\), and the label is assigned as \(y_e = \mathrm{sign}(x_e^{\mathsf{T}} \tau^* + 0.1\,\beta)\), with a noise rate of \(\beta = 0.1\).

In Figures~\ref{hpcirsyn}, all methods are evaluated using 8 agents under various communication topologies. For the ring topology, the weight matrix \(W = (w_{ij})\) is doubly stochastic, defined by
\[
w_{i,i} = a, \quad w_{i,i\pm1} = \tfrac{1-a}{2}, \quad \text{with } a = 0.4,
\]
and all other entries are set to zero.

All experiments are conducted on a machine with Intel(R) Core(TM) i7-10510U CPU @ 1.80GHz. The step sizes are configured as follows. For SUN-DSBO-GT/SE (with coefficient \(\mu_k := 1.5(1+k)^{-0.001}\) and \(\gamma = 0.9\)), the steps are \([0.01, 0.02, 0.03]\).

\paragraph{Settings with MNIST.}
We evaluate all algorithms on a hyperparameter optimization task for logistic regression using the MNIST dataset. We select 20,000 training samples and 5,000 test samples from the dataset. On node \(i\), the bilevel objectives are defined as
\begin{align*}
	f_{i}\bigl(\pi,\tau^*(\pi)\bigr)
	&= \frac{1}{|\mathcal{D}_{\mathrm{val}}^{(i)}|}\sum_{(x_{e},y_{e})\in\mathcal{D}_{\mathrm{val}}^{(i)}} L\bigl(x_{e}^{\mathsf{T}}\tau^*(\pi), y_{e}\bigr),\\
	g_{i}(\pi,\tau)
	&= \frac{1}{|\mathcal{D}_{\mathrm{tr}}^{(i)}|}\sum_{(x_{e},y_{e})\in\mathcal{D}_{\mathrm{tr}}^{(i)}} L\bigl(x_{e}^{\mathsf{T}}\tau, y_{e}\bigr)
	+ \frac{1}{c\,s}\sum_{j=1}^{s} e^{\pi_{j}} \sum_{k=1}^{c} \tau_{kj}^{2},
\end{align*}
where \(c = 10\) is the number of classes, \(s = 784\) is the feature dimension, \(L(\cdot,\cdot)\) denotes the cross-entropy loss, and \(\mathcal{D}_{\mathrm{tr}}^{(i)}\) and \(\mathcal{D}_{\mathrm{val}}^{(i)}\) are the training and validation sets on node \(i\), respectively.

In Figure~\ref{hpcirm}, all methods are evaluated using 8 agents under various communication topologies. For the ring topology, the weight matrix \(W = (w_{ij})\) is doubly stochastic and defined as
\[
w_{i,i} = a, \quad w_{i,i\pm1} = \tfrac{1-a}{2}, \quad \text{with } a = 0.4,
\]
and all other entries are set to zero.

All experiments are conducted on a machine equipped with an Intel(R) Xeon(R) Gold 5218R CPU @ 2.10GHz. The step sizes are configured as follows. For SUN-DSBO-GT/SE (with decaying coefficient \(\mu_k := (1+k)^{-0.001}\) and \(1/\gamma = 0.02\)), the step sizes are set to \([0.05, 0.1, 0.1]\).

\paragraph{Settings with Dynamic Topology.} 
The hyperparameter settings are the same as those in Figure~\ref{hrr} (b) and (c), corresponding to the heterogeneity parameters \( \mathcal{H} \) for each case.

\subsection{Meta-learning} \label{details:meta}
In the context of decentralized meta-learning, the problem can be formulated as a decentralized stochastic bilevel optimization (SBO) problem \citep{zhu2024sparkle}. We assume that the data for each task is distributed across \( N \) nodes. Each node \( i \) (for \( i = 1, 2, \dots, N \)) holds a local training dataset \( D^{\text{Train}}_i \) and a local validation dataset \( D^{\text{Val}}_i \), which are subsets of the full task-specific datasets.

There are \( R \) tasks \( \{ T_s \}_{s=1}^{R} \), where each task \( T_s \) has its corresponding loss function \( L(x, y_s, \xi) \), with \( \xi \) being a stochastic sample drawn from the data distribution \( D_s \). In this context, \( y_s \) represents task-specific parameters, while \( x \) represents global parameters shared by all tasks.

The decentralized formulation seeks to optimize the global parameters \( x \) by coordinating the task-specific parameters \( y_1, y_2, \dots, y_R \) across all nodes while minimizing the following objective function:
\[
f_i(x, y) = \frac{1}{R} \sum_{s=1}^R \mathbb{E}_{\xi \sim D^{\text{Train}}_{i, s}} \left[ L(x, y_s, \xi) \right] + R(y_s)
\]
where \( R(y_s) \) represents a regularization term applied to the task-specific parameters \( y_s \), such as \( R(y_s) = C_r \| y_s \|^2 \), ensuring that the task parameters remain small or smooth.

Each node \( i \) aims to compute the task-specific parameters \( y_s \) that minimize the expected loss over the local training data, while respecting the regularization:
\[
g_i(x, y) = \frac{1}{R} \sum_{s=1}^R \mathbb{E}_{\xi \sim D^{\text{Train}}_{i, s}} \left[ L(x, y_s, \xi) \right] + R(y_s)
\]
The goal of meta-learning is to find the global parameters \( x \) that minimize the average loss across all tasks. The overall objective can be written as:
\[
\min_{x, y_1, \dots, y_R} \frac{1}{R} \sum_{s=1}^R \mathbb{E}_{\xi \sim D_s} \left[ L(x, y_s, \xi) \right]
\]
where \( L(x, y_s, \xi) \) represents the loss function for the task \( T_s \) with given global parameters \( x \) and task-specific parameters \( y_s \), evaluated on the sample \( \xi \) drawn from the distribution \( D_s \).

This decentralized meta-learning framework ensures that each node learns the task-specific parameters collaboratively while maintaining local privacy constraints.

All methods are evaluated with 10 agents for Omniglot and 5 agents arranged in a ring topology. The batch size (meta-batch) is set to 16. The mixing matrix \( W \) is doubly stochastic, with self, left, and right weights defined as:
\[
w_{i,i} = w_{i,i \pm 1} = \frac{1}{3}, \quad \text{and all other entries are set to 0.}
\]
Tasks are 5-way 5-shot, with 20k, 200, and 200 tasks sampled for training, validation, and testing, respectively. The evaluation is capped at 1,000 tasks. Unless otherwise stated, the data is uniformly split (Dirichlet heterogeneity is off by default).

The models use a 4-layer CNN backbone consisting of 3×3 convolutions, batch normalization (BN), ReLU activation, and 2×2 pooling per layer. The channel width is 64 for Omniglot, and 32 for MiniImageNet, followed by a task-specific linear head.

The following are the hyperparameter settings:

For Omniglot: SPARKLE-GT/EXTRA, SLDBO D-SOBA: step sizes = [0.01, 0.1, 0.1]; SUN-DSBO-SE/GT: decaying coefficient $\mu_k := 4 (1+k)^{-0.01}$, $\gamma = 6.0$, and step sizes = [0.02, 0.1, 0.1].

For MiniImageNet: SPARKLE-GT/EXTRA, SLDBO D-SOBA: step sizes = [0.01, 0.05, 0.05]; SUN-DSBO-SE/GT: decaying coefficient $\mu_k := (1+k)^{-0.01}$, $\gamma = 1.0$, and step sizes = [0.01, 0.15, 0.15].

The momentum for task-specific SGD is set to 0.85, with a regularization term of 0.001 for MiniImageNet and Omniglot.

 \section{Additional theoretical results} \label{atr}
\subsection{The relationship between problem (\ref{DSBO}) and problem (\ref{Upsct-0}) } \label{DSBOandup}
To clearly demonstrate the effectiveness of solving the reconstruction problem (\ref{Upsct-0}) in this work, we first describe the relationship between problem (\ref{DSBO}) and problem (\ref{reDSBO}), and subsequently clarify the connection between problem (\ref{reDSBO}) and problem (\ref{Upsct-0}).

\textbf{Relationship between problem (\ref{DSBO}) and problem (\ref{reDSBO}).}
Since it is difficult to work directly with implicit gradients for problem (\ref{DSBO}) when the lower-level problem is nonconvex, researchers often consider the following equivalent value function-based reformulation of problem (\ref{DSBO}):
\begin{align}\label{VALref}
\min_{x,y} F(x,y) \quad \text{s.t.} \quad G(x,y) - V(x) \leq 0, \quad \text{where} \quad V(x) = \min_{y} G(x,y),
\end{align}
where $V(x)$ is referred to as the value function. However, when the lower-level problem is nonconvex, $V(x)$ becomes \textit{nonsmooth}, and conventional constraint qualifications fail to hold, see, e.g., \citet[Proposition 3.2]{ye1995optimality}. As a result, the classical Karush–Kuhn–Tucker (KKT) condition of problem (\ref{VALref}) are overly complex—due to the computation of the subgradient of $V(x)$—and are no longer suitable as an effective metric for problem (\ref{DSBO}).

To address the nonsmooth issue, we apply a \textit{smooth relaxation} using the Moreau envelope, leading to problem (\ref{reDSBO}):
\[
\min_{(x,y)} F(x,y) \quad \text{s.t.} \quad G(x,y) - V_\gamma(x,y) \leq 0, 
\]
where
\[
V_\gamma(x,y) := \min_{\theta \in \mathbb{R}^{d_y}} \left\{ G(x,\theta) + \frac{1}{2\gamma} \|\theta - y\|^2 \right\}, \quad \gamma > 0,
\]
is the Moreau envelope of $G$. Notably, $V_\gamma(x,y)$ is smooth and satisfies $G(x,y)-V_\gamma(x,y)\geq0$. Moreover, as discussed in item (i) in Subsection \ref{prelim}, under mild conditions, problem (\ref{reDSBO}) is equivalent to the relaxed version of problem (\ref{DSBO}):
\begin{align}\label{eqconstrain}
\min_{x,y} F(x,y) \quad \text{s.t.} \quad \nabla_y G(x,y) = 0. 
\end{align}
It is worth noting that problem (\ref{eqconstrain}) is independent of $\gamma$ and \textit{remains equivalent to  problem (\ref{DSBO})} when the lower-level objective satisfies either a convexity condition or the Polyak–Łojasiewicz (PL) condition. Therefore, it is reasonable to consider problem (\ref{reDSBO}) as a surrogate for problem (\ref{DSBO}).

\textbf{Connection between problem (\ref{reDSBO}) and problem (\ref{Upsct-0}).}
Although problem (\ref{reDSBO}) is smooth, conventional constraint qualifications (e.g., MFCQ) do not hold for its inequality constraint $G(x,y) - V_\gamma(x,y) \leq 0$. Hence, we apply the \textit{penalty method}, which is widely used in such problems; see, e.g., the penalty approaches for nonconvex bilevel optimization discussed in \citet{kwon2023penalty}. Thanks to the property $G(x,y)-V_\gamma(x,y)\geq0$, the penalty formulation can be expressed as problem (\ref{Upsct-0}):
\begin{align*}
\min_{(x,y) \in \mathbb{R}^{d_x} \times \mathbb{R}^{d_y}}  \Psi_{\mu}(x,y) := \mu F(x,y) +  G(x,y) - V_\gamma(x,y). 
\end{align*}
Depending on whether the exact penalty property holds, the penalty parameter $\mu$ is chosen to be either sufficiently small or gradually diminishing. In particular, if an \textit{error bound condition holds} for problem (\ref{reDSBO}), then $\mu$ can be fixed as a sufficiently small constant. In this case, problem \ref{Upsct-0} is equivalent to problem \ref{reDSBO}. As noted, the theoretical analysis and results remain valid for fixed $\mu$, i.e., when $p=0$.
 On the other hand, if the exact penalty property does not hold for problem (\ref{reDSBO}), then $\mu$ should gradually diminish to ensure that problem (\ref{Upsct-0}) serves as an effective approximation of problem (\ref{reDSBO}).  
 
Next, we discuss the relationship between the metric in problem (\ref{DSBO}) and problem (\ref{Upsct-0}).

\begin{remark}\label{reST}
We have discussed the relationship between problem~(\ref{DSBO}) and problem~(\ref{reDSBO}) in the previous discussion, so the connection between the metric in problem~(\ref{DSBO}) and problem~(\ref{reDSBO}) follows analogously. 
If an \textit{error bound condition} holds for problem~(\ref{reDSBO}), \(\mu_{k}\) can be fixed as a sufficiently small constant. In this case, the stationarity measure \(\nabla \Psi_{\mu_{k}}(x, y)\) with \(\mu_{k}\) being a sufficiently small constant corresponds to the classical Karush–Kuhn–Tucker (KKT) condition of problem~(\ref{reDSBO}).
 Otherwise, if the exact penalty property does not hold, $\mu_{k}$ should gradually diminish so that problem~(\ref{Upsct-0}) approximates problem~(\ref{reDSBO}); then, \(\nabla \Psi_{\mu_{k}}(x, y)\) with a varying sequence $\mu_k$  corresponds to an approximate KKT condition. 
In both scenarios, \(\nabla \Psi_{\mu_{k}}(x, y)\) effectively measures optimality in problem~(\ref{reDSBO}).
\end{remark}

\subsection{Additional convergence rate results}\label{atracr}
\paragraph{Results with \(\mu_{k}\) including \(p=0\).} When the penalty parameter \(\mu_{k}\) is fixed at a constant value \(\mu_{0}\), setting \(p = 0\) in Theorems \ref{main-sunse} and \ref{main-sungt} leads to the corresponding convergence rate result for the approximation problem (\ref{min-max}), where \(\mu_{k} = \mu_{0}\), derived from its stationarity measure.
\begin{proposition}\label{pro-sunse}
By replacing \(p \in (0, 1/4)\) in Theorem \ref{main-sunse} with \(p \in [0, 1/4)\), we obtain the following result:
	\begin{align*}
		&\frac{1}{K} \sum_{k=0}^{K-1} \mathbb{E}\|\nabla \Psi_{\mu_{k}} (\bar{x}^k, \bar{y}^k)\|^2 = 
		\mathcal{O}\left( \frac{\mathcal{C}_{1}}{n^{1/2} K^{1/2}} \right) 
		+ \mathcal{O}\left( \frac{n(\delta_f^2 + \delta_g^2)}{(1-\rho)^{2}K} \right) 
		+ \mathcal{O}\left( \frac{n(\sigma_f^2 + \sigma_g^2)}{(1-\rho)^{2}K} \right),
	\end{align*}
	where $\mathcal{C}_{1}$ is a positive constant depending on $\delta_f^2$, $\delta_g^2$, $\sigma_f^{2}$, and $\sigma_g^{2}$, but independent of $n$ and $(1-\rho)$.
\end{proposition}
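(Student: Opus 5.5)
The plan is to rerun the proof of Theorem~\ref{main-sunse} and check that no step uses $p>0$ strictly. The argument has three parts. First, a descent inequality for a Lyapunov function built from the primal function $\Psi_{\mu_k}$, evaluated at the averaged iterates $(\bar x^k,\bar y^k)$. Second, a recursion for the dual error $\|\bar\theta^k-\theta^*_k\|^2$, where $\theta^*_k=\arg\min_\theta\{G(\bar x^k,\theta)+\frac{1}{2\gamma}\|\theta-\bar y^k\|^2\}$ is unique because $\gamma<\frac{1}{2L_2}$ makes the inner problem $(\frac1\gamma-L_2)$-strongly convex. Third, a consensus-error recursion for $\Delta^k$ driven by $\rho$, $\sigma_f^2,\sigma_g^2$ and $\delta_f^2,\delta_g^2$. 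These combine into Lemma~\ref{lyafuncdes-se}, which is telescoped and divided by $K$.

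The only places where $\mu_k$ enters are the following.
\begin{itemize}
\item The smoothness constant of $\Psi_{\mu_k}$, which is $\mu_k L_1$ plus a $\gamma$-dependent term. It is bounded using $\mu_k\le\mu_0$, which holds for every $p\ge0$.
\item The stochastic and heterogeneity terms $\mu_k^2\delta_f^2$ and $\mu_k^2\sigma_f^2$. These are also bounded by $\mu_0^2(\cdot)$.
\item The change-of-penalty term $\Psi_{\mu_{k+1}}(\bar x^{k+1},\bar y^{k+1})-\Psi_{\mu_k}(\bar x^{k+1},\bar y^{k+1})=(\mu_{k+1}-\mu_k)F(\bar x^{k+1},\bar y^{k+1})$.
\end{itemize}

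The third item is the only one that needs care. For $p>0$, $\mu_{k+1}-\mu_k\le0$. Using $F\ge\underline F$, the term $(\mu_{k+1}-\mu_k)(F-\underline F)$ is nonpositive, and the leftover piece $(\mu_k-\mu_{k+1})\underline F$ telescopes to at most $\mu_0|\underline F|$. This is presumably how the original proof handles it: shift the Lyapunov function to $\mu_k(F-\underline F)+G-V_\gamma\ge0$. For $p=0$ the difference is identically zero, so the bound holds trivially and is in fact sharper. Hence every inequality in Lemma~\ref{lyafuncdes-se} remains valid with the same constants.

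Next I would check the summation step. The stationarity measure $\mathbb{E}\|\nabla\Psi_{\mu_k}(\bar x^k,\bar y^k)\|^2$ is bounded, up to constants, by the descent terms plus the dual error and the consensus error. The restriction $p<1/4$ is only used where sums like $\sum_k\mu_k^{-1}\lambda^k\cdot(\ldots)$ appear, for example if the descent inequality is normalized by $\mu_k$. In that case the factor $\mu_K^{-1}=\mu_0^{-1}(K+1)^{p}$ is absorbed into the $K^{-1/2}$ rate, which requires $p<1/4$. At $p=0$ this factor is the constant $\mu_0^{-1}$, so the condition is satisfied a fortiori.

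Finally, I would plug in the step sizes $\lambda^k_\theta=c_\theta n^{1/2}K^{-1/2}$ and $\lambda_x^k=\lambda_y^k=c_\lambda\lambda_\theta^k$. This yields the same three terms: the initial Lyapunov gap over $K\lambda$, giving $\mathcal{C}_1/(n^{1/2}K^{1/2})$; the variance terms of order $\lambda\,\delta^2/n$; and the consensus-induced terms of order $\lambda^2(\delta^2+\sigma^2)/(1-\rho)^2$, giving $n(\cdot)/((1-\rho)^2K)$.

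The main obstacle is verifying that no hidden use of $p>0$ appears, such as a bound of the form $\sum_k(\mu_k-\mu_{k+1})/\mu_k^2$ or a lower bound on $\mu_k-\mu_{k+1}$. I expect each such occurrence to be either zero or monotone in $p$ at $p=0$.
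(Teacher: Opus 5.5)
Your proposal is correct and follows the paper's route. Theorem~\ref{mainthseapp} is already stated and proved for $p\in[0,1/4)$ through the Lyapunov function built on $\Phi_{\mu_k}=\mu_k(F-\underline{F})+G-V_\gamma\ge 0$, and that proof uses only that $\mu_k$ is non-increasing and bounded by $\mu_0$ (the latter inside $L_{\Phi_0}$, $\tau_j$, $M_j$), exactly the points you isolate. The one divergence is your guess about where $p<1/4$ enters: the paper never normalizes by $\mu_k$, so that restriction plays no role, and nothing extra needs checking at $p=0$.
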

\begin{proposition}\label{pro-sungt}
By replacing \(p \in (0, 1/4)\) in Theorem \ref{main-sungt} with \(p \in [0, 1/4)\), we obtain the following result:
	\begin{align*}
	&\frac{1}{K}\sum_{k=0}^{K-1} \mathbb{E}\|\nabla \Psi_{\mu_k} (\bar{x}^k, \bar{y}^k)\|^2 
	= \mathcal{O}\left( \frac{\mathcal{C}_{2}}{n^{1/2} K^{1/2}} \right) 
	+ \mathcal{O}\left( \frac{n(\delta_f^2 + \delta_g^2)}{(1-\rho)^{4}K} \right),
\end{align*}
where $\mathcal{C}_{2}$ is a positive constant depending on $\delta_f^2$, and $\delta_g^2$, but independent of $n$ and $(1-\rho)$.
\end{proposition}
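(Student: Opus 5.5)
The plan is to rerun the proof of Theorem~\ref{main-sungt} and check that every place where $p>0$ was used still works when $p=0$. With $p=0$ we have $\mu_k\equiv\mu_0$, and every estimate that depends on $p$ changes only by constants. The proof of Theorem~\ref{main-sungt} has the usual three-part structure, and we keep it.

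\textbf{Descent on the averaged primal.} First, a one-step descent inequality for $\Psi_{\mu_k}(\bar x^k,\bar y^k)$ along the averaged GT directions. The key fact is that $\nabla V_\gamma(x,y)=\big(\nabla_x G(x,\theta^*(x,y)),\,(y-\theta^*(x,y))/\gamma\big)$, where $\theta^*$ is the unique maximizer of the $\gamma^{-1}-L_2>0$ strongly concave inner problem in (\ref{min-max}). Because GT preserves averages, $\bar D^k_\Diamond=\frac1n\sum_i\hat D^k_{\Diamond,i}$. This average differs from $\nabla\Psi_{\mu_k}$ by three things: consensus error, the dual error $\|\bar\theta^k-\theta^*(\bar x^k,\bar y^k)\|^2$, and noise whose variance is $(\mu_k^2\delta_f^2+\delta_g^2)/n$.

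\textbf{Dual contraction.} Second, a contraction for the dual error, from the strong concavity in $\theta$. The drift term uses the Lipschitz continuity of $\theta^*$ (constant $L_\theta$ depending only on $\gamma,L_2$) and is controlled by $\lambda_x^k,\lambda_y^k=c_\lambda\lambda_\theta^k$ with $c_\lambda$ small.

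\textbf{Consensus and tracking recursions.} Third, the standard consensus and tracking-error recursions, with contraction factor $\rho$. These recursions produce the $(1-\rho)^{-4}$ factor.

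\textbf{Telescoping the Lyapunov function.} These three ingredients are combined into the Lyapunov function of Lemma~\ref{lyafuncdes-gt}. The only place the penalty schedule enters is the mismatch term
\[
\Psi_{\mu_{k+1}}(\bar x^{k+1},\bar y^{k+1})-\Psi_{\mu_k}(\bar x^{k+1},\bar y^{k+1})=(\mu_{k+1}-\mu_k)F(\bar x^{k+1},\bar y^{k+1}).
\]
For $p>0$ this is handled by $F\ge\underline F$ together with monotonicity of $\mu_k$, giving $(\mu_{k+1}-\mu_k)(F-\underline F)\le 0$ after shifting $F$ by $\underline F$ (which leaves the gradients unchanged). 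For $p=0$ the term vanishes identically, so the telescoping is only easier.

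\textbf{Constants.} All other occurrences of $\mu_k$ are upper bounds $\mu_k\le\mu_0$, used in the smoothness constant $L_\Psi=\mu_0L_1+L_2+L_{V_\gamma}$ and in the noise bound $\mu_k^2\delta_f^2+\delta_g^2\le\mu_0^2\delta_f^2+\delta_g^2$. Neither depends on $p$. The step-size conditions on $c_\theta,c_\lambda$ in Lemma~\ref{lyafuncdes-gt} involve only $\mu_0$, $L_1$, $L_2$, $\gamma$ and $\rho$, so they remain admissible.

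Summing from $k=0$ to $K-1$, dividing by $K\lambda_x$, and substituting $\lambda_\theta=c_\theta n^{1/2}K^{-1/2}$ gives the same three contributions as before:
\begin{itemize}
\item $\mathcal O\big(\mathcal C_2/(n^{1/2}K^{1/2})\big)$ from the initial Lyapunov value and the $\lambda\sigma^2/n$ noise;
\item $\mathcal O\big(n(\delta_f^2+\delta_g^2)/((1-\rho)^4K)\big)$ from the $\lambda^2$ consensus and tracking terms;
\item nothing from the penalty mismatch.
\end{itemize}

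\textbf{Main obstacle.} The step needing the most care is confirming that no intermediate bound in the proof of Theorem~\ref{main-sungt} used $p>0$ strictly. One possibility is a bound of the form $\sum_k(\mu_k-\mu_{k+1})/\lambda\le\mu_0/\lambda$ placed in a denominator. Another is a factor like $\mu_K^{-1}$ used to convert the measure; but $\mu_K=\mu_0$ here, which is harmless. I would go through each use of $p$ in that proof and verify that it appears only through $\mu_k\le\mu_0$ or through the sign of $\mu_{k+1}-\mu_k$. If so, the $p=0$ case follows verbatim with the same constant $\mathcal C_2$.
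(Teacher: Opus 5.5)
Your proposal is correct and follows essentially the paper's route. The paper states Theorem~\ref{mainthgtapp} directly for $p\in[0,1/4)$, and its argument via the descent of $\mathcal{L}_{\text{gt}}^k$ in Lemma~\ref{lyafuncdes-gt} uses the penalty schedule only through $\mu_k\le\mu_0$ and the non-increase of $\mu_k$ (together with $\Phi_{\mu_k}\ge 0$), both of which hold trivially when $\mu_k\equiv\mu_0$, so the $p=0$ case goes through verbatim as you describe.
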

The proofs of Propositions \ref{pro-sunse} and \ref{pro-sungt} can be found in Theorems \ref{mainthseapp} and \ref{mainthgtapp}.

\paragraph{Results illustrating the effectiveness of the stationarity measure \(\nabla\Psi_{\mu_k}\).} 
Based on Propositions~\ref{pro-sunse} and \ref{pro-sungt}, we further analyze the convergence behavior of SUN-DSBO-SE and SUN-DSBO-GT, illustrating the effectiveness of our results with \(\nabla\Psi_{\mu_k}\):
\begin{corollary}
    Under the conditions of Proposition~\ref{pro-sunse}, for SUN-DSBO-SE, it holds that
    \begin{align*}
        \min_{\lfloor (K-1)/2 \rfloor \leq k \leq K-1} 
        \mathbb{E}\big\|\nabla \Psi_{\mu_{k}} (\bar{x}^k, \bar{y}^k)\big\|^2
        &= \mathcal{O}\Big( \frac{\mathcal{C}_{1}}{n^{1/2} K^{1/2}} \Big) 
        + \mathcal{O}\Big( \frac{n(\delta_f^2 + \delta_g^2)}{(1-\rho)^{2} K} \Big)  + \mathcal{O}\Big( \frac{n(\sigma_f^2 + \sigma_g^2)}{(1-\rho)^{2} K} \Big),
    \end{align*}
\end{corollary}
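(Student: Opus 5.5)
The corollary follows from Proposition~\ref{pro-sunse} by a simple averaging argument. Write $a_k := \mathbb{E}\|\nabla \Psi_{\mu_k}(\bar{x}^k,\bar{y}^k)\|^2 \ge 0$, and let $R_K$ denote the right-hand side of Proposition~\ref{pro-sunse}:
\[
R_K := \mathcal{O}\big(\mathcal{C}_1 n^{-1/2}K^{-1/2}\big) + \mathcal{O}\big(n(\delta_f^2+\delta_g^2)(1-\rho)^{-2}K^{-1}\big) + \mathcal{O}\big(n(\sigma_f^2+\sigma_g^2)(1-\rho)^{-2}K^{-1}\big).
\]
Proposition~\ref{pro-sunse} then gives $\sum_{k=0}^{K-1} a_k \le K R_K$.

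Set $k_0 := \lfloor (K-1)/2\rfloor$. The index window $\{k_0,\dots,K-1\}$ contains $K-k_0 \ge K/2$ indices. Since every $a_k$ is nonnegative, we can drop the first $k_0$ terms and bound the minimum over the window by the average over the window:
\[
\min_{k_0\le k\le K-1} a_k \;\le\; \frac{1}{K-k_0}\sum_{k=k_0}^{K-1} a_k \;\le\; \frac{2}{K}\sum_{k=0}^{K-1} a_k \;\le\; 2R_K .
\]
The factor $2$ is absorbed into the $\mathcal{O}(\cdot)$ notation. This yields exactly the three terms in the statement, with the same dependence on $n$, $1-\rho$, $\delta$ and $\sigma$.

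The only point needing care is that Proposition~\ref{pro-sunse} is applied with the step sizes $\lambda^k = c_\theta n^{1/2}K^{-1/2}$ tuned to the full horizon $K$. We use its conclusion over all iterations $0,\dots,K-1$ and never restart it on the tail window, so no re-tuning issue arises. Nonnegativity of $a_k$ is what justifies discarding the early iterates. For odd versus even $K$, a direct check confirms $K-\lfloor (K-1)/2\rfloor \ge K/2$ in both cases.

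There is no real obstacle here. The substance is entirely contained in Proposition~\ref{pro-sunse}, that is, in Theorem~\ref{mainthseapp}.
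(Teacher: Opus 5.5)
Your proof is correct and follows the same argument as the paper. Both bound the minimum over the tail window by the window average and use nonnegativity to pass to the full sum. Both then use $K-\lfloor (K-1)/2\rfloor \ge K/2$, so only a factor $2$ is lost relative to Proposition~\ref{pro-sunse}.
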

where $\mathcal{C}_{1}$ is a positive constant depending on $\delta_f^2$, $\delta_g^2$, $\sigma_f^2$, and $\sigma_g^2$, but independent of $n$ and $(1-\rho)$. Here, $\lfloor (K-1)/2 \rfloor$ denotes the floor function of $(K-1)/2$.
\begin{proof}
    From Proposition \ref{pro-sunse}, we know that:
\begin{align*}
\frac{1}{K} \sum_{k=0}^{K-1} \mathbb{E}\|\nabla \Psi_{\mu_k} (\bar{x}^k, \bar{y}^k)\|^2 = 
\mathcal{O}\left( \frac{\mathcal{C}_1}{n^{1/2} K^{1/2}} \right) + \mathcal{O}\left( \frac{n(\delta_f^2 + \delta_g^2)}{(1-\rho)^2 K} \right) + \mathcal{O}\left( \frac{n(\sigma_f^2 + \sigma_g^2)}{(1-\rho)^2 K} \right).
\end{align*}
From here, we can derive the following bounds:
\begin{align*}
\frac{1}{K} \sum_{k=\lfloor (K-1)/2 \rfloor}^{K-1} \mathbb{E}\|\nabla \Psi_{\mu_k} (\bar{x}^k, \bar{y}^k)\|^2 = 
\mathcal{O}\left( \frac{\mathcal{C}_1}{n^{1/2} K^{1/2}} \right) + \mathcal{O}\left( \frac{n(\delta_f^2 + \delta_g^2)}{(1-\rho)^2 K} \right) + \mathcal{O}\left( \frac{n(\sigma_f^2 + \sigma_g^2)}{(1-\rho)^2 K} \right).
\end{align*}
We can then establish the following relationship:
\begin{align*}
\min_{\lfloor (K-1)/2 \rfloor \leq k \leq K-1} \mathbb{E}\|\nabla \Psi_{\mu_k} (\bar{x}^k, \bar{y}^k)\|^2 &\leq \frac{1}{K - \lfloor (K-1)/2 \rfloor} \sum_{k=\lfloor (K-1)/2 \rfloor}^{K-1} \mathbb{E}\|\nabla \Psi_{\mu_k} (\bar{x}^k, \bar{y}^k)\|^2\\
&\leq 
\frac{K}{K - \lfloor (K-1)/2 \rfloor} \frac{1}{K} \sum_{k=\lfloor (K-1)/2 \rfloor}^{K-1} \mathbb{E}\|\nabla \Psi_{\mu_k} (\bar{x}^k, \bar{y}^k)\|^2\\
&\leq 2\frac{1}{K} \sum_{k=\lfloor (K-1)/2 \rfloor}^{K-1} \mathbb{E}\|\nabla \Psi_{\mu_k} (\bar{x}^k, \bar{y}^k)\|^2.
\end{align*}
Then we can obtain the above conclusion.
\end{proof}
\begin{corollary}
    Under the conditions of Proposition~\ref{pro-sungt}, for SUN-DSBO-GT, it holds that
    \begin{align*}
        \min_{\lfloor (K-1)/2 \rfloor \leq k \leq K-1}
        \mathbb{E}\big\|\nabla \Psi_{\mu_{k}} (\bar{x}^k, \bar{y}^k)\big\|^2
        &= \mathcal{O}\Big( \frac{\mathcal{C}_{2}}{n^{1/2} K^{1/2}} \Big) 
        + \mathcal{O}\Big( \frac{n(\delta_f^2 + \delta_g^2)}{(1-\rho)^{4} K} \Big),
    \end{align*}
    where $\mathcal{C}_{2}$ is a positive constant depending on $\delta_f^2$ and $\delta_g^2$, but independent of $n$ and $(1-\rho)$.
\end{corollary}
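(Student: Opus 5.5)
The proof follows the same route as the corollary just proved for SUN-DSBO-SE, now starting from Proposition~\ref{pro-sungt}. Write $a_k := \mathbb{E}\|\nabla \Psi_{\mu_k}(\bar{x}^k,\bar{y}^k)\|^2 \ge 0$ and $k_0 := \lfloor (K-1)/2 \rfloor$. Proposition~\ref{pro-sungt} gives
\[
\frac{1}{K}\sum_{k=0}^{K-1} a_k = \mathcal{O}\Big(\frac{\mathcal{C}_2}{n^{1/2}K^{1/2}}\Big) + \mathcal{O}\Big(\frac{n(\delta_f^2+\delta_g^2)}{(1-\rho)^4 K}\Big).
\]
Since every $a_k$ is nonnegative, dropping the terms with $k<k_0$ only decreases the left-hand side. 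Hence $\frac{1}{K}\sum_{k=k_0}^{K-1} a_k$ obeys the same upper bound.

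Next, a minimum over an index set never exceeds the average over that set, so
\[
\min_{k_0\le k\le K-1} a_k \le \frac{1}{K-k_0}\sum_{k=k_0}^{K-1} a_k = \frac{K}{K-k_0}\cdot\frac{1}{K}\sum_{k=k_0}^{K-1} a_k .
\]

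It remains to bound the ratio $K/(K-k_0)$ by an absolute constant. Since $k_0 \le (K-1)/2$, we have $K-k_0 \ge (K+1)/2 \ge K/2$, and therefore $K/(K-k_0)\le 2$. Multiplying the bound on the tail average by $2$ leaves the $\mathcal{O}(\cdot)$ terms unchanged. This yields the claimed rate, with the same constant $\mathcal{C}_2$, which depends only on $\delta_f^2,\delta_g^2$ and not on $n$ or $1-\rho$.

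No step here is a real obstacle; the argument is elementary. The only points needing care are the nonnegativity of $a_k$ (which justifies truncating the sum) and the requirement that the step sizes and $\mu_k$ schedule are exactly those of Proposition~\ref{pro-sungt}, so that the averaged bound applies verbatim to the full horizon $K$.
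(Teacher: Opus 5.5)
Your proposal is correct and follows the paper's argument (the one given for the SUN-DSBO-SE corollary, which the paper reuses here): truncate the averaged bound of Proposition~\ref{pro-sungt} to the tail $k\ge\lfloor (K-1)/2\rfloor$, bound the minimum by the tail average, and absorb the factor $K/(K-\lfloor (K-1)/2\rfloor)\le 2$ into the $\mathcal{O}(\cdot)$ terms. You also justify the truncation explicitly via nonnegativity of $\mathbb{E}\|\nabla\Psi_{\mu_k}(\bar{x}^k,\bar{y}^k)\|^2$, and the factor $2$ via $K-k_0\ge (K+1)/2$; the paper leaves both steps implicit.
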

This proof is similar to the proof of Proposition \ref{pro-sunse}.

 By definition, if
\[
\min_{\lfloor (K-1)/2 \rfloor \leq k \leq K-1} 
\mathbb{E}\big\|\nabla \Psi_{\mu_k} (\bar{x}^k, \bar{y}^k)\big\|^2 \leq \epsilon,
\]
then there exists some $\hat{k} \in [\lfloor (K-1)/2 \rfloor, K-1]$ such that
\[
\mathbb{E}\big\|\nabla \Psi_{\mu_{\hat{k}}} (\bar{x}^{\hat{k}}, \bar{y}^{\hat{k}})\big\|^2 \leq \epsilon.
\]
When $K$ is sufficiently large, $\hat{k}$ is also large, ensuring that $\mu_{\hat{k}} \to 0$. Consequently, $\nabla \Psi_{\mu_{\hat{k}}} (\bar{x}^{\hat{k}}, \bar{y}^{\hat{k}})$ approximates a stationary point of the problem (\ref{eqconstrain}).



{\tcbset{
		colframe=black!10,
		boxrule=0.3pt,
		sharp corners,
		enhanced jigsaw,
		boxsep=4pt,
		left=0pt,
		right=0pt,
		top=0pt,
		bottom=0pt
	}
	
	\section{Proof sketch}\label{pfsketch}
In this section, we present the corresponding challenges and provide a macroscopic overview of the solution approaches, summarizing the relevant analytical challenges. We then outline the key steps and subsequently discuss the non-trivial analytical aspects.

    \subsection{Main challenges } \label{mainchall}
Below, we elaborate on the specific challenges encountered in our study:

\textit{Compared to single-agent bilevel optimization with nonconvex lower-level objectives, the main difference lies in the need to address issues introduced by our decentralized algorithm, such as consensus constraints and data heterogeneity.} To resolve the consensus constraint, we adopt a network-consensus framework and propose two single-loop algorithms which maintain communication efficiency.
 Additionally, we incorporate a heterogeneity correction technique, gradient tracking, to mitigate the effects of data heterogeneity. Notably, compared to methods in \citet{liu2024moreau} that rely on Moreau envelope reformulation, our approach also accounts for stochasticity, which requires analyzing the impact of stochastic errors and the complexity introduced by this aspect.

\textit{Compared to existing global DSBO approaches in Table \ref{tablecompar}, our assumptions are weaker, primarily in two aspects.}
On one hand, we do not assume that the gradient norm of the upper-level objective function is bounded, which necessitates additional steps for gradient estimation. To estimate the gradient, we need to introduce the gradient similarity assumption (Assumption \ref{ass4}) or the heterogeneity correction technique (GT). Moreover, by incorporating GT, we can eliminate the gradient similarity assumption for both the upper and lower-level objective functions. Consequently, further analysis is needed to address the resulting complexities in the convergence analysis.
On the other hand, we assume that the lower-level objective is nonconvex, which means we cannot guarantee the uniqueness of the lower-level solution. As a result, we cannot use algorithms based on hyper-gradients, as hyper-gradients are not well-defined in this case, nor can we directly use  value function-based algorithms, since the smoothness of the value functions relies on the uniqueness of the lower-level solution \citep{wang2024fully}.
Similar to existing decentralized or centralized bilevel algorithms for non-strongly convex or nonconvex LL objectives, We adopt a feasible relaxation-based approach. 

In decentralized bilevel optimization, no prior work, to the best of our knowledge, has explored DSBO with nonconvex lower-level objectives. Recently,  \citet{qin2025duet}  has studied DSBO with non-strongly convex lower-level objectives, introducing gradually diminishing quadratic regularization to address the non-uniqueness challenge of the lower-level solution. However, it assumes the lower-level objective is convex in the personalized DSBO setting, where ``personalized" refers to the absence of consensus constraints on the lower-level variables.
This approach cannot be directly applied to our problem (\ref{DSBO}) due to the lack of a uniqueness guarantee for the lower-level solution caused by nonconvexity when gradually diminishing quadratic regularization is introduced. Additionally, compared to most personalized DSBO works, the challenge in global DSBO lies in the inability of single-agent bilevel optimization methods to adapt to the distributed setting, when relaxed to lower-level strong convexity, as discussed in \citet[Section 1.2]{chen2023decentralized}. To address the challenge of non-uniqueness of the lower-level solution, we employ a Moreau envelope-based penalty method to smooth and relax the original problem (\ref{DSBO}) into problem \ref{Upsct-0}. The relationship between problem (\ref{DSBO}) and problem (\ref{Upsct-0}) is discussed in Appendix \ref{DSBOandup}. Furthermore, in the subsequent steps, decentralized coordination for the lower-level variables is required, and feedback is incorporated into the upper-level network-consensus process.

From the above discussion, we can conclude that the main objective of our analysis is to conduct convergence analysis under weaker assumptions—particularly without bounded gradient assumptions or even bounded gradient similarity assumptions for the upper-level objective function—while considering the consensus error and heterogeneity effects introduced by applying Moreau envelope-based penalty to DSBO with  nonconvex lower-level objectives, coupled with the impact of stochastic errors.
In the next subsection, we focus on the key steps of the proof, which are accompanied by the challenges discussed earlier.

    \subsection{Key steps}\label{keystep}
	In Section~\ref{secTA}, we establish the theoretical convergence guarantees for the proposed algorithms through Theorems~\ref{main-sunse} and~\ref{main-sungt}. This section outlines the key ideas underlying the convergence analysis of 
	\colorbox{sebg}{\textsf{SUN-DSBO-SE}} and 
	\colorbox{gtbg}{\textsf{SUN-DSBO-GT}}. The analysis follows two main steps: (i) deriving an upper bound on the residual function, and (ii) bounding the corresponding terms using a Lyapunov-based argument. Together, these steps yield rigorous convergence guarantees under appropriately chosen step sizes. While the sketches follow a unified structure, the steps highlighted in \colorbox{sebg}{\textsf{yellow}} correspond to \colorbox{sebg}{\textsf{SUN-DSBO-SE}}, and those in \colorbox{gtbg}{\textsf{green}} correspond to \colorbox{gtbg}{\textsf{SUN-DSBO-GT}}.
	
	\medskip
	\textbf{Step 1. Bound the residual $\|\nabla \Psi_{\mu_k}(\bar{x}^k, \bar{y}^k)\|^2$ in terms of the step sizes.} \\
	By exploiting Assumption~\ref{ass1}, the $L$-smoothness of $\Phi_{\mu_k}$ (Lemma~\ref{prophi_k}), and choosing step sizes as $\lambda_x^{k} = \lambda_y^{k} = c_{\lambda} \lambda_\theta^{k}$ for some $c_\lambda > 0$, we obtain:
	\begin{equation}\label{UBRF}
		\begin{aligned}
			\mathbb{E}\|\nabla \Psi_{\mu_{k}} (\bar{x}^k, \bar{y}^k)\|^2 \leq \frac{1}{\lambda_\theta^k} R_s^k,
		\end{aligned}
	\end{equation}
	where
	\begin{equation*}
		\begin{aligned}
			R_s^k :=\ &\mathcal{O}\left(\frac{1}{\lambda_\theta^k}\right) \|\mathbb{E}[\bar{x}^{k+1} - \bar{x}^k]\|^2 + \mathcal{O}\left(\frac{1}{\lambda_\theta^k}\right) \|\mathbb{E}[\bar{y}^{k+1} - \bar{y}^k]\|^2 \\
			&+ \mathcal{O}(\lambda_\theta^k) \mathbb{E} \|\bar{\theta}^k - \theta_{\gamma}^*(\bar{x}^k, \bar{y}^k)\|^2 + \mathcal{O}(\lambda_\theta^k) \Delta^k.
		\end{aligned}
	\end{equation*}
	Here, $\theta_{\gamma}^{*}(x, y)$ denotes the unique solution to problem~(3), and $\Delta^k$ captures the cumulative consensus error over $x$, $y$, and $\theta$, as defined in Section~\ref{secTA}. This upper bound reveals three main sources of residual error: (i) the drift between successive averaged iterates $(\bar{x}^k, \bar{y}^k)$, (ii) the mismatch between $\bar{\theta}^k$ and the smoothed lower-level solution $\theta_{\gamma}^{*}(\bar{x}^k, \bar{y}^k)$, and (iii) the global consensus error $\Delta^k$, which distinguishes it from the single-agent setting. Under properly chosen step sizes, $R_s^k$ remains bounded, thereby ensuring convergence of the residual.

	\medskip
	\textbf{Step 2. Control $R_s^k$ using a Lyapunov descent argument.} \\
	To establish the descent of $R_s^k$ in~(\ref{UBRF}), we introduce the following Lyapunov functions.
	
	\begin{tcolorbox}[colback=yellow!6!white]
		\begin{equation}\label{eq:lyapunov_def_ps}
			\begin{aligned}
				\mathcal{L}_{\text{se}}^{k} :=\ & \mathbb{E}\big[\Phi_{\mu_k}(\bar{x}^{k}, \bar{y}^{k})\big] + a_{1} \mathbb{E}\big\|\bar{\theta}^{k} - \theta_{\gamma}^{*}(\bar{x}^{k}, \bar{y}^{k})\big\|^{2}  \\
				& + \frac{a_{2}^{k}}{n} \sum_{i=1}^n \mathbb{E}\big\|x_i^k - \bar{x}^k\big\|^2 + \frac{a_{3}^{k}}{n} \sum_{i=1}^n \mathbb{E}\big\|y_i^k - \bar{y}^k\big\|^2 + \frac{a_{4}^{k}}{n} \sum_{i=1}^n \mathbb{E}\big\|\theta_i^k - \bar{\theta}^k\big\|^2,
			\end{aligned}
		\end{equation}
		where
		\[
		a_{1} := \frac{1}{2L_\theta}\sqrt{L_{2}^{2} + \frac{1}{\gamma^{2}}}, \quad 
		a_{2}^{k} := \tau_{2}c_{\lambda}\lambda_{\theta}^{k}, \quad
		a_{3}^{k} := \tau_{3}c_{\lambda}\lambda_{\theta}^{k}, \quad
		a_{4}^{k} := \tau_{4}\lambda_{\theta}^{k},
		\]
		with $\tau_{2}$--$\tau_{4}$ from~(\ref{lypscontse}) ensuring the descent of $\mathcal{L}_{\text{se}}^{k}$, and $L_\theta$ a constant from Lemma~\ref{gaptheta}.
		
	\end{tcolorbox}
	\begin{tcolorbox}[colback=green!6!white]
		\begin{equation}\label{eq:lyapunov_def_ps_gt}
			\begin{aligned}
				\mathcal{L}_{\text{gt}}^{k} :=\ & \mathbb{E}\big[\Phi_{\mu_k}(\bar{x}^{k},\bar{y}^{k})\big] + a_{1}\mathbb{E}\|\bar{\theta}^{k}-\theta_{\gamma}^{*}(\bar{x}^{k},\bar{y}^{k})\|^{2} \\
				&+ \frac{a_{2}^{k}}{n}\sum_{i=1}^n\mathbb{E}\|x_i^k-\bar{x}^k\|^2 + \frac{a_{3}^{k}}{n}\sum_{i=1}^n\mathbb{E}\|y_i^k-\bar{y}^k\|^2 + \frac{a_{4}^{k}}{n}\sum_{i=1}^n\mathbb{E}\|\theta_i^k-\bar{\theta}^k\|^2 \\
				&+ \frac{a_{5}^{k}}{n}\sum_{i=1}^{n}\mathbb{E}\|t_{x,i}^{k}-\vec{t}_{x}^{k}\|^{2} + \frac{a_{6}^{k}}{n}\sum_{i=1}^{n}\mathbb{E}\|t_{y,i}^{k}-\vec{t}_{y}^{k}\|^{2} + \frac{a_{7}^{k}}{n}\sum_{i=1}^{n}\mathbb{E}\|t_{\theta,i}^{k}-\vec{t}_{\theta}^{k}\|^{2},
			\end{aligned}
		\end{equation}
		
		where $t_{\Diamond,i}^{k}$ and $\vec{t}_{\Diamond}^{k}$ for $\Diamond \in \{x, y, \theta\}$ are auxiliary gradient tracking variables in~(\ref{tvar}), and the coefficients are given by:
		\begin{align*}
			a_{1} &= \frac{1}{2L_\theta}\sqrt{L_{2}^{2} + \frac{1}{\gamma^{2}}}, \quad 
			a_{2}^{k} = \tau_{2}c_{\lambda}\lambda_{\theta}^{k}, \quad 
			a_{3}^{k} = \tau_{3}c_{\lambda}\lambda_{\theta}^{k}, \\
			a_{4}^{k} &= \tau_{4}c_{\lambda}\lambda_{\theta}^{k}, \quad 
			a_{5}^{k} = \tau_{5}c_{\lambda}^{2}(\lambda_{\theta}^{k})^{2}, \quad 
			a_{6}^{k} = \tau_{6}c_{\lambda}^{2}(\lambda_{\theta}^{k})^{2}, \quad 
			a_{7}^{k} = \tau_{7}c_{\lambda}^{2}(\lambda_{\theta}^{k})^{2},
		\end{align*}
		with $\tau_{2}$--$\tau_{7}$ ensuring the descent of $\mathcal{L}_{\text{gt}}^{k}$  and $L_\theta$ is some positive constant defined in Lemma \ref{gaptheta}.
	\end{tcolorbox}
	Here, $\Phi_{\mu_k}(x,y) := \mu_k (F(x,y) - \underline{F}) + G(x,y) - V_\gamma(x,y) \geq 0$, and satisfies $\nabla \Psi_{\mu_k}(x,y) = \nabla \Phi_{\mu_k}(x,y)$. The descent of $R_s^k$ follows under a step size regime that is jointly tailored with the penalty decay schedule $\{\mu_k\}$, in order to accommodate both the hierarchical structure and the approximation behavior induced by the vanishing penalty.

	
	

	\begin{tcolorbox}[colback=yellow!6!white]
		\begin{equation}\label{skedesLya_se}
			R_{s}^{k}
			\leq
			\mathcal{L}_{\text{se}}^{k}-\mathcal{L}_{\text{se}}^{k+1} + \epsilon_{\text{sto,se}}^{k} + \epsilon_{\text{dh}}^{k},
		\end{equation}
		where  $\epsilon_{\text{sto,se}}^{k}$ and $\epsilon_{\text{dh}}^{k}$ are defined in (\ref{lyaperr-se}), representing the errors due to stochastic estimation and data heterogeneity, respectively. More details can be found in Lemma~\ref{lyafuncdes-se}.
	\end{tcolorbox}
	\begin{tcolorbox}[colback=green!6!white]
		\begin{equation}\label{skedesLya_gt}
			R_{s}^{k}
			\leq
			\mathcal{L}_{\text{gt}}^{k}-\mathcal{L}_{\text{gt}}^{k+1} + \epsilon_{\text{sto,gt}}^{k},
		\end{equation}
		where $\epsilon_{\text{sto,gt}}^{k}$ is defined in (\ref{lyaperr-gt}), representing the error due to stochastic estimation. More details can be found in Lemma~\ref{lyafuncdes-gt}.
	\end{tcolorbox}
	
	These inequalities characterize the descent of $R_s^k$ through Lyapunov difference terms and additive error components, where the contributions of stochastic noise and gradient dissimilarity (as introduced in Assumption~\ref{ass4}) are explicitly maintained to preserve analytical clarity, and will be used to establish the convergence bounds that follow.

	\begin{tcolorbox}[colback=yellow!6!white]
		Substituting (\ref{skedesLya_se}) into (\ref{UBRF}), summing from $k = 0$ to $K-1$, and utilizing the monotonicity of $\mu_k$ and non-negativity of $\Phi_{\mu_k}(x,y)$, we obtain:
		\begin{align}\label{lambdapsse-ps}
			\begin{aligned}
				\sum_{k=0}^{K-1} \lambda_\theta^{k} \, \mathbb{E}\|\nabla\Psi_{\mu_k}(\bar{x}^{k}, \bar{y}^{k})\|^{2} 
				\leq\ & \mathcal{O}\left(\sum_{k=0}^{K-1}   \frac{(\lambda_\theta^{k})^{2}}{n} (\delta_{f}^{2} + \delta_{g}^{2})\right) 
				+ \mathcal{O}\left(\sum_{k=0}^{K-1}  \frac{(\lambda_\theta^{k})^{3}}{(1-\rho)^{2}} (\delta_{f}^{2} + \delta_{g}^{2})\right) \\
				& + \mathcal{O}\left(\sum_{k=0}^{K-1} \frac{(\lambda_\theta^{k})^{3} }{(1-\rho)^{2}} (\sigma_{f}^{2} + \sigma_{g}^{2})\right) +\mathcal{O}\big(\mathcal{L}_{\text{se}}^{0}\big).
			\end{aligned}
		\end{align}
		As detailed in (\ref{psisumse}), the second and third terms in (\ref{lambdapsse-ps}) correspond to the accumulated stochastic noise $\sum_k \epsilon_{\text{sto}}^k$, while the last term reflects the cumulative effect of data heterogeneity $\sum_k \epsilon_{\text{dh}}^k$, as defined in (\ref{skedesLya_se}). By selecting step sizes according to Theorem~\ref{main-sunse}, the convergence of $\mathbb{E}\|\nabla\Psi_{\mu_k}(\bar{x}^{k}, \bar{y}^{k})\|^{2}$ is ensured, completing the proof of Theorem~\ref{main-sunse}.
	\end{tcolorbox}
	\begin{tcolorbox}[colback=green!6!white]
		Substituting (\ref{skedesLya_gt}) into (\ref{UBRF}) and summing over $k$, we have:
		\begin{align}\label{lambdapsgt-ps}
			\begin{aligned}
				\sum_{k=0}^{K-1} \lambda_\theta^{k} \, \mathbb{E}\|\nabla\Psi_{\mu_k}(\bar{x}^{k}, \bar{y}^{k})\|^{2} 
				\leq\ & \mathcal{O}\left(\sum_{k=0}^{K-1}   \frac{(\lambda_\theta^{k})^{2}}{n} (\delta_{f}^{2} + \delta_{g}^{2})\right) 
				+ \mathcal{O}\left(\sum_{k=0}^{K-1}  \frac{(\lambda_\theta^{k})^{3}}{(1-\rho)^{4}} (\delta_{f}^{2} + \delta_{g}^{2})\right) \\
				& +\mathcal{O}\big(\mathcal{L}_{\text{gt}}^{0}\big).
			\end{aligned}
		\end{align}
		The proof follows a similar line of reasoning as the derivation leading to Equation~(\ref{lambdapsse-ps}).
		The second term in (\ref{lambdapsgt-ps}) captures the accumulated stochastic error $\sum_k \epsilon_{\text{sto}}^k$, while the third term quantifies the residual error determined by the initial Lyapunov gap $\mathcal{L}_{\text{gt}}^{0}$, as defined in (\ref{skedesLya_gt}). With constant step sizes specified in Theorem~\ref{main-sungt}, convergence is guaranteed, thereby completing the proof of Theorem~\ref{main-sungt}.
	\end{tcolorbox}
}

Next, we elaborate on the \textit{non-trivial aspects} of our analysis, which distinguish it from previous works.
\begin{itemize}
\item \textit{The special gradient form with dynamic \( \mu_k \) under stochastic and heterogeneous settings.} If we choose \( \mu_k \) as a sufficiently small constant, it may result in slower convergence, as discussed in  \citet{kwon2023fully, kwon2023penalty} for centralized bilevel optimization. Gradually decreasing the penalty parameters \( \{\mu_k\} \), with \( \mu_k \to 0 \) as the iteration index \( k \) increases, is a better choice.
Varying \( \mu_k \) leads to a model that evolves with the iteration index \( k \), introducing unique challenges under weaker conditions, particularly in gradient estimation and heterogeneity analysis. The structure of our problem yields a \textit{special gradient form} (see Eq.~(\ref{egrd})), which may involve stochastic estimates with a \textit{time-varying \( \mu_k \)}, and may even incorporate heterogeneity correction techniques without assuming bounded upper-level gradient norms.
These factors complicate gradient estimation and heterogeneity analysis. On one hand, certain coefficients become dynamic, such as the heterogeneity levels \( \sigma_{f} \) and \( \sigma_{g} \) in Assumption~\ref{ass4} and the stochastic error, requiring additional handling (e.g., step-size restrictions). On the other hand, when heterogeneity correction techniques are incorporated, analyzing GT methods under dynamic gradients with stochastic error becomes necessary, which is not conventional.



\item \textit{Design of Lyapunov Functions}:  
The iterative structure of the proposed stochastic bilevel optimization algorithm requires the construction of Lyapunov functions with several undetermined coefficients. These functions must simultaneously capture upper-level optimality, lower-level error, consensus error, and auxiliary error, making their design analytically challenging.  

\begin{itemize}
    \item For \textit{SUN-DSBO-SE} (without gradient tracking), the analysis relies on the assumption of \textit{bounded gradient similarity}, which is weaker than the commonly used bounded gradient norm assumption. The main difficulty lies in controlling error accumulation and variance throughout the iterations. This requires carefully combining upper-level optimality, lower-level error, consensus error, and auxiliary error terms with appropriately chosen coefficients to establish convergence.
    
    \item For \textit{SUN-DSBO-GT} (with gradient tracking), the analysis no longer depends on the bounded gradient similarity assumption. Instead, the primary challenge is to handle the coupling between \textit{local variable bias} and \textit{gradient tracking errors}. This demands a precise selection of Lyapunov terms to manage these interactions and ensure system stability.
\end{itemize}
The corresponding form of the Lyapunov function can be found in Step 2 of the next section.

\item \textit{Step-size (learning rate) selection.} Selecting appropriate step-sizes is crucial for ensuring the convergence of our algorithm. First, we must control the step-size to ensure convergence while mitigating the effects of stochastic errors and heterogeneity. Second, this task becomes particularly challenging due to the complex interactions between the error term coefficients and the time-varying \( \mu_k \), including stochastic and consensus errors. Therefore, we need to carefully consider the range of the step-size to accommodate the dynamic nature of \( \mu_k \).


\end{itemize}

\section{Proofs of SUN-DSBO-SE}\label{PF_SE}

We first introduce some notations that will be used throughout the analysis.

\subsection{Notations}\label{notationse}
We recall the following averaged variables across agents:
\begin{align}\label{variablebar}
	&\bar{x}^{k} := \frac{1}{n}\sum_{i=1}^{n}x_{i}^{k}, \quad
	\bar{y}^{k} := \frac{1}{n}\sum_{i=1}^{n}y_{i}^{k}, \quad
	\bar{\theta}^{k} := \frac{1}{n}\sum_{i=1}^{n}\theta_{i}^{k}, \\
	&\bar{d}_{x}^{k} := \frac{1}{n}\sum_{i=1}^{n}d_{x,i,k}^{(\zeta_{i}^{k})}, \quad
	\bar{d}_{y}^{k} := \frac{1}{n}\sum_{i=1}^{n}d_{y,i,k}^{(\zeta_{i}^{k})}, \quad
	\bar{d}_{\theta}^{k} := \frac{1}{n}\sum_{i=1}^{n}d_{\theta,i,k}^{(\zeta_{i}^{k})},
\end{align}
where \( d_{\theta,i,k}^{(\zeta_{i}^{k})} = \hat{D}_{\theta,i}^{k} \), \( d_{x,i,k}^{(\zeta_{i}^{k})} = \hat{D}_{x,i}^{k} \), and \( d_{y,i,k}^{(\zeta_{i}^{k})} = \hat{D}_{y,i}^{k} \), with \( \zeta_{i}^{k} = [\xi_{f,i}^{k}, \xi_{g,i}^{k}] \) denoting the stochastic sample used in Algorithm~\ref{sun-dsbo-se}. 
The explicit expressions of the gradient estimators are given as:
\begin{align}\label{gradoraclese}
	\begin{aligned}
		d_{\theta,i,k}^{(\zeta_{i}^{k})} &= \nabla_{y}g_{i}(x_i^{k},\theta_i^{k};\xi_{g,i}^{k}) + \frac{1}{\gamma}(\theta_i^{k} - y_i^{k}), \\
		d_{x,i,k}^{(\zeta_{i}^{k})} &= \mu_{k} \nabla_{x}f_{i}(x_i^{k},y_i^{k};\xi_{f,i}^{k}) + \nabla_{x}g_{i}(x_i^{k},y_i^{k};\xi_{g,i}^{k}) - \nabla_{x}g_{i}(x_i^{k},\theta_i^{k};\xi_{g,i}^{k}), \\
		d_{y,i,k}^{(\zeta_{i}^{k})} &= \mu_{k} \nabla_{y}f_{i}(x_i^{k},y_i^{k};\xi_{f,i}^{k}) + \nabla_{y}g_{i}(x_i^{k},y_i^{k};\xi_{g,i}^{k}) - \frac{1}{\gamma}(y_i^{k} - \theta_i^{k}).
	\end{aligned}
\end{align}

We denote the corresponding expectations as
\[
\tilde{d}_{\theta,i}^{k} := \mathbb{E}[d_{\theta,i,k}^{(\zeta_{i}^{k})}], \quad
\tilde{d}_{x,i}^{k} := \mathbb{E}[d_{x,i,k}^{(\zeta_{i}^{k})}], \quad
\tilde{d}_{y,i}^{k} := \mathbb{E}[d_{y,i,k}^{(\zeta_{i}^{k})}],
\]
where the expectation is taken with respect to the sampling \( \zeta_i^k \).

We further define the averaged expected gradients across all nodes:
\begin{align}\label{bargrade}
	\tilde{\bar{d}}_{\theta}^{k} := \frac{1}{n} \sum_{i=1}^{n} \tilde{d}_{\theta,i}^{k}, \quad
	\tilde{\bar{d}}_{x}^{k} := \frac{1}{n} \sum_{i=1}^{n} \tilde{d}_{x,i}^{k}, \quad
	\tilde{\bar{d}}_{y}^{k} := \frac{1}{n} \sum_{i=1}^{n} \tilde{d}_{y,i}^{k}.
\end{align}


Note that the averaged iterates evolve as
\begin{equation}\label{aveiter}
	\bar{x}^{k+1} = \bar{x}^{k} - \lambda_x^{k} \bar{d}_{x}^{k}, \quad
	\bar{y}^{k+1} = \bar{y}^{k} - \lambda_y^{k} \bar{d}_{y}^{k}, \quad
	\bar{\theta}^{k+1} = \bar{\theta}^{k} - \lambda_\theta^{k} \bar{d}_{\theta}^{k}.
\end{equation}

To quantify the consensus error among agents, we define
\begin{align}
	\Delta_{x}^{k} := \frac{1}{n} \sum_{i=1}^{n} \mathbb{E}\|x_{i}^{k} - \bar{x}^{k}\|^{2}, \quad
	\Delta_{y}^{k} := \frac{1}{n} \sum_{i=1}^{n} \mathbb{E}\|y_{i}^{k} - \bar{y}^{k}\|^{2}, \quad
	\Delta_{\theta}^{k} := \frac{1}{n} \sum_{i=1}^{n} \mathbb{E}\|\theta_{i}^{k} - \bar{\theta}^{k}\|^{2}.
\end{align}

We further define the following auxiliary function at each node \( i \), which will be used in our analysis of the lower-level updates:
\begin{align} \label{auxfunc}
	\phi^{i}_{\mu_k}(x, y)[\theta] := \mu_k f_{i}(x, y) + g_{i}(x, y) - g_{i}(x, \theta) - \frac{1}{2\gamma} \| \theta - y \|^2.
\end{align}
To establish the convergence results, we first illustrate the decreasing property of the auxiliary  function defined as:
\begin{eqnarray}\label{Phict}
	\begin{aligned} 
		\min_{(x,y)\in  \mathbb{R}^{d_{x}}\times \mathbb{R}^{d_{y}}}\Phi_{\mu_{k}}(x,y):= &\mu_{k}\Big(F(x,y)-\underline{F}\Big)+G(x,y)-V_\gamma(x,y).\\ 
	\end{aligned}
\end{eqnarray}
Finally, to formalize the randomness introduced by sampling, we define the natural filtration associated with the algorithm as
\begin{align*}
	\mathcal{F}_{0} := \{\Omega, \varnothing\}, \quad
	\mathcal{F}_{k} := \sigma\left(\left\{\zeta_{i}^{0}, \zeta_{i}^{1}, \dots, \zeta_{i}^{k-1} : i \in \mathcal{G} \right\} \right), \quad \forall k \geq 1,
\end{align*}
where \( \varnothing \) denotes the empty set.
\subsection{Preliminary lemmas}\label{prelemmase}

Define \( \theta_{\gamma}^{*}(x,y) \) is the unique solution to problem (\ref{Morenv-0}), owing to the strong convexity of its objective function. 
\begin{lemma}[Properties of Moreau envelope\citep{liu2024moreau}]\label{MEineq}
	Suppose that $g_{i}(x, y)$ is $L_{2}$-smooth on $\mathbb{R}^{d_{x}}\times\mathbb{R}^{d_{y}}$. Then for $\gamma \in (0, \frac{1}{2L_2})$ , $\rho_{v_1}\geq L_{2}$ and $\rho_{v_2}\geq\frac{1}{\gamma}$ , the function $V_\gamma(x,y)+\frac{\rho_{v_1}}2\|x\|^2+\frac{\rho_{v_2}}2\|y\|^2$ is convex on $\mathbb{R}^{d_{x}}\times\mathbb{R}^{d_{y}}$. Furthermore, for $\gamma\in(0,\frac{1}{2L_{2}})$ , the function \( V_{\gamma}(x,y) \) is differentiable, and its gradient is given by: 
	\(\nabla V_{\gamma}(x,y)
	:= \left(
	\nabla_{x}G(x,\theta_{\gamma}^{*}(x,y)), 
	\frac{y - \theta_{\gamma}^{*}(x,y)}{\gamma}
	\right)^T\). In addition, the following inequality holds: 
	\begin{eqnarray}
		-V_{\gamma}(x,y)\leq-V_{\gamma}(\bar{x},\bar{y})-\bigg<\nabla V_{\gamma}(\bar{x},\bar{y}),(x,y)-(\bar{x},\bar{y})\bigg>+\frac{\rho_{v_{1}}}{2}\|x-\bar{x}\|^{2}+\frac{\rho_{v_{2}}}{2}\|y-\bar{y}\|^{2} , 
	\end{eqnarray}
	for $(\bar{x},\bar{y}) \in \mathbb{R}^{d_{x}}\times \mathbb{R}^{d_{y}}$.
\end{lemma}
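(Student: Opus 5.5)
The plan is to pass everything through the function $h(x,y,\theta):=G(x,\theta)+\frac{1}{2\gamma}\|\theta-y\|^2$. I would read off its convexity structure directly and then apply Danskin's theorem for the gradient formula.

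\textbf{Step 1 (strong convexity in $\theta$).} Since $G=\frac1n\sum_i g_i$ and each $g_i$ is $L_2$-smooth, $G$ is $L_2$-smooth. Hence $G(x,\cdot)+\frac{L_2}{2}\|\cdot\|^2$ is convex. For $\gamma<\frac{1}{2L_2}$ we have $\frac1\gamma>2L_2$, so $h(x,y,\cdot)$ is $(\frac1\gamma-L_2)$-strongly convex. Consequently $\theta^*_\gamma(x,y)$ exists and is unique.

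\textbf{Step 2 (joint convexity after regularization).} Consider
\[
h(x,y,\theta)+\frac{\rho_{v_1}}{2}\|x\|^2+\frac{\rho_{v_2}}{2}\|y\|^2 .
\]
I would show it is jointly convex in $(x,y,\theta)$. For this I split it as
\[
\Big[G(x,\theta)+\tfrac{L_2}{2}\|x\|^2+\tfrac{L_2}{2}\|\theta\|^2\Big]+\Big[\tfrac{1}{2\gamma}\|\theta-y\|^2-\tfrac{L_2}{2}\|\theta\|^2+\tfrac{\rho_{v_2}}{2}\|y\|^2\Big]+\tfrac{\rho_{v_1}-L_2}{2}\|x\|^2 .
\]
The first bracket is convex because $G$ is jointly $L_2$-smooth. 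The last term is convex because $\rho_{v_1}\ge L_2$. The middle bracket is a quadratic form in $(\theta,y)$ with matrix
\[
\begin{pmatrix}\frac1\gamma-L_2 & -\frac1\gamma\\ -\frac1\gamma & \frac1\gamma+\rho_{v_2}\end{pmatrix}\otimes I .
\]
I need to check this matrix is PSD for $\rho_{v_2}\ge 1/\gamma$. The determinant is $(\frac1\gamma-L_2)(\frac1\gamma+\rho_{v_2})-\frac1{\gamma^2}$. Using $\rho_{v_2}\ge\frac1\gamma$ and $L_2<\frac1{2\gamma}$, it is at least $\frac{1}{2\gamma}\cdot\frac2\gamma-\frac1{\gamma^2}=0$, so the matrix is PSD. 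This bookkeeping, choosing the split so that the PSD check closes with exactly the constants stated, is the step I expect to be the main obstacle.

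Partial minimization over $\theta$ of a jointly convex function preserves convexity. Therefore $V_\gamma+\frac{\rho_{v_1}}2\|x\|^2+\frac{\rho_{v_2}}2\|y\|^2$ is convex.

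\textbf{Step 3 (differentiability and gradient formula).} Apply Danskin's theorem to $-V_\gamma(x,y)=\max_\theta\{-h(x,y,\theta)\}$. The maximizer is unique by Step 1. Local boundedness of $\theta^*_\gamma$ follows from strong convexity: $\|\theta^*_\gamma(x,y)-\theta^*_\gamma(x',y')\|$ is controlled by the change in $\nabla_\theta h$, which is Lipschitz in $(x,y)$. Hence $V_\gamma$ is differentiable with $\nabla V_\gamma(x,y)=\nabla_{(x,y)}h(x,y,\theta)|_{\theta=\theta^*_\gamma(x,y)}$, which equals $\big(\nabla_xG(x,\theta^*_\gamma),\,(y-\theta^*_\gamma)/\gamma\big)$.

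An alternative route to the same conclusion: the convex function from Step 2 has a unique subgradient, since $V_\gamma$ is sandwiched between this formula's affine minorant and majorant. I would use whichever is cleaner.

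\textbf{Step 4 (the inequality).} Write the convex function from Step 2 as $V_\gamma+q$, with $q(x,y)=\frac{\rho_{v_1}}2\|x\|^2+\frac{\rho_{v_2}}2\|y\|^2$. Its first-order convexity inequality at $(\bar x,\bar y)$, using the gradient from Step 3, is
\[
V_\gamma(x,y)+q(x,y)\ge V_\gamma(\bar x,\bar y)+q(\bar x,\bar y)+\langle\nabla V_\gamma(\bar x,\bar y)+\nabla q(\bar x,\bar y),(x,y)-(\bar x,\bar y)\rangle .
\]
Expanding $q(x,y)-q(\bar x,\bar y)-\langle\nabla q(\bar x,\bar y),(x,y)-(\bar x,\bar y)\rangle=\frac{\rho_{v_1}}2\|x-\bar x\|^2+\frac{\rho_{v_2}}2\|y-\bar y\|^2$ and rearranging with signs flipped gives the stated bound on $-V_\gamma(x,y)$.
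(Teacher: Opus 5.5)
Your proof is correct and is essentially the standard argument of \citet{liu2024moreau}, which the paper cites rather than reproves: joint convexity of $G(x,\theta)+\frac{1}{2\gamma}\|\theta-y\|^2+\frac{\rho_{v_1}}{2}\|x\|^2+\frac{\rho_{v_2}}{2}\|y\|^2$ via the weak-convexity split and the $2\times 2$ PSD check, preservation of convexity under partial minimization in $\theta$, a Danskin-type argument (using continuity of $\theta^*_\gamma$) for the gradient, and then the first-order inequality for the convex function $V_\gamma+q$. One phrase in your alternative route for Step 3 is loose: the majorant $h(\cdot,\cdot,\theta^*_\gamma(\bar x,\bar y))+q$ that touches $V_\gamma+q$ at $(\bar x,\bar y)$ is quadratic, not affine, though it still forces $\partial(V_\gamma+q)(\bar x,\bar y)$ to be the singleton given by your gradient formula.
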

By \citet[Lemma A.9]{liu2024moreau}, we can easily derive the following lemma.
\begin{lemma}[Property of $\theta_{\gamma}^{*}(x,y)$ \citep{liu2024moreau}] \label{gaptheta}
	Let $\gamma\in(0,\frac{1}{2L_{2}})$. Then, there exists $L_{\theta}>0$ such that for any $(x,y),(x',y')\in \mathbb{R}^{d_{x}} \times\mathbb{R}^{d_{y}}$, the following inequality holds: 
	\begin{equation}
		\|\theta_{\gamma}^{*}(x,y)-\theta_{\gamma}^{*}(x^{\prime},y^{\prime})\|\leq L_{\theta}\|(x,y)-(x^{\prime},y^{\prime})\|,
	\end{equation}
	which $L_\theta$ is some positive constant.
\end{lemma}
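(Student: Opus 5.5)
The claim is a Lipschitz bound on the proximal map $\theta_\gamma^*(x,y)$. My plan is to use strong convexity of the proximal subproblem together with the first-order optimality condition.

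\textbf{Step 1 (strong convexity and uniqueness).} For $\gamma\in(0,\frac{1}{2L_2})$, the map $\theta\mapsto G(x,\theta)+\frac{1}{2\gamma}\|\theta-y\|^2$ is $(\frac1\gamma-L_2)$-strongly convex. Indeed, $G(x,\cdot)$ is $L_2$-smooth by Assumption~\ref{ass1}, so $G(x,\cdot)+\frac{L_2}{2}\|\cdot\|^2$ is convex. Since $\frac1\gamma-L_2>L_2>0$, the minimizer $\theta_\gamma^*(x,y)$ is unique and is characterized by
\[
\nabla_y G(x,\theta_\gamma^*(x,y))+\tfrac1\gamma\bigl(\theta_\gamma^*(x,y)-y\bigr)=0 .
\]

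\textbf{Step 2 (compare two points).} Write $\theta:=\theta_\gamma^*(x,y)$ and $\theta':=\theta_\gamma^*(x',y')$. Subtract the two optimality conditions and take the inner product with $\theta-\theta'$. To isolate the $\theta$-dependence, split the gradient difference as
\[
\nabla_yG(x,\theta)-\nabla_yG(x',\theta')=\bigl[\nabla_yG(x,\theta)-\nabla_yG(x,\theta')\bigr]+\bigl[\nabla_yG(x,\theta')-\nabla_yG(x',\theta')\bigr].
\]
The first bracket, together with the $\frac1\gamma(\theta-\theta')$ term, gives
\[
\Bigl\langle \nabla_yG(x,\theta)-\nabla_yG(x,\theta')+\tfrac1\gamma(\theta-\theta'),\,\theta-\theta'\Bigr\rangle\ \ge\ \Bigl(\tfrac1\gamma-L_2\Bigr)\|\theta-\theta'\|^2
\]
by strong monotonicity of the gradient of a strongly convex function. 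The remaining terms are bounded by Cauchy--Schwarz: the second bracket has norm at most $L_2\|x-x'\|$ by joint smoothness, and the $y$-terms contribute $\frac1\gamma\|y-y'\|$. Dividing through by $\|\theta-\theta'\|$ yields
\[
\Bigl(\tfrac1\gamma-L_2\Bigr)\|\theta-\theta'\|\le L_2\|x-x'\|+\tfrac1\gamma\|y-y'\|\le \sqrt{L_2^2+\gamma^{-2}}\,\|(x,y)-(x',y')\| .
\]

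\textbf{Step 3 (conclude).} Setting
\[
L_\theta:=\frac{\sqrt{L_2^2+\gamma^{-2}}}{\gamma^{-1}-L_2}
\]
gives the claimed bound. This is consistent with the appearance of $\sqrt{L_2^2+1/\gamma^2}$ in the Lyapunov coefficient $a_1$.

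The only delicate point is Step 2. There we must use smoothness of $G$ \emph{jointly} in $(x,y)$, which is what Assumption~\ref{ass1} provides, to control $\nabla_yG(x,\theta')-\nabla_yG(x',\theta')$. We also need $\gamma<1/L_2$, which $\gamma<\frac1{2L_2}$ ensures, to keep the strong convexity modulus positive. Otherwise the argument is the standard proximal-map Lipschitz estimate and is essentially \citet[Lemma A.9]{liu2024moreau}.
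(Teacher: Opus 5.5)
Your proof is correct and takes essentially the paper's route: the paper gives no argument of its own and simply defers to \citet[Lemma A.9]{liu2024moreau}, and your comparison of the two first-order optimality conditions is the standard proof of that fact in this unconstrained, smooth setting. It uses the $(\frac{1}{\gamma}-L_2)$-strong monotonicity in $\theta$ together with joint $L_2$-smoothness in $x$, and it makes explicit the details and the constant the paper leaves implicit, namely $L_\theta=\sqrt{L_2^2+\gamma^{-2}}/\eta$ with $\eta=\frac{1}{\gamma}-L_2$ as in the paper.
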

We next characterize the smoothness of the regularized objective $\Phi_{\mu_k}(x,y)$, which plays a key role in our convergence analysis.

\begin{lemma}[Property of $\Phi_{\mu_k}(x,y)$] \label{prophi_k}
	Under Assumption~\ref{ass1}, if $\gamma \in (0,\frac{1}{2L_2})$, then $\Phi_{\mu_k}(x,y)$ is $L_{\Phi_k}$-smooth with respect to $(x,y)$, where $L_{\Phi_k} := \mu_k L_1 + L_2 + \max\{L_2, 1/\gamma\}$.
\end{lemma}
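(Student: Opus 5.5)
Since $\Phi_{\mu_k}=\mu_k(F-\underline{F})+G-V_\gamma$, the constant $-\mu_k\underline{F}$ plays no role, and I will bound the Lipschitz constant of $\nabla\Phi_{\mu_k}=\mu_k\nabla F+\nabla G-\nabla V_\gamma$ term by term. By Assumption~\ref{ass1}, each $f_i$ is $L_1$-smooth and each $g_i$ is $L_2$-smooth. Averaging preserves these constants, so $F$ is $L_1$-smooth and $G$ is $L_2$-smooth. The part $\mu_kF+G$ is therefore $(\mu_kL_1+L_2)$-smooth, and it remains to show that $V_\gamma$ is $\max\{L_2,1/\gamma\}$-smooth.

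For $V_\gamma$, I will obtain two-sided quadratic bounds and invoke the standard fact that a differentiable function $h$ satisfying
\[
|h(z')-h(z)-\langle\nabla h(z),z'-z\rangle|\le \tfrac{L}{2}\|z'-z\|^2
\]
for all $z,z'$ has an $L$-Lipschitz gradient. This follows by noting that $h+\frac L2\|\cdot\|^2$ and $\frac L2\|\cdot\|^2-h$ are both convex and applying the usual co-coercivity argument. If anisotropic weights get in the way, I can rescale variables, or simply accept the constant $\max$ of the weights.

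The lower quadratic bound comes directly from Lemma~\ref{MEineq}. Taking $\rho_{v_1}=L_2$ and $\rho_{v_2}=1/\gamma$ gives
\[
-V_\gamma(z')\le -V_\gamma(z)-\langle\nabla V_\gamma(z),z'-z\rangle+\tfrac{\max\{L_2,1/\gamma\}}{2}\|z'-z\|^2,
\]
which is weak convexity of $V_\gamma$ with modulus $\max\{L_2,1/\gamma\}$.

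For the upper bound I use the min-structure of $V_\gamma$. Write $z=(x,y)$ and $\theta^*=\theta^*_\gamma(x,y)$. Since $\theta^*$ is feasible in the problem defining $V_\gamma(x',y')$,
\[
V_\gamma(x',y')\le G(x',\theta^*)+\tfrac1{2\gamma}\|\theta^*-y'\|^2.
\]
I then expand the right-hand side around $(x,y)$. The $L_2$-smoothness of $G(\cdot,\theta^*)$ in $x$ gives $G(x',\theta^*)\le G(x,\theta^*)+\langle\nabla_xG(x,\theta^*),x'-x\rangle+\frac{L_2}2\|x'-x\|^2$. The quadratic term expands exactly as $\frac1{2\gamma}\|\theta^*-y\|^2+\frac1\gamma\langle y-\theta^*,y'-y\rangle+\frac1{2\gamma}\|y'-y\|^2$. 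The first-order terms match the gradient formula $\nabla V_\gamma=(\nabla_xG(x,\theta^*),(y-\theta^*)/\gamma)$ from Lemma~\ref{MEineq}. Hence
\[
V_\gamma(z')\le V_\gamma(z)+\langle\nabla V_\gamma(z),z'-z\rangle+\tfrac{\max\{L_2,1/\gamma\}}2\|z'-z\|^2 .
\]
Combining the two bounds, $\nabla V_\gamma$ is $\max\{L_2,1/\gamma\}$-Lipschitz, and adding this to the $(\mu_kL_1+L_2)$ constant from the first part yields $L_{\Phi_k}=\mu_kL_1+L_2+\max\{L_2,1/\gamma\}$.

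The main obstacle is the step that passes from the two-sided quadratic bounds to a Lipschitz gradient for a function that is not convex. I would handle it by applying the convex descent-lemma converse to $h+\frac L2\|\cdot\|^2$, which has a $2L$-bounded Bregman divergence. This may lose a factor of $2$. If so, I would fall back on a direct argument instead: bound $\|\nabla V_\gamma(z)-\nabla V_\gamma(z')\|$ using Lemma~\ref{gaptheta}. That route yields a possibly different constant, which can be absorbed into the $\mathcal{O}$-level use of $L_{\Phi_k}$ later in the paper.
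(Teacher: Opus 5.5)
Your proof is correct, and it establishes more than the paper needs. The paper states this lemma without proof. The only way it uses ``$L_{\Phi_k}$-smooth'' (in Lemma~\ref{desphict-se}) is the one-sided descent inequality $\Phi_{\mu_k}(z')\le\Phi_{\mu_k}(z)+\langle\nabla\Phi_{\mu_k}(z),z'-z\rangle+\frac{L_{\Phi_k}}{2}\|z'-z\|^2$. That inequality follows in one line from the descent lemmas for $F$ and $G$ plus the inequality for $-V_\gamma$ in Lemma~\ref{MEineq} with $\rho_{v_1}=L_2$, $\rho_{v_2}=1/\gamma$, which is the same ingredient you use for your lower bound.

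You go further in two steps. First, you add the reverse bound on $V_\gamma$ by inserting $\theta_\gamma^*(x,y)$ as a feasible point in the problem defining $V_\gamma(x',y')$. Second, you upgrade the two-sided quadratic bound to a genuine Lipschitz gradient. This costs extra work but proves the lemma in its literal sense rather than only the descent form used downstream.

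Your worry about losing a factor $2$ in the upgrade is unfounded. Set $L:=\max\{L_2,1/\gamma\}$ and $\phi:=V_\gamma+\frac{L}{2}\|\cdot\|^2$. Your two bounds give $0\le\phi(z')-\phi(z)-\langle\nabla\phi(z),z'-z\rangle\le L\|z'-z\|^2$. So $\phi$ is convex with a $2L$ upper bound, and hence co-coercive: $\langle\nabla\phi(z)-\nabla\phi(z'),z-z'\rangle\ge\frac{1}{2L}\|\nabla\phi(z)-\nabla\phi(z')\|^2$. Now expand $\|\nabla V_\gamma(z)-\nabla V_\gamma(z')\|^2=\|\nabla\phi(z)-\nabla\phi(z')-L(z-z')\|^2$ and apply co-coercivity; this gives exactly $L^2\|z-z'\|^2$, with no loss.

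You should therefore drop the fallback through Lemma~\ref{gaptheta}. It would produce a constant involving $L_\theta$, so it would not prove the lemma with the stated $L_{\Phi_k}$.
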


Note that $L_{\Phi_k}$ has both a positive lower and upper bound, i.e., $L_{\Phi_0} \geq L_{\Phi_k} \geq L_{\Phi_{\infty}} := L_2 + \min\{L_2, 1/\gamma\}$, due to the decay of the penalty parameter $\mu_k$.

\begin{lemma}
	Consider the mixing matrix $\mathbf{W}=(w_{ij})\in\mathbb{R}^{n\times n}$ defined in Assumption \ref{ass3}, for any $x_1,\ldots,x_n\in\mathbb{R}^{d_{x}}$, let $\bar{x}=\frac1n\sum_{i=1}^nx_i$, we have 
	\begin{align}
		\begin{aligned}&\text{(a)}\quad\sum_{i=1}^n\bigg\|\sum_{j=1}^nw_{ij}x_j\bigg\|^2\leq\sum_{j=1}^n\left\|x_j\right\|^2,\\
			&\text{(b)}\quad\sum_{i=1}^n\bigg\|\sum_{j=1}^nw_{ij}\left(x_j-\bar{x}\right)\bigg\|^2\leq\rho^2\sum_{i=1}^n\left\|x_i-\bar{x}\right\|^2,\end{aligned}
	\end{align}
	where $\rho$ is defined in Assumption \ref{ass3}.
\end{lemma}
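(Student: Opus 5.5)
The two inequalities are the standard mixing-matrix facts, and I will prove them in stacked matrix form. Write $\mathbf{X}\in\mathbb{R}^{n\times d_x}$ for the matrix whose $i$-th row is $x_i^\top$. Then $\sum_{i}\|\sum_j w_{ij}x_j\|^2=\|\mathbf{W}\mathbf{X}\|_F^2$. Moreover, $\bar{\mathbf{X}}:=\frac1n\mathbf{1}\mathbf{1}^\top\mathbf{X}$ is the matrix whose rows all equal $\bar x^\top$. Both claims then reduce to spectral estimates for $\mathbf{W}$, which is symmetric by Assumption~\ref{ass3}. I therefore use the eigendecomposition $\mathbf{W}=\sum_{l=1}^n\lambda_l u_lu_l^\top$ with an orthonormal basis $\{u_l\}$.

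\textbf{Part (a).} Since $\mathbf{W}$ is doubly stochastic and nonnegative, $\lambda_1=1$ with $u_1=\mathbf{1}/\sqrt n$. All other eigenvalues satisfy $|\lambda_l|\le\rho<1$ by Assumption~\ref{ass3}, so $\|\mathbf{W}\|_2\le1$. Applying this column by column gives
\[
\|\mathbf{W}\mathbf{X}\|_F^2=\sum_{c}\|\mathbf{W}\mathbf{X}_{:,c}\|^2\le\sum_c\|\mathbf{X}_{:,c}\|^2=\|\mathbf{X}\|_F^2,
\]
which is (a). Alternatively, one can use Jensen: since row $i$ of $\mathbf{W}$ is a probability vector, $\|\sum_jw_{ij}x_j\|^2\le\sum_jw_{ij}\|x_j\|^2$. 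Summing over $i$ and using that the column sums equal one also gives (a).

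\textbf{Part (b).} Every column of $\mathbf{X}-\bar{\mathbf{X}}$ is orthogonal to $\mathbf{1}=\sqrt n\,u_1$, because $\mathbf{1}^\top(\mathbf{X}-\bar{\mathbf{X}})=n\bar x^\top-n\bar x^\top=0$. Hence each column $v$ of $\mathbf{X}-\bar{\mathbf{X}}$ lies in $\mathrm{span}\{u_2,\dots,u_n\}$. On that subspace,
\[
\|\mathbf{W}v\|^2=\sum_{l\ge2}\lambda_l^2\langle u_l,v\rangle^2\le\rho^2\|v\|^2.
\]
Summing over the columns gives $\|\mathbf{W}(\mathbf{X}-\bar{\mathbf{X}})\|_F^2\le\rho^2\|\mathbf{X}-\bar{\mathbf{X}}\|_F^2$. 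Translating back to rows, $\sum_i\|\sum_jw_{ij}(x_j-\bar x)\|^2\le\rho^2\sum_i\|x_i-\bar x\|^2$, which is (b).

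The only step needing care is (b): the orthogonality to $\mathbf{1}$ must be checked columnwise. One must also note that symmetry of $\mathbf{W}$ is what guarantees an orthonormal eigenbasis containing $\mathbf{1}/\sqrt n$. Once these two points are in place, the rest is routine.
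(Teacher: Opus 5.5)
Your proof is correct. The paper states this lemma without any proof, treating it as a standard property of symmetric doubly stochastic mixing matrices, so your spectral argument supplies the justification the paper omits. For (a), your Jensen route is the more economical one, since it needs only nonnegativity and double stochasticity, not symmetry.

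One small point of attribution. Symmetry gives the orthonormal eigenbasis and makes $\mathbf{1}^\perp$ invariant under $\mathbf{W}$. It is the strict gap $\lambda_1>\lambda_2$ that identifies the eigenvalues on $\mathbf{1}^\perp$ as exactly $\lambda_2,\dots,\lambda_n$, and hence bounds them by $\rho$.
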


\subsection{Convergence analysis}\label{caappse}
\begin{lemma}\label{vardec-se}
	The sequence $\{\theta_i^k\}$, $\{x_i^k\}$ and $\{y_i^k\}$ generated by Algorithm SUN-DSBO-SE satisfies
	\begin{align*}
		\sum_{i=1}^n\mathbb{E}\|x_i^{k+1}-\bar{x}^{k+1}\|^2\leq&\rho\sum_{i=1}^n\mathbb{E}\|x_i^k-\bar{x}^k\|^2+  6\frac{\rho^2{\lambda_x^{k}}^{2}}{1-\rho}3n\big(\mu_{k}^{2}\delta_{f}^{2}+2\delta_{g}^{2}\big)\\
		&+3\frac{\rho^2{\lambda_x^{k}}^{2}}{1-\rho}3\big(\mu_{k}^{2}\sigma_{f}^{2}+2\sigma_{g}^{2}\big),\\
		\sum_{i=1}^n\mathbb{E}\|y_i^{k+1}-\bar{y}^{k+1}\|^2\leq&\rho\sum_{i=1}^n\mathbb{E}\|y_i^k-\bar{y}^k\|^2+6\frac{\rho^2{\lambda_y^{k}}^{2}}{1-\rho}3n\big(\mu_{k}^{2}\delta_{f}^{2}+\delta_{g}^{2}\big)\\
		&+3\frac{\rho^2{\lambda_y^{k}}^{2}}{1-\rho}3\big(\mu_{k}^{2}\sigma_{f}^{2}+\sigma_{g}^{2}\big),\\
		\sum_{i=1}^n\mathbb{E}\|\theta_i^{k+1}-\bar{\theta}^{k+1}\|^2\leq		&\rho\sum_{i=1}^n\mathbb{E}\|\theta_i^k-\bar{\theta}^k\|^2+6\frac{\rho^2{\lambda_\theta^{k}}^{2}}{1-\rho}3n\delta_{g}^{2}+3\frac{\rho^2{\lambda_\theta^{k}}^{2}}{1-\rho}3\sigma_{g}^{2}.
	\end{align*}
\end{lemma}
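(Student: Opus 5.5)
I will prove the three consensus recursions in the same way; I describe the $x$-block. Stack the local iterates as $\mathbf{X}^k=[x_1^k,\dots,x_n^k]$ and write the update of SUN-DSBO-SE as $x_i^{k+1}=\sum_j w_{ij}(x_j^k-\lambda_x^k \hat D_{x,j}^k)$. Because $\mathbf{W}$ is doubly stochastic, $\bar x^{k+1}=\bar x^k-\lambda_x^k\bar d_x^k$, as in (\ref{aveiter}). Subtracting gives
\[
x_i^{k+1}-\bar x^{k+1}=\sum_{j}w_{ij}\Big((x_j^k-\bar x^k)-\lambda_x^k(\hat D_{x,j}^k-\bar d_x^k)\Big).
\]
Part (b) of the mixing lemma applies to the bracketed vectors, since they have zero average across agents. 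It yields
\[
\sum_i\|x_i^{k+1}-\bar x^{k+1}\|^2\le\rho^2\sum_j\big\|(x_j^k-\bar x^k)-\lambda_x^k(\hat D_{x,j}^k-\bar d_x^k)\big\|^2 .
\]

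Next I apply Young's inequality $\|a+b\|^2\le(1+\eta)\|a\|^2+(1+1/\eta)\|b\|^2$ with $\eta=(1-\rho)/\rho$. This gives $\rho^2(1+\eta)=\rho$ and $\rho^2(1+1/\eta)=\rho^2/(1-\rho)$, which produces the leading term $\rho\sum_i\|x_i^k-\bar x^k\|^2$ and a remainder
\[
\frac{\rho^2(\lambda_x^k)^2}{1-\rho}\sum_j\|\hat D_{x,j}^k-\bar d_x^k\|^2 .
\]
Since subtracting the mean only reduces the sum of squares, it suffices to bound $\sum_j\|\hat D_{x,j}^k-c\|^2$ for a suitable centering $c$. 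I split each $\hat D_{x,j}^k$ into its stochastic noise $\hat D_{x,j}^k-\tilde d_{x,j}^k$ plus its expectation $\tilde d_{x,j}^k$. The noise part is a sum of three independent-sample gradient errors, so $(a+b+c)^2\le 3(\cdot)$ and Assumption~\ref{ass2} give the bound $3n(\mu_k^2\delta_f^2+2\delta_g^2)$. The expectation part is bounded via Assumption~\ref{ass4}: $\tilde d_{x,j}^k$ is compared with the same expression built from $\nabla F,\nabla G$ at the same local points, which produces the $3(\mu_k^2\sigma_f^2+2\sigma_g^2)$-type terms. The $y$- and $\theta$-blocks follow identically. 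For the $\theta$-block, the linear term $\frac1\gamma(\theta-y)$ and the $y$-block's linear terms are deterministic and contribute no noise. Finally I take total expectation.

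The main obstacle is the heterogeneity term. After centering, $\tilde d_{x,j}^k$ is evaluated at different local points $(x_j^k,y_j^k,\theta_j^k)$, so Assumption~\ref{ass4}, which holds at a fixed point, does not directly give a pure $\sigma^2$ bound. The residual $\nabla F(x_j,y_j)-\nabla F(\bar x,\bar y)$ type differences would generate additional consensus-error terms scaled by $L_1^2,L_2^2$. Two options handle this. One is to absorb these terms into the $\rho$ coefficient, accepting a slightly worse contraction factor for small stepsizes. The other is to note that the stated constants appear to treat the global-gradient differences as cancelling after the centering argument. I would first try the cleanest route, centering each summand by the corresponding global-gradient expression at the same local point. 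If leftover Lipschitz differences remain, I would bound them by $L^2$ times consensus errors and absorb them using the stepsize conditions of Lemma~\ref{lyafuncdes-se}, which effectively recovers the stated form up to constants.
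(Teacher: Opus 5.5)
Your skeleton is the paper's: the contraction from part (b) of the mixing lemma plus Young's inequality with parameter $(1-\rho)/\rho$, which gives the coefficients $\rho$ and $\rho^2/(1-\rho)$. This is followed by splitting $\hat D_{x,i}^k-\bar d_x^k$ into local noise, the dispersion $\tilde d_{x,i}^k-\tilde{\bar d}_x^k$ of the expected directions, and averaged noise. The paper then bounds the dispersion by the $\sigma$-terms with no further justification. That is exactly the step you flag, and you are right that Assumption~\ref{ass4} does not give it.

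Your repair, however, cannot produce the stated inequality, because the inequality is false as written. The dispersion contains consensus errors of all three blocks:
\begin{itemize}
\item $(\mu_k^2L_1^2+L_2^2)$-weighted ones, from evaluating gradients at distinct local points;
\item $\gamma^{-2}$-weighted ones, from the linear terms $\frac1\gamma(\theta_i^k-y_i^k)$. As you note, these carry no noise, but they do carry dispersion.
\end{itemize}
Cross-block terms, such as $\Delta_y^k$ in the $\theta$-recursion, cannot be folded into the coefficient $\rho$ of that block, and no stepsize condition helps.

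Here is a concrete counterexample. Take $d_y=1$ and $f_i\equiv g_i\equiv 0$ with exact oracles, so $\delta_f=\delta_g=\sigma_f=\sigma_g=0$. Set $\theta_i^k=0$ for all $i$ and $y^k=\epsilon v$, where $v\perp\mathbf 1$ is an eigenvector of $\mathbf W$ whose eigenvalue has modulus $\rho>0$; these are admissible, e.g., as initial points. Then $\hat D_{\theta,i}^k=-y_i^k/\gamma$, so the stacked iterate is $\theta^{k+1}=(\lambda_\theta^k/\gamma)\mathbf W y^k$. Hence $\sum_i\|\theta_i^{k+1}-\bar\theta^{k+1}\|^2=(\lambda_\theta^k)^2\rho^2\epsilon^2\|v\|^2/\gamma^2>0$ for every $\lambda_\theta^k>0$, while the claimed right-hand side is $0$. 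Exchanging the roles of $y$ and $\theta$ breaks the $y$-line in the same way.

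Your alternative of centering each summand at the global expression evaluated at the same local point does not help either. The reduction $\sum_j\|a_j-\bar a\|^2\le\sum_j\|a_j-c\|^2$ needs a common $c$, and Assumption~\ref{ass4} only controls an average over nodes at a single point. The workable center is $(\bar x^k,\bar y^k,\bar\theta^k)$, which gives
\[
\sum_{i=1}^n\mathbb{E}\|\tilde d_{x,i}^k-\tilde{\bar d}_x^k\|^2\le 6n\big(\mu_k^2\sigma_f^2+2\sigma_g^2\big)+12\big(\mu_k^2L_1^2+L_2^2\big)\,n\big(\Delta_x^k+\Delta_y^k+\Delta_\theta^k\big).
\]
Note the factor $n$ on the $\sigma$-term, which the statement also drops. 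The $y$- and $\theta$-blocks get analogous $\gamma^{-2}$-weighted terms.

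So what you can actually prove is the lemma with these additional terms; absorption does not ``recover the stated form up to constants.'' The stepsize absorption you have in mind belongs one level up, in the Lyapunov descent. There the new terms are multiplied by $a_2^k,a_3^k,a_4^k\propto\lambda_\theta^k/(1-\rho)$, so they enter $\mathcal L_{\text{se}}^{k+1}-\mathcal L_{\text{se}}^k$ at order $(\lambda_\theta^k)^3/(1-\rho)^2$. They are dominated by the negative $\Theta(\lambda_\theta^k)$-multiples of $\Delta_x^k,\Delta_y^k,\Delta_\theta^k$ once $\lambda_\theta^k\lesssim 1-\rho$, with constants depending on $L_1,L_2,\gamma,\mu_0$. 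That is enough for the downstream argument, at the price of an extra stepsize condition, but it is not the lemma as stated.
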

\begin{proof}
	First, we consider the term $\sum_{i=1}^{n}\mathbb{E}\|x_{i}^{k+1} -\bar{x}^{k+1}\|^{2}$,
	\begin{align*}
		&\sum_{i=1}^{n}\mathbb{E}\big[\|x_{i}^{k+1} -\bar{x}^{k+1}  \|^{2} \big| \mathcal{F}_{k} \big]\notag\\
		=&\sum_{i=1}^{n}\mathbb{E}\bigg[\Big\|\sum_{j=1}^{n}w_{ij}(x_{j}^{k}-{\lambda_x^{k}} d_{x,j,k}^{(\zeta_{i}^{k})})-\frac{1}{n}\sum_{s=1}^{n}\sum_{j=1}^{n}w_{sj}(x_{j}^{k}-{\lambda_x^{k}} d_{x,j,k}^{(\zeta_{i}^{k})})  \Big\|^{2}  \notag \bigg|\mathcal{F}_{k}\bigg] \\
		\leq
		&\left(1+\frac{1-\rho}{\rho}\right)\sum_{i=1}^{n}\Big\|\sum_{j=1}^{n}w_{ij}x_{j}^{k}-\bar{x}^{k}\Big\|^{2}\\
		&+\left(1+\frac{\rho}{1-\rho}\right){\lambda_x^{k}}^{2}\sum_{i=1}^{n}\mathbb{E}\bigg[\Big\|\sum_{j=1}^{n}w_{ij}(d_{x,i,k}^{(\zeta_{i}^{k})}-\bar{d}_{x}^{k})\Big\|^{2}\bigg|\mathcal{F}_{k}\bigg]. \notag
	\end{align*}	
	Taking expectations on both sides, we obtain 
	\begin{align} 
		&\sum_{i=1}^{n}\mathbb{E}\|x_{i}^{k+1} -\bar{x}^{k+1} \|^{2} \notag\\
		\leq
		&\rho\sum_{i=1}^n\mathbb{E}\|x_i^k-\bar{x}^k\|^2+\frac{\rho^2{\lambda_x^{k}}^{2}}{1-\rho}\sum_{i=1}^n\mathbb{E}\|d_{x,i,k}^{(\zeta_{i}^{k})}-\bar{d}_x^k\|^2 \notag \\
		\leq
		&\rho\sum_{i=1}^n\mathbb{E}\|x_i^k-\bar{x}^k\|^2 +
		3\frac{\rho^2{\lambda_x^{k}}^{2}}{1-\rho}\sum_{i=1}^n\bigg(\mathbb{E}\|\tilde{d}_{x,i}^{k}-d_{x,i,k}^{(\zeta_{i}^{k})}\|^2  +\|\tilde{d}_{x,i}^{k}-\tilde{\bar{d}}_x^{k}\|^{2} +    \|\tilde{\bar{d}}_x^{k}-\bar{d}_x^k\|^{2}    \bigg)    \notag\\
		\leq
		&\rho\sum_{i=1}^n\mathbb{E}\|x_i^k-\bar{x}^k\|^2+  6\frac{\rho^2{\lambda_x^{k}}^{2}}{1-\rho}3n\big(\mu_{k}^{2}\delta_{f}^{2}+2\delta_{g}^{2}\big)+3\frac{\rho^2{\lambda_x^{k}}^{2}}{1-\rho}3\big(\mu_{k}^{2}\sigma_{f}^{2}+2\sigma_{g}^{2}\big) .        \notag  
	\end{align}
	Similarly, we have 
	\begin{align*}
		&\sum_{i=1}^{n}\mathbb{E}\|y_{i}^{k+1} -\bar{y}^{k+1}\|^{2}\\
		\leq
		&\rho\sum_{i=1}^n\mathbb{E}\|y_i^k-\bar{y}^k\|^2+6\frac{\rho^2{\lambda_y^{k}}^{2}}{1-\rho}3n\big(\mu_{k}^{2}\delta_{f}^{2}+\delta_{g}^{2}\big)+3\frac{\rho^2{\lambda_y^{k}}^{2}}{1-\rho}3\big(\mu_{k}^{2}\sigma_{f}^{2}+\sigma_{g}^{2}\big),
	\end{align*}
	and 
	\begin{align}
		\sum_{i=1}^{n}\mathbb{E}\|\theta_{i}^{k+1} -\bar{\theta}^{k+1}\|^{2}
		\leq
		\rho\sum_{i=1}^n\mathbb{E}\|\theta_i^k-\bar{\theta}^k\|^2+6\frac{\rho^2{\lambda_\theta^{k}}^{2}}{1-\rho}3n\delta_{g}^{2}+3\frac{\rho^2{\lambda_\theta^{k}}^{2}}{1-\rho}3\sigma_{g}^{2}.
	\end{align}
\end{proof}

\begin{lemma}[Descent in $\Phi_{\mu_k}(x,y)$]\label{desphict-se}
	Under  Assumptions \ref{ass1}, $\gamma\in(0,\frac{1}{2L_2})$, the sequence of $(\bar{x}^{k}, \bar{y}^{k}, \bar{\theta}^{k})$  generated by SUN-DSBO-SE satisfies 
	\begin{align*}
		&\mathbb{E}\Phi_{\mu_k}(\bar{x}^{k+1},\bar{y}^{k+1})-\mathbb{E}\Phi_{\mu_k}(\bar{x}^{k},\bar{y}^{k})\\
		\leq & 
		-\frac{1}{2{{\lambda_x^{k}}}}\|\mathbb{E}[\bar{x}^{k+1}-\bar{x}^{k}]\|^{2}+\frac{L_{\Phi_k}}{2}\mathbb{E}\|\bar{x}^{k+1}-\bar{x}^{k}\|^{2}-\frac{1}{2{{\lambda_y^{k}}}}\|\mathbb{E}[\bar{y}^{k+1}-\bar{y}^{k}]\|^{2}+\frac{L_{\Phi_k}}{2}\mathbb{E}\|\bar{y}^{k+1}-\bar{y}^{k}\|^{2}\\
		&+\big({{\lambda_x^{k}}}L_{2}^{2}+{{\lambda_y^{k}}}\frac{1}{\gamma^{2}}\big)\mathbb{E}\Vert\bar{\theta}^{k}-\theta_{\gamma}^*(\bar{x}^{k},\bar{y}^{k})\Vert^{2}+\left(3{{\lambda_x^{k}}}\big(\mu_{k}^{2}L_{1}^{2}+2L_{2}^{2}\big)+4{{\lambda_y^{k}}}\big(\mu_{k}^{2}L_{1}^{2}+L_{2}^{2}\big)\right)\mathbb{E}\Delta_{x}^{k}\\
		&+\left(3{{\lambda_x^{k}}}L_{2}^{2}+ 4{{\lambda_y^{k}}}\big(\mu_{k}^{2}L_{1}^{2}+L_{2}^{2}+\frac{1}{\gamma^{2}}\big)\right)\mathbb{E}\Delta_{y}^{k}
		+\left(3{{\lambda_x^{k}}}L_{2}^{2}+\frac{4}{\gamma^{2}}{{\lambda_y^{k}}}\right)\mathbb{E}\Delta_{\theta}^{k} ,
	\end{align*} 
	where $\Phi_{\mu_k}(x,y):=\mu_{k}\big(F(x,y)-\underline{F}\big)+G(x,y)-V_\gamma(x,y)$.
\end{lemma}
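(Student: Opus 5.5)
We would start from the $L_{\Phi_k}$-smoothness of $\Phi_{\mu_k}$ (Lemma~\ref{prophi_k}) and write the descent lemma at the averaged iterates:
\[
\Phi_{\mu_k}(\bar{x}^{k+1},\bar{y}^{k+1})\le \Phi_{\mu_k}(\bar{x}^{k},\bar{y}^{k})+\langle\nabla_x\Phi_{\mu_k}(\bar{x}^k,\bar{y}^k),\bar{x}^{k+1}-\bar{x}^k\rangle+\langle\nabla_y\Phi_{\mu_k}(\bar{x}^k,\bar{y}^k),\bar{y}^{k+1}-\bar{y}^k\rangle+\tfrac{L_{\Phi_k}}{2}\big(\|\bar{x}^{k+1}-\bar{x}^k\|^2+\|\bar{y}^{k+1}-\bar{y}^k\|^2\big).
\]
We then take conditional expectation with respect to $\mathcal{F}_k$. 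By (\ref{aveiter}) and unbiasedness (Assumption~\ref{ass2}), $\mathbb{E}[\bar{x}^{k+1}-\bar{x}^k\mid\mathcal{F}_k]=-\lambda_x^k\tilde{\bar d}_x^k$, and similarly for $y$. The $x$-inner product then becomes $\langle \nabla_x\Phi_{\mu_k},-\lambda_x^k\tilde{\bar d}_x^k\rangle$.

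Next we use the polarization identity $\langle a,-\lambda b\rangle=-\tfrac{\lambda}{2}\|a\|^2-\tfrac{\lambda}{2}\|b\|^2+\tfrac{\lambda}{2}\|a-b\|^2$ with $a=\nabla_x\Phi_{\mu_k}(\bar x^k,\bar y^k)$ and $b=\tilde{\bar d}_x^k$. We drop $-\tfrac{\lambda}{2}\|a\|^2\le 0$ and keep $-\tfrac{\lambda_x^k}{2}\|\tilde{\bar d}_x^k\|^2=-\tfrac{1}{2\lambda_x^k}\|\mathbb{E}[\bar x^{k+1}-\bar x^k\mid\mathcal F_k]\|^2$. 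After taking total expectation and applying Jensen, this becomes $-\tfrac{1}{2\lambda_x^k}\|\mathbb{E}[\bar x^{k+1}-\bar x^k]\|^2$. The same treatment applies to $y$. The quadratic terms are kept as $\tfrac{L_{\Phi_k}}{2}\mathbb{E}\|\cdot\|^2$, exactly as in the statement.

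The remaining work is to bound the bias term $\tfrac{\lambda}{2}\|a-b\|^2$. By Lemma~\ref{MEineq}, $\nabla_x\Phi_{\mu_k}=\mu_k\nabla_xF+\nabla_xG-\nabla_xG(\cdot,\theta^*_\gamma)$ and $\nabla_y\Phi_{\mu_k}=\mu_k\nabla_yF+\nabla_yG-(y-\theta^*_\gamma)/\gamma$, evaluated at $(\bar x^k,\bar y^k)$. Meanwhile $\tilde{\bar d}_x^k=\tfrac1n\sum_i[\mu_k\nabla_xf_i(x_i^k,y_i^k)+\nabla_xg_i(x_i^k,y_i^k)-\nabla_xg_i(x_i^k,\theta_i^k)]$. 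We would add and subtract $\nabla_x g_i(\bar x^k,\bar\theta^k)$ to split $a-b$ into three groups:
\begin{itemize}
\item the $\mu_k f_i$ terms at $(x_i^k,y_i^k)$ versus $(\bar x^k,\bar y^k)$;
\item the $g_i$ terms at $(x_i^k,y_i^k)$ versus the mean point;
\item $\nabla_xg_i(x_i^k,\theta_i^k)-\nabla_xg_i(\bar x^k,\theta^*_\gamma(\bar x^k,\bar y^k))$, which we further split through $(\bar x^k,\bar\theta^k)$.
\end{itemize}
Using $\|u_1+u_2+u_3\|^2\le 3\sum\|u_j\|^2$, Jensen over $i$, and $L_1,L_2$-smoothness, this yields $3\lambda_x^k(\mu_k^2L_1^2+2L_2^2)\Delta_x^k+3\lambda_x^kL_2^2(\Delta_y^k+\Delta_\theta^k)+\lambda_x^kL_2^2\|\bar\theta^k-\theta^*_\gamma\|^2$.

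The $\theta^*$ gap needs a separate weighting so that it appears with coefficient $\lambda_x^kL_2^2$ rather than $3\lambda_x^kL_2^2$; the $1/2$ factor from polarization gives the needed slack. For $y$ we use a four-way split: the $f$ part, the $g$ part, the linear term $(\bar y^k-\tfrac1n\sum_i y_i^k)/\gamma=0$, and $(\bar\theta^k-\theta_i^k)/\gamma$ together with $(\theta^*_\gamma-\bar\theta^k)/\gamma$. This produces the coefficients $4\lambda_y^k(\mu_k^2L_1^2+L_2^2)$ on $\Delta_x^k$, $4\lambda_y^k(\mu_k^2L_1^2+L_2^2+1/\gamma^2)$ on $\Delta_y^k$, $\tfrac{4}{\gamma^2}\lambda_y^k$ on $\Delta_\theta^k$, and $\lambda_y^k/\gamma^2$ on the $\theta^*$ gap.

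We expect the main obstacle to be the bookkeeping of constants. In particular, isolating the $\|\bar\theta^k-\theta^*_\gamma\|^2$ term with coefficient exactly $\lambda_x^kL_2^2+\lambda_y^k/\gamma^2$ requires splitting $\|a-b\|^2\le 2\|\text{consensus part}\|^2+2\|\theta^*\text{ part}\|^2$, so that the factor $2$ cancels the polarization $\tfrac12$. We then redistribute the constants inside the consensus part so that they match the stated $3$ and $4$ factors, which may require slightly loose grouping. Everything else is a routine application of the smoothness assumptions and Jensen's inequality.
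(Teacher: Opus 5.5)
Your route is the paper's route: the descent lemma for $\Phi_{\mu_k}$ at the averaged iterates, then $\mathbb{E}[\bar x^{k+1}-\bar x^k\mid\mathcal{F}_k]=-\lambda_x^k\tilde{\bar d}_x^k$, then polarization. (The paper's (\ref{hxin}) followed by Young's inequality (\ref{x-ineq}) amounts to the same thing, and your Jensen step for $\|\mathbb{E}[\cdot]\|^2$ is cleaner.) After that come the factor-$2$ separation of the $\theta^*_\gamma$-gap as in (\ref{gradphix1}) and a three-way split of the consensus part as in (\ref{gradphix2}).

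The gap is in the bookkeeping you flagged. Your own first group, $\mu_k\big(\nabla_x f_i(x_i^k,y_i^k)-\nabla_x f_i(\bar x^k,\bar y^k)\big)$, has squared norm up to $\mu_k^2L_1^2\big(\|x_i^k-\bar x^k\|^2+\|y_i^k-\bar y^k\|^2\big)$. After the factors $\tfrac12\cdot 2\cdot 3$, the $x$-bias therefore contributes $3\lambda_x^k(\mu_k^2L_1^2+L_2^2)\Delta_y^k$, not the $3\lambda_x^kL_2^2\Delta_y^k$ you wrote. No regrouping can remove the $\mu_k^2L_1^2$ part. The only other $\Delta_y^k$ budget in the statement scales with $\lambda_y^k$, and the lemma imposes no relation between $\lambda_x^k$ and $\lambda_y^k$.

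In fact the inequality with $3\lambda_x^kL_2^2$ on $\Delta_y^k$ is false, so you should not try to reach it. Take $n=2$, $d_x=d_y=1$, $g_1=g_2\equiv 0$, $f_1=Lxy$, $f_2=-Lxy$. Then $F\equiv 0$, $\Phi_{\mu}\equiv 0$ and $\theta^*_\gamma(x,y)=y$. Use exact oracles and start from $x_1^0=x_2^0=0$, $y_1^0=\theta_1^0=e$, $y_2^0=\theta_2^0=-e$. This gives $\bar d_y^0=0$, $\bar x^1-\bar x^0=-\lambda_x\mu_0 L e$, $\bar\theta^0=\theta^*_\gamma(\bar x^0,\bar y^0)$, $\Delta_x^0=0$ and $\Delta_y^0=\Delta_\theta^0=e^2$. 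The left side is $0$, while the stated right side is
\[
-\tfrac{\lambda_x}{2}\mu_0^2L^2e^2\big(1-L_{\Phi_0}\lambda_x\big)+\big(6\lambda_xL_2^2+4\lambda_y(\mu_0^2L^2+L_2^2+2\gamma^{-2})\big)e^2 .
\]
This is negative, e.g., for $L=10$, $\mu_0=\gamma=1$, $L_2=\lambda_x=10^{-2}$, $\lambda_y=10^{-6}$.

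The coefficient in the statement is a typo. The paper's own bound (\ref{gradphix5}) carries $6(\mu_k^2L_1^2+L_2^2)$ on $\Delta_y^k$. The Lyapunov computation that follows uses $3\lambda_x^k(\mu_k^2L_1^2+L_2^2)+4\lambda_y^k(\mu_k^2L_1^2+L_2^2+\gamma^{-2})$. Prove that corrected version and your argument goes through unchanged. Your $y$-part is valid and even loose, since $\frac1n\sum_i(y_i^k-\theta_i^k)=\bar y^k-\bar\theta^k$ exactly.
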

\begin{proof}
	Considering the update rule for the variable $x$ as defined in (\ref{sun-dsbo-se}) in server and leveraging the property of the projection operator $\mathrm{Proj}_{X}$, it follows that
	\begin{align}
		\langle \bar{x}^{k}-{\lambda_x^{k}}\bar{d}_x^k-\bar{x}^{k+1},\bar{x}^{k}-\bar{x}^{k+1}\rangle\leq 0,				
	\end{align}
	which leading to 
	\begin{align}\label{hxin}
		\langle \tilde{\bar{d}}_x^k ,\mathbb{E}[\bar{x}^{k+1}-\bar{x}^{k}]\rangle\leq-\frac{1}{{\lambda_x^{k}}}\|\mathbb{E}[\bar{x}^{k+1}-\bar{x}^{k}]\|^{2}.
	\end{align}
	Similarly,
	\begin{align}\label{hyin}
		\langle \tilde{\bar{d}}_y^k ,\mathbb{E}[\bar{y}^{k+1}-\bar{y}^{k}]\rangle\leq-\frac{1}{{\lambda_y^{k}}}\|\mathbb{E}[\bar{y}^{k+1}-\bar{y}^{k}]\|^{2}.
	\end{align}
	By the Lemma \ref{prophi_k}, we have
	\begin{align*}
		&\mathbb{E}\Phi_{\mu_k}(\bar{x}^{k+1},\bar{y}^{k+1})-\mathbb{E}\Phi_{\mu_k}(\bar{x}^{k},\bar{y}^{k}) \notag\\
		\leq 
		&  \mathbb{E}\langle\nabla_{x}\Phi_{\mu_k}(\bar{x}^{k},\bar{y}^{k})-\tilde{\bar{d}}_{x}^{k}+\tilde{\bar{d}}_{x}^{k},\bar{x}^{k+1}-\bar{x}^{k}\rangle 
		+ \mathbb{E}\langle\nabla_{y}\Phi_{\mu_k}(\bar{x}^{k},\bar{y}^{k})-\tilde{\bar{d}}_{y}^{k}+\tilde{\bar{d}}_{y}^{k},\bar{y}^{k+1}-\bar{y}^{k}\rangle  \notag\\   	
		&+\frac{L_{\Phi_k}}{2}(\mathbb{E}\|\bar{x}^{k+1}-\bar{x}^{k}\|^{2}+\mathbb{E}\|\bar{y}^{k+1}-\bar{y}^{k}\|^{2}) \notag\\
		\leq
		&\mathbb{E}\langle\nabla_{x}\Phi_{\mu_k}(\bar{x}^{k},\bar{x}^{k})-\tilde{\bar{d}}_{x}^{k},\bar{x}^{k+1}-\bar{x}^{k}\rangle+ \mathbb{E}\langle\nabla_{y}\Phi_{\mu_k}(\bar{x}^{k},\bar{y}^{k})-\tilde{\bar{d}}_{y}^{k},\bar{y}^{k+1}-\bar{y}^{k}\rangle \notag\\ 
		&-\frac{1}{{{\lambda_x^{k}}}}\|\mathbb{E}[\bar{x}^{k+1}-\bar{x}^{k}]\|^{2}-\frac{1}{{{\lambda_y^{k}}}}\|\mathbb{E}[\bar{y}^{k+1}-\bar{y}^{k}]\|^{2}
		+\frac{L_{\Phi_k}}{2}(\mathbb{E}\|\bar{x}^{k+1}-\bar{x}^{k}\|^{2}+\mathbb{E}\|\bar{y}^{k+1}-\bar{y}^{k}\|^{2}) . \notag
	\end{align*}
	For $\mathbb{E}\langle\nabla_{x}\Phi_{\mu_k}(\bar{x}^{k},\bar{y}^{k})-\tilde{\bar{d}}_{x}^{k},\bar{x}^{k+1}-\bar{x}^{k}\rangle$,
	we have 
	\begin{align}\label{x-ineq}
		\begin{aligned}   &\mathbb{E}\langle\nabla_{x}\Phi_{\mu_k}(\bar{x}^{k},\bar{y}^{k})-\tilde{\bar{d}}_{x}^{k},\bar{x}^{k+1}-\bar{x}^{k}\rangle\\
			\leq
			&\frac{\lambda_x^{k}}{2}\mathbb{E}\left\|\nabla_{x}\Phi_{\mu_k}(\bar{x}^{k},\bar{y}^{k})-\tilde{\bar{d}}_{x}^{k}\right\|^{2}+\frac{1}{2{{\lambda_x^{k}}}}\|\mathbb{E}[\bar{x}^{k+1}-\bar{x}^{k}]\|^{2}.
		\end{aligned}
	\end{align}
	
	According to the definition of $\tilde{\bar{d}}_{x}^{k}$ in (\ref{bargrade}), we can analyze the term 
	$\left\Vert\nabla_{x}\Phi_{\mu_k}(\bar{x}^{k},\bar{y}^{k}) - \tilde{\bar{d}}_x^k\right\Vert^2$ in (\ref{x-ineq}) as follows:
	\begin{align}\label{gradphix1}
		&\mathbb{E}\left\Vert \nabla_{x}\Phi_{\mu_k}(\bar{x}^{k}, \bar{y}^{k}) - \frac{1}{n} \sum_{i=1}^{n} \tilde{d}_{x,i}^{k} \right\Vert^2 \notag \\
		=&\mathbb{E}\left\Vert \frac{1}{n} \sum_{i=1}^n \left[ 
		\nabla_{x} \phi^{i}_{\mu_k}(\bar{x}^{k}, \bar{y}^{k})\left[\theta_{\gamma}^{*}(\bar{x}^{k}, \bar{y}^{k})\right] 
		- \nabla_{x} \phi^{i}_{\mu_k}(\bar{x}^{k}, \bar{y}^{k})\left[\bar{\theta}^{k}\right]
		+ \nabla_{x} \phi^{i}_{\mu_k}(\bar{x}^{k}, \bar{y}^{k})\left[\bar{\theta}^{k}\right] 
		- \tilde{d}_{x,i}^{k} 
		\right] \right\Vert^2 \notag \\
		\leq\;
		& 2L_{2}^{2} \mathbb{E} \left\Vert \theta_{\gamma}^{*}(\bar{x}^{k}, \bar{y}^{k}) - \bar{\theta}^{k} \right\Vert^2 
		+ 2 \mathbb{E} \left\Vert \frac{1}{n} \sum_{i=1}^n 
		\left( \nabla_{x} \phi^{i}_{\mu_k}(\bar{x}^{k}, \bar{y}^{k})[\bar{\theta}^{k}] - \tilde{d}_{x,i}^{k} \right) \right\Vert^2, 
	\end{align}
	where 	$\phi^{i}_{\mu_k}(x, y)[\theta]$ is defined in (\ref{auxfunc}).
	
	For the term $\mathbb{E}\bigg\Vert \frac{1}{n}\sum_{i=1}^n \nabla_{x}\phi^{i}_{\mu_k}(\bar{x}^{k},\bar{y}^{k})[\bar{\theta}^{k}]-\tilde{d}_{x,i}^{k}\bigg\Vert^2$, according to the definition, we have
	\begin{align}\label{gradphix2}
		&\mathbb{E}\bigg\Vert \frac{1}{n}\sum_{i=1}^n\left[ \nabla_{x}\phi^{i}_{\mu_k}(\bar{x}^{k},\bar{y}^{k})[\bar{\theta}^{k}]-\tilde{d}_{x,i}^{k}\right]\bigg\Vert^2  \notag\\
		\overset{(a)}{\leq}\,
		&\frac{1}{n}\sum_{i=1}^n\mathbb{E}\bigg\Vert\nabla_{x}\phi^{i}_{\mu_k}(\bar{x}^{k},\bar{y}^{k})[\bar{\theta}^{k}]-\tilde{d}_{x,i}^{k}\bigg\Vert^2  \notag\\
		\overset{(b)}{\leq}\,
		&3\big(\mu_{k}^{2}L_{1}^{2}+2L_{2}^{2}\big)\cdot\frac{1}{n}\sum_{i=1}^n\mathbb{E}\big\Vert x_i^{k}-\bar{x}^{k}\big\Vert^2 
		+ 3\big(\mu_{k}^{2}L_{1}^{2}+L_{2}^{2}\big)\cdot\frac{1}{n}\sum_{i=1}^n\mathbb{E}\big\Vert y_i^{k}-\bar{y}^{k}\big\Vert^{2} \notag\\
		&+ L_{2}^{2}\cdot\frac{1}{n}\sum_{i=1}^n\mathbb{E}\big\Vert \theta_i^{k}-\bar{\theta}^{k}\big\Vert^{2}, 
	\end{align}
	
	where $(a)$ comes from Jensen's inequality,  $(b)$ comes from the Assumption \ref{ass1}.
	
	Combining the inequalities (\ref{gradphix1}) and (\ref{gradphix2}),   we have 
	\begin{align}\label{gradphix5}
		&\mathbb{E}\bigg\Vert\nabla_{x}\Phi_{\mu_k}(\bar{x}^{k},\bar{y}^{k})-\frac{1}{n}\sum_{i=1}^{n}\tilde{d}_{x,i}^{k}\bigg\Vert^2 \notag\\ 
		\leq
		&2L_{2}^{2}\mathbb{E}\Vert \theta_{\gamma}^{*}(\bar{x}^{k},\bar{y}^{k})-\bar{\theta}^{k}\Vert^2+6\big(\mu_{k}^{2} L_{1}^{2}+2L_{2}^{2}\big)\frac{1}{n}\sum_{i=1}^n\mathbb{E}\Vert x_i^{k}-\bar{x}^{k}\Vert^2\\
		&+6\big(\mu_{k}^{2} L_{1}^{2}+L_{2}^{2}\big)\frac{1}{n}\sum_{i=1}^n\mathbb{E}\Vert y_i^{k}-\bar{y}^{k}\Vert^2 +6L_{2}^{2}\frac{1}{n}\sum_{i=1}^n\mathbb{E}\Vert \theta_i^{k}-\bar{\theta}^{k}\Vert^2.\notag
	\end{align}
	Similarly,
	\begin{align}\label{gradphiy5}
		&\mathbb{E}\bigg\Vert\nabla_{y}\Phi_{\mu_k}(\bar{x}^{k},\bar{y}^{k})-\bar{d}_{y}^{k}\bigg\Vert^2\notag\\ 
		\leq
		&\frac{2}{\gamma^{2}}\mathbb{E}\Vert\theta_{\gamma}^{*}(\bar{x}^{k},\bar{y}^{k})-\bar{\theta}^{k}\Vert^{2}+ 8\Big(\mu_{k}^{2}L_{1}^{2}+L_{2}^{2}\Big)\frac{1}{n}\sum_{i=1}^n\mathbb{E} \Vert x_i^{k}-\bar{x}^{k}\Vert^{2}\\
		& +8\Big(\mu_{k}^{2}L_{1}^{2}+L_{2}^{2}+\frac{1}{\gamma^{2}}\Big)\frac{1}{n}\sum_{i=1}^n\mathbb{E}\Vert  y_{i}^{k} - \bar{y}^{k}\Vert^2+ 8\frac{1}{\gamma^{2}}\frac{1}{n}\sum_{i=1}^n\mathbb{E}\Vert \theta_i^{k}-\bar{\theta}^{k}\Vert^{2}.\notag
	\end{align}	
	Based on the above, 
	\begin{align*}
		&\mathbb{E}\Phi_{\mu_k}(\bar{x}^{k+1},\bar{y}^{k+1})-\mathbb{E}\Phi_{\mu_k}(\bar{x}^{k},\bar{y}^{k})\\		
		\leq & 
		-\frac{1}{2{{\lambda_x^{k}}}}\|\mathbb{E}[\bar{x}^{k+1}-\bar{x}^{k}]\|^{2}+\frac{L_{\Phi_k}}{2}\mathbb{E}\|\bar{x}^{k+1}-\bar{x}^{k}\|^{2}-\frac{1}{2{{\lambda_y^{k}}}}\|\mathbb{E}[\bar{y}^{k+1}-\bar{y}^{k}]\|^{2}+\frac{L_{\Phi_k}}{2}\mathbb{E}\|\bar{y}^{k+1}-\bar{y}^{k}\|^{2}\\
		&+\big({{\lambda_x^{k}}}L_{2}^{2}+{{\lambda_y^{k}}}\frac{1}{\gamma^{2}}\big)\mathbb{E}\Vert\bar{\theta}^{k}-\theta_{\gamma}^*(\bar{x}^{k},\bar{y}^{k})\Vert^{2}+\left(3{{\lambda_x^{k}}}\big(\mu_{k}^{2}L_{1}^{2}+2L_{2}^{2}\big)+4{{\lambda_y^{k}}}\big(\mu_{k}^{2}L_{1}^{2}+L_{2}^{2}\big)\right)\mathbb{E}\Delta_{x}^{k}\\
		&+\left(3{{\lambda_x^{k}}}L_{2}^{2}+ 4{{\lambda_y^{k}}}\big(\mu_{k}^{2}L_{1}^{2}+L_{2}^{2}+\frac{1}{\gamma^{2}}\big)\right)\mathbb{E}\Delta_{y}^{k}
		+\left(3{{\lambda_x^{k}}}L_{2}^{2}+\frac{4}{\gamma^{2}}{{\lambda_y^{k}}}\right)\mathbb{E}\Delta_{\theta}^{k}.
	\end{align*} 
\end{proof}

\begin{lemma}\label{thetaerr-se}
	Under Assumptions~\ref{ass1} and~\ref{ass2}, let $\bar{\theta}^{k}$ be the sequence generated by Algorithm SUN-DSBO-SE. Then the error $\Vert\bar{\theta}^{k} - \theta_{\gamma}^*(\bar{x}^{k}, \bar{y}^{k})\Vert^2$ satisfies the following recursion:
	\begin{align}
		&\mathbb{E}\Vert\bar{\theta}^{k+1}-\theta_{\gamma}^*(\bar{x}^{k+1},\bar{y}^{k+1})\Vert^2 \notag\\
		\leq&(1+\delta_{k})\Big(\mathbb{E}\Vert\bar{\theta}^{k}- \theta_{\gamma}^*(\bar{x}^{k},\bar{y}^{k})\Vert^{2}
		+{\lambda_\theta^{k}}^{2}\mathbb{E}\Vert \bar{d}_{\theta}^{k}\Vert^{2}
		+\lambda_\theta^{k}\frac{6}{\eta}L_{2}^{2}\mathbb{E}\Delta_{x}^{k}
		+\lambda_\theta^{k}\frac{6}{\eta}\big(L_{2}^{2}+\frac{1}{\gamma^{2}}\big)\mathbb{E}\Delta_{\theta}^{k} \notag\\
		&+\lambda_\theta^{k}\frac{6}{\eta}\frac{1}{\gamma^{2}}\mathbb{E}\Delta_{y}^{k}      
		-\rho\lambda_\theta^{k}\mathbb{E}\Vert\bar{\theta}^{k}- \theta_{\gamma}^*(\bar{x}^{k},\bar{y}^{k})\Vert^{2}\Big) \notag\\
		&+2L_{\theta}^{2}\big(1+\frac{1}{\delta_{k}}\big)
		(\mathbb{E}\Vert \bar{x}^{k+1}-\bar{x}^{k}\Vert^{2}+\mathbb{E}\Vert \bar{y}^{k+1}-\bar{y}^{k}\Vert^{2}) , 
	\end{align}
	where $\delta_{k}>0$, and $\eta := \frac{1}{\gamma} - L_{2}$.
\end{lemma}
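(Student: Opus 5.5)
The plan is to split the target error at the intermediate point $\theta_{\gamma}^*(\bar{x}^{k},\bar{y}^{k})$ using Young's inequality with parameter $\delta_k$:
\[
\|\bar{\theta}^{k+1}-\theta_{\gamma}^*(\bar{x}^{k+1},\bar{y}^{k+1})\|^2\le (1+\delta_k)\|\bar{\theta}^{k+1}-\theta_{\gamma}^*(\bar{x}^{k},\bar{y}^{k})\|^2+(1+\tfrac{1}{\delta_k})\|\theta_{\gamma}^*(\bar{x}^{k},\bar{y}^{k})-\theta_{\gamma}^*(\bar{x}^{k+1},\bar{y}^{k+1})\|^2 .
\]
The second term is handled by Lemma~\ref{gaptheta}. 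Since $\|(a,b)\|^2=\|a\|^2+\|b\|^2$, it is bounded by $L_\theta^2(\|\bar{x}^{k+1}-\bar{x}^k\|^2+\|\bar{y}^{k+1}-\bar{y}^k\|^2)$. The stated factor $2L_\theta^2$ leaves slack, so nothing more is needed there.

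For the first term, write $\theta^*:=\theta_{\gamma}^*(\bar{x}^{k},\bar{y}^{k})$ and use the averaged recursion (\ref{aveiter}), $\bar{\theta}^{k+1}=\bar{\theta}^k-\lambda_\theta^k\bar{d}_\theta^k$. Expanding gives
\[
\|\bar{\theta}^k-\theta^*\|^2-2\lambda_\theta^k\langle\bar{d}_\theta^k,\bar{\theta}^k-\theta^*\rangle+(\lambda_\theta^k)^2\|\bar{d}_\theta^k\|^2 .
\]
Conditioning on $\mathcal{F}_k$ and using unbiasedness (Assumption~\ref{ass2}), the cross term becomes $\langle\tilde{\bar d}_\theta^k,\bar\theta^k-\theta^*\rangle$. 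I then write
\[
\tilde{\bar d}_\theta^k=h(\bar\theta^k)+e^k,\qquad h(\theta):=\nabla_y G(\bar x^k,\theta)+\tfrac1\gamma(\theta-\bar y^k),
\]
with
\[
e^k=\frac1n\sum_i\Big[\nabla_y g_i(x_i^k,\theta_i^k)-\nabla_y g_i(\bar x^k,\bar\theta^k)+\tfrac1\gamma\big((\theta_i^k-\bar\theta^k)-(y_i^k-\bar y^k)\big)\Big].
\]
Since $\gamma<1/(2L_2)$, the function $h$ is the gradient of the $\eta$-strongly convex map $\theta\mapsto G(\bar x^k,\theta)+\frac1{2\gamma}\|\theta-\bar y^k\|^2$, with $\eta=\frac1\gamma-L_2$, and $h(\theta^*)=0$. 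Hence
\[
\langle h(\bar\theta^k),\bar\theta^k-\theta^*\rangle\ge\eta\|\bar\theta^k-\theta^*\|^2 .
\]
The error term is absorbed by $2|\langle e^k,\bar\theta^k-\theta^*\rangle|\le \frac{\eta}{2}\|\bar\theta^k-\theta^*\|^2+\frac{2}{\eta}\|e^k\|^2$.

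Next I bound $\|e^k\|^2$ via Jensen and Assumption~\ref{ass1}, splitting the summand into three pieces. This gives roughly $3L_2^2\Delta_x^k+3(L_2^2+\frac1{\gamma^2})\Delta_\theta^k+\frac{3}{\gamma^2}\Delta_y^k$; the grouping of the $\theta$-differences may cost a factor 2, which fits within the stated constants $6/\eta$. Collecting terms, the contraction on $\|\bar\theta^k-\theta^*\|^2$ has coefficient $-c\lambda_\theta^k$ for some $c$ of order $\eta$. This is the term written in the statement as $-\rho\lambda_\theta^k$. The symbol $\rho$ there looks like a reused constant (presumably $\eta$ or $\eta/2$), not the connectivity parameter, and I would make that identification explicit. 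The $(\lambda_\theta^k)^2\mathbb{E}\|\bar d_\theta^k\|^2$ term is kept as is. Taking total expectation and multiplying by $(1+\delta_k)$ yields the claim.

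The main obstacle is getting the inner product right under stochasticity and inconsistency across agents. The conditional expectation must be taken before applying strong monotonicity, because $\bar\theta^k-\theta^*$ is $\mathcal F_k$-measurable. The consensus-induced error $e^k$ must be separated cleanly so that only $\Delta^k$ terms with coefficients of order $\lambda_\theta^k/\eta$ appear. Finally, the constants must be matched to the stated $6/\eta$ and the contraction rate.
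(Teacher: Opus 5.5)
Your proposal is correct and follows essentially the same route as the paper: a $\delta_k$-Young split at $\theta_{\gamma}^*(\bar{x}^k,\bar{y}^k)$ combined with Lemma~\ref{gaptheta}, then expansion of the averaged $\theta$-step, then a cross-term estimate that pairs $\eta$-strong monotonicity with Young/Jensen bounds on the consensus error. Your reading of $-\rho\lambda_\theta^k$ as $-\eta\lambda_\theta^k$ also agrees with the paper, whose proof ends with exactly that coefficient (your weights even give $-\tfrac{3}{2}\eta\lambda_\theta^k$). The one difference is in your favor: you condition on $\mathcal{F}_k$ before bounding the inner product, so only $\tilde{\bar d}_\theta^k$ enters. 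The paper instead applies Young directly to the stochastic $d_{\theta,i,k}^{(\zeta_i^k)}$ and then bounds the resulting mean square by consensus terms alone, silently dropping a $\delta_g^2$ variance contribution; your ordering legitimately avoids it.
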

\begin{proof}
	For the gap of $\Vert\bar{\theta}^{k}-\theta_{\gamma}^*(\bar{x}^{k},\bar{y}^{k})\Vert^2$ , we have
	\begin{align}\label{thetagammadescse}
		&\mathbb{E}\Vert\bar{\theta}^{k+1}-\theta_{\gamma}^*(\bar{x}^{k+1},\bar{y}^{k+1})\Vert^2 \notag\\
		=
		&\mathbb{E}\Vert\bar{\theta}^{k+1}-\theta_{\gamma}^*(\bar{x}^{k},\bar{y}^{k})\Vert^{2}+	\mathbb{E}\Vert\theta_{\gamma}^*(\bar{x}^{k+1},\bar{y}^{k+1})-\theta_{\gamma}^*(\bar{x}^{k},\bar{y}^{k})\Vert^{2} \notag\\
		&+2	\mathbb{E}\big<\bar{\theta}^{k+1}-\theta_{\gamma}^*(\bar{x}^{k},\bar{y}^{k}),\theta_{\gamma}^*(\bar{x}^{k+1},\bar{y}^{k+1})-\theta_{\gamma}^*(\bar{x}^{k},\bar{y}^{k})\big>.
	\end{align}
	For the last term in( \ref{thetagammadescse}), we have 
	\begin{align*}
		&2\mathbb{E}\big< \bar{\theta}^{k+1}-\theta_{\gamma}^*(\bar{x}^{k},\bar{y}^{k}),\theta_{\gamma}^*(\bar{x}^{k+1},\bar{y}^{k+1})-\theta_{\gamma}^*(\bar{x}^{k},\bar{y}^{k})\big>\\
		\overset{(a)}{\leq}
		&\delta_{k} \mathbb{E}\|\bar{\theta}^{k+1}-\theta_{\gamma}^*(\bar{x}^{k},\bar{y}^{k})\|^{2}+ \frac{1}{\delta_{k}}\mathbb{E}\|\theta_{\gamma}^*(\bar{x}^{k+1},\bar{y}^{k+1})-\theta_{\gamma}^*(\bar{x}^{k},\bar{y}^{k})\|^{2}\\
		\overset{(b)}{\leq}
		& \delta_{k} \mathbb{E}\|\bar{\theta}^{k+1}-\theta_{\gamma}^*(\bar{x}^{k},\bar{y}^{k})\|^{2} + \frac{2L_{\theta}^{2}}{\delta_{k}}\mathbb{E}\|(\bar{x}^{k+1},\bar{y}^{k+1})- (\bar{x}^{k},\bar{y}^{k})\|^{2},
	\end{align*}
	where  $(a)$ can be derived from Young's inequality, $(b)$ comes from the Lemma \ref{gaptheta}. 
	Then the inequality can be reformulated as
	\begin{align}\label{thetagammadesc1}
		\mathbb{E}\left\|\bar{\theta}^{k+1}-\theta_{\gamma}^*(\bar{x}^{k+1},\bar{y}^{k+1})\right\|^2
		\leq&\ (1+\delta_{k})\,\mathbb{E}\left\|\bar{\theta}^{k+1}-\theta_{\gamma}^*(\bar{x}^{k},\bar{y}^{k})\right\|^2 \notag\\
		&+2L_{\theta}^{2}\left(1+\frac{1}{\delta_{k}}\right)\mathbb{E}\left\|(\bar{x}^{k+1},\bar{y}^{k+1}) - (\bar{x}^{k},\bar{y}^{k})\right\|^2.
	\end{align}
	
	For the term $\mathbb{E}\Vert\bar{\theta}^{k+1}-\theta_{\gamma}^*(\bar{x}^{k},\bar{y}^{k})\Vert^{2}$ in Eq. (\ref{thetagammadesc1}),
	\begin{align}\label{gaptheta1se}
		\mathbb{E}\Vert\bar{\theta}^{k+1}-\theta_{\gamma}^*(\bar{x}^{k},\bar{y}^{k})\Vert^{2} 
		\overset{(a)}{\leq}
		&\mathbb{E}\Vert\bar{\theta}^{k}- \theta_{\gamma}^*(\bar{x}^{k},\bar{y}^{k}) - \lambda_\theta^{k} \bar{d}_{\theta}^{k}\Vert^{2} \notag\\
		=
		&\mathbb{E}\Vert\bar{\theta}^{k}- \theta_{\gamma}^*(\bar{x}^{k},\bar{y}^{k})\Vert^{2}+{\lambda_\theta^{k}}^{2}\mathbb{E}\Vert \bar{d}_{\theta}^{k}\Vert^{2}-2\lambda_\theta^{k}\mathbb{E}\left\langle\bar{\theta}^{k}- \theta_{\gamma}^*(\bar{x}^{k},\bar{y}^{k}),\bar{d}_{\theta}^{k}\right\rangle ,
	\end{align}   	
	where (a) comes from the iteration of variable $\bar{\theta^{k}}$ in (\ref{aveiter}).
	
	For the term $-\mathbb{E}\left\langle\bar{\theta}^{k}- \theta_{\gamma}^*(\bar{x}^{k},\bar{y}^{k}), \bar{d}_{\theta}^{k}\right\rangle$ in (\ref{gaptheta1se}),
	\begin{align}\label{gaptheta2}
		&-\mathbb{E}\left\langle\bar{\theta}^{k}- \theta_{\gamma}^*(\bar{x}^{k},\bar{y}^{k}),\bar{d}_{\theta}^{k}\right\rangle \notag\\
		=
		&-\mathbb{E}\left\langle\bar{\theta}^{k}- \theta_{\gamma}^*(\bar{x}^{k},\bar{y}^{k}),\frac{1}{n}\sum_{i=1}^{n}d_{\theta,i,k}^{(\zeta_{i}^{k})}\right\rangle\notag\\
		=
		&-\mathbb{E}\left\langle\bar{\theta}^{k}- \theta_{\gamma}^*(\bar{x}^{k},\bar{y}^{k}),\frac{1}{n}\sum_{i=1}^{n}d_{\theta,i,k}^{(\zeta_{i}^{k})}-\frac{1}{n}\sum_{i=1}^{n}[\nabla_{y}g_{i}(\bar{x}^{k},\bar{\theta}^{k})+\frac{1}{\gamma}(\bar{\theta}^{k}-\bar{y}^{k})]\right\rangle \notag\\
		&-\mathbb{E}\left\langle\bar{\theta}^{k}- \theta_{\gamma}^*(\bar{x}^{k},\bar{y}^{k}),\frac{1}{n}\sum_{i=1}^{n}[\nabla_{y}g_{i}(\bar{x}^{k},\bar{\theta}^{k})+\frac{1}{\gamma}(\bar{\theta}^{k}-\bar{y}^{k})]-\frac{1}{n}\sum_{i=1}^{n}[\nabla_{y}g_{i}(\bar{x}^{k},\theta_{\gamma}^*(\bar{x}^{k},\bar{y}^{k}))\right. \notag\\
		&\left.\qquad\quad+\frac{1}{\gamma}(\theta_{\gamma}^*(\bar{x}^{k},\bar{y}^{k})-\bar{y}^{k})]\right\rangle \notag\\
		\overset{(a)}{\leq}
		&\frac{\eta}{2}\mathbb{E}\Vert\bar{\theta}^{k}- \theta_{\gamma}^*(\bar{x}^{k},\bar{y}^{k})\Vert^{2} + \frac{1}{\eta}\frac{1}{n}\sum_{i=1}^{n}\mathbb{E}\bigg\Vert d_{\theta,i,k}^{(\zeta_{i}^{k})}-\nabla_{y}g_{i}(\bar{x}^{k},\bar{\theta}^{k})+\frac{1}{\gamma}(\bar{\theta}^{k}-\bar{y}^{k})\bigg\Vert^{2} \notag \\
		&-\eta\mathbb{E}\Vert\bar{\theta}^{k}- \theta_{\gamma}^*(\bar{x}^{k},\bar{y}^{k})\Vert^{2} \notag \\
		\overset{(b)}{\leq}
		&\frac{3}{\eta}L_{2}^{2}\mathbb{E}\Delta_{x}^{k}+\frac{3}{\eta}\big(L_{2}^{2}+\frac{1}{\gamma^{2}}\big)\mathbb{E}\Delta_{\theta}^{k}+\frac{3}{\eta}\frac{1}{\gamma^{2}}\mathbb{E}\Delta_{y}^{k}-\frac{\eta}{2}\mathbb{E}\Vert\bar{\theta}^{k}- \theta_{\gamma}^*(\bar{x}^{k},\bar{y}^{k})\Vert^{2},
	\end{align}
	where the first two terms in (a) comes from the  Young's inequality and the last term in (a) comes from the strong convexity of $G(x,\theta)+\frac{1}{2\gamma}\|\theta-y\|^{2}$ w.r.t. $\theta$ which can be derived from the Lemma \ref{MEineq}, where (b) comes from the L-smoothness of $f_{i}(x,y)$ and $g_{i}(x,y)$ in Assumption \ref{ass1}.
	Then we have 
	\begin{align}
		&\mathbb{E}\Vert\bar{\theta}^{k+1}-\theta_{\gamma}^*(\bar{x}^{k+1},\bar{y}^{k+1})\Vert^2                 \notag\\
		\leq
		&(1+\delta_{k})\Big(\mathbb{E}\Vert\bar{\theta}^{k}- \theta_{\gamma}^*(\bar{x}^{k},\bar{y}^{k})\Vert^{2}+{\lambda_\theta^{k}}^{2}\mathbb{E}\Vert \bar{d}_{\theta}^{k}\Vert^{2}+\lambda_\theta^{k}\frac{6}{\eta}L_{2}^{2}\mathbb{E}\Delta_{x}^{k}+\lambda_\theta^{k}\frac{6}{\eta}\big(L_{2}^{2}+\frac{1}{\gamma^{2}}\big)\mathbb{E}\Delta_{\theta}^{k}\notag\\
		&+\lambda_\theta^{k}\frac{6}{\eta}\frac{1}{\gamma^{2}}\mathbb{E}\Delta_{y}^{k}      -\eta\lambda_\theta^{k}\mathbb{E}\Vert\bar{\theta}^{k}- \theta_{\gamma}^*(\bar{x}^{k},\bar{y}^{k})\Vert^{2}\Big)\\
		&+2L_{\theta}^{2}\big(1+\frac{1}{\delta_{k}}\big)(\mathbb{E}\Vert \bar{x}^{k+1}-\bar{x}^{k}\Vert^{2}+\mathbb{E}\Vert\bar{y}^{k+1}-\bar{y}^{k} \Vert^{2}) . \notag
	\end{align}
\end{proof}

\begin{lemma}\label{gradtheta-se}
	The sequences $\{d_{\theta,i}^k\}$ generated by Algorithm SUN-DSBO-SE satisfy
	\begin{align*}
		\mathbb{E}\|\bar{d}_{\theta}^{k}\|^{2}\leq	& 2\frac{\sigma_{g}^{2}}{n}+12L_{2}^{2}\frac{1}{n}\sum_{i=1}^{n}\mathbb{E}\|x_i^{k}-\bar{x}^{k}\|^{2}+12\Big(L_{2}^{2}+\frac{1}{\gamma^{2}}\Big)\frac{1}{n}\sum_{i=1}^{n}\mathbb{E}\|\theta_i^{k}-\bar{\theta}^{k}\|^{2}\\
		&+12\frac{1}{\gamma^{2}}\frac{1}{n}\sum_{i=1}^{n}\mathbb{E}\|y_i^{k}-\bar{y}^{k}\|^{2}
		+4\Big(L_{2}^{2}+\frac{1}{\gamma^{2}}\Big)\mathbb{E}\|\theta_{\gamma}^*(\bar{x}^{k},\bar{y}^{k})-\bar{\theta}^{k}\|^{2} . 
	\end{align*}
\end{lemma}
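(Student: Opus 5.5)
The plan is to split $\bar d_\theta^k$ into a zero-mean noise part and a deterministic part. Then the deterministic part is compared with the optimality condition defining $\theta_\gamma^*(\bar x^k,\bar y^k)$, and the remaining differences are absorbed into consensus errors via Assumption~\ref{ass1}.

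\textbf{Noise part.} Write $\bar d_\theta^k = (\bar d_\theta^k - \tilde{\bar d}_\theta^k) + \tilde{\bar d}_\theta^k$, so that
\[
\mathbb{E}\|\bar d_\theta^k\|^2\le 2\mathbb{E}\|\bar d_\theta^k-\tilde{\bar d}_\theta^k\|^2+2\mathbb{E}\|\tilde{\bar d}_\theta^k\|^2 .
\]
For the first term, the samples $\xi_{g,i}^k$ are independent across agents conditional on $\mathcal F_k$, and each stochastic gradient is unbiased by Assumption~\ref{ass2}. Hence the cross terms vanish and the averaged noise has second moment at most $\frac1{n^2}\sum_i\delta_g^2=\delta_g^2/n$. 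The statement writes $\sigma_g^2$ here; this appears to be a notational slip for the variance constant $\delta_g^2$, and the argument gives $2\delta_g^2/n$.

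\textbf{Deterministic part.} The key observation is that $\theta^*:=\theta_\gamma^*(\bar x^k,\bar y^k)$ satisfies the first-order condition
\[
\frac1n\sum_i\Big[\nabla_y g_i(\bar x^k,\theta^*)+\tfrac1\gamma(\theta^*-\bar y^k)\Big]=0,
\]
since it is the unique minimizer in (\ref{Morenv-0}). We can therefore subtract this zero and write
\[
\tilde{\bar d}_\theta^k=\frac1n\sum_i\Big[\nabla_y g_i(x_i^k,\theta_i^k)-\nabla_y g_i(\bar x^k,\theta^*)+\tfrac1\gamma(\theta_i^k-\theta^*)-\tfrac1\gamma(y_i^k-\bar y^k)\Big].
\]
Inserting and subtracting $\nabla_y g_i(\bar x^k,\bar\theta^k)$ and $\frac1\gamma\bar\theta^k$ splits this into two pieces:
\begin{itemize}
\item a \emph{consensus piece}, made of $\nabla_y g_i(x_i^k,\theta_i^k)-\nabla_y g_i(\bar x^k,\bar\theta^k)$ together with $\frac1\gamma(\theta_i^k-\bar\theta^k)$ and $-\frac1\gamma(y_i^k-\bar y^k)$;
\item an \emph{optimality-gap piece}, made of $\nabla_y g_i(\bar x^k,\bar\theta^k)-\nabla_y g_i(\bar x^k,\theta^*)+\frac1\gamma(\bar\theta^k-\theta^*)$.
\end{itemize}

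\textbf{Bounding the pieces.} Apply $\|a+b\|^2\le2\|a\|^2+2\|b\|^2$ to the two pieces.

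For the optimality-gap piece, $L_2$-smoothness gives a bound of at most $2(L_2^2+1/\gamma^2)\|\bar\theta^k-\theta^*\|^2$. With the outer factor 2, this yields the stated coefficient $4(L_2^2+1/\gamma^2)$.

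For the consensus piece, use Jensen's inequality over $i$, then split the three summands with the factor 3. $L_2$-smoothness bounds the gradient difference by $L_2^2(\|x_i^k-\bar x^k\|^2+\|\theta_i^k-\bar\theta^k\|^2)$. After multiplying by the outer constants $2\cdot2\cdot3=12$, this gives coefficient $12L_2^2$ on $\Delta_x$, $12(L_2^2+1/\gamma^2)$ on $\Delta_\theta$, and $12/\gamma^2$ on $\Delta_y$.

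The only step needing care is the variance term, specifically getting the $1/n$ factor, which relies on conditional independence across agents. Everything else is bookkeeping of constants, and the stated constants have enough slack to accommodate it.
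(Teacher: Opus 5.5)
Your route is the paper's: split off the sampling noise, subtract the first-order condition for $\theta^*:=\theta_\gamma^*(\bar x^k,\bar y^k)$, insert $(\bar x^k,\bar\theta^k)$, and use $L_2$-Lipschitzness. Your remark that $\sigma_g^2$ should read $\delta_g^2$ is also correct. The problem is the bookkeeping for the optimality-gap piece, call it $O$.

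With the steps you describe, there are \emph{two} outer factors of $2$ in front of $O$: the noise/deterministic split and the consensus/gap split. These are the same two factors you use to get $2\cdot2\cdot3=12$. They sit on top of your bound $\|O\|^2\le 2(L_2^2+1/\gamma^2)\|\bar\theta^k-\theta^*\|^2$. So what your chain proves is the coefficient $8(L_2^2+1/\gamma^2)$, not $4(L_2^2+1/\gamma^2)$, and the claimed slack is not there for this term.

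A sharper bound on $O$ cannot rescue it. For $g_i(x,\theta)=\frac{L_2}{2}\|\theta\|^2$ one has $O=(L_2+1/\gamma)(\bar\theta^k-\theta^*)$, and $(L_2+1/\gamma)^2>L_2^2+1/\gamma^2$, so the step $4\|O\|^2\le 4(L_2^2+1/\gamma^2)\|\bar\theta^k-\theta^*\|^2$ genuinely fails. The paper's displayed chain makes exactly the same jump, so you have reproduced its constant rather than derived it.

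The fix is to replace the first crude inequality by the exact decomposition
\[
\mathbb{E}\|\bar d_\theta^k\|^2=\mathbb{E}\|\bar d_\theta^k-\tilde{\bar d}_\theta^k\|^2+\mathbb{E}\|\tilde{\bar d}_\theta^k\|^2 .
\]
This holds because $\tilde{\bar d}_\theta^k$ is $\mathcal F_k$-measurable and the noise has zero conditional mean, so the cross term vanishes. Running the rest of your argument unchanged then gives
\[
\frac{\delta_g^2}{n}+6L_2^2\Delta_x^k+6\Big(L_2^2+\frac{1}{\gamma^2}\Big)\Delta_\theta^k+\frac{6}{\gamma^2}\Delta_y^k+4\Big(L_2^2+\frac{1}{\gamma^2}\Big)\mathbb{E}\|\bar\theta^k-\theta^*\|^2 ,
\]
which implies the stated bound. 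Alternatively, keep your argument and state the coefficient as $8(L_2^2+1/\gamma^2)$. Downstream this only changes constants in the step-size conditions.
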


\begin{proof}
	We will analyze the term $\mathbb{E}\|\bar{d}_{\theta}^{k}\|^{2}$:
	\begin{align*}
		\mathbb{E}\|\bar{d}_{\theta}^{k}\|^{2}
		\leq 
		&2\mathbb{E}\|\bar{d}_{\theta}^{k}-\tilde{\bar{d}}_{\theta}^{k}\|^{2}+2\mathbb{E}\|\tilde{\bar{d}}_{\theta}^{k}\|^{2}\\
		\overset{(a)}{\leq}
		&2\frac{\sigma_{g}^{2}}{n}+2\mathbb{E}\bigg\|\frac{1}{n}\sum_{i=1}^{n}\Big(\nabla_{2}g_{i}(x_i^{k},\theta_i^{k})+\frac{1}{\gamma}(\theta_i^{k}-y_i^{k})-\nabla_{2}g_{i}(\bar{x}^{k},\bar{\theta}^{k})-\frac{1}{\gamma}(\bar{\theta}^{k}-\bar{y}^{k})\\
		&+\nabla_{2}g_{i}(\bar{x}^{k},\bar{\theta}^{k})+\frac{1}{\gamma}(\bar{\theta}^{k}-\bar{y}^{k})-\nabla_{2}g_{i}(\bar{x}^{k},\theta_{\gamma}^*(\bar{x}^{k},\bar{y}^{k}))-\frac{1}{\gamma}(\theta_{\gamma}^*(\bar{x}^{k},\bar{y}^{k})-\bar{y}^{k})\Big)\bigg\|^{2}\\
		\leq
		& 2\frac{\sigma_{g}^{2}}{n}+12L_{2}^{2}\frac{1}{n}\sum_{i=1}^{n}\mathbb{E}\|x_i^{k}-\bar{x}^{k}\|^{2}+12\Big(L_{2}^{2}+\frac{1}{\gamma^{2}}\Big)\frac{1}{n}\sum_{i=1}^{n}\mathbb{E}\|\theta_i^{k}-\bar{\theta}^{k}\|^{2}\\
		&+12\frac{1}{\gamma^{2}}\frac{1}{n}\sum_{i=1}^{n}\mathbb{E}\|y_i^{k}-\bar{y}^{k}\|^{2}
		+4\Big(L_{2}^{2}+\frac{1}{\gamma^{2}}\Big)\mathbb{E}\|\theta_{\gamma}^*(\bar{x}^{k},\bar{y}^{k})-\bar{\theta}^{k}\|^{2},
	\end{align*}	
	where (a) comes from the Assumption \ref{ass2} and \( \theta_{\gamma}^{*}(x,y) \) is the unique solution of (\ref{Morenv-0}).
\end{proof}

We define the Lyapunov function
\begin{align}
	\begin{aligned}
		\mathcal{L}_{\text{se}}^{k}:=& \mathbb{E}\Phi_{\mu_k}(\bar{x}^{k},\bar{y}^{k})+a_{1}\mathbb{E}\|\bar{\theta}^{k}-\theta_{\gamma}^{*}(\bar{x}^{k},\bar{y}^{k})\|^{2}  \notag \\
		&+\frac{a_{2}^{k}}n\sum_{i=1}^n\mathbb{E}\|x_i^k-\bar{x}^k\|^2+\frac{a_{3}^{k}}n\sum_{i=1}^n\mathbb{E}\|y_i^k-\bar{y}^k\|^2+\frac{a_{4}^{k}}n\sum_{i=1}^n\mathbb{E}\|\theta_i^k-\bar{\theta}^k\|^2 ,
	\end{aligned}        
\end{align}
where $a_{2}^{k}:= \tau_{2}\lambda_{x}^{k}$, $a_{3}^{k}:=\tau_{3}\lambda_{y}^{k}$ and $a_{4}^{k}:=\tau_{4}\lambda_{\theta}^{k}$.

Combining Lemmas~\ref{vardec-se}, \ref{desphict-se}, \ref{thetaerr-se} , and \ref{gradtheta-se}, we obtain
\begin{align*}
	&\mathcal{L}_{\text{se}}^{k+1}-\mathcal{L}_{\text{se}}^{k}\\
	\leq
	&-\frac{1}{2{{\lambda_x^{k}}}}\|\mathbb{E}[\bar{x}^{k+1}-\bar{x}^{k}]\|^{2}
	-\frac{1}{2{{\lambda_y^{k}}}}\|\mathbb{E}[\bar{y}^{k+1}-\bar{y}^{k}]\|^{2}+\frac{L_{\Phi_k}}{2}\mathbb{E}\Big(\|\bar{x}^{k+1}-\bar{x}^{k}\|^{2}+\mathbb{E}\|\bar{y}^{k+1}-\bar{y}^{k}\|^{2}\Big)\\
	&+\big({{\lambda_x^{k}}}L_{2}^{2}+{{\lambda_y^{k}}}\frac{1}{\gamma^{2}}\big)\mathbb{E}\Vert\bar{\theta}^{k}-\theta_{\gamma}^*(\bar{x}^{k},\bar{y}^{k})\Vert^{2}+\left(3{{\lambda_x^{k}}}\big(\mu_{k}^{2}L_{1}^{2}+2L_{2}^{2}\big)+4{{\lambda_y^{k}}}\big(\mu_{k}^{2}L_{1}^{2}+L_{2}^{2}\big)\right)\mathbb{E}\Delta_{x}^{k}\\
	&+\left(3{{\lambda_x^{k}}}\big(\mu_{k}^{2}L_{1}^{2}+L_{2}^{2}\big)+ 4{{\lambda_y^{k}}}\big(\mu_{k}^{2}L_{1}^{2}+L_{2}^{2}+\frac{1}{\gamma^{2}}\big)\right)\mathbb{E}\Delta_{y}^{k}
	+\left(3{{\lambda_x^{k}}}L_{2}^{2}+\frac{4}{\gamma^{2}}{{\lambda_y^{k}}}\right)\mathbb{E}\Delta_{\theta}^{k}\\
	&+	a_{1}(1+\delta_{k})\Big(\mathbb{E}\Vert\bar{\theta}^{k}- \theta_{\gamma}^*(\bar{x}^{k},\bar{y}^{k})\Vert^{2}+{\lambda_\theta^{k}}^{2}\mathbb{E}\Vert \bar{d}_{\theta}^{k}\Vert^{2}+\lambda_\theta^{k}\frac{6}{\eta}L_{2}^{2}\mathbb{E}\Delta_{x}^{k}+\lambda_\theta^{k}\frac{6}{\eta}\big(L_{2}^{2}+\frac{1}{\gamma^{2}}\big)\mathbb{E}\Delta_{\theta}^{k} \notag\\
	&+\lambda_\theta^{k}\frac{6}{\eta}\frac{1}{\gamma^{2}}\mathbb{E}\Delta_{y}^{k}     -\eta\lambda_\theta^{k}\mathbb{E}\Vert\bar{\theta}^{k}- \theta_{\gamma}^*(\bar{x}^{k},\bar{y}^{k})\Vert^{2}\Big) \notag\\
	&+2a_{1}L_{\theta}^{2}\big(1+\frac{1}{\delta_{k}}\big)(\mathbb{E}\Vert \bar{x}^{k+1}-\bar{x}^{k}\Vert^{2}+\mathbb{E}\Vert \bar{y}^{k+1}-\bar{y}^{k}\Vert^{2})-a_{1}\mathbb{E}\Vert\bar{\theta}^{k}- \theta_{\gamma}^*(\bar{x}^{k},\bar{y}^{k})\Vert^{2}\\
	&+a_{2}^{k}\big(\rho-1\big)\mathbb{E}\Delta_{x}^{k}+a_{2}^{k}6\frac{\rho^2{\lambda_x^{k}}^{2}}{1-\rho}3\big(\mu_{k}^{2}\delta_{f}^{2}+2\delta_{g}^{2}\big)+a_{2}^{k}3\frac{\rho^2{\lambda_x^{k}}^{2}}{1-\rho}3\frac{1}{n}\big(\mu_{k}^{2}\sigma_{f}^{2}+2\sigma_{g}^{2}\big)   \\
	&+a_{3}^{k}\big(\rho-1\big)\mathbb{E}\Delta_{y}^{k}+a_{3}^{k}6\frac{\rho^2{\lambda_y^{k}}^{2}}{1-\rho}3\big(\mu_{k}^{2}\delta_{f}^{2}+\delta_{g}^{2}\big)+a_{3}^{k}3\frac{1}{n}\frac{\rho^2{\lambda_y^{k}}^{2}}{1-\rho}3\big(\mu_{k}^{2}\sigma_{f}^{2}+\sigma_{g}^{2}\big) \\
	&+a_{4}^{k}\big(\rho-1\big)\mathbb{E}\Delta_{\theta}^{k}+a_{4}^{k}6\frac{\rho^2{\lambda_\theta^{k}}^{2}}{1-\rho}3\delta_{g}^{2}+a_{3}^{k}3\frac{\rho^2{\lambda_x^{k}}^{2}}{1-\rho}3\frac{1}{n}\sigma_{g}^{2} .  
\end{align*}

We now analyze the coefficients in the above inequality. In particular, we assume that the step sizes follow a proportional relation:
\begin{equation}\label{eq:stepsize_relation}
	\lambda_x^{k} = \lambda_y^{k} = c_{\lambda} \lambda_\theta^{k},
\end{equation}
where \( c_{\lambda} > 0 \) is a fixed constant.
For the  coefficient of term $\|\mathbb{E}[\bar{x}^{k+1}-\bar{x}^{k}]\|^2$ is 
\begin{align*}
	&-(\frac{1}{2{{\lambda_x^{k}}}}-\frac{L_{\Phi_k}}{2})+2a_{1}L_{\theta}^{2}\big(1+\frac{1}{\delta_{k}}\big).
\end{align*}
If we take ${\lambda_\theta^{k}}\leq \frac{4}{3\eta}$ such that $1+\frac{1}{\delta_{k}}\leq 2$,
the coefficient of term $\|\mathbb{E}[\bar{x}^{k+1}-\bar{x}^{k}]\|^2$ is $-\frac{1}{8c_{\lambda}\lambda_{\theta}^{k}}$.

For the  coefficient of term $\|\mathbb{E}[\bar{y}^{k+1}-\bar{y}^{k}]\|^2$ is 
\begin{align*}
	&-(\frac{1}{2{{\lambda_y^{k}}}}-\frac{L_{\Phi_k}}{2})+2a_{1}L_{\theta}^{2}\big(1+\frac{1}{\delta_{k}}\big).
\end{align*}

Then we take 
the coefficient of term $\|\mathbb{E}[\bar{x}^{k+1}-\bar{x}^{k}]\|^2$ is $-\frac{1}{8c_{\lambda}\lambda_{\theta}^{k}}$.

For the coefficient of term $\mathbb{E}\|\bar{\theta}^{k}-\theta_{\gamma}^{*}(\bar{x}^{k},\bar{y}^{k})\|^{2}$ is 
\begin{align*}
	&{{\lambda_x^{k}}}L_{2}^{2}+{{\lambda_y^{k}}}\frac{1}{\gamma^{2}}+
	a_{1}\big((1+\delta_{k})(1-\eta{\lambda_\theta^{k}})-1\big)
	+4{\lambda_\theta^{k}}^2\Big(L_{2}^{2}+\frac{1}{\gamma^{2}}\Big)a_{1}(1+\delta_{k})\\
	+&4\Big(9\tau_{5}{\lambda_x^{k}}^{3}L_{2}^{2}+12\tau_{6}{\lambda_y^{k}}^{3}\frac{1}{\gamma^{2}}+9\tau_{7}{\lambda_\theta^{k}}^{3}\big(L_{2}^{2}+\frac{1}{\gamma^{2}}\big)\Big)\frac{1}{1-\rho}\bigg) . 
\end{align*}
If we take $a_{1}:=\frac{1}{2L_\theta}\sqrt{L_{2}^{2}+\frac{1}{\gamma^{2}}}$, $\delta_{k}:=\frac{{{\lambda_\theta^{k}}}\eta}{4(1-\frac{{{\lambda_\theta^{k}}} \eta}{2})}$, where $c_{\lambda}\leq \frac{\eta}{32a_{1}L_{\theta}^{2}}$, $\eta:=\frac{1}{\gamma}-L_{2}$, $\lambda_\theta^{k}\leq  \frac{4a_{1}L_{\theta}^{2}}{\rho L_{\Phi_{\infty}}}$,
the coefficient of term $\mathbb{E}\|\bar{\theta}^{k}-\theta_{\gamma}^{*}(\bar{x}^{k},\bar{y}^{k})\|^2$ is $-\frac{1}{16a_{1}L_{\theta}^{2}}\big(L_{2}^{2}+\frac{1}{\gamma^{2}}\big)\lambda_\theta^{k}$.

For the coefficient of term $\Delta_{x}^{k}$ is 
\begin{align*}
	&\Big(3{{\lambda_x^{k}}}\big(\mu_{k}^{2}L_{1}^{2}+2L_{2}^{2}\big)+4{{\lambda_y^{k}}}\big(\mu_{k}^{2}L_{1}^{2}+L_{2}^{2}\big)\Big)+a_{1}(1+\delta_{k})\lambda_\theta^{k}\frac{6}{\eta}L_{2}^{2}\\
	+&\tau_{2}\lambda_{x}^{k}(\rho-1)
	+12L_{2}^{2}{\lambda_\theta^{k}}^2a_{1}(1+\delta_{k}) . 
\end{align*}
If we take $\tau_{2}:= 2\frac{\Big(3\big(\mu_{0}L_{1}^{2}+2L_{2}^{2}\big)c_{\lambda}+4\big(\mu_{0}L_{1}^{2}+L_{2}^{2}\big)c_{\lambda}\Big)+2a_{1}\frac{6}{\eta}L_{2}^{2}}{c_{\lambda}(1-\rho)}$ and
\begin{align*}
	\lambda_\theta^{k}&\leq\frac{\tau_{2}c_{\lambda}(1-\rho)}{96L_{2}^{2}\bigg(2a_{1}+4\Big(9\tau_{5}c_{\lambda}^{3}L_{2}^{2}+12\tau_{6}c_{\lambda}^{3}\frac{1}{\gamma^{2}}+9\tau_{7}\big(L_{2}^{2}+\frac{1}{\gamma^{2}}\big)\Big)\bigg)\frac{1}{1-\rho}},
\end{align*}
the coefficient of term $\Delta_{x}^{k}$ is $-\frac{\tau_{2}c_{\lambda}(1-\rho)}{8}\lambda_\theta^{k}$.

For the coefficient of term $\Delta_{y}^{k}$ is 
\begin{align*}
	&\left(3{{\lambda_x^{k}}}\big(\mu_{0}L_{1}^{2}+L_{2}^{2}\big)+ 4{{\lambda_y^{k}}}\big(\mu_{0}L_{1}^{2}+L_{2}^{2}+\frac{1}{\gamma^{2}}\big)\right)+	a_{1}(1+\delta_{k})\lambda_\theta^{k}\frac{6}{\eta}\frac{1}{\gamma^{2}}
	+\tau_{3}\lambda_{y}^{k}(\rho-1)\\
	&+12\frac{1}{\gamma^{2}}{\lambda_\theta^{k}}^2a_{1}(1+\delta_{k}) . 
\end{align*}

If we take $\tau_{3}:= 2\frac{\Big(3\big(\mu_{0}L_{1}^{2}+L_{2}^{2}\big)c_{\lambda}+4\big(\mu_{0}L_{1}^{2}+L_{2}^{2}+\frac{1}{\gamma^{2}}\big)c_{\lambda}\Big)+2a_{1}\frac{6}{\eta}\frac{1}{\gamma^{2}}}{c_{\lambda}(1-\rho)}$ and
$\lambda_\theta^{k}\leq\frac{\tau_{3}c_{\lambda}(1-\rho)}{96\frac{1}{\gamma^{2}}2a_{1}\frac{1}{1-\rho}}$, 
the coefficient of term $\Delta_{y}^{k}$ is $-\frac{\tau_{3}c_{\lambda}(1-\rho)}{8}\lambda_\theta^{k}$.

For the coefficient of term $\Delta_{\theta}^{k}$ is 
\begin{align*}
	&\left(3{{\lambda_x^{k}}}L_{2}^{2}+\frac{4}{\gamma^{2}}{{\lambda_y^{k}}}\right)+a_{1}(1+\delta_{k})\lambda_\theta^{k}\frac{6}{\eta}\big(L_{2}^{2}+\frac{1}{\gamma^{2}}\big)+\tau_{4}\lambda_{\theta}^{k}(\rho-1)
	+12\big(\frac{1}{\gamma^{2}}+L_{2}^{2}\big){\lambda_\theta^{k}}^2a_{1}(1+\delta_{k}) . 
\end{align*}

If we take $\tau_{4}:= 2\frac{\Big(3L_{2}^{2}c_{\lambda}+\frac{4}{\gamma^{2}}c_{\lambda}\Big)+2a_{1}\frac{6}{\eta}\big(\frac{1}{\gamma^{2}}+L_{2}^{2}\big)}{c_{\lambda}(1-\rho)}$ and
$\lambda_\theta^{k}\leq\frac{\tau_{4}c_{\lambda}(1-\rho)}{96\big(L_{2}^{2}+\frac{1}{\gamma^{2}}\big)2a_{1}\frac{1}{1-\rho}}$,
the coefficient of term $\Delta_{\theta}^{k}$ is $-\frac{\tau_{4}(1-\rho)}{8}\lambda_\theta^{k}$.
Since \(\mu_{k}\) is a decaying sequence, it follows that
\begin{align*}
	\mathcal{L}_{\text{se}}^{k+1}-\mathcal{L}_{\text{se}}^{k}\leq&-\frac{1}{8c_{\lambda}\lambda_{\theta}^{k}}\|\mathbb{E}[\bar{x}^{k+1}-\bar{x}^{k}]\|^{2}-\frac{1}{8c_{\lambda}\lambda_{\theta}^{k}}\|\mathbb{E}[\bar{y}^{k+1}-\bar{y}^{k}]\|^{2}\\
	&-\frac{1}{16a_{1}L_{\theta}^{2}}\big(L_{2}^{2}+\frac{1}{\gamma^{2}}\big){\lambda_\theta}^{k}
	\mathbb{E} \Vert \bar{\theta}^{k}-\theta_{\gamma}^*(\bar{x}^{k},\bar{y}^{k})\Vert^{2}\\
	&-\frac{\tau_{2}c_{\lambda}(1-\rho)}{8}c_{\lambda}\lambda_\theta^{k}\Delta_{x}^{k}-\frac{\tau_{3}c_{\lambda}(1-\rho)}{8}c_{\lambda}\lambda_\theta^{k}\Delta_{y}^{k}-\frac{\tau_{4}(1-\rho)}{8}\lambda_\theta^{k}\Delta_{\theta}^{k}\\
	&+{\lambda_\theta^{k}}^{2}M_{1}\frac{1}{n}\Big(\mu_{0}\delta_{f}^{2}+2\delta_{g}^{2}\Big)+{\lambda_\theta^{k}}^{3}M_{2}\frac{1}{1-\rho}\big(\mu_{0}\delta_{f}^{2}+2\delta_{g}^{2}\big)\\
	&+{\lambda_\theta^{k}}^{3}M_{3}\frac{1}{1-\rho}\big(\mu_{0}\sigma_{f}^{2}+2\sigma_{g}^{2}\big),
\end{align*}
where 
\begin{align}\label{boundM-se}
	M_{1}:=4a_{1}+6\Bigl(\tfrac{L_{\Phi_0}}{2}+4a_{1}L_{\theta}^{2}\Bigr)c_{\lambda}^{2}, 
	~M_{2}:=18\bigl(\tau_{2}+\tau_{3}\bigr)c_{\lambda}^{3}+\tau_{4}, 
	~M_{3}:=9\bigl(\tau_{2}+\tau_{3}\bigr)c_{\lambda}^{3}+\tau_{4}.
\end{align}

Building upon the preceding discussions, we present the following lemma:
\begin{lemma}[Descent in Lyapunov function $\mathcal{L}_{\text{se}}^{k}$]\label{lyafuncdes-se}
	Fix the number of communication rounds $K$. Define the Lyapunov function as
	\begin{equation}\label{eq:lyapunov_def}
		\begin{aligned}
			\mathcal{L}_{\text{se}}^{k} :=\ & \mathbb{E}\Phi_{\mu_k}(\bar{x}^{k}, \bar{y}^{k}) + a_{1}\mathbb{E}\|\bar{\theta}^{k} - \theta_{\gamma}^{*}(\bar{x}^{k}, \bar{y}^{k})\|^{2}  \\
			& + \frac{a_{2}^{k}}{n}\sum_{i=1}^n\mathbb{E}\|x_i^k - \bar{x}^k\|^2 + \frac{a_{3}^{k}}{n}\sum_{i=1}^n\mathbb{E}\|y_i^k - \bar{y}^k\|^2 + \frac{a_{4}^{k}}{n}\sum_{i=1}^n\mathbb{E}\|\theta_i^k - \bar{\theta}^k\|^2,
		\end{aligned}
	\end{equation}
	where
	\begin{align}
		a_{1} := \frac{1}{2L_\theta}\sqrt{L_{2}^{2} + \frac{1}{\gamma^{2}}}, \quad 
		a_{2}^{k} := \tau_{2}\lambda_{x}^{k}, \quad
		a_{3}^{k} := \tau_{3}\lambda_{y}^{k}, \quad
		a_{4}^{k} := \tau_{4}\lambda_{\theta}^{k},
	\end{align}
	with
	\begin{align}\label{lypscontse}
		\begin{aligned}
			\tau_{2} &:= 2\frac{\Big(3\big(\mu_{0}L_{1}^{2} + 2L_{2}^{2}\big)c_{\lambda} + 4\big(\mu_{0}L_{1}^{2} + L_{2}^{2}\big)c_{\lambda}\Big) + 2a_{1}\frac{6}{\eta}L_{2}^{2}}{c_{\lambda}(1-\rho)}, \\
			\tau_{3} &:= 2\frac{\Big(3\big(\mu_{0}L_{1} + L_{2}^{2}\big)c_{\lambda} + 4\big(\mu_{0}L_{1}^{2} + L_{2}^{2} + \frac{1}{\gamma^{2}}\big)c_{\lambda}\Big) + 2a_{1}\frac{6}{\eta}\frac{1}{\gamma^{2}}}{c_{\lambda}(1-\rho)}, \\
			\tau_{4} &:= 2\frac{\Big(3L_{2}^{2}c_{\lambda} + \frac{4}{\gamma^{2}}c_{\lambda}\Big) + 2a_{1}\frac{6}{\eta}\big(\frac{1}{\gamma^{2}} + L_{2}^{2}\big)}{c_{\lambda}(1-\rho)}. 
		\end{aligned}
	\end{align}
	
	Under Assumptions \ref{ass1}--\ref{ass4}, let the learning rates as follows: 
	\(
	\lambda_\theta^k = c_{\theta} n^{1/2}K^{-1/2} \),  
	\(\lambda_x^k = \lambda_y^k = c_\lambda \lambda_\theta^k
	\), 
	where \( c_{\lambda} \leq \frac{\eta}{32a_{1}L_{\theta}^{2}}, c_{\theta} \) are positive constants that ensure the learning rate satisfies the following:
	\begin{align}\label{se:stepsizes}
		\begin{aligned}
			\lambda_\theta^{k} &\leq \frac{4}{3\eta}, \quad 
			\lambda_\theta^{k} \leq \frac{4a_{1}L_{\theta}^{2}}{\rho L_{\Phi_{\infty}}},  \\
			\lambda_\theta^{k} &\leq \frac{\tau_{2}c_{\lambda}(1-\rho)}{96L_{2}^{2}\left(2a_{1} + 4\Big(9\tau_{5}c_{\lambda}^{3}L_{2}^{2} + 12\tau_{6}c_{\lambda}^{3}\frac{1}{\gamma^{2}} + 9\tau_{7}\big(L_{2}^{2} + \frac{1}{\gamma^{2}}\big)\Big)\right)\frac{1}{1-\rho}}, \\
			\lambda_\theta^{k} &\leq \frac{\tau_{3}c_{\lambda}(1-\rho)}{96\frac{1}{\gamma^{2}}2a_{1}\frac{1}{1-\rho}}, \quad
			\lambda_\theta^{k} \leq \frac{\tau_{4}c_{\lambda}(1-\rho)}{96\big(L_{2}^{2}+\frac{1}{\gamma^{2}}\big)2a_{1}\frac{1}{1-\rho}}, 
		\end{aligned}
	\end{align}
	where 
	\(
	\delta_{k} := \frac{{\lambda_\theta^{k}}\eta}{4(1-\frac{{\lambda_\theta^{k}}\eta}{2})},
	\rho := \frac{1}{\gamma} - L_{2}.
	\)
	Taking into account that $\mu_k$ in Algorithm~\ref{sun-dsbo-se} is a decaying sequence,  
	we then obtain the following descent property:
	\begin{equation}\label{eq:lyapunov_descent_se}
		\begin{aligned}
			\mathcal{L}_{\text{se}}^{k+1} - \mathcal{L}_{\text{se}}^{k}
			\le& 
			- \frac{1}{8c_{\lambda}\,\lambda_{\theta}^{k}}\bigl\|\mathbb{E}[\bar{x}^{k+1} - \bar{x}^{k}]\bigr\|^{2}
			- \frac{1}{8c_{\lambda}\,\lambda_{\theta}^{k}}\bigl\|\mathbb{E}[\bar{y}^{k+1} - \bar{y}^{k}]\bigr\|^{2} \\
			& - \frac{1}{16\,a_{1}L_{\theta}^{2}}\Bigl(L_{2}^{2} + \tfrac{1}{\gamma^{2}}\Bigr)\lambda_\theta^{k}
			\,\mathbb{E}\bigl\|\bar{\theta}^{k} - \theta_{\gamma}^*(\bar{x}^{k}, \bar{y}^{k})\bigr\|^{2} 
			- \frac{\tau_{2}c_{\lambda}(1-\rho)}{8}\,c_{\lambda}\lambda_\theta^{k}\,\Delta_{x}^{k}\\
			&  - \frac{\tau_{3}c_{\lambda}(1-\rho)}{8}\,c_{\lambda}\lambda_\theta^{k}\,\Delta_{y}^{k}
			- \frac{\tau_{4}(1-\rho)}{8}\,\lambda_\theta^{k}\,\Delta_{\theta}^{k} 
			+ \epsilon_{\text{sto, se}}^{k}+\epsilon_{\text{dh}}^{k},
		\end{aligned}
	\end{equation}
	where $\epsilon_{\text{sto, se}}^{k}$ and $\epsilon_{\text{dh}}^{k}$ denote the stochastic error and data heterogeneity error terms, respectively. Specifically, they are defined as:
	\begin{align}\label{lyaperr-se}
		\begin{aligned}
			\epsilon_{\text{sto, se}}^{k} :=\ & (\lambda_\theta^{k})^{2} M_{1} \cdot \frac{1}{n} \Bigl(\mu_{0}^{2} \delta_{f}^{2} + 2 \delta_{g}^{2} \Bigr)
			+ (\lambda_\theta^{k})^{3} M_{2} \cdot \frac{1}{1 - \rho} \Bigl( \mu_{0}^{2} \delta_{f}^{2} + 2 \delta_{g}^{2} \Bigr), \\
			\epsilon_{\text{dh}}^{k} :=\ & (\lambda_\theta^{k})^{3} M_{3} \cdot \frac{1}{1 - \rho} \Bigl( \mu_{0}^{2} \sigma_{f}^{2} + 2 \sigma_{g}^{2} \Bigr),
		\end{aligned}
	\end{align}
	where $M_1$, $M_2$, and $M_3$ are constants defined in~(\ref{boundM-se}).
\end{lemma}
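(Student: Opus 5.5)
The plan is to assemble the one-step Lyapunov difference $\mathcal{L}_{\text{se}}^{k+1}-\mathcal{L}_{\text{se}}^{k}$ from the four preceding lemmas and then fix the free constants so that every coefficient has the right sign.

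\textbf{Splitting the difference.} I would write
\[
\mathcal{L}_{\text{se}}^{k+1}-\mathcal{L}_{\text{se}}^{k}
=\bigl[\mathbb{E}\Phi_{\mu_{k+1}}(\bar{x}^{k+1},\bar{y}^{k+1})-\mathbb{E}\Phi_{\mu_k}(\bar{x}^{k+1},\bar{y}^{k+1})\bigr]
+\bigl[\mathbb{E}\Phi_{\mu_k}(\bar{x}^{k+1},\bar{y}^{k+1})-\mathbb{E}\Phi_{\mu_k}(\bar{x}^{k},\bar{y}^{k})\bigr]+\cdots .
\]
The first bracket is $(\mu_{k+1}-\mu_k)\,\mathbb{E}(F-\underline{F})\le 0$, since $\mu_k$ is decreasing and $F\ge\underline{F}$. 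This is exactly where the decay of $\mu_k$ is used. The same monotonicity gives $a_j^{k+1}\le a_j^{k}$, because the step sizes are constant in $k$ and hence nonincreasing. So for the consensus terms I can replace $a_j^{k+1}\Delta^{k+1}$ by $a_j^{k}\Delta^{k+1}$ and then apply Lemma~\ref{vardec-se}. That lemma yields $a_j^{k}(\rho-1)\Delta^{k}$ plus noise and heterogeneity terms of order $\lambda^3/(1-\rho)$.

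\textbf{Bounding the remaining pieces.} The second bracket is bounded by Lemma~\ref{desphict-se}. The $a_1$ term is bounded by Lemma~\ref{thetaerr-se}, with $\mathbb{E}\|\bar{d}_\theta^k\|^2$ replaced using Lemma~\ref{gradtheta-se}.

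The terms $\mathbb{E}\|\bar{x}^{k+1}-\bar{x}^k\|^2$ and $\mathbb{E}\|\bar{y}^{k+1}-\bar{y}^k\|^2$ appear with positive coefficients $L_{\Phi_k}/2+2a_1L_\theta^2(1+1/\delta_k)$. I would split each as $\|\mathbb{E}[\cdot]\|^2$ plus a variance part. The variance part is $\lambda^2\,\mathbb{E}\|\bar d-\tilde{\bar d}\|^2\le\lambda^2 O\bigl((\mu_0^2\delta_f^2+\delta_g^2)/n\bigr)$, and it feeds the $M_1$ term. The $\|\mathbb{E}[\cdot]\|^2$ part is absorbed into $-\frac{1}{2\lambda_x^k}\|\mathbb{E}[\cdot]\|^2$.

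\textbf{Choosing the constants.} I would set $\delta_k=\lambda_\theta^k\eta/(4(1-\lambda_\theta^k\eta/2))$, so that $(1+\delta_k)(1-\eta\lambda_\theta^k)-1\le-\eta\lambda_\theta^k/2$. With $\lambda_\theta^k\le 4/(3\eta)$, the quantity $1/\delta_k$ is of order $1/\lambda_\theta^k$. Then $c_\lambda\le\eta/(32a_1L_\theta^2)$ makes $2a_1L_\theta^2(1+1/\delta_k)$ at most a quarter of $1/(2\lambda_x^k)$, which leaves the coefficient $-1/(8c_\lambda\lambda_\theta^k)$.

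For the $\theta$-gap, the positive contributions $\lambda_xL_2^2+\lambda_y/\gamma^2$ and the $\lambda_\theta^2$ terms must be dominated by $-a_1\eta\lambda_\theta^k/2$. This is what dictates the choice of $a_1$ together with the smallness of $c_\lambda$ and $\lambda_\theta^k$.

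For $\Delta_x,\Delta_y,\Delta_\theta$, the positive coefficients are $O(\lambda_\theta^k)$. The choice of $\tau_2,\tau_3,\tau_4$ in (\ref{lypscontse}) makes $a_j^k(1-\rho)$ twice those coefficients. The remaining $\lambda_\theta^2$ contributions are then killed by the step-size caps in (\ref{se:stepsizes}), leaving $-\tau_j(1-\rho)\lambda_\theta^k/8$ and the analogous terms.

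\textbf{Main obstacle.} The delicate step is the $\theta$-gap coefficient. Its negative part comes only from the strong convexity term $\eta$, scaled by $a_1$. Meanwhile $a_1$ also multiplies the drift term $2a_1L_\theta^2(1+1/\delta_k)$, which is of order $a_1L_\theta^2/(\eta\lambda)$. Balancing these two forces the ordering: first fix $\delta_k$, then $a_1$, then $c_\lambda$, and only then $\tau_j$ and the caps on $\lambda_\theta$. Finally, I would collect the leftover noise terms into $\epsilon_{\text{sto,se}}^k$ and $\epsilon_{\text{dh}}^k$ with $M_1,M_2,M_3$, using $\mu_k\le\mu_0$ throughout.
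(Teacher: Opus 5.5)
Your overall route is the paper's: combine Lemmas~\ref{vardec-se}, \ref{desphict-se}, \ref{thetaerr-se} and \ref{gradtheta-se}, use $\mu_{k+1}\le\mu_k$ to replace $\Phi_{\mu_{k+1}}$ by $\Phi_{\mu_k}$, then fix $\delta_k$, $a_1$, $c_\lambda$, $\tau_2,\tau_3,\tau_4$ and the step-size caps.

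\textbf{The gap.} The step that fails is the claim that the variance part of $\mathbb{E}\|\bar x^{k+1}-\bar x^k\|^2$ and $\mathbb{E}\|\bar y^{k+1}-\bar y^k\|^2$ ``feeds the $M_1$ term''. (For that split to isolate sampling noise, $\|\mathbb{E}[\cdot]\|^2$ must be read as $\mathbb{E}\|\mathbb{E}[\cdot\mid\mathcal{F}_k]\|^2$.)
\begin{itemize}
\item These second moments carry the coefficient $L_{\Phi_k}/2+2a_1L_\theta^2(1+1/\delta_k)$.
\item As you note yourself, $1/\delta_k\ge 4/(3\eta\lambda_\theta^k)$ under $\lambda_\theta^k\le 4/(3\eta)$.
\item So the noise term your bound produces is $\frac{2a_1L_\theta^2}{\delta_k}(c_\lambda\lambda_\theta^k)^2\cdot\mathcal{O}\bigl((\mu_k^2\delta_f^2+\delta_g^2)/n\bigr)$, i.e.\ of order $\frac{a_1L_\theta^2c_\lambda^2}{\eta}\,\lambda_\theta^k\,(\mu_k^2\delta_f^2+\delta_g^2)/n$. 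This is first order in $\lambda_\theta^k$, not second.
\item None of the negative terms controls sampling variance: they are the conditional-mean increments, the $\theta$-gap and the consensus errors.
\item Since $c_\lambda$ is a fixed constant, the term cannot be rewritten as $(\lambda_\theta^k)^2M_1/n$.
\end{itemize}
This is not cosmetic. After multiplying by $C_R$, summing, and dividing by $K\lambda_\theta^k$ as in Theorem~\ref{mainthseapp}, it leaves a non-vanishing $\mathcal{O}((\mu_0^2\delta_f^2+\delta_g^2)/n)$ floor.

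\textbf{The paper has the same hole.} Its proof asserts that $\lambda_\theta^k\le 4/(3\eta)$ gives $1+1/\delta_k\le 2$. What that cap actually gives is $\delta_k\le 1$, i.e.\ $1+\delta_k\le 2$. The summand $4a_1L_\theta^2$ inside $M_1$ in (\ref{boundM-se}) comes from the false version. Your handling of the conditional-mean part, balancing $1/\delta_k=\mathcal{O}(1/\lambda_\theta^k)$ against $1/(2\lambda_x^k)$ via $c_\lambda\le\eta/(32a_1L_\theta^2)$, is the consistent one; the variance part must then be treated the same way.

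\textbf{Root cause.} It lies in Lemma~\ref{thetaerr-se}. The cross term $2\mathbb{E}\langle\bar\theta^{k+1}-\theta_\gamma^*(\bar x^k,\bar y^k),\theta_\gamma^*(\bar x^{k+1},\bar y^{k+1})-\theta_\gamma^*(\bar x^k,\bar y^k)\rangle$ is handled by Young's inequality plus only the Lipschitz continuity of Lemma~\ref{gaptheta}. That sends the full second moment of the $(x,y)$-increment, noise included, through the $1/\delta_k$ channel. You cannot take $\delta_k$ of order one instead, because that destroys the contraction $(1+\delta_k)(1-\eta\lambda_\theta^k)<1$.

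\textbf{What a fix needs.} Getting an $\mathcal{O}((\lambda_\theta^k)^2)$ stochastic error requires an extra ingredient. Two options:
\begin{itemize}
\item Assume smoothness of $\theta_\gamma^*$ (which needs Lipschitz second derivatives of $g$). Then the cross term can be linearized and conditioned on $\mathcal{F}_k$, and the leading noise contributions become products of order $\lambda_\theta^k\lambda_x^k$.
\item Use a two-timescale choice $\lambda_x^k=\lambda_y^k\ll\lambda_\theta^k$, so the drift noise enters at order $(\lambda_x^k)^2/\lambda_\theta^k$.
\end{itemize}
Either changes the hypotheses or the resulting rate. As written, neither your plan nor the paper's computation establishes the stated $\epsilon_{\text{sto, se}}^{k}$.
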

\begin{remark}
	Since all terms on the right-hand side of the inequalities in the conditions ~\ref{se:stepsizes} are non-negative, the step size $\lambda_\theta^{k}$ on the left-hand side can be chosen sufficiently small to satisfy the conditions in~\ref{se:stepsizes}.
\end{remark}
Based on Lemma~\ref{lyafuncdes-se}, we establish the convergence rate of the SUN-DSBO-SE algorithm. Note that here \( p \in [0, 1/4) \) includes the case where \( p \in (0, 1/4) \) in Theorem \ref{main-sunse}.

\begin{theorem}\label{mainthseapp}
	Under Assumptions~\ref{ass1}–\ref{ass4}, let $\mu_k = \mu_0(k+1)^{-p}$, where $\mu_0 > 0$, $p \in [0,1/4)$, and $\gamma \in (0, \frac{1}{2L_{2}})$. Choose learning rates as follows: 
	\(
	\lambda_\theta^k = c_{\theta} n^{1/2}K^{-1/2} \),  
	\(\lambda_x^k = \lambda_y^k = c_\lambda \lambda_\theta^k
	\), 
where $c_{\theta}, c_{\lambda}$ are positive constants that satisfy the conditions in Lemma~\ref{lyafuncdes-se}.
	Then, for SUN-DSBO-SE, we have 	
	\begin{align}\label{resmease}
		\begin{aligned}
			\frac{1}{K} \sum_{k=0}^{K-1} \mathbb{E}\|\nabla \Psi_{\mu_{k}} (\bar{x}^k, \bar{y}^k)\|^2  =&\mathcal{O}\left( \frac{\mathcal{C}_{1}}{n^{1/2} K^{1/2}} \right) 
			+ \mathcal{O}\left( \frac{nM_{2}(\mu_{0}\delta_f^2 + \delta_g^2)}{(1-\rho)K} \right) \\
			&+ \mathcal{O}\left( \frac{nM_{3}(\mu_{0}\sigma_f^2 + \sigma_g^2)}{(1-\rho)K} \right),
		\end{aligned}
	\end{align}
	and
	\begin{align*}
		\frac{1}{K}\sum_{k=0}^{K-1} \Delta^k = &
		\mathcal{O}\left(  \frac{\mathcal{C}_{1}}{K^{1/2}}\right) 
		+ \mathcal{O}\left( \frac{n^{3/2}M_{2}(\mu_{0}\delta_f^2 + \delta_g^2)}{(1-\rho)K} \right) \\
		&+ \mathcal{O}\left( \frac{n^{3/2}M_{3}(\mu_{0}\sigma_f^2 + \sigma_g^2)}{(1-\rho)K} \right),
	\end{align*}
	where 
	\[
	\mathcal{C}_{1} := \mathcal{L}_{\text{se}}^{0} + M_{1}\bigl(\mu_{0}\delta_{f}^{2} + 2\delta_{g}^{2}\bigr),
	~
	\Delta^k := \frac{1}{n}\sum_{i=1}^{n} \mathbb{E}\left(\| x_{i}^{k} - \bar{x}^{k} \|^{2}+\| y_{i}^{k} - \bar{y}^{k} \|^{2}+\| \theta_{i}^{k} - \bar{\theta}^{k} \|^{2}\right),
	\]
	and $M_1, M_2$, and $M_3$ are defined in~(\ref{boundM-se}).
	
\end{theorem}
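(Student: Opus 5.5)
The plan is to combine the residual bound of Step 1 in Appendix~\ref{keystep} with the Lyapunov descent of Lemma~\ref{lyafuncdes-se}, telescope, and then insert the step size $\lambda_\theta^k=c_\theta n^{1/2}K^{-1/2}$.

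\textbf{Step 1 (residual bound).} Write $\nabla\Psi_{\mu_k}(\bar x^k,\bar y^k)=\nabla\Phi_{\mu_k}(\bar x^k,\bar y^k)$. Split it as $\tilde{\bar d}^k$ plus the error $\nabla\Phi_{\mu_k}-\tilde{\bar d}^k$. By (\ref{aveiter}), $\tilde{\bar d}_x^k=-\mathbb{E}[\bar x^{k+1}-\bar x^k\mid\mathcal F_k]/\lambda_x^k$, and similarly for $y$. The error is controlled by (\ref{gradphix5}) and (\ref{gradphiy5}). This gives
\[
\mathbb{E}\|\nabla\Psi_{\mu_k}(\bar x^k,\bar y^k)\|^2\le \tfrac{C}{(\lambda_\theta^k)^2}\big(\|\mathbb{E}[\bar x^{k+1}-\bar x^k]\|^2+\|\mathbb{E}[\bar y^{k+1}-\bar y^k]\|^2\big)+C\,\mathbb{E}\|\bar\theta^k-\theta_\gamma^*(\bar x^k,\bar y^k)\|^2+C\Delta^k ,
\]
with $C$ depending on $c_\lambda,L_1,L_2,\gamma,\mu_0$. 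Multiplying by $\lambda_\theta^k$, each right-hand term is a constant multiple of one of the negative terms in (\ref{eq:lyapunov_descent_se}). The consensus terms carry a factor $(1-\rho)$ times $\tau_i$; since $\tau_i\propto 1/(1-\rho)$, these factors cancel. Hence
\[
\lambda_\theta^k\,\mathbb{E}\|\nabla\Psi_{\mu_k}\|^2\le C'\big(\mathcal L_{\rm se}^k-\mathcal L_{\rm se}^{k+1}+\epsilon_{\rm sto,se}^k+\epsilon_{\rm dh}^k\big).
\]

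\textbf{Step 2 (telescoping).} Summing over $k=0,\dots,K-1$ needs $\mathcal L_{\rm se}^K\ge0$, which holds because $\Phi_{\mu_k}\ge0$ and the remaining terms are nonnegative. One point needs care: the descent in Lemma~\ref{lyafuncdes-se} compares $\Phi_{\mu_k}$ at consecutive iterates with the same $\mu_k$. Passing to $\Phi_{\mu_{k+1}}$ uses $\Phi_{\mu_{k+1}}\le\Phi_{\mu_k}$, since $\mu_k$ is nonincreasing and $F-\underline F\ge0$. This is where monotonicity of $\mu_k$ (including the case $p=0$) enters. The weights $a_j^k$ are constant in $k$ for constant step sizes, so no extra terms arise there.

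\textbf{Step 3 (rates).} Dividing by $K\lambda_\theta=c_\theta n^{1/2}K^{1/2}$ yields
\[
\frac1K\sum_{k}\mathbb{E}\|\nabla\Psi_{\mu_k}\|^2\lesssim\frac{\mathcal L^0_{\rm se}}{n^{1/2}K^{1/2}}+\frac{\lambda_\theta M_1(\cdot)}{n}+\frac{\lambda_\theta^2M_2(\cdot)}{1-\rho}+\frac{\lambda_\theta^2M_3(\cdot)}{1-\rho}.
\]
Substituting $\lambda_\theta$, the second term becomes $M_1/(n^{1/2}K^{1/2})$, which combines with the first into $\mathcal C_1/(n^{1/2}K^{1/2})$. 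The last two terms become $nM_{2,3}/((1-\rho)K)$. Because $M_{2},M_3\propto\tau_i\propto(1-\rho)^{-1}$, this matches the $(1-\rho)^{-2}$ in Theorem~\ref{main-sunse}.

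For the consensus bound, keep the negative $\Delta$ terms instead, with coefficients of order $\lambda_\theta$ once the $(1-\rho)\tau_i$ cancellation is used. Dividing by $K\lambda_\theta$ gives the same right-hand side multiplied by $n^{1/2}$. This gives $\mathcal C_1/K^{1/2}$ plus terms of order $n^{3/2}/K$.

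\textbf{Main obstacle.} The delicate step is Step 1. I must check that the coefficients produced by the bound on $\nabla\Psi-\tilde{\bar d}$ are dominated by the negative coefficients of Lemma~\ref{lyafuncdes-se} uniformly in $\rho$ and $k$. I must also handle the gap between $\|\mathbb{E}[\cdot]\|^2$ and conditional expectations, using Jensen or the tower property with $\mathcal F_k$.
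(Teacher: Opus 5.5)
Your route is the paper's proof: $\nabla\Psi_{\mu_k}=\nabla\Phi_{\mu_k}$, the residual bound from (\ref{gradphix5})--(\ref{gradphiy5}), domination of $\lambda_\theta^k\,\mathbb{E}\|\nabla\Psi_{\mu_k}(\bar x^k,\bar y^k)\|^2$ by a constant $C_R$ times the negative terms of Lemma~\ref{lyafuncdes-se}, telescoping with $\Phi_{\mu_{k+1}}\le\Phi_{\mu_k}$ and $\mathcal{L}_{\text{se}}^K\ge 0$, and substitution of $\lambda_\theta^k=c_\theta n^{1/2}K^{-1/2}$. Your remark that $\tau_i(1-\rho)$ is free of $\rho$ is what makes $C_R$ independent of $\rho$.

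\textbf{Where your plan fails as written.} The expectation mismatch cannot be fixed with Jensen. Since $\tilde{\bar d}_x^k=-\mathbb{E}[\bar x^{k+1}-\bar x^k\mid\mathcal{F}_k]/\lambda_x^k$, the residual bound actually contains $(\lambda_x^k)^{-2}\,\mathbb{E}\|\mathbb{E}[\bar x^{k+1}-\bar x^k\mid\mathcal{F}_k]\|^2$. Jensen gives $\|\mathbb{E}[\bar x^{k+1}-\bar x^k]\|^2\le\mathbb{E}\|\mathbb{E}[\bar x^{k+1}-\bar x^k\mid\mathcal{F}_k]\|^2$. That is the wrong direction for absorbing this quantity into the term $-\frac{1}{8c_\lambda\lambda_\theta^k}\|\mathbb{E}[\bar x^{k+1}-\bar x^k]\|^2$ of Lemma~\ref{lyafuncdes-se} as printed. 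So your displayed residual bound, like the paper's, is not valid as stated.

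\textbf{The repair.} Carry out the descent step of Lemma~\ref{desphict-se} conditionally on $\mathcal{F}_k$. There $\langle\tilde{\bar d}_x^k,\mathbb{E}[\bar x^{k+1}-\bar x^k\mid\mathcal{F}_k]\rangle=-\|\mathbb{E}[\bar x^{k+1}-\bar x^k\mid\mathcal{F}_k]\|^2/\lambda_x^k$ is an identity. The Lyapunov descent then holds with $\mathbb{E}\|\mathbb{E}[\,\cdot\mid\mathcal{F}_k]\|^2$ in place of $\|\mathbb{E}[\,\cdot\,]\|^2$. The remainder of $\mathbb{E}\|\bar x^{k+1}-\bar x^k\|^2$ is the conditional variance $(\lambda_x^k)^2\,\mathbb{E}\|\bar d_x^k-\tilde{\bar d}_x^k\|^2$, which goes into the noise terms; the same applies to $y$.

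\textbf{Minor point.} In the consensus bound, dividing by $K\lambda_\theta^k$ gives the same right-hand side as for the gradient, not one multiplied by $n^{1/2}$. The theorem's bound is weaker, so it follows anyway.

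\textbf{A weakness shared with the paper.} Both arguments inherit a problem from Lemma~\ref{lyafuncdes-se}.
\begin{itemize}
\item Its $\mathcal{O}\big((\lambda_\theta^k)^2/n\big)$ noise term rests on the claim $1+1/\delta_k\le 2$. But with $\delta_k=\lambda_\theta^k\eta/(4-2\lambda_\theta^k\eta)$ one has $1+1/\delta_k=4/(\lambda_\theta^k\eta)-1\ge 2$ whenever $\lambda_\theta^k\le 4/(3\eta)$.
\item $\delta_k$ cannot be taken larger, because the contraction of $\|\bar\theta^k-\theta_\gamma^*(\bar x^k,\bar y^k)\|^2$ forces $\delta_k=\mathcal{O}(\lambda_\theta^k)$.
\item Hence the variance part of the drift term $2a_1L_\theta^2(1+1/\delta_k)\,\mathbb{E}\|\bar x^{k+1}-\bar x^k\|^2$ from Lemma~\ref{thetaerr-se} is of order $c_\lambda^2\lambda_\theta^k(\mu_0^2\delta_f^2+\delta_g^2)/n$. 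This is first order in the step size.
\item After summation and division by $K\lambda_\theta^k$, it leaves a term that does not decay in $K$ for fixed $c_\lambda$.
\end{itemize}
Closing this needs a different treatment of that drift, for instance a potential whose change in $(x,y)$ is controlled by the smoothness of $V_\gamma$ rather than only the Lipschitz continuity of $\theta_\gamma^*$. Adjusting the bookkeeping in your Step~1 will not suffice.
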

\begin{proof}
	Given that 
	\(
	\nabla_{x}\Psi_{\mu_k}(\bar{x}^{k}, \bar{y}^{k}) = \nabla_{x}\Phi_{\mu_k}(\bar{x}^{k}, \bar{y}^{k}), 
	\nabla_{y}\Psi_{\mu_k}(\bar{x}^{k}, \bar{y}^{k}) = \nabla_{y}\Phi_{\mu_k}(\bar{x}^{k}, \bar{y}^{k}),
	\)
	we estimate the squared norm of the gradients as follows:
	\begin{align*}
		\mathbb{E}\|\nabla_{x}\Psi_{\mu_k}(\bar{x}^{k}, \bar{y}^{k})\|^{2} 
		&\leq 2\mathbb{E}\|\nabla_{x}\Phi_{\mu_{k}}(\bar{x}^{k},\bar{y}^{k}) - \tilde{\bar{d}}_{x}^{k} \|^{2} 
		+ \frac{2}{({\lambda_{x}^{k}})^{2}} \left\|\mathbb{E}[\bar{x}^{k+1}-\bar{x}^{k}] \right\|^{2}, \\
		\mathbb{E}\|\nabla_{y}\Psi_{\mu_k}(\bar{x}^{k}, \bar{y}^{k})\|^{2}
		&\leq 2\mathbb{E}\|\nabla_{y}\Phi_{\mu_{k}}(\bar{x}^{k},\bar{y}^{k}) - \tilde{\bar{d}}_{y}^{k} \|^{2} 
		+ \frac{2}{({\lambda_{y}^{k}})^{2}} \left\|\mathbb{E}[\bar{y}^{k+1}-\bar{y}^{k}] \right\|^{2}.
	\end{align*}
	By utilizing the inequality mentioned above and (\ref{gradphix5}) and (\ref{gradphiy5}) and performing left and right multiplication by 
	$\lambda_\theta^{k}$ , we establish the existence of 
	$C_{R} > 0$ such that 
	\begin{align}\label{desaaa}
		{\lambda_\theta^{k}}\mathbb{E}\|\nabla\Psi_{\mu_k}(\bar{x}^{k}, \bar{y}^{k})\|^{2} 
		\leq
		&C_{R}\Big(-\frac{1}{8c_{\lambda}\lambda_{\theta}^{k}}\|\mathbb{E}[\bar{x}^{k+1}-\bar{x}^{k}]\|^{2}-\frac{1}{8c_{\lambda}\lambda_{\theta}^{k}}\|\mathbb{E}[\bar{y}^{k+1}-\bar{y}^{k}]\|^{2} \notag\\
		&-\frac{1}{16a_{1}L_{\theta}^{2}}\big(L_{2}^{2}+\frac{1}{\gamma^{2}}\big){\lambda_\theta}^{k}
		\mathbb{E}\big\Vert \bar{\theta}^{k}-\theta_{\gamma}^*(\bar{x}^{k},\bar{y}^{k})\big\Vert^{2}\\
		&-\frac{\tau_{2}c_{\lambda}(1-\rho)}{8}c_{\lambda}\lambda_\theta^{k}\Delta_{x}^{k}-\frac{\tau_{3}c_{\lambda}(1-\rho)}{8}c_{\lambda}\lambda_\theta^{k}\Delta_{y}^{k}-\frac{\tau_{4}(1-\rho)}{8}\lambda_\theta^{k}\Delta_{\theta}^{k}\Big) . \notag
	\end{align}
	By using the   (\ref{eq:lyapunov_descent_se}),
	\begin{align}
		\begin{aligned}
			&{\lambda_\theta^{k}}\mathbb{E}\|\nabla\Psi_{\mu_k}(\bar{x}^{k}, \bar{y}^{k})\|^{2} \\
			\leq
			&C_{R}\big(\mathcal{L}_{\text{se}}^{k}-\mathcal{L}_{\text{se}}^{k+1}\big) +{\lambda_\theta^{k}}^{2}C_{R}M_{1}\frac{1}{n}\Big(\mu_{0}^{2}\delta_{f}^{2}+2\delta_{g}^{2}\Big)\\
			&+{\lambda_\theta^{k}}^{3}M_{2}C_{R}\frac{1}{1-\rho}\big(\mu_{0}^{2}\delta_{f}^{2}+2\delta_{g}^{2}\big)
			+ {\lambda_\theta^{k}}^{3}M_{3}C_{R}\frac{1}{1-\rho}\big(\mu_{0}^{2}\sigma_{f}^{2}+2\sigma_{g}^{2}\big) , 
		\end{aligned}
	\end{align}
	where $M_{1}, M_{2}$ and $M_{3}$ are defined in (\ref{boundM-se}).
	
	Summing both sides of the equality from \( k = 0 \) to \( K-1 \) yields,
	\begin{align}\label{psisumse}
		&\sum_{k=0}^{K-1}	{\lambda_\theta^{k}}\mathbb{E}\|\nabla\Psi_{\mu_k}(\bar{x}^{k}, \bar{y}^{k})\|^{2} \notag\\
		=
		&\mathcal{O}\left(\sum_{k=0}^{K-1}{\lambda_\theta^{k}}^{2}M_{1}\frac{1}{n}\big(\mu_{0}\delta_{f}^{2}+\delta_{g}^{2}\big)\right)
		+\mathcal{O}\left(\sum_{k=0}^{K-1}{\lambda_\theta^{k}}^{3}M_{2}\frac{1}{1-\rho}\big(\mu_{0}\delta_{f}^{2}+\delta_{g}^{2}\big)\right)\\
		&+\mathcal{O}\left(\sum_{k=0}^{K-1}{\lambda_\theta^{k}}^{3}M_{3}\frac{1}{1-\rho}\big(\mu_{0}\sigma_{f}^{2}+\sigma_{g}^{2}\big)\right)+\mathcal{O}\left(\mathcal{L}_{\text{se}}^{0}\right) . \notag
	\end{align}
	If we take ${\lambda_\theta^{k}}=\mathcal{O}\left(\frac{1}{n^{1/2}K^{1/2}}\right)$, then we have 	
	\begin{align*}
		\frac{1}{K}\sum_{k=0}^{K-1}	\mathbb{E}\|\nabla\Psi_{\mu_k}(\bar{x}^{k}, \bar{y}^{k})\|^{2} =&\mathcal{O}\left( \frac{\mathcal{L}_{\text{se}}^{0}+M_{1}\Bigl(\mu_{0}\delta_{f}^{2} + 2\delta_{g}^{2}\Bigr)}{n^{1/2} K^{1/2}} \right) 
		+ \mathcal{O}\left( \frac{nM_{2}(\mu_{0}\delta_f^2 + \delta_g^2)}{(1-\rho)K} \right) \\
		&+ \mathcal{O}\left( \frac{nM_{3}(\mu_{0}\sigma_f^2 + \sigma_g^2)}{(1-\rho)K} \right).
	\end{align*}
	Based on (\ref{desaaa}), and following a similar line of analysis as above, we obtain
	\begin{align*}
		\frac{1}{K}\sum_{k=0}^{K-1} \Delta^k = &
		\mathcal{O}\left( \frac{\mathcal{L}_{\text{se}}^{0}+M_{1}\Bigl(\mu_{0}\delta_{f}^{2} + 2\delta_{g}^{2}\Bigr)}{K^{1/2}} \right) 
		+ \mathcal{O}\left( \frac{n^{3/2}M_{2}(\mu_{0}\delta_f^2 + \delta_g^2)}{(1-\rho)K} \right) \\
		&+ \mathcal{O}\left( \frac{n^{3/2}M_{3}(\mu_{0}\sigma_f^2 + \sigma_g^2)}{(1-\rho)K} \right).
	\end{align*}
\end{proof}
\begin{corollary}[Complexity of \textbf{SUN-DSBO-SE}]\label{corollary-sunse}
	Under the same assumptions as in Theorem~\ref{main-sunse}, the sample complexity of \textbf{SUN-DSBO-SE} is 
	$\mathcal{O}\left(\epsilon^{-2}\right)$. Its transient iteration complexity is 
	$\mathcal{O}\left(\max\big\{n^3 / (1 - \rho)^4, n^3(\sigma_f^2 + \sigma_g^2)^{2} / (1 - \rho)^4\big\}\right)$.
\end{corollary}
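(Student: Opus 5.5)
The corollary follows directly from the rate in Theorem~\ref{mainthseapp} (equivalently Theorem~\ref{main-sunse}), and the plan is to read off both complexities from its three-term bound
\[
\frac{1}{K}\sum_{k=0}^{K-1}\mathbb{E}\|\nabla\Psi_{\mu_k}(\bar{x}^k,\bar{y}^k)\|^2
= \mathcal{O}\Big(\frac{\mathcal{C}_1}{n^{1/2}K^{1/2}}\Big)
+\mathcal{O}\Big(\frac{n(\delta_f^2+\delta_g^2)}{(1-\rho)^2K}\Big)
+\mathcal{O}\Big(\frac{n(\sigma_f^2+\sigma_g^2)}{(1-\rho)^2K}\Big).
\]
Here the $(1-\rho)^{-2}$ factor comes from $M_2,M_3$ containing $\tau_2,\tau_3,\tau_4 = \mathcal{O}(1/(1-\rho))$ together with the explicit $1/(1-\rho)$. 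I would first record this dependence explicitly from (\ref{boundM-se}) and (\ref{lypscontse}), so that all constants other than $n$, $1-\rho$, $\sigma$ and $\delta$ are absorbed into $\mathcal{O}(\cdot)$.

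\textbf{Sample complexity.} I would make the left-hand average at most $\epsilon$; then some iterate, or a uniformly random one, satisfies $\mathbb{E}\|\nabla\Psi_{\mu_k}\|^2\le\epsilon$. For fixed $n$ and $\rho$ the dominant term is $\mathcal{C}_1/(n^{1/2}K^{1/2})$. Requiring it to be at most $\epsilon$ gives $K=\mathcal{O}(1/(n\epsilon^2))$, and the two $1/K$ terms require only $K=\mathcal{O}(1/\epsilon)$, which is of lower order. Each iteration uses $n$ stochastic gradient evaluations (one sample per agent, with a constant number of gradient calls), so the total sample count is $nK=\mathcal{O}(\epsilon^{-2})$. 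One should check that the step-size conditions (\ref{se:stepsizes}) remain valid: $\lambda_\theta^k=c_\theta n^{1/2}K^{-1/2}$ is small once $K$ is large relative to $n$, and the transient regime below ensures this.

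\textbf{Transient iteration complexity.} Linear speedup holds once the leading term dominates, that is, once
\[
\frac{n}{(1-\rho)^2K}\lesssim\frac{1}{n^{1/2}K^{1/2}}
\quad\text{and}\quad
\frac{n(\sigma_f^2+\sigma_g^2)}{(1-\rho)^2K}\lesssim\frac{1}{n^{1/2}K^{1/2}}.
\]
Squaring, these give $K\gtrsim n^3/(1-\rho)^4$ and $K\gtrsim n^3(\sigma_f^2+\sigma_g^2)^2/(1-\rho)^4$, and taking the maximum yields the stated bound.

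The main obstacle is bookkeeping rather than new ideas. Theorem~\ref{mainthseapp} writes the bound with $M_2/(1-\rho)$, and converting this to $(1-\rho)^{-2}$ requires confirming that $M_2, M_3 = \Theta(1/(1-\rho))$ with $c_\lambda$ fixed. One must also ensure that $\mathcal{C}_1$ is independent of $n$ and $\rho$; in particular $\mathcal{L}^0_{\text{se}}$ contains $a_i^0$ terms scaling with $\lambda_\theta^0/(1-\rho)$. I would handle this by assuming consensual initialization, so those terms vanish, or by absorbing them under the same transient condition.
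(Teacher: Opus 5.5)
Your proposal is correct and follows essentially the same route as the paper: both read off the sample complexity by forcing the $\mathcal{C}_1/(n^{1/2}K^{1/2})$ term below $\epsilon$, and both obtain the transient complexity by squaring the two dominance conditions $n/((1-\rho)^2K)\lesssim (nK)^{-1/2}$ and $n(\sigma_f^2+\sigma_g^2)/((1-\rho)^2K)\lesssim (nK)^{-1/2}$. The differences are only in bookkeeping: the paper treats $n$ as fixed and takes $K=\mathcal{O}(\epsilon^{-2})$ directly, whereas you count $nK$ total samples with $K=\mathcal{O}(1/(n\epsilon^2))$, and you also explicitly check the $(1-\rho)$-dependence of $M_2,M_3$, the validity of the step sizes, and the initialization term in $\mathcal{C}_1$, all of which the paper leaves implicit.
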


\begin{proof}
	From Theorem~\ref{mainthseapp}, when $K \gg n$, we have
	\(
	\min_{0 \leq k \leq K-1} \mathbb{E}\|\nabla_{x}\Psi_{\mu_k}(\bar{x}^{k}, \bar{y}^{k})\|^{2}
	= \mathcal{O}\left( \frac{1}{K^{1/2}} \right) \leq \epsilon.
	\)
	Thus, to achieve an $\epsilon$-accurate solution, it suffices to take
	\(
	K = \mathcal{O}\left(\epsilon^{-2}\right),
	\)
	which corresponds to the total sample complexity is $\mathcal{O}(\epsilon^{-2})$.
	
	Moreover, according to Theorem~\ref{mainthseapp}, if the term $\frac{1}{\sqrt{nK}}$ dominates the convergence rate, then
	\begin{equation}\label{eq:dominant_condition}
		\frac{1}{n^{1/2}K^{1/2}} \gtrsim \max \Big\{ n \cdot \frac{1}{(1-\rho)^{2}K},  n \cdot \frac{(\sigma_f^2 + \sigma_g^2)}{(1-\rho)^{2}K} \Big\}.
	\end{equation}
	Simplifying (\ref{eq:dominant_condition}) yields
	\(
	K \gtrsim \max\big\{\frac{n^{3}}{(1-\rho)^{4}},\frac{n^{3}(\sigma_f^2 + \sigma_g^2)^{2}}{(1-\rho)^{4}}\big\}.
	\)
	Therefore, the transient iteration complexity is $\mathcal{O}\left(\frac{n^{3}}{(1-\rho)^{4}}\right)$.
\end{proof}
\section{Proofs of SUN-DSBO-GT}\label{PF_GT}
\subsection{Notations}\label{notationgt}
We first introduce some additional notations that will be used throughout the analysis:
\begin{align}\label{tvar}
	\bar{t}_{x}^{k} := \frac{1}{n} \sum_{i=1}^{n} t_{x,i}^{k}, \quad
	\bar{t}_{y}^{k} := \frac{1}{n} \sum_{i=1}^{n} t_{y,i}^{k}, \quad
	\bar{t}_{\theta}^{k} := \frac{1}{n} \sum_{i=1}^{n} t_{\theta,i}^{k},
\end{align}
where $t_{\theta,i}^{k} = D_{\theta,i}^{k}$, $t_{x,i}^{k} = D_{x,i}^{k}$, and $t_{y,i}^{k} = D_{y,i}^{k}$, where $D_{\theta,i}^{k}$, $D_{x,i}^{k}$ and $D_{y,i}^{k}$ in defined in Algorithm~\ref{sun-dsbo-gt}.

We define the conditional expectations as
\begin{align} \label{tildevar}
	\tilde{t}_{\theta,i}^{k} := \mathbb{E}[t_{\theta,i}^{k}], \quad
	\tilde{t}_{x,i}^{k} := \mathbb{E}[t_{x,i}^{k}], \quad
	\tilde{t}_{y,i}^{k} := \mathbb{E}[t_{y,i}^{k}],
\end{align}
with the initialization satisfying
$\tilde{t}_{x,i}^{-1} = t_{x,i}^{-1}$, $\tilde{t}_{y,i}^{-1} = t_{y,i}^{-1}$, and $\tilde{t}_{\theta,i}^{-1} = t_{\theta,i}^{-1}$.

Similarly, we define the average expected values as
\begin{align}\label{exp-tildevar}
	\tilde{\bar{t}}_{\theta}^{k} := \frac{1}{n} \sum_{i=1}^{n} \tilde{t}_{\theta,i}^{k}, \quad
	\tilde{\bar{t}}_{x}^{k} := \frac{1}{n} \sum_{i=1}^{n} \tilde{t}_{x,i}^{k}, \quad
	\tilde{\bar{t}}_{y}^{k} := \frac{1}{n} \sum_{i=1}^{n} \tilde{t}_{y,i}^{k}.
\end{align}

Note that $\bar{d}_{x}^{k} = \bar{t}_{x}^{k}$, $\bar{d}_{y}^{k} = \bar{t}_{y}^{k}$, and $\bar{d}_{\theta}^{k} = \bar{t}_{\theta}^{k}$, so the averaged updates in Algorithm~\ref{sun-dsbo-gt} follow
\[
\bar{x}^{k+1} = \bar{x}^{k} - \lambda_x^{k} \bar{d}_{x}^{k}, \quad
\bar{y}^{k+1} = \bar{y}^{k} - \lambda_y^{k} \bar{d}_{y}^{k}, \quad
\bar{\theta}^{k+1} = \bar{\theta}^{k} - \lambda_\theta^{k} \bar{d}_{\theta}^{k}.
\]


\subsection{Preliminary lemmas}\label{prelemmagt}

In order to further control the consensus errors among local variables $\theta_i^k$, $x_i^k$, and $y_i^k$, we state the following result that characterizes the evolution of their disagreement across iterations:

\begin{lemma}
	The sequences $\{\theta_i^k\}$, $\{x_i^k\}$, and $\{y_i^k\}$ generated by Algorithm SUN-DSBO-GT satisfy the following recursion:
	\begin{align*}
		\sum_{i=1}^n \mathbb{E}\|\theta_i^{k+1} - \bar{\theta}^{k+1}\|^2 
		&\leq \rho \sum_{i=1}^n \mathbb{E}\|\theta_i^k - \bar{\theta}^k\|^2 
		+ \frac{\rho^2 (\lambda_\theta^{k})^{2}}{1 - \rho} \sum_{i=1}^n \mathbb{E}\|t_{\theta,i}^k - \bar{t}_\theta^k\|^2, \\
		\sum_{i=1}^n \mathbb{E}\|x_i^{k+1} - \bar{x}^{k+1}\|^2 
		&\leq \rho \sum_{i=1}^n \mathbb{E}\|x_i^k - \bar{x}^k\|^2 
		+ \frac{\rho^2 (\lambda_x^{k})^{2}}{1 - \rho} \sum_{i=1}^n \mathbb{E}\|t_{x,i}^k - \bar{t}_x^k\|^2, \\
		\sum_{i=1}^n \mathbb{E}\|y_i^{k+1} - \bar{y}^{k+1}\|^2 
		&\leq \rho \sum_{i=1}^n \mathbb{E}\|y_i^k - \bar{y}^k\|^2 
		+ \frac{\rho^2 (\lambda_y^{k})^{2}}{1 - \rho} \sum_{i=1}^n \mathbb{E}\|t_{y,i}^k - \bar{t}_y^k\|^2.
	\end{align*}
\end{lemma}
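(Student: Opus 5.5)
The plan is to run the same contraction argument as in Lemma~\ref{vardec-se}, but stop before the tracking direction is expanded into gradient noise and heterogeneity. I will write it out for $x$; the cases $y$ and $\theta$ are identical up to renaming.

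Stack the local variables so that $x_i^{k+1}=\sum_j w_{ij}(x_j^k-\lambda_x^k t_{x,j}^k)$ with $t_{x,j}^k=D_{x,j}^k$. Because $\mathbf{W}$ is doubly stochastic, averaging gives $\bar{x}^{k+1}=\bar{x}^k-\lambda_x^k\bar{t}_x^k$. Subtracting this from the local update yields
\[
x_i^{k+1}-\bar{x}^{k+1}=\sum_{j=1}^n w_{ij}\bigl(x_j^k-\bar{x}^k\bigr)-\lambda_x^k\sum_{j=1}^n w_{ij}\bigl(t_{x,j}^k-\bar{t}_x^k\bigr),
\]
which uses $\sum_j w_{ij}=1$ to insert the two averages.

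Next I apply Young's inequality $\|a+b\|^2\le(1+\alpha)\|a\|^2+(1+1/\alpha)\|b\|^2$ with $\alpha=(1-\rho)/\rho$. This gives $1+\alpha=1/\rho$ and $1+1/\alpha=1/(1-\rho)$. Summing over $i$, I bound both mixed sums with part (b) of the mixing-matrix lemma (the consensus lemma for $\mathbf{W}$ stated above). The first term becomes at most $\rho^2\sum_i\|x_i^k-\bar{x}^k\|^2$, and the second at most $\rho^2(\lambda_x^k)^2\sum_i\|t_{x,i}^k-\bar{t}_x^k\|^2$.

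Multiplying through by the Young factors, the first term becomes $\rho\sum_i\|x_i^k-\bar{x}^k\|^2$ and the second $\frac{\rho^2(\lambda_x^k)^2}{1-\rho}\sum_i\|t_{x,i}^k-\bar{t}_x^k\|^2$. Taking total expectation then gives exactly the claimed inequality. Unlike Lemma~\ref{vardec-se}, no conditional expectation or variance decomposition is needed here, because the bound is stated directly in terms of the tracking disagreement.

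There is no real obstacle in this argument. The only points to check are that the step sizes are common across agents, so that $\lambda_x^k$ factors out of the mixed sum, and that $\rho<1$, so that the Young parameter $\alpha$ is positive. The degenerate case $\rho=0$ is trivial: then $\mathbf{W}$ averages exactly, so $x_i^{k+1}=\bar{x}^{k+1}$ for every $i$ and the left-hand side is zero.
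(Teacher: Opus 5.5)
Your proposal is correct and follows essentially the same route as the paper. You write $x_i^{k+1}-\bar x^{k+1}$ as the mixed consensus error minus $\lambda_x^k$ times the mixed tracking disagreement, apply Young's inequality with parameter $(1-\rho)/\rho$, contract both terms with part (b) of the mixing-matrix lemma, and then take expectations; your separate handling of $\rho=0$, where that Young parameter is undefined, is a small point the paper leaves implicit.
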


\begin{proof}
	First, we consider the term $\sum_{i=1}^{n}\mathbb{E}\left[\|x_{i}^{k+1} - \bar{x}^{k+1}\|^{2}\right]$. 
	Similar to the analysis in Lemma A.3 in \citet{dong2023single}, we have
	\begin{align}
		&\sum_{i=1}^{n}\left[\|x_{i}^{k+1} - \bar{x}^{k+1}\|^{2}\right]  \notag\\
		=& \sum_{i=1}^{n} \left\|\sum_{j=1}^{n} w_{ij}(x_{j}^{k} - \lambda_x^{k} t_{x,j}^{k}) - \frac{1}{n} \sum_{s=1}^{n} \sum_{j=1}^{n} w_{sj}(x_{j}^{k} - \lambda_x^{k} t_{x,j}^{k}) \right\|^{2} \notag \\
		\leq& \left(1+\frac{1-\rho}{\rho}\right) \sum_{i=1}^{n} \left\|\sum_{j=1}^{n} w_{ij}x_{j}^{k} - \bar{x}^{k}\right\|^{2} 
		+ \left(1+\frac{\rho}{1-\rho}\right) (\lambda_x^{k})^{2} \sum_{i=1}^{n} \left\|\sum_{j=1}^{n} w_{ij}(t_{x,j}^{k} - \bar{t}_{x}^{k})\right\|^{2} \notag \\
		\leq& \rho \sum_{i=1}^{n} \left[\|x_{i}^{k} - \bar{x}^{k}\|^{2}\right]
		+ \frac{\rho^{2} (\lambda_x^{k})^{2}}{1-\rho} \sum_{i=1}^{n} \left[\|t_{x,i}^{k} - \bar{t}_{x}^{k}\|^{2}\right]. \notag
	\end{align}
	Combined with the analysis in Lemma~\ref{vardec-se}, we have 
	\begin{align*}
		\sum_{i=1}^n\mathbb{E}\|x_i^{k+1}-\bar{x}^{k+1}\|^2\leq&\rho\sum_{i=1}^n\mathbb{E}\|x_i^k-\bar{x}^k\|^2+\frac{\rho^2{\lambda_x^{k}}^{2}}{1-\rho}\sum_{i=1}^n\mathbb{E}\|t_{x,i}^k-\bar{t}_x^k\|^2.
	\end{align*}
	Similarly, we obtain
	\begin{align}
		\sum_{i=1}^{n}\mathbb{E}\left[\|y_{i}^{k+1} - \bar{y}^{k+1}\|^{2}\right]
		&\leq \rho \sum_{i=1}^{n} \mathbb{E}\left[\|y_{i}^{k} - \bar{y}^{k}\|^{2}\right]
		+ \frac{\rho^{2} (\lambda_y^{k})^{2}}{1-\rho} \sum_{i=1}^{n} \mathbb{E}\left[\|t_{y,i}^{k} - \bar{t}_{y}^{k}\|^{2}\right], \notag\\
		\sum_{i=1}^{n}\mathbb{E}\left[\|\theta_{i}^{k+1} - \bar{\theta}^{k+1}\|^{2}\right]
		&\leq \rho \sum_{i=1}^{n} \mathbb{E}\left[\|\theta_{i}^{k} - \bar{\theta}^{k}\|^{2}\right]
		+ \frac{\rho^{2} (\lambda_\theta^{k})^{2}}{1-\rho} \sum_{i=1}^{n} \mathbb{E}\left[\|t_{\theta,i}^{k} - \bar{t}_{\theta}^{k}\|^{2}\right]. \notag
	\end{align}
\end{proof}

\begin{lemma}\citep{dong2023single}
	The sequences $\{x_i^k\}$,$\{y_i^k\}$ and  $\{\theta_i^k\}$ generated by Algorithm SUN-DSBO-GT satisfies
	\begin{align}
		&\sum_{i=1}^n\mathbb{E}\|\theta_i^{k+1}-\theta_i^k\|^2\leq8\sum_{i=1}^n\mathbb{E}\|\theta_i^k-\bar{\theta}^k\|^2+4{\lambda_\theta^{k}}^2\sum_{i=1}^n\mathbb{E}\|t_{\theta,i}^k-\bar{t}_\theta^k\|^2+4n{\lambda_\theta^{k}}^2\mathbb{E}\|\bar{d}_\theta^k\|^2, \notag\\
		&\sum_{i=1}^n\mathbb{E}\|x_i^{k+1}-x_i^k\|^2\leq8\sum_{i=1}^n\mathbb{E}\|x_i^k-\bar{x}^k\|^2+4{\lambda_x^{k}}^2\sum_{i=1}^n\mathbb{E}\|t_{x,i}^k-\bar{t}_x^k\|^2+4n\mathbb{E}\|\bar{x}^{k+1}-\bar{x}^{k}\|^2,\notag\\
		&\sum_{i=1}^n\mathbb{E}\|y_i^{k+1}-y_i^k\|^2\leq8\sum_{i=1}^n\mathbb{E}\|y_i^k-\bar{y}^k\|^2+4{\lambda_y^{k}}^2\sum_{i=1}^n\mathbb{E}\|t_{y,i}^k-\bar{t}_y^k\|^2+4n\mathbb{E}\|\bar{y}^{k+1}-\bar{y}^{k}\|^2. \notag
	\end{align}
\end{lemma}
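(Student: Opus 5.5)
The plan is to write the per-agent change $\Diamond_i^{k+1}-\Diamond_i^k$ as a sum of three terms and apply $\|a+b+c\|^2\le c_1\|a\|^2+c_2\|b\|^2+c_3\|c\|^2$ with suitable constants. I carry this out for $x$; the arguments for $y$ and $\theta$ are identical, with $\bar{x}^{k+1}-\bar{x}^k=-\lambda_x^k\bar{d}_x^k$ replaced by the corresponding averaged step. For $\theta$ I write that step as $-\lambda_\theta^k\bar{d}_\theta^k$, which explains why the $\theta$ inequality contains $4n(\lambda_\theta^k)^2\mathbb{E}\|\bar{d}_\theta^k\|^2$.

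From the GT update in Algorithm~\ref{sun-dsbo-gt}, $x_i^{k+1}=\sum_j w_{ij}(x_j^k-\lambda_x^k t_{x,j}^k)$. Since $\mathbf{W}$ is doubly stochastic, $\bar{x}^{k+1}=\bar{x}^k-\lambda_x^k\bar{t}_x^k$. I then decompose
\begin{align*}
x_i^{k+1}-x_i^k = \Big(\sum_j w_{ij}(x_j^k-\bar{x}^k)-(x_i^k-\bar{x}^k)\Big) - \lambda_x^k\sum_j w_{ij}(t_{x,j}^k-\bar{t}_x^k) + (\bar{x}^{k+1}-\bar{x}^k).
\end{align*}
I group the first bracket as a single consensus term and split with weights $(2,4,4)$, which is valid because $\tfrac12+\tfrac14+\tfrac14=1$. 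Summing over $i$ gives three contributions. For the first, I use $\|a-b\|^2\le 2\|a\|^2+2\|b\|^2$ together with part (a) of the mixing-matrix lemma, which yields $\sum_i\|\sum_j w_{ij}(x_j^k-\bar{x}^k)\|^2\le\sum_i\|x_i^k-\bar{x}^k\|^2$. This bounds the consensus term by $4\sum_i\|x_i^k-\bar{x}^k\|^2$, and after the factor $2$ it becomes $8\sum_i\|x_i^k-\bar{x}^k\|^2$. For the second contribution, part (a) (or (b)) of the same lemma gives $4(\lambda_x^k)^2\sum_i\|t_{x,i}^k-\bar{t}_x^k\|^2$. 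The third contribution is simply $4n\|\bar{x}^{k+1}-\bar{x}^k\|^2$. Taking expectations then gives the stated bound.

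No step is a genuine obstacle; the only care needed is bookkeeping of the constants. I have to choose the three-way Young split so that the coefficients come out exactly as $8,4,4n$. I also have to check that the averaged recursion $\bar{x}^{k+1}=\bar{x}^k-\lambda_x^k\bar{t}_x^k$ holds exactly, so that the third term is precisely $\bar{x}^{k+1}-\bar{x}^k$. This identity uses only column-stochasticity of $\mathbf{W}$ and requires no tracking property.
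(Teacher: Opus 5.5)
Your proposal is correct and is the standard argument behind the cited result of \citet{dong2023single}; the paper itself gives no proof beyond that citation. The $(2,4,4)$ split, the bound $\sum_i\|\sum_j w_{ij}(x_j^k-\bar{x}^k)-(x_i^k-\bar{x}^k)\|^2\le 4\sum_i\|x_i^k-\bar{x}^k\|^2$, and part (a) of the mixing lemma give exactly the constants $8$, $4$, $4n$. One small correction: in the $\theta$ line, replacing $-\lambda_\theta^k\bar{t}_\theta^k$ by $-\lambda_\theta^k\bar{d}_\theta^k$ does use the gradient-tracking invariant $\bar{t}_\theta^k=\frac{1}{n}\sum_i\hat{D}_{\theta,i}^k$, which follows from column-stochasticity and the zero initialization $D^{-1}=\hat{D}^{-1}=0$, so your remark that no tracking property is needed holds only for the $x$ and $y$ lines.
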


\subsection{Convergence analysis}\label{caappgt}
Following a similar line of analysis as in Lemma~\ref{desphict-se}, we can establish a same result.
\begin{lemma}[Descent in $\Phi_{\mu_k}(x,y)$]\label{desphict-gt}
	Under  Assumptions \ref{ass1}--\ref{ass3}, $\gamma\in(0,\frac{1}{2L_2})$, the sequence of $(\bar{x}^{k}, \bar{y}^{k}, \bar{\theta}^{k})$  generated by SUN-DSBO-GT satisfies 
	\begin{align*}
		&\mathbb{E}\Phi_{\mu_k}(\bar{x}^{k+1},\bar{y}^{k+1})-\mathbb{E}\Phi_{\mu_k}(\bar{x}^{k},\bar{y}^{k})\\
		\leq & 
		-\frac{1}{2{{\lambda_x^{k}}}}\|\mathbb{E}[\bar{x}^{k+1}-\bar{x}^{k}]\|^{2}+\frac{L_{\Phi_k}}{2}\mathbb{E}\|\bar{x}^{k+1}-\bar{x}^{k}\|^{2}-\frac{1}{2{{\lambda_y^{k}}}}\|\mathbb{E}[\bar{y}^{k+1}-\bar{y}^{k}]\|^{2}+\frac{L_{\Phi_k}}{2}\mathbb{E}\|\bar{y}^{k+1}-\bar{y}^{k}\|^{2}\\
		&+\big({{\lambda_x^{k}}}L_{2}^{2}+{{\lambda_y^{k}}}\frac{1}{\gamma^{2}}\big)\mathbb{E}\Vert\bar{\theta}^{k}-\theta_{\gamma}^*(\bar{x}^{k},\bar{y}^{k})\Vert^{2}+\left(3{{\lambda_x^{k}}}\big(\mu_{k}^{2}L_{1}^{2}+2L_{2}^{2}\big)+4{{\lambda_y^{k}}}\big(\mu_{k}^{2}L_{1}^{2}+L_{2}^{2}\big)\right)\mathbb{E}\Delta_{x}^{k}\\
		&+\left(3{{\lambda_x^{k}}}L_{2}^{2}+ 4{{\lambda_y^{k}}}\big(\mu_{k}^{2}L_{1}^{2}+L_{2}^{2}+\frac{1}{\gamma^{2}}\big)\right)\mathbb{E}\Delta_{y}^{k}
		+\left(3{{\lambda_x^{k}}}L_{2}^{2}+\frac{4}{\gamma^{2}}{{\lambda_y^{k}}}\right)\mathbb{E}\Delta_{\theta}^{k},
	\end{align*} 
	where $\Phi_{\mu_k}(x,y):=\mu_{k}\big(F(x,y)-\underline{F}\big)+G(x,y)-V_\gamma(x,y)$.
\end{lemma}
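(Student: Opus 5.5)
The statement is Lemma~\ref{desphict-gt}, the one-step descent of $\Phi_{\mu_k}$ along the averaged iterates of SUN-DSBO-GT. My plan is to reproduce the argument of Lemma~\ref{desphict-se} almost verbatim. The only structural fact needed is that gradient tracking preserves averages. Since $\mathbf{W}$ is doubly stochastic (Assumption~\ref{ass3}) and all trackers are initialized at zero, induction on the GT recursion gives $\bar{t}_{\Diamond}^{k}=\frac{1}{n}\sum_{i}\hat{D}_{\Diamond,i}^{k}$ for every $k$. Consequently the averaged iterates obey the same recursion (\ref{aveiter}) as in SUN-DSBO-SE, with $\bar{d}_{\Diamond}^k=\bar{t}_{\Diamond}^k$. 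Moreover, conditional on $\mathcal{F}_k$, $\mathbb{E}[\bar{t}_{\Diamond}^{k}]=\tilde{\bar{d}}_{\Diamond}^{k}$ is exactly the averaged expected direction from (\ref{bargrade}), evaluated at the local iterates $(x_i^k,y_i^k,\theta_i^k)$.

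With this identification I would carry out the SE proof step by step.

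\emph{Step 1.} Apply the $L_{\Phi_k}$-smoothness of $\Phi_{\mu_k}$ (Lemma~\ref{prophi_k}) between $(\bar{x}^k,\bar{y}^k)$ and $(\bar{x}^{k+1},\bar{y}^{k+1})$.

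\emph{Step 2.} Split each inner product as $\langle \nabla_x\Phi_{\mu_k}-\tilde{\bar{d}}_x^k,\cdot\rangle+\langle \tilde{\bar{d}}_x^k,\cdot\rangle$, and likewise for $y$. Since $\mathbb{E}[\bar{x}^{k+1}-\bar{x}^k\mid\mathcal{F}_k]=-\lambda_x^k\tilde{\bar{d}}_x^k$, the second piece equals $-\frac{1}{\lambda_x^k}\|\mathbb{E}[\bar{x}^{k+1}-\bar{x}^k]\|^2$, exactly as in (\ref{hxin})--(\ref{hyin}).

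\emph{Step 3.} Bound the first piece by Young's inequality, as in (\ref{x-ineq}). This produces $\frac{\lambda_x^k}{2}\mathbb{E}\|\nabla_x\Phi_{\mu_k}(\bar{x}^k,\bar{y}^k)-\tilde{\bar{d}}_x^k\|^2$ plus half of the negative drift term.

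\emph{Step 4.} Control the bias $\|\nabla\Phi_{\mu_k}-\tilde{\bar{d}}^k\|^2$ via (\ref{gradphix5}) and (\ref{gradphiy5}). This requires inserting $\nabla_x\phi^i_{\mu_k}(\bar{x}^k,\bar{y}^k)[\bar{\theta}^k]$, using the gradient formula for $V_\gamma$ from Lemma~\ref{MEineq} (so that $\nabla\Phi_{\mu_k}=\frac{1}{n}\sum_i\nabla\phi^i_{\mu_k}[\theta^*_\gamma]$), then applying Jensen and the $L_1,L_2$-smoothness of Assumption~\ref{ass1}. Those bounds involve only deterministic functions of the iterates, so they carry over unchanged and yield the $\|\bar{\theta}^k-\theta^*_\gamma\|^2$ and $\Delta^k_{x},\Delta^k_y,\Delta^k_\theta$ terms with the stated coefficients.

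The main point to verify carefully is the first one: that the bias $\nabla\Phi-\mathbb{E}\bar{t}$ involves no tracking-error term. This holds because averaging removes the tracking disagreement $t_{\Diamond,i}^k-\bar{t}_{\Diamond}^k$, and the stochastic noise has zero conditional mean. The noise enters only through the $\frac{L_{\Phi_k}}{2}\mathbb{E}\|\bar{x}^{k+1}-\bar{x}^k\|^2$ terms, which are left untouched in the statement. I would check the induction $\bar{t}^k=\frac1n\sum_i\hat{D}^k_i$ explicitly, including the base case $k=0$ with $\hat{D}^{-1}=D^{-1}=0$. No further obstacle is expected.
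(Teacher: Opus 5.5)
Your proposal follows the paper's proof, which simply invokes the argument of Lemma~\ref{desphict-se}; your check that gradient tracking with zero initialization and doubly stochastic $\mathbf{W}$ gives $\bar{t}_{\Diamond}^{k}=\frac{1}{n}\sum_{i}\hat{D}_{\Diamond,i}^{k}$ is exactly what justifies that reuse, since it makes the averaged iterates and their conditional means the same as in the SE case. One caveat on your claim that (\ref{gradphix5}) yields the stated coefficients: $\nabla_x f_i$ also depends on $y_i^k-\bar{y}^k$, so the argument actually gives $3\lambda_x^k(\mu_k^2L_1^2+L_2^2)$ in front of $\mathbb{E}\Delta_y^k$, not $3\lambda_x^kL_2^2$. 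This is a typo in the statement, shared with Lemma~\ref{desphict-se}, and the larger coefficient is the one the paper's subsequent Lyapunov analysis uses.
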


\begin{lemma}\label{thetaerrgt}
	Under Assumptions~\ref{ass1} and~\ref{ass2}, let $\bar{\theta}^{k}$ be the sequence generated by Algorithm SUN-DSBO-SE. Then the error $\Vert\bar{\theta}^{k} - \theta_{\gamma}^*(\bar{x}^{k}, \bar{y}^{k})\Vert^2$ satisfies the following recursion:
	\begin{align}
		&\mathbb{E}\Vert\bar{\theta}^{k+1}-\theta_{\gamma}^*(\bar{x}^{k+1},\bar{y}^{k+1})\Vert^2  \notag\\
		\leq&(1+\delta_{k})\Big(\mathbb{E}\Vert\bar{\theta}^{k}- \theta_{\gamma}^*(\bar{x}^{k},\bar{y}^{k})\Vert^{2}+{\lambda_\theta^{k}}^{2}\mathbb{E}\Vert \bar{t}_{\theta}^{k}\Vert^{2}+\lambda_\theta^{k}\frac{6}{\eta}L_{2}^{2}\mathbb{E}\Delta_{x}^{k}+\lambda_\theta^{k}\frac{6}{\eta}\big(L_{2}^{2}+\frac{1}{\gamma^{2}}\big)\mathbb{E}\Delta_{\theta}^{k}\notag\\
		&+\lambda_\theta^{k}\frac{6}{\eta}\frac{1}{\gamma^{2}}\mathbb{E}\Delta_{y}^{k}      
		-\rho\lambda_\theta^{k}\mathbb{E}\Vert\bar{\theta}^{k}- \theta_{\gamma}^*(\bar{x}^{k},\bar{y}^{k})\Vert^{2}\Big)\\
		&+2L_{\theta}^{2}\big(1+\frac{1}{\delta_{k}}\big)(\mathbb{E}\Vert \bar{x}^{k+1}-\bar{x}^{k}\Vert^{2}+\mathbb{E}\Vert \bar{y}^{k+1}-\bar{y}^{k}\Vert^{2}) ,  \notag
	\end{align}
	where $\delta_{k}>0, \eta:=\frac{1}{\gamma}-L_{2}$.
\end{lemma}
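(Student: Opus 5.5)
The plan is to reuse the argument of Lemma~\ref{thetaerr-se} almost verbatim. The only structural change is that the averaged $\theta$-update in SUN-DSBO-GT is driven by $\bar t_\theta^k$ instead of $\bar d_\theta^k$. Because $\mathbf W$ is doubly stochastic and the tracker is initialized with $D^{-1}_{\theta,i}=\hat D^{-1}_{\theta,i}=0$, induction on the GT recursion gives $\bar t_\theta^k=\frac1n\sum_i \hat D_{\theta,i}^k=\bar d_\theta^k$ for every $k$. Hence $\bar\theta^{k+1}=\bar\theta^k-\lambda_\theta^k\bar t_\theta^k$, and $\mathbb E[\bar t_\theta^k\mid\mathcal F_k]=\frac1n\sum_i \tilde d_{\theta,i}^k$ by Assumption~\ref{ass2}. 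This identity is the first step, since it lets every later estimate be copied from the SE case.

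\emph{Step 1: handle the drift of the target.} Expand $\|\bar\theta^{k+1}-\theta_\gamma^*(\bar x^{k+1},\bar y^{k+1})\|^2$ around $\theta_\gamma^*(\bar x^k,\bar y^k)$. Young's inequality with parameter $\delta_k$ and the Lipschitz bound of Lemma~\ref{gaptheta} then give
\[
(1+\delta_k)\,\mathbb E\|\bar\theta^{k+1}-\theta_\gamma^*(\bar x^k,\bar y^k)\|^2+2L_\theta^2(1+1/\delta_k)\big(\mathbb E\|\bar x^{k+1}-\bar x^k\|^2+\mathbb E\|\bar y^{k+1}-\bar y^k\|^2\big).
\]

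\emph{Step 2: expand the fixed-target error.} Substitute the averaged update to get
\[
\mathbb E\|\bar\theta^k-\theta^*_\gamma\|^2+(\lambda_\theta^k)^2\mathbb E\|\bar t_\theta^k\|^2-2\lambda_\theta^k\,\mathbb E\langle\bar\theta^k-\theta^*_\gamma,\bar t_\theta^k\rangle .
\]
Condition on $\mathcal F_k$ inside the inner product; this is legitimate because $\bar\theta^k$ and $\bar x^k,\bar y^k$ are $\mathcal F_k$-measurable. The stochastic direction then becomes $\frac1n\sum_i\tilde d^k_{\theta,i}$. Add and subtract the centralized direction $\nabla_y G(\bar x^k,\bar\theta^k)+\frac1\gamma(\bar\theta^k-\bar y^k)$ and split the inner product into two pieces.

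\emph{Step 3: bound the two pieces.} The first piece is the mismatch between the local and averaged directions. Bound it with Young's inequality (weight $\eta/2$) and Jensen's inequality. Assumption~\ref{ass1} then controls it by $\frac{3}{\eta}\big(L_2^2\Delta_x^k+(L_2^2+\gamma^{-2})\Delta_\theta^k+\gamma^{-2}\Delta_y^k\big)$. The second piece is the centralized direction at $\bar\theta^k$ minus the same direction at $\theta^*_\gamma$. Use the $\eta$-strong convexity of $\theta\mapsto G(x,\theta)+\frac1{2\gamma}\|\theta-y\|^2$ (with $\eta=1/\gamma-L_2$, positive because $\gamma<1/(2L_2)$, as in Lemma~\ref{MEineq}) together with the optimality condition at $\theta^*_\gamma$. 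This yields a contraction term $-\eta\,\mathbb E\|\bar\theta^k-\theta^*_\gamma\|^2$. Collecting terms and multiplying by $2\lambda_\theta^k$ produces the bracket in the statement. The contraction coefficient is written with the same symbol as in Lemma~\ref{thetaerr-se}, where it denotes this strong-convexity modulus. The term $(\lambda_\theta^k)^2\mathbb E\|\bar t_\theta^k\|^2$ is left unexpanded, and bounding it is deferred to a later lemma.

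\emph{Main obstacle.} I expect the key difficulty to be justifying the unbiasedness used in Step 2. The tracking variables $t_{\theta,i}^k$ individually carry correlated past noise, so the argument cannot be run agent by agent. It must go through the average $\bar t_\theta^k$, and this is exactly where the exact tracking identity of the first step is needed. Once that identity is in place, the remaining work is the bookkeeping already carried out in Lemma~\ref{thetaerr-se}.
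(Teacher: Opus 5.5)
Your proposal is correct and follows essentially the same route as the paper. The paper obtains this lemma by rerunning the proof of Lemma~\ref{thetaerr-se} with $\bar t_\theta^k=\bar d_\theta^k$: Young's inequality with $\delta_k$ plus Lemma~\ref{gaptheta} for the drift of $\theta_\gamma^*$, then adding and subtracting the centralized direction, Young's inequality with weight $\eta/2$, and $\eta$-strong convexity. Its contraction coefficient is $\eta=1/\gamma-L_2$, exactly as you read it. Your explicit conditioning on $\mathcal F_k$ before splitting the inner product is more careful than the paper's written argument. The paper bounds the mismatch term using the sampled directions $d_{\theta,i,k}^{(\zeta_i^k)}$ and silently omits the resulting $\delta_g^2$ variance contribution, which your conditioning removes.
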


Given that $\bar{t}_{\theta}^{k} = \bar{d}_{\theta}^{k}$, we invoke Lemma~\ref{gradtheta-se} to obtain the following bound on the averaged sequence $\{\bar{t}_{\theta}^{k}\}$ generated by Algorithm SUN-DSBO-GT:
\begin{lemma}
The averaged sequence $\{\bar{t}_{\theta}^{k}\}$ of $\{t_{\theta,i}^{k}\}, i\in[n]$ satisfies the following inequality:
\begin{align*}
	\|\bar{t}_{\theta}^{k}\|^{2} \leq\ & 2\frac{\sigma_{g}^{2}}{n} 
	+ 12L_{2}^{2} \cdot \frac{1}{n} \sum_{i=1}^{n} \mathbb{E} \|x_i^{k} - \bar{x}^{k}\|^{2} 
	+ 12\left(L_{2}^{2} + \frac{1}{\gamma^{2}}\right) \cdot \frac{1}{n} \sum_{i=1}^{n} \mathbb{E} \|\theta_i^{k} - \bar{\theta}^{k}\|^{2} \\
	& + 12 \cdot \frac{1}{\gamma^{2}} \cdot \frac{1}{n} \sum_{i=1}^{n} \mathbb{E} \|y_i^{k} - \bar{y}^{k}\|^{2}
	+ 4\left(L_{2}^{2} + \frac{1}{\gamma^{2}}\right) \cdot \mathbb{E} \|\theta_{\gamma}^{*}(\bar{x}^{k}, \bar{y}^{k}) - \bar{\theta}^{k}\|^{2}.
\end{align*}
\end{lemma}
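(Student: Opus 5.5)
The plan is to reduce the statement to Lemma~\ref{gradtheta-se} by proving that gradient tracking preserves averages. Concretely, I will show $\bar{t}_{\theta}^{k} = \bar{d}_{\theta}^{k} := \frac{1}{n}\sum_{i=1}^n \hat{D}_{\theta,i}^{k}$ for every $k\ge 0$. I read the left-hand side as $\mathbb{E}\|\bar{t}_{\theta}^{k}\|^2$, since the right-hand side is stated in expectation. Once this identity holds, the right-hand side is exactly the bound of Lemma~\ref{gradtheta-se}, because that bound depends only on the local iterates $(x_i^k,y_i^k,\theta_i^k)$ and the fresh samples $\xi_{g,i}^k$. It does not depend on how those iterates were produced.

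\emph{Step 1 (tracking identity).} Average the GT recursion $D_{\theta,i}^{k} = \sum_j w_{ij}\big(D_{\theta,j}^{k-1} + \hat{D}_{\theta,j}^{k} - \hat{D}_{\theta,j}^{k-1}\big)$ over $i$. Since $\mathbf{W}$ is doubly stochastic (Assumption~\ref{ass3}), $\frac1n\sum_i w_{ij} = \frac1n$, which gives
\[
\bar{t}_{\theta}^{k} = \bar{t}_{\theta}^{k-1} + \frac1n\sum_{j}\hat{D}_{\theta,j}^{k} - \frac1n\sum_j \hat{D}_{\theta,j}^{k-1}.
\]
With the initialization $D_{\theta,i}^{-1} = \hat{D}_{\theta,i}^{-1} = 0$, telescoping (equivalently, induction on $k$) yields $\bar{t}_{\theta}^{k} = \frac1n\sum_j \hat{D}_{\theta,j}^{k}$, which is $\bar{d}_\theta^k$ in the notation of Section~\ref{notationse}.

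\emph{Step 2 (bias--variance split).} Conditioning on $\mathcal{F}_k$, I write $\mathbb{E}\|\bar{t}_\theta^k\|^2 \le 2\mathbb{E}\|\bar{t}_\theta^k - \tilde{\bar{t}}_\theta^k\|^2 + 2\mathbb{E}\|\tilde{\bar{t}}_\theta^k\|^2$. The noise term equals $\frac{1}{n^2}\sum_i \mathbb{E}\|\nabla_y g_i(x_i^k,\theta_i^k;\xi_{g,i}^k) - \nabla_y g_i(x_i^k,\theta_i^k)\|^2$. The cross terms vanish because the samples are independent across agents and unbiased given $\mathcal{F}_k$ (Assumption~\ref{ass2}). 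This gives an $\mathcal{O}(1/n)$ variance term. It is the stochastic-variance constant, written $\sigma_g^2/n$ to match Lemma~\ref{gradtheta-se}.

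\emph{Step 3 (deterministic part).} I will use the optimality condition $\nabla_y G(\bar{x}^k,\theta_\gamma^*) + \frac1\gamma(\theta_\gamma^* - \bar{y}^k) = 0$. This lets me subtract zero and write $\tilde{\bar{t}}_\theta^k$ as the sum of two pieces:
\begin{itemize}
\item[(i)] the per-agent differences between evaluating at $(x_i^k,\theta_i^k,y_i^k)$ and at $(\bar{x}^k,\bar{\theta}^k,\bar{y}^k)$;
\item[(ii)] the difference between $\bar\theta^k$ and $\theta_\gamma^*(\bar x^k,\bar y^k)$ inside $\nabla_yG + \frac1\gamma(\cdot - \bar y^k)$.
\end{itemize}
Applying Jensen and the $L_2$-smoothness from Assumption~\ref{ass1} produces the $12L_2^2$, $12(L_2^2+\gamma^{-2})$, $12\gamma^{-2}$ and $4(L_2^2+\gamma^{-2})$ coefficients, as in the proof of Lemma~\ref{gradtheta-se}.

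The only real content is Step~1: one must check that GT reproduces the exact average of the current stochastic directions and not some lagged quantity. This relies on double stochasticity and on the zero initialization of both $D^{-1}$ and $\hat D^{-1}$. Once Step~1 holds, Steps~2--3 are verbatim from Lemma~\ref{gradtheta-se}.
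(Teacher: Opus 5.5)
Your proposal is correct and is essentially the paper's argument: the paper just asserts $\bar t_\theta^k=\bar d_\theta^k$ and invokes Lemma~\ref{gradtheta-se}, and your Step~1 supplies the justification of that identity (column sums of $\mathbf{W}$ equal to one, plus the zero initialization of $D_{\theta,i}^{-1}$ and $\hat D_{\theta,i}^{-1}$) that the paper leaves implicit. One small caveat, inherited from Lemma~\ref{gradtheta-se}: with the lossy split $\mathbb{E}\|\bar t_\theta^k\|^2\le 2\mathbb{E}\|\bar t_\theta^k-\tilde{\bar t}_\theta^k\|^2+2\mathbb{E}\|\tilde{\bar t}_\theta^k\|^2$ and the further factor-$2$ split of (i)+(ii) that produces the $12$'s, piece (ii) yields the coefficient $4(L_2+1/\gamma)^2$ rather than $4(L_2^2+\gamma^{-2})$, so to get the stated constants you should instead use the exact identity $\mathbb{E}\|\bar t_\theta^k\|^2=\mathbb{E}\|\bar t_\theta^k-\tilde{\bar t}_\theta^k\|^2+\mathbb{E}\|\tilde{\bar t}_\theta^k\|^2$ (the cross term vanishes by conditional unbiasedness given $\mathcal{F}_k$), after which $2(L_2+1/\gamma)^2\le 4(L_2^2+\gamma^{-2})$ closes the bound, with the variance term being $\delta_g^2/n$ from Assumption~\ref{ass2}, as you suspected.
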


We define the Lyapunov function 
\begin{align}
\begin{aligned}
	\mathcal{L}_{\text{gt}}^{k}:=& \mathbb{E}\Phi_{\mu_k}(\bar{x}^{k},\bar{y}^{k})+a_{1}\mathbb{E}\|\bar{\theta}^{k}-\theta_{\gamma}^{*}(\bar{x}^{k},\bar{y}^{k})\|^{2}  \\
	&+\frac{a_{2}^{k}}n\sum_{i=1}^n\mathbb{E}\|x_i^k-\bar{x}^k\|^2+\frac{a_{3}^{k}}n\sum_{i=1}^n\mathbb{E}\|y_i^k-\bar{y}^k\|^2+\frac{a_{4}^{k}}n\sum_{i=1}^n\mathbb{E}\|\theta_i^k-\bar{\theta}^k\|^2 \\
	&+\frac{a_{5}^{k}}{n}\sum_{i=1}^{n}\mathbb{E}\|t_{x,i}^{k}-\vec{t}_{x}^{k}\|^{2}+\frac{a_{6}^{k}}{n}\sum_{i=1}^{n}\mathbb{E}\|t_{y,i}^{k}-\vec{t}_{y}^{k}\|^{2}+\frac{a_{7}^{k}}{n}\sum_{i=1}^{n}\mathbb{E}\|t_{\theta,i}^{k}-\vec{t}_{\theta}^{k}\|^{2}.
\end{aligned}
\end{align}

Then we have 
\begin{align*}
&\mathcal{L}_{\text{gt}}^{k+1}-\mathcal{L}_{\text{gt}}^{k}\\
\leq&-\frac{1}{2{{\lambda_x^{k}}}}\|\mathbb{E}[\bar{x}^{k+1}-\bar{x}^{k}]\|^{2}
-\frac{1}{2{{\lambda_y^{k}}}}\|\mathbb{E}[\bar{y}^{k+1}-\bar{y}^{k}]\|^{2}+\frac{L_{\Phi_k}}{2}\mathbb{E}\Big(\|\bar{x}^{k+1}-\bar{x}^{k}\|^{2}+\mathbb{E}\|\bar{y}^{k+1}-\bar{y}^{k}\|^{2}\Big)\\
&+\big({{\lambda_x^{k}}}L_{2}^{2}+{{\lambda_y^{k}}}\frac{1}{\gamma^{2}}\big)\mathbb{E}\Vert\bar{\theta}^{k}-\theta_{\gamma}^*(\bar{x}^{k},\bar{y}^{k})\Vert^{2}+\left(3{{\lambda_x^{k}}}\big(\mu_{k}^{2}L_{1}^{2}+2L_{2}^{2}\big)+4{{\lambda_y^{k}}}\big(\mu_{k}^{2}L_{1}^{2}+L_{2}^{2}\big)\right)\mathbb{E}\Delta_{x}^{k}\\
&+\left(3{{\lambda_x^{k}}}\big(\mu_{k}^{2}L_{1}^{2}+L_{2}^{2}\big)+ 4{{\lambda_y^{k}}}\big(\mu_{k}^{2}L_{1}^{2}+L_{2}^{2}+\frac{1}{\gamma^{2}}\big)\right)\mathbb{E}\Delta_{y}^{k}
+\left(3{{\lambda_x^{k}}}L_{2}^{2}+\frac{4}{\gamma^{2}}{{\lambda_y^{k}}}\right)\mathbb{E}\Delta_{\theta}^{k}\\
&+	a_{1}(1+\delta_{k})\Big(\mathbb{E}\Vert\bar{\theta}^{k}- \theta_{\gamma}^*(\bar{x}^{k},\bar{y}^{k})\Vert^{2}+{\lambda_\theta^{k}}^{2}\mathbb{E}\Vert \bar{d}_{\theta}^{k}\Vert^{2}+\lambda_\theta^{k}\frac{6}{\eta}L_{2}^{2}\mathbb{E}\Delta_{x}^{k}+\lambda_\theta^{k}\frac{6}{\eta}\big(L_{2}^{2}+\frac{1}{\gamma^{2}}\big)\mathbb{E}\Delta_{\theta}^{k} \notag \\
&+\lambda_\theta^{k}\frac{6}{\eta}\frac{1}{\gamma^{2}}\mathbb{E}\Delta_{y}^{k}      -\eta\lambda_\theta^{k}\mathbb{E}\Vert\bar{\theta}^{k}- \theta_{\gamma}^*(\bar{x}^{k},\bar{y}^{k})\Vert^{2}\Big)\notag\\
&+2a_{1}L_{\theta}^{2}\big(1+\frac{1}{\delta_{k}}\big)(\mathbb{E}\Vert \bar{x}^{k+1}-\bar{x}^{k}\Vert^{2}+\mathbb{E}\Vert \bar{y}^{k+1}-\bar{y}^{k}\Vert^{2})-a_{1}\mathbb{E}\Vert\bar{\theta}^{k}- \theta_{\gamma}^*(\bar{x}^{k},\bar{y}^{k})\Vert^{2}\\
&+a_{2}^{k}\big(\rho-1\big)\mathbb{E}\Delta_{x}^{k}+3a_{2}^{k}\frac{\rho^2{\lambda_x^{k}}^2}{1-\rho}\frac{1}{n}\sum_{i=1}^n\mathbb{E}\|t_{x,i}^k-\bar{t}_x^k\|^2\\
&+a_{3}^{k}\big(\rho-1\big)\mathbb{E}\Delta_{y}^{k}+3a_{3}^{k}\frac{\rho^2{\lambda_y^{k}}^2}{1-\rho}\frac{1}{n}\sum_{i=1}^n\mathbb{E}\|t_{y,i}^k-\bar{t}_y^k\|^2\\
&+a_{4}^{k}\big(\rho-1\big)\mathbb{E}\Delta_{\theta}^{k}+3a_{4}^{k}\frac{\rho^2{\lambda_\theta^{k}}^2}{1-\rho}\frac{1}{n}\sum_{i=1}^n\mathbb{E}\|t_{\theta,i}^k-\bar{t}_\theta^k\|^2\\
&+a_{5}^{k}\big(\rho-1\big)\frac{1}{n}\sum_{i=1}^n\mathbb{E}\|t_{x,i}^k-\bar{t}_x^k\|^2+a_{5}^{k}\frac{1}{1-\rho}\frac{1}{n}\sum_{i=1}^n\mathbb{E}\|d_{x,i,k+1}^{(\zeta_{i}^{k+1})}-d_{x,i,k}^{(\zeta_{i}^{k})}\|^2\\
&+a_{6}^{k}\big(\rho-1\big)\frac{1}{n}\sum_{i=1}^n\mathbb{E}\|t_{y,i}^k-\bar{t}_y^k\|^2+a_{6}^{k}\frac{1}{1-\rho}\frac{1}{n}\sum_{i=1}^n\mathbb{E}\|d_{y,i,k+1}^{(\zeta_{i}^{k+1})}-d_{y,i,k}^{(\zeta_{i}^{k})}\|^2\\
&+a_{7}^{k}\big(\rho-1\big)\frac{1}{n}\sum_{i=1}^n\mathbb{E}\|t_{\theta,i}^k-\bar{t}_\theta^k\|^2+a_{7}^{k}\frac{1}{1-\rho}\frac{1}{n}\sum_{i=1}^n\mathbb{E}\|d_{\theta,i,k+1}^{(\zeta_{i}^{k+1})}-d_{\theta,i,k}^{(\zeta_{i}^{k})}\|^2.	\end{align*}

For the  coefficient of term $\|\mathbb{E}[\bar{x}^{k+1}-\bar{x}^{k}]\|^2$ is 
\begin{align*}
&\Bigg(-(\frac{1}{2{{\lambda_y^{k}}}}-\frac{L_{\Phi_k}}{2})+2a_{1}L_{\theta}^{2}\big(1+\frac{1}{\delta_{k}}\big)\\
&+4\bigg(9a_{5}{\lambda_x^{k}}^{2}\Big(\mu_{k}^{2}L_{1}^{2}+L_{2}^{2}\Big)+12a_{6}{\lambda_y^{k}}^{2}\Big(\mu_{k}^{2}L_{1}^{2}+L_{2}^{2}+\frac{1}{\gamma^{2}}\Big)+9a_{7}{\lambda_\theta^{k}}^{2}\frac{1}{\gamma^{2}}\bigg)\frac{1}{1-\rho}\Bigg) . 
\end{align*}
If we take ${\lambda_\theta^{k}}\leq \frac{4}{3\eta}$ such that $1+\frac{1}{\delta_{k}}\leq 2$ and 
\begin{align}\label{lydesgt1}
\lambda_\theta^{k} \leq	\frac{1-\rho}{64c_{\lambda}\bigg(\frac{L_{\Phi_\infty}}{2}+4a_{1}L_{\theta}^{2}+8\Big(9\tau_{5}\big(\mu_{0}^{2}L_{1}^{2}+2L_{2}^{2}\big)+12\tau_{6}\big(\mu_{0}^{2}L_{1}^{2}+L_{2}^{2}\big)+9\tau_{7}L_{2}^{2}\Big)\bigg)},
\end{align}
the coefficient of term $\|\mathbb{E}[\bar{x}^{k+1}-\bar{x}^{k}]\|^2$ is $-\frac{1}{16c_{\lambda}\lambda_{\theta}^{k}}$.

For the  coefficient of term $\|\mathbb{E}[\bar{y}^{k+1}-\bar{y}^{k}]\|^2$ is 
\begin{align*}
&-(\frac{1}{2{{\lambda_y^{k}}}}-\frac{L_{\Phi_k}}{2})+2a_{1}L_{\theta}^{2}\big(1+\frac{1}{\delta_{k}}\big)\\
&+4\bigg(9a_{5}{\lambda_x^{k}}^{2}\Big(\mu_{k}^{2}L_{1}^{2}+L_{2}^{2}\Big)+12a_{6}{\lambda_y^{k}}^{2}\Big(\mu_{k}^{2}L_{1}^{2}+L_{2}^{2}+\frac{1}{\gamma^{2}}\Big)+9a_{7}{\lambda_\theta^{k}}^{2}\frac{1}{\gamma^{2}}\bigg)\frac{1}{1-\rho}.
\end{align*}

If we take 
\begin{align}\label{lydesgt2}
\lambda_\theta^{k} \leq	\frac{1-\rho}{64c_{\lambda}\bigg(\frac{L_{\Phi_\infty}}{2}+4a_{1}L_{\theta}^{2}+8\Big(9\tau_{5}\Big(\mu_{0}^{2}L_{1}^{2}+L_{2}^{2}\Big)+12\tau_{6}\Big(\mu_{0}^{2}L_{1}^{2}+L_{2}^{2}+\frac{1}{\gamma^{2}}\Big)+9\tau_{7}\frac{1}{\gamma^{2}}\Big)\bigg)},
\end{align}
the coefficient of term $\|\mathbb{E}[\bar{x}^{k+1}-\bar{x}^{k}]\|^2$ is $-\frac{1}{16c_{\lambda}\lambda_{\theta}^{k}}$.

For the coefficient of term $\mathbb{E}\|\bar{\theta}^{k}-\theta_{\gamma}^{*}(\bar{x}^{k},\bar{y}^{k})\|^{2}$ is 
\begin{align*}
&{{\lambda_x^{k}}}L_{2}^{2}+{{\lambda_y^{k}}}\frac{1}{\gamma^{2}}+
a_{1}\big((1+\delta_{k})(1-\eta{\lambda_\theta^{k}})-1\big)\\
+&4{\lambda_\theta^{k}}^2\Big(L_{2}^{2}+\frac{1}{\gamma^{2}}\Big)\bigg(a_{1}(1+\delta_{k})
+4\Big(9\tau_{5}{\lambda_x^{k}}^{3}L_{2}^{2}+12\tau_{6}{\lambda_y^{k}}^{3}\frac{1}{\gamma^{2}}+9\tau_{7}{\lambda_\theta^{k}}^{3}\big(L_{2}^{2}+\frac{1}{\gamma^{2}}\big)\Big)\frac{1}{1-\rho}\bigg) . 
\end{align*}
If we take \( a_{1}:=\frac{1}{2L_\theta}\sqrt{L_{2}^{2}+\frac{1}{\gamma^{2}}} \), \( \delta_{k}:=\frac{{\lambda_\theta^{k}} \eta}{4\left(1-\frac{{\lambda_\theta^{k}} \eta}{2}\right)} \), where \( c_{\lambda}\leq\frac{\eta}{32a_{1}L_{\theta}^{2}} \) is a constant,
$\eta:=\frac{1}{\gamma}-L_{2}$, $\lambda_\theta^{k}\leq \frac{4a_{1}L_{\theta}^{2}}{\rho L_{\Phi_{\infty}}}$, and
\begin{align}\label{lydesgt3}
\lambda_\theta^{k}\leq\frac{1}{32a_{1}L_{\theta}^{2}}\big(L_{2}^{2}+\frac{1}{\gamma^{2}}\big)\frac{1-\rho}{16\Big(L_{2}^{2}+\frac{1}{\gamma^{2}}\Big)4\bigg(9a_{5}L_{2}^{2}+12a_{6}\frac{1}{\gamma^{2}}+9a_{7}\Big(L_{2}^{2}+\frac{1}{\gamma^{2}}\Big)\bigg)},
\end{align}
the coefficient of term $\mathbb{E}\|\bar{\theta}^{k}-\theta_{\gamma}^{*}(\bar{x}^{k},\bar{y}^{k})\|^2$ is $-\frac{1}{32a_{1}L_{\theta}^{2}}\big(L_{2}^{2}+\frac{1}{\gamma^{2}}\big)\lambda_\theta^{k}$.

For the coefficient of term $\Delta_{x}^{k}$ is 
\begin{align*}
&\Big(3{{\lambda_x^{k}}}\big(\mu_{k}^{2}L_{1}^{2}+2L_{2}^{2}\big)+4{{\lambda_y^{k}}}\big(\mu_{k}^{2}L_{1}^{2}+L_{2}^{2}\big)\Big)+a_{1}(1+\delta_{k})\lambda_\theta^{k}\frac{6}{\eta}L_{2}^{2}+\tau_{2}\lambda_{x}^{k}(\rho-1)\\
&+8\bigg(9\tau_{5}{\lambda_x^{k}}^{3}\Big(\mu_{k}^{2}L_{1}^{2}+2L_{2}^{2}\Big)
+12\tau_{6}{\lambda_y^{k}}^{3}\Big(\mu_{k}^{2}L_{1}^{2}+L_{2}^{2}\Big)+9\tau_{7}{\lambda_\theta^{k}}^{3}L_{2}^{2}\bigg)\\ &+12L_{2}^{2}{\lambda_\theta^{k}}^2\bigg(a_{1}(1+\delta_{k})+4\bigg(9\tau_{5}{\lambda_x^{k}}^{3}L_{2}^{2}+12\tau_{6}{\lambda_y^{k}}^{3}\frac{1}{\gamma^{2}}+9\tau_{7}{\lambda_\theta^{k}}^{3}\Big(L_{2}^{2}+\frac{1}{\gamma^{2}}\Big)\bigg)\frac{1}{1-\rho}\bigg) . 
\end{align*}
If we take $\tau_{2}:= 2\frac{\Big(3\big(\mu_{0}^{2}L_{1}^{2}+2L_{2}^{2}\big)c_{\lambda}+4\big(\mu_{0}^{2}L_{1}^{2}+L_{2}^{2}\big)c_{\lambda}\Big)+2a_{1}\frac{6}{\eta}L_{2}^{2}}{c_{\lambda}(1-\rho)}$ and
\begin{align}
{\lambda_\theta^{k}}^{2}	&\leq\frac{\tau_{2}c_{\lambda}(1-\rho)}{64\bigg(9\tau_{5}c_{\lambda}^{3}\Big(\mu_{0}^{2}L_{1}^{2}+2L_{2}^{2}\Big)
	+12\tau_{6}c_{\lambda}^{3}\Big(\mu_{0}^{2}L_{1}^{2}+L_{2}^{2}\Big)+9\tau_{7}L_{2}^{2}\bigg)}, \label{lydesgt4} \\
\lambda_\theta^{k}&\leq\frac{\tau_{2}c_{\lambda}(1-\rho)}{96L_{2}^{2}\bigg(2a_{1}+4\Big(9\tau_{5}c_{\lambda}^{3}L_{2}^{2}+12\tau_{6}c_{\lambda}^{3}\frac{1}{\gamma^{2}}+9\tau_{7}\big(L_{2}^{2}+\frac{1}{\gamma^{2}}\big)\Big)\bigg)\frac{1}{1-\rho}},  \label{lydesgt5}
\end{align}
the coefficient of term $\Delta_{x}^{k}$ is $-\frac{\tau_{2}c_{\lambda}(1-\rho)}{8}\lambda_\theta^{k}$.

For the coefficient of term $\Delta_{y}^{k}$ is 
\begin{align*}
&\left(3{{\lambda_x^{k}}}\big(\mu_{k}^{2}L_{1}^{2}+L_{2}^{2}\big)+ 4{{\lambda_y^{k}}}\big(\mu_{k}^{2}L_{1}^{2}+L_{2}^{2}+\frac{1}{\gamma^{2}}\big)\right)+	a_{1}(1+\delta_{k})\lambda_\theta^{k}\frac{6}{\eta}\frac{1}{\gamma^{2}}+\tau_{3}\lambda_{y}^{k}(\rho-1)\\
&+8\bigg(9\tau_{5}{\lambda_x^{k}}^{3}\Big(\mu_{k}^{2}L_{1}^{2}+L_{2}^{2}\Big)+12\tau_{6}{\lambda_y^{k}}^{3}\Big(\mu_{k}^{2}L_{1}^{2}+L_{2}^{2}+\frac{1}{\gamma^{2}}\Big)+9\tau_{7}{\lambda_\theta^{k}}^{3}\frac{1}{\gamma^{2}}\bigg)\\
&+12\frac{1}{\gamma^{2}}{\lambda_\theta^{k}}^2\bigg(a_{1}(1+\delta_{k})+4\bigg(9a_{5}{\lambda_x^{k}}^{3}L_{2}^{2}+12a_{6}{\lambda_y^{k}}^{3}\frac{1}{\gamma^{2}}+9a_{7}{\lambda_\theta^{k}}^{3}\Big(L_{2}^{2}+\frac{1}{\gamma^{2}}\Big)\bigg)\frac{1}{1-\rho}\bigg) .
\end{align*}

If we take $\tau_{3}:= 2\frac{\Big(3\big(\mu_{0}^{2}L_{1}^{2}+L_{2}^{2}\big)c_{\lambda}+4\big(\mu_{0}^{2}L_{1}^{2}+L_{2}^{2}+\frac{1}{\gamma^{2}}\big)c_{\lambda}\Big)+2a_{1}\frac{6}{\eta}\frac{1}{\gamma^{2}}}{c_{\lambda}(1-\rho)}$ and
\begin{align}
{\lambda_\theta^{k}}^{2}	&\leq\frac{\tau_{3}c_{\lambda}(1-\rho)}{64\bigg(9\tau_{5}c_{\lambda}^{3}\Big(\mu_{0}^{2}L_{1}^{2}+L_{2}^{2}\Big)+12\tau_{6}c_{\lambda}^{3}\Big(\mu_{0}^{2}L_{1}^{2}+L_{2}^{2}+\frac{1}{\gamma^{2}}\Big)+9\tau_{7}\frac{1}{\gamma^{2}}\bigg)} \label{lydesgt6}\\
\lambda_\theta^{k}&\leq\frac{\tau_{3}c_{\lambda}(1-\rho)}{96\frac{1}{\gamma^{2}}\bigg(2a_{1}+4\Big(9\tau_{5}c_{\lambda}^{3}L_{2}^{2}+12\tau_{6}c_{\lambda}^{3}\frac{1}{\gamma^{2}}+9\tau_{7}\big(L_{2}^{2}+\frac{1}{\gamma^{2}}\big)\Big)\bigg)\frac{1}{1-\rho}}, \label{lydesgt7}
\end{align}
the coefficient of term $\Delta_{y}^{k}$ is $-\frac{\tau_{3}c_{\lambda}(1-\rho)}{8}\lambda_\theta^{k}$.

For the coefficient of term $\Delta_{\theta}^{k}$ is 
\begin{align*}
&\left(3{{\lambda_x^{k}}}L_{2}^{2}+\frac{4}{\gamma^{2}}{{\lambda_y^{k}}}\right)+a_{1}(1+\delta_{k})\lambda_\theta^{k}\frac{6}{\eta}\big(L_{2}^{2}+\frac{1}{\gamma^{2}}\big)+\tau_{4}\lambda_{\theta}^{k}(\rho-1)\\
&+8\bigg(9\tau_{5}{\lambda_x^{k}}^{3}L_{2}^{2}+12\tau_{6}{\lambda_y^{k}}^{3}\frac{1}{\gamma^{2}}+9\tau_{7}{\lambda_\theta^{k}}^{3}\Big(L_{2}^{2}+\frac{1}{\gamma^{2}}\Big)\bigg)\frac{1}{1-\rho}\\
&+12\big(\frac{1}{\gamma^{2}}+L_{2}^{2}\big){\lambda_\theta^{k}}^2\bigg(a_{1}(1+\delta_{k})+4\bigg(9\tau_{5}{\lambda_x^{k}}^{3}L_{2}^{2}+12\tau_{6}{\lambda_y^{k}}^{3}\frac{1}{\gamma^{2}}+9\tau_{7}{\lambda_\theta^{k}}^{3}\Big(L_{2}^{2}+\frac{1}{\gamma^{2}}\Big)\bigg)\frac{1}{1-\rho}\bigg) . 
\end{align*}

If we take $\tau_{4}:= 2\frac{\Big(3L_{2}^{2}c_{\lambda}+\frac{4}{\gamma^{2}}c_{\lambda}\Big)+2a_{1}\frac{6}{\eta}\big(\frac{1}{\gamma^{2}}+L_{2}^{2}\big)}{c_{\lambda}(1-\rho)}$ and
\begin{align}
{\lambda_\theta^{k}}^{2}	&\leq\frac{\tau_{4}c_{\lambda}(1-\rho)}{64\bigg(9\tau_{5}c_{\lambda}^{3}L_{2}^{2}+12\tau_{6}c_{\lambda}^{3}\frac{1}{\gamma^{2}}+9\tau_{7}\big(L_{2}^{2}\frac{1}{\gamma^{2}}\big)\bigg)} \label{lydesgt8} \\
\lambda_\theta^{k}&\leq\frac{\tau_{4}c_{\lambda}(1-\rho)}{96\big(L_{2}^{2}+\frac{1}{\gamma^{2}}\big)\bigg(2a_{1}+4\Big(9\tau_{5}c_{\lambda}^{3}L_{2}^{2}+12\tau_{6}c_{\lambda}^{3}\frac{1}{\gamma^{2}}+9\tau_{7}\big(L_{2}^{2}+\frac{1}{\gamma^{2}}\big)\Big)\bigg)\frac{1}{1-\rho}} ,   \label{lydesgt9}
\end{align}
the coefficient of term $\Delta_{\theta}^{k}$ is $-\frac{\tau_{4}(1-\rho)}{8}\lambda_\theta^{k}$.

For the coefficient of term $\frac{1}{n}\sum_{i=1}^n\mathbb{E}\|t_{x,i}^k-\bar{t}_x^k\|^2$ is 
\begin{align*}
&\Bigg(\tau_{2}\frac{\rho^2}{1-\rho}+\tau_{5}\big(\rho-1\big)+4\bigg(9\tau_{5}{\lambda_x^{k}}^{3}\Big(\mu_{k}^{2}L_{1}^{2}+2L_{2}^{2}\Big)\\
&\quad+12\tau_{6}{\lambda_y^{k}}^{3}\Big(\mu_{k}^{2}L_{1}^{2}+L_{2}^{2}\Big)+9\tau_{7}{\lambda_\theta^{k}}^{3}L_{2}^{2}\bigg)\frac{1}{1-\rho}\Bigg){\lambda_x^{k}}^3 . 
\end{align*}
If we take $\tau_{5}:=\frac{\tau_{2}}{(\rho-1)^{2}}$ and
\begin{align}\label{lydesgt10}
{\lambda_\theta^{k}}^3\leq \frac{(\rho+1)\tau_{2}}{2c_{\lambda}^{3}(1-\rho)4\Big(9\tau_{5}\big(\mu_{0}^{2}L_{1}^{2}+2L_{2}^{2}\big)+12\tau_{6}\big(\mu_{0}^{2}L_{1}^{2}+L_{2}^{2}\big)+9\tau_{7}L_{2}^{2}\Big)\frac{1}{1-\rho}},
\end{align}
the coefficient of term $\frac{1}{n}\sum_{i=1}^n\mathbb{E}\|t_{x,i}^k-\bar{t}_x^k\|^2$ is $-\frac{\tau_{2}(1+\rho)}{2(1-\rho)}{\lambda_\theta^{k}}^{3}c_{\lambda}^{3}{\lambda_\theta^{k}}^{3}$.

For the coefficient of term $\frac{1}{n}\sum_{i=1}^n\mathbb{E}\|t_{y,i}^k-\bar{t}_y^k\|^2$ is 
\begin{align*}
&\bigg(\tau_{3}\frac{\rho^2}{1-\rho}+\tau_{6}\big(\rho-1\big)+4\bigg(9\tau_{5}{\lambda_x^{k}}^{3}\Big(\mu_{k}^{2}L_{1}^{2}+L_{2}^{2}\Big)\\ \quad &+12\tau_{6}{\lambda_y^{k}}^{3}\Big(\mu_{k}^{2}L_{1}^{2}+L_{2}^{2}+\frac{1}{\gamma^{2}}\Big)+9\tau_{7}{\lambda_\theta^{k}}^{3}\frac{1}{\gamma^{2}}\bigg)\frac{1}{1-\rho}\bigg){\lambda_y^{k}}^{3}.
\end{align*}

If we take $\tau_{6}:=\frac{\tau_{3}}{(\rho-1)^{2}}$ and
\begin{align}\label{lydesgt11}
{\lambda_\theta^{k}}^3\leq \frac{(\rho+1)\tau_{3}}{2c_{\lambda}^{3}(1-\rho)4\Big(9\tau_{5}\big(\mu_{0}^{2}L_{1}^{2}+L_{2}^{2}\big)+12\tau_{6}\big(\mu_{0}^{2}L_{1}^{2}+L_{2}^{2}+\frac{1}{\gamma^{2}}\big)+9\tau_{7}\frac{1}{\gamma^{2}}\Big)\frac{1}{1-\rho}},
\end{align}
the coefficient of term $\frac{1}{n}\sum_{i=1}^n\mathbb{E}\|t_{y,i}^k-\bar{t}_y^k\|^2$ is $-\frac{\tau_{3}c_{\lambda}(1+\rho)}{2(1-\rho)}c_{\lambda}^{3}{\lambda_\theta^{k}}^{3}$.

For the coefficient of term $\frac{1}{n}\sum_{i=1}^n\mathbb{E}\|t_{\theta,i}^k-\bar{t}_\theta^k\|^2$ is 
\begin{align*}
\left(\tau_{4}\frac{\rho^2}{1-\rho}+\tau_{7}\big(\rho-1\big)+4\bigg(9\tau_{5}{\lambda_x^{k}}^{3}L_{2}^{2}+12\tau_{6}{\lambda_y^{k}}^{3}\frac{1}{\gamma^{2}}+9\tau_{7}{\lambda_\theta^{k}}^{3}\Big(L_{2}^{2}+\frac{1}{\gamma^{2}}\Big)\bigg)\frac{1}{1-\rho}\right){\lambda_\theta^{k}}^{3} . 
\end{align*}
If we take $\tau_{7}:=\frac{\tau_{4}}{(\rho-1)^{2}}$ and
\begin{align}\label{lydesgt12}
{\lambda_\theta^{k}}^3\leq \frac{(\rho+1)\tau_{3}}{2c_{\lambda}^{3}(1-\rho)4\Big(9\tau_{5}L_{2}^{2}+12\tau_{6}\frac{1}{\gamma^{2}}+9\tau_{7}(L_{2}^{2}+\frac{1}{\gamma^{2}})\Big)\frac{1}{1-\rho}},
\end{align}
the coefficient of term $\frac{1}{n}\sum_{i=1}^n\mathbb{E}\|t_{\theta,i}^k-\bar{t}_\theta^k\|^2$ is $-\frac{\tau_{4}(1+\rho)}{2(1-\rho)}{\lambda_\theta^{k}}^{3}$.

Taking into account that $\mu_k$ in Algorithm~\ref{sun-dsbo-gt} is a decaying sequence,  
we establish the following descent property of the Lyapunov function:
\begin{equation}\label{eq:lyap-descent-gt1}
\begin{aligned}
	\mathcal{L}_{\text{gt}}^{k+1}-\mathcal{L}_{\text{gt}}^{k}\le\ & 
	-\frac{1}{16c_{\lambda}\lambda_{\theta}^{k}}\bigl\|\mathbb{E}[\bar{x}^{k+1}-\bar{x}^{k}]\bigr\|^{2}
	-\frac{1}{16c_{\lambda}\lambda_{\theta}^{k}}\bigl\|\mathbb{E}[\bar{y}^{k+1}-\bar{y}^{k}]\bigr\|^{2}\\
	&-\frac{1}{32a_{1}L_{\theta}^{2}}\Bigl(L_{2}^{2}+\tfrac{1}{\gamma^{2}}\Bigr)\lambda_\theta^{k}
	\mathbb{E}\bigl\|\bar{\theta}^{k}-\theta_{\gamma}^*(\bar{x}^{k},\bar{y}^{k})\bigr\|^{2}\\
	&-\frac{\tau_{2}c_{\lambda}(1-\rho)}{8}c_{\lambda}\lambda_\theta^{k}\Delta_{x}^{k}
	-\frac{\tau_{3}c_{\lambda}(1-\rho)}{8}c_{\lambda}\lambda_\theta^{k}\Delta_{y}^{k}
	-\frac{\tau_{4}(1-\rho)}{8}\lambda_\theta^{k}\Delta_{\theta}^{k}\\
	&-\frac{\tau_{2}c_{\lambda}(1+\rho)}{2(1-\rho)}c_{\lambda}^{3}(\lambda_\theta^{k})^3
	\frac{1}{n}\sum_{i=1}^n\mathbb{E}\|t_{x,i}^k-\bar{t}_x^k\|^2\\
	&-\frac{\tau_{3}c_{\lambda}(1+\rho)}{2(1-\rho)}c_{\lambda}^{3}(\lambda_\theta^{k})^3
	\frac{1}{n}\sum_{i=1}^n\mathbb{E}\|t_{y,i}^k-\bar{t}_y^k\|^2\\
	&-\frac{\tau_{4}(1+\rho)}{2(1-\rho)}(\lambda_\theta^{k})^3
	\frac{1}{n}\sum_{i=1}^n\mathbb{E}\|t_{\theta,i}^k-\bar{t}_\theta^k\|^2\\
	&+(\lambda_\theta^{k})^{2}M_{4}\frac{1}{n}\Bigl(\mu_{0}^{2}\delta_{f}^{2}+2\delta_{g}^{2}\Bigr)
	+(\lambda_\theta^{k})^{3}M_{5}\frac{1}{1-\rho}\Bigl(\mu_{0}^{2}\delta_{f}^{2}+2\delta_{g}^{2}\Bigr),
\end{aligned}
\end{equation}
where $M_{4}$ and $M_{5}$ are defined :
\begin{align}\label{boundC-gt}
\begin{aligned}
	M_{4}&:=4a_{1}+\Bigl(\tfrac{L_{\Phi_0}}{2}+4a_{1}L_{\theta}^{2}\Bigr)\,6c_{\lambda}^{2},\\
	M_{5}&:=8\bigl(9\tau_{5}c_{\lambda}^{3}L_{2}^{2}+12\tau_{6}c_{\lambda}^{3}\tfrac{1}{\gamma^{2}}
	+\tau_{7}(L_{2}^{2}+\tfrac{1}{\gamma^{2}})\bigr)+6(\tau_{5}c_{\lambda}^{3}+\tau_{6}c_{\lambda}^{3}+\tau_{7}).
\end{aligned}
\end{align}

Building upon the preceding discussions, we present the following lemma:
\begin{lemma}[Descent in Lyapunov function $\mathcal{L}_{\text{gt}}^{k}$]\label{lyafuncdes-gt}
	Fix the number of communication rounds $K$. Define the Lyapunov function
	\begin{equation}\label{eq:lyap-gt}
		\begin{aligned}
			\mathcal{L}_{\text{gt}}^{k} =\ & \mathbb{E}\bigl[\Phi_{\mu_k}(\bar{x}^{k},\bar{y}^{k})\bigr]
			+ a_{1}\,\mathbb{E}\bigl\|\bar{\theta}^{k}-\theta_{\gamma}^{*}(\bar{x}^{k},\bar{y}^{k})\bigr\|^{2}  \\
			&+ \frac{a_{2}^{k}}{n}\sum_{i=1}^n \mathbb{E}\|x_i^k-\bar{x}^k\|^2
			+ \frac{a_{3}^{k}}{n}\sum_{i=1}^n \mathbb{E}\|y_i^k-\bar{y}^k\|^2
			+ \frac{a_{4}^{k}}{n}\sum_{i=1}^n \mathbb{E}\|\theta_i^k-\bar{\theta}^k\|^2 \\
			&+ \frac{a_{5}^{k}}{n}\sum_{i=1}^{n}\mathbb{E}\|t_{x,i}^{k}-\bar{t}_{x}^{k}\|^{2}
			+ \frac{a_{6}^{k}}{n}\sum_{i=1}^{n}\mathbb{E}\|t_{y,i}^{k}-\bar{t}_{y}^{k}\|^{2}
			+ \frac{a_{7}^{k}}{n}\sum_{i=1}^{n}\mathbb{E}\|t_{\theta,i}^{k}-\bar{t}_{\theta}^{k}\|^{2},
		\end{aligned}
	\end{equation}
	where
	\begin{equation*}
		\begin{aligned}
			a_{1} &= \frac{1}{2L_\theta}\sqrt{L_{2}^{2} + \frac{1}{\gamma^{2}}}, 
			& a_{2}^{k} &= \tau_{2}\lambda_{x}^{k}, 
			& a_{3}^{k} &= \tau_{3}\lambda_{y}^{k}, 
			& a_{4}^{k} &= \tau_{4}\lambda_{\theta}^{k},\\
			a_{5}^{k} &= \tau_{5}{\lambda_{x}^{k}}^{2}, 
			& a_{6}^{k} &= \tau_{6}{\lambda_{y}^{k}}^{2}, 
			& a_{7}^{k} &= \tau_{7}{\lambda_{\theta}^{k}}^{2}.
		\end{aligned}
	\end{equation*}
	The constants $\tau_j$ for \( j=2,\dots,7\) are defined by
	\begin{align}\label{eq:tau-gt}
		\begin{aligned}
			\tau_{2} &:= 2\frac{\bigl(3(\mu_{0}^{2}L_{1}^{2}+2L_{2}^{2})c_{\lambda}
				+4(\mu_{0}^{2}L_{1}^{2}+L_{2}^{2})c_{\lambda}\bigr)
				+2a_{1}\frac{6}{\eta}L_{2}^{2}}{c_{\lambda}(1-\rho)},\\
			\tau_{3} &:= 2\frac{\bigl(3(\mu_{0}^{2}L_{1}^{2}+L_{2}^{2})c_{\lambda}
				+4(\mu_{0}^{2}L_{1}^{2}+L_{2}^{2}+\frac{1}{\gamma^{2}})c_{\lambda}\bigr)
				+2a_{1}\frac{6}{\eta}\frac{1}{\gamma^{2}}}{c_{\lambda}(1-\rho)},\\
			\tau_{4} &:= 2\frac{\bigl(3L_{2}^{2}c_{\lambda}+\frac{4}{\gamma^{2}}c_{\lambda}\bigr)
				+2a_{1}\frac{6}{\eta}(L_{2}^{2}+\tfrac{1}{\gamma^{2}})}{c_{\lambda}(1-\rho)},\\
			\tau_{5} &:= \frac{\tau_{2}}{(1-\rho)^{2}}, \quad
			\tau_{6} := \frac{\tau_{3}}{(1-\rho)^{2}}, \quad
			\tau_{7} := \frac{\tau_{4}}{(1-\rho)^{2}}.
		\end{aligned}
	\end{align}
Under Assumptions~\ref{ass1}--\ref{ass3}, 
let the learning rates be defined as follows: 
\(
\lambda_\theta^k = c_{\theta} n^{1/2} K^{-1/2}, 
\lambda_x^k = \lambda_y^k = c_\lambda \lambda_\theta^k,
\)
where \( c_{\lambda} \leq \frac{\eta}{32a_{1}L_{\theta}^{2}} \), and \( c_{\theta} \) are positive constants that ensure the learning rates satisfy the following:
\begin{equation}\label{eq:stepsizes-gt}
    \lambda_\theta^{k} \le \min\left\{\frac{4}{3\eta},\, 
    \frac{4a_{1}L_{\theta}^{2}}{\rho L_{\Phi_\infty}}\right\}
\end{equation}
and satisfy conditions~(\ref{lydesgt1})--(\ref{lydesgt12}), where 
\(
\delta_{k} = \frac{\lambda_\theta^{k} \eta}{4 \left(1 - \frac{\lambda_\theta^{k} \eta}{2}\right)}
\)
and \(\rho = \frac{1}{\gamma} - L_{2}\).
	Then the following descent property holds:
	\begin{equation}\label{eq:lyap-descent-gt}
		\begin{aligned}
			\mathcal{L}_{\text{gt}}^{k+1}-\mathcal{L}_{\text{gt}}^{k}\le\ & 
			-\frac{1}{16c_{\lambda}\lambda_{\theta}^{k}}\bigl\|\mathbb{E}[\bar{x}^{k+1}-\bar{x}^{k}]\bigr\|^{2}
			-\frac{1}{16c_{\lambda}\lambda_{\theta}^{k}}\bigl\|\mathbb{E}[\bar{y}^{k+1}-\bar{y}^{k}]\bigr\|^{2}\\
			&-\frac{1}{32a_{1}L_{\theta}^{2}}\Bigl(L_{2}^{2}+\tfrac{1}{\gamma^{2}}\Bigr)\lambda_\theta^{k}
			\mathbb{E}\bigl\|\bar{\theta}^{k}-\theta_{\gamma}^*(\bar{x}^{k},\bar{y}^{k})\bigr\|^{2}\\
			&-\frac{\tau_{2}c_{\lambda}(1-\rho)}{8}c_{\lambda}\lambda_\theta^{k}\Delta_{x}^{k}
			-\frac{\tau_{3}c_{\lambda}(1-\rho)}{8}c_{\lambda}\lambda_\theta^{k}\Delta_{y}^{k}
			-\frac{\tau_{4}(1-\rho)}{8}\lambda_\theta^{k}\Delta_{\theta}^{k}\\
			&-\frac{\tau_{2}c_{\lambda}(1+\rho)}{2(1-\rho)}c_{\lambda}^{3}(\lambda_\theta^{k})^3
			\frac{1}{n}\sum_{i=1}^n\mathbb{E}\|t_{x,i}^k-\bar{t}_x^k\|^2\\
			&-\frac{\tau_{3}c_{\lambda}(1+\rho)}{2(1-\rho)}c_{\lambda}^{3}(\lambda_\theta^{k})^3
			\frac{1}{n}\sum_{i=1}^n\mathbb{E}\|t_{y,i}^k-\bar{t}_y^k\|^2\\
			&-\frac{\tau_{4}(1+\rho)}{2(1-\rho)}(\lambda_\theta^{k})^3
			\frac{1}{n}\sum_{i=1}^n\mathbb{E}\|t_{\theta,i}^k-\bar{t}_\theta^k\|^2+\epsilon_{\text{sto,gt}}^{k},\\
		\end{aligned}
	\end{equation}
	where $\epsilon_{\text{sto, gt}}^{k}$ denotes the stochastic error terms. Specifically, they are defined as:
	\begin{align}\label{lyaperr-gt}
		\epsilon_{\text{sto,gt}}^{k} :=\ & (\lambda_\theta^{k})^{2} M_{4} \cdot \frac{1}{n} \Bigl(\mu_{0}^{2} \delta_{f}^{2} + 2 \delta_{g}^{2} \Bigr)
		+ (\lambda_\theta^{k})^{3} M_{5} \cdot \frac{1}{1 - \rho} \Bigl( \mu_{0}^{2} \delta_{f}^{2} + 2 \delta_{g}^{2} \Bigr),
	\end{align}
	where $M_4$ and $M_5$ are constants defined in~(\ref{boundC-gt}).
\end{lemma}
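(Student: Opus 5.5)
The plan is to assemble the one-step descent of $\mathcal{L}_{\text{gt}}^{k}$ from four ingredients: Lemma~\ref{desphict-gt} (descent of $\Phi_{\mu_k}$), Lemma~\ref{thetaerrgt} (the $\bar\theta$-tracking error recursion), the consensus recursions for $x,y,\theta$ with the tracking terms $\frac1n\sum_i\|t_{\Diamond,i}^k-\bar t_\Diamond^k\|^2$, and a new recursion for those tracking errors. The argument mirrors Lemma~\ref{lyafuncdes-se}, with three extra tracking-error blocks.

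\textbf{Step 1 (preparatory bounds).} Since $\mu_{k+1}\le\mu_k$ and $F-\underline F\ge0$, we have $\Phi_{\mu_{k+1}}\le\Phi_{\mu_k}$. Hence $\mathbb{E}\Phi_{\mu_{k+1}}(\bar x^{k+1},\bar y^{k+1})-\mathbb{E}\Phi_{\mu_k}(\bar x^k,\bar y^k)$ is bounded by Lemma~\ref{desphict-gt}. The stepsizes are nonincreasing (constant here), so the weights $a_j^{k+1}\le a_j^k$ and the consensus terms may be evaluated with $a_j^k$.

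\textbf{Step 2 (GT recursion).} Multiplying the tracking update by $\mathbf{W}$ and subtracting the mean gives, by Young's inequality with parameter $(1-\rho)/\rho$,
\[
\sum_i\mathbb{E}\|t_{\Diamond,i}^{k+1}-\bar t_\Diamond^{k+1}\|^2\le\rho\sum_i\mathbb{E}\|t_{\Diamond,i}^{k}-\bar t_\Diamond^{k}\|^2+\tfrac{1}{1-\rho}\sum_i\mathbb{E}\|d_{\Diamond,i,k+1}-d_{\Diamond,i,k}\|^2 .
\]
The gradient-difference term is split into three parts. The first is the sampling noise at both iterates, bounded by $2(\mu_0^2\delta_f^2+2\delta_g^2)$ per node. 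The second is the smoothness part, bounded through $\|x_i^{k+1}-x_i^k\|^2$ (and likewise for $y,\theta$), which the \citep{dong2023single} lemma expresses via consensus errors, tracking errors, and $\|\bar x^{k+1}-\bar x^k\|^2$, $\|\bar y^{k+1}-\bar y^k\|^2$, $\lambda_\theta^2\|\bar d_\theta^k\|^2$. The third comes from the change $\mu_{k+1}-\mu_k$ multiplying $\nabla f_i$. That change is $\le\mu_k$, and I would absorb it using $|\mu_{k+1}-\mu_k|\lesssim\mu_0 p/(k+1)$ together with smoothness of $f_i$ and bounded variance. Then $\mathbb{E}\|\bar d_\theta^k\|^2$ is replaced by the $\bar t_\theta$ lemma.

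\textbf{Step 3 (collecting coefficients).} Collect the coefficient of each nonnegative quantity: $\|\mathbb{E}[\bar x^{k+1}-\bar x^k]\|^2$, $\|\mathbb{E}[\bar y^{k+1}-\bar y^k]\|^2$, $\mathbb{E}\|\bar\theta^k-\theta^*_\gamma\|^2$, $\Delta_x^k,\Delta_y^k,\Delta_\theta^k$, and the three tracking errors. Use $\mathbb{E}\|\bar x^{k+1}-\bar x^k\|^2\le\|\mathbb{E}[\cdot]\|^2+\lambda_x^2\,\mathrm{Var}$, with the variance at most $\frac{1}{n}$ times the noise since the average of $t$ equals the average of $d$. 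With $a_1$ and $\delta_k$ fixed as in the SE case, $\delta_k$ kills the $(1+\delta_k)$ factor against $-\eta\lambda_\theta$. The values $\tau_2,\tau_3,\tau_4$ of (\ref{eq:tau-gt}) make the consensus coefficients $\le-\tau_j c_\lambda(1-\rho)\lambda_\theta/8$ once the higher-order terms are dominated by (\ref{lydesgt4})--(\ref{lydesgt9}). The choice $\tau_{5,6,7}=\tau_{2,3,4}/(1-\rho)^2$ makes the tracking coefficients negative. Here the $+\tau_2\rho^2\lambda^3/(1-\rho)$ feed from the consensus recursion is beaten by $-(1-\rho)\tau_5\lambda^2$, using (\ref{lydesgt10})--(\ref{lydesgt12}). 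Finally, conditions (\ref{lydesgt1})--(\ref{lydesgt3}) keep the drift coefficients at $-1/(16c_\lambda\lambda_\theta)$ after the $a_5,a_6,a_7$ contributions of the $\|\bar x^{k+1}-\bar x^k\|^2$ terms are added. The remaining noise terms, of order $\lambda^2/n$ from the averaged updates and of order $\lambda^3/(1-\rho)$ from GT, collect into $M_4,M_5$.

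\textbf{Main obstacle.} The main obstacle is Step 2's circular coupling. The tracking error feeds the consensus error with weight $\lambda^2/(1-\rho)$, while $\|x^{k+1}-x^k\|^2$ feeds the tracking error back with weight $1/(1-\rho)$, and the time-varying $\mu_k$ adds a non-vanishing drift. I would first try to close the loop by the $(1-\rho)^{-2}$ scaling of $\tau_{5,6,7}$ together with $\lambda_\theta^2\ll(1-\rho)^{2}$. If the $\mu_k$-drift resists absorption, I would bound it crudely by $\mu_0^2$ times the gradient variation and place it into the $\lambda^3/(1-\rho)$ noise.
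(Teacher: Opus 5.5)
Your plan has the same architecture as the paper's proof. You combine Lemma~\ref{desphict-gt}, Lemma~\ref{thetaerrgt}, the three consensus recursions and the tracking recursion, then collect coefficients with $\tau_{5,6,7}=\tau_{2,3,4}/(1-\rho)^2$. Your accounting of $a_5^k=\tau_5\lambda_x^2$ against the $\tau_2\rho^2\lambda_x^3/(1-\rho)$ feed is, if anything, more consistent than the paper's. The step that fails is the third piece of your Step~2. Write $\mu_{k+1}\nabla_x f_i(x_i^{k+1},y_i^{k+1})-\mu_k\nabla_x f_i(x_i^k,y_i^k)=\mu_{k+1}\bigl(\nabla_x f_i(x_i^{k+1},y_i^{k+1})-\nabla_x f_i(x_i^k,y_i^k)\bigr)+(\mu_{k+1}-\mu_k)\nabla_x f_i(x_i^k,y_i^k)$; the second term contains a bare gradient. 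You can strip the mean using $(\mathbf W-J)=(\mathbf W-J)(I-J)$ with $J=\frac1n\mathbf 1\mathbf 1^{\top}$. You are still left with $(\mu_k-\mu_{k+1})^2\,\frac1n\sum_i\|\nabla f_i(x_i^k,y_i^k)-\frac1n\sum_j\nabla f_j(x_j^k,y_j^k)\|^2$, i.e.\ the gradient heterogeneity of the $f_i$ at the current iterates. Under Assumptions~\ref{ass1}--\ref{ass3} nothing bounds this quantity:
\begin{itemize}
\item $L_1$-smoothness controls only gradient differences;
\item Assumption~\ref{ass2} controls only sampling noise;
\item no term of $\mathcal L_{\text{gt}}^k$ dominates it.
\end{itemize}
The factor $(\mu_k-\mu_{k+1})^2\lesssim\mu_0^2p^2(k+1)^{-2}$ is small, but it multiplies an unbounded quantity. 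Your fallback of moving the term into the $\lambda^3/(1-\rho)$ noise is also unavailable, because $\epsilon_{\text{sto,gt}}^k$ involves only $\delta_f^2,\delta_g^2$. Any such bound reintroduces a dissimilarity constant like $\sigma_f^2$ from Assumption~\ref{ass4}, which is exactly what the GT statement is meant to avoid.

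The paper's proof does not address this term at all. After writing the tracking contribution $a_5^k\frac{1}{1-\rho}\frac1n\sum_i\mathbb{E}\|d_{x,i,k+1}^{(\zeta_i^{k+1})}-d_{x,i,k}^{(\zeta_i^k)}\|^2$, it bounds this difference by smoothness with a single penalty factor $\mu_k^2L_1^2$ (later $\mu_0^2L_1^2$), i.e.\ as if $\mu_{k+1}=\mu_k$. The only error it keeps, through $M_5$, is sampling noise. So you have found a real weak point, but neither argument closes it. When $\mu_k\equiv\mu_0$ ($p=0$) the drift vanishes identically and the issue disappears. For $p>0$ you need one of two changes:
\begin{itemize}
\item an extra hypothesis, e.g.\ the $f$-part of Assumption~\ref{ass4}, which adds a summable error of order $a_5^k(\mu_k-\mu_{k+1})^2\sigma_f^2/(1-\rho)$ that is absent from $\epsilon_{\text{sto,gt}}^k$;
\item a modified algorithm that tracks the $\nabla f_i$-part and the $\nabla g_i$-part separately and multiplies by $\mu_k$ only after mixing, so every tracked quantity is a fixed function of the iterates.
\end{itemize}

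A smaller point concerns your bound $\mathbb{E}\|\bar x^{k+1}-\bar x^k\|^2\le\|\mathbb{E}[\bar x^{k+1}-\bar x^k]\|^2+\lambda_x^2\,\mathrm{Var}$, with the variance controlled by the sampling noise over $n$. This holds only if $\|\mathbb{E}[\cdot]\|^2$ is read as $\mathbb{E}\|\mathbb{E}[\,\cdot\mid\mathcal F_k]\|^2$. With an unconditional expectation, the variance also carries the randomness of the iterates. The paper relies on the same conditional reading without saying so.
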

\begin{remark}
	Since all terms on the right-hand side of the inequalities in the conditions~(\ref{lydesgt1})--(\ref{lydesgt12}) are non-negative, the step size $\lambda_\theta^{k}$ on the left-hand side can be chosen sufficiently small to satisfy the conditions~(\ref{lydesgt1})--(\ref{lydesgt12}).
\end{remark}
Based on Lemma~\ref{lyafuncdes-gt}, we establish the convergence rate of the SUN-DSBO-GT algorithm. Note that here \( p \in [0, 1/4) \) includes the case where \( p \in (0, 1/4) \) in Theorem \ref{main-sungt}.

\begin{theorem}\label{mainthgtapp}
	Under Assumptions~\ref{ass1}–\ref{ass3}, let $\mu_k = \mu_0(k+1)^{-p}$, where $\mu_0 > 0$, $p \in [0,1/4)$, and $\gamma \in (0, \frac{1}{2L_{2}})$. Choose learning rates as follows: 
	\(
	\lambda_\theta^k = c_{\theta} n^{1/2}K^{-1/2} \),  
	\(\lambda_x^k = \lambda_y^k = c_\lambda \lambda_\theta^k
	\), 
	where $c_{\theta}, c_{\lambda}$ are positive constants that satisfy the conditions in Lemma~\ref{lyafuncdes-gt}.
	Then, for SUN-DSBO-GT, we have 	
	\begin{align}\label{resmeagt}
		\frac{1}{K} \sum_{k=0}^{K-1} \mathbb{E}\|\nabla \Psi_{\mu_{k}} (\bar{x}^k, \bar{y}^k)\|^2  =&\mathcal{O}\left( \frac{\mathcal{C}_{2}}{n^{1/2} K^{1/2}} \right) 
		+ \mathcal{O}\left( \frac{nM_{5}(\mu_{0}\delta_f^2 + 2\delta_g^2)}{(1-\rho)K} \right) ,\notag
	\end{align}
	and
	\begin{align*}
		\frac{1}{K}\sum_{k=0}^{K-1} \Delta^k = &
		\mathcal{O}\left( \frac{\mathcal{C}_{2}}{K^{1/2}} \right) 
		+ \mathcal{O}\left( \frac{n^{3/2}M_{5}(\mu_{0}\delta_f^2 + 2\delta_g^2)}{(1-\rho)K} \right) ,
	\end{align*}
	where   
	\[\mathcal{C}_{2}:=\mathcal{L}_{\text{gt}}^{0}+M_{4}\Bigl(\mu_{0}\delta_{f}^{2} + 2\delta_{g}^{2}\Bigr), ~ \Delta^k:= \frac{1}{n}\sum_{i=1}^{n} \mathbb{E}\left(\| x_{i}^{k} - \bar{x}^{k} \|^{2}+\| y_{i}^{k} - \bar{y}^{k} \|^{2}+\| \theta_{i}^{k} - \bar{\theta}^{k} \|^{2}\right),\] 
	and $M_{4}$ and $M_{5}$ are defined in (\ref{boundC-gt}).
\end{theorem}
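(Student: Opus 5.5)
The proof follows the template of Theorem~\ref{mainthseapp}, with Lemma~\ref{lyafuncdes-gt} in place of Lemma~\ref{lyafuncdes-se}.

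\textbf{Step 1: bound the stationarity measure.} Since $\nabla\Psi_{\mu_k}=\nabla\Phi_{\mu_k}$, I split each block:
\[
\mathbb{E}\|\nabla_x\Psi_{\mu_k}(\bar x^k,\bar y^k)\|^2\le 2\mathbb{E}\|\nabla_x\Phi_{\mu_k}(\bar x^k,\bar y^k)-\tilde{\bar d}_x^k\|^2+\frac{2}{(\lambda_x^k)^2}\|\mathbb{E}[\bar x^{k+1}-\bar x^k]\|^2 ,
\]
and do the same for the $y$-block. This uses $\bar x^{k+1}-\bar x^k=-\lambda_x^k\bar t_x^k$ together with $\mathbb{E}\bar t_x^k=\tilde{\bar d}_x^k$. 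The latter holds because gradient tracking preserves averages: by induction from the zero initialization, $\bar t^k_\Diamond=\frac1n\sum_i\hat D^k_{\Diamond,i}$.

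The first term is controlled by the bounds (\ref{gradphix5}) and (\ref{gradphiy5}). These depend only on the averaged iterates and the local expected directions, so they transfer verbatim to SUN-DSBO-GT; this is the content of Lemma~\ref{desphict-gt}. The result is a combination of $\mathbb{E}\|\bar\theta^k-\theta_\gamma^*(\bar x^k,\bar y^k)\|^2$, $\Delta_x^k$, $\Delta_y^k$ and $\Delta_\theta^k$.

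Multiplying through by $\lambda_\theta^k$ and using $\lambda_x^k=\lambda_y^k=c_\lambda\lambda_\theta^k$, each term on the right is a constant multiple of a \emph{negative} term in (\ref{eq:lyap-descent-gt}). Hence there is a constant $C_R'$, independent of $n$, $K$ and $(1-\rho)$, such that
\[
\lambda_\theta^k\,\mathbb{E}\|\nabla\Psi_{\mu_k}(\bar x^k,\bar y^k)\|^2\le C_R'\big(\mathcal L_{\text{gt}}^k-\mathcal L_{\text{gt}}^{k+1}+\epsilon^k_{\text{sto,gt}}\big).
\]
The tracking-error terms in (\ref{eq:lyap-descent-gt}) are negative, so they are simply dropped.

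\textbf{Step 2: telescope.} I sum over $k=0,\dots,K-1$. Since $\mu_{k+1}\le\mu_k$ and $F-\underline F\ge0$, we have $\Phi_{\mu_{k+1}}\le\Phi_{\mu_k}$, so the telescoping stays valid even though $\Phi$ changes with $k$. Moreover $\mathcal L_{\text{gt}}^K\ge0$, because every summand is nonnegative. This gives
\[
\sum_{k=0}^{K-1}\lambda_\theta^k\mathbb{E}\|\nabla\Psi_{\mu_k}\|^2\le C_R'\Big(\mathcal L_{\text{gt}}^0+\sum_k(\lambda_\theta^k)^2\tfrac{M_4}{n}(\mu_0^2\delta_f^2+2\delta_g^2)+\sum_k(\lambda_\theta^k)^3\tfrac{M_5}{1-\rho}(\mu_0^2\delta_f^2+2\delta_g^2)\Big).
\]

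\textbf{Step 3: plug in the constant step size.} With $\lambda_\theta^k=c_\theta n^{1/2}K^{-1/2}$, I divide by $K\lambda_\theta^k$:
\begin{itemize}
\item the initial-value term $\mathcal L^0_{\text{gt}}/(K\lambda_\theta)$ gives $\mathcal O(1/\sqrt{nK})$;
\item the $M_4$ term gives $\lambda_\theta M_4/n=\mathcal O(1/\sqrt{nK})$, which together with the previous item forms the $\mathcal C_2$ term;
\item the $M_5$ term gives $\lambda_\theta^2M_5/(1-\rho)=\mathcal O\big(nM_5/((1-\rho)K)\big)$.
\end{itemize}
Since $\tau_5,\tau_6,\tau_7$ carry a factor $(1-\rho)^{-3}$, this reproduces the stated $(1-\rho)^{-4}$ scaling in Theorem~\ref{main-sungt}.

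\textbf{Consensus bound.} The bound on $\frac1K\sum_k\Delta^k$ comes from the same telescoped inequality, using the negative $\Delta$ terms instead. Their coefficients scale like $(1-\rho)\tau_j\lambda_\theta^k$ with $\tau_j\propto(1-\rho)^{-1}$, i.e.\ $\Theta(\lambda_\theta^k)$. Dividing by $K\lambda_\theta$ and then multiplying by $n^{1/2}$ to match the stated normalization gives the $\mathcal C_2/K^{1/2}$ and $n^{3/2}$ terms.

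\textbf{Main obstacle.} The delicate step is justifying that $\bar t_\Diamond^k$ is an unbiased average of fresh stochastic gradients. This is what makes the $\lambda_\theta^2/n$ variance term in $\epsilon_{\text{sto,gt}}$ appear with the $1/n$ factor, which is the source of linear speedup, and it must be checked against the tracking recursion. Everything else is already supplied by Lemma~\ref{lyafuncdes-gt}.
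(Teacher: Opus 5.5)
Your proposal is correct and follows the route the paper intends. The paper's proof of this theorem simply defers to the argument of Theorem~\ref{mainthseapp}, using the descent inequality (\ref{eq:lyap-descent-gt}) of Lemma~\ref{lyafuncdes-gt}, and you carry out exactly that: you split $\nabla\Psi_{\mu_k}=\nabla\Phi_{\mu_k}$ via (\ref{gradphix5})--(\ref{gradphiy5}), absorb the pieces into the negative Lyapunov terms with a constant $C_R$, telescope using $\Phi_{\mu_k}\ge 0$, and substitute $\lambda_\theta^k=c_\theta n^{1/2}K^{-1/2}$; your observation that this yields a consensus bound tighter than the stated one by a factor $n^{1/2}$ (so the stated bound follows a fortiori) is also correct.
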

\begin{proof}
	Based on the descent inequality in (\ref{eq:lyap-descent-gt}), the rest of the proof follows similarly to the argument in Theorem~\ref{mainthseapp} and is omitted here.
\end{proof}

\begin{corollary}[Complexity of SUN-DSBO-GT]\label{corollary-sungt}
	Under the same assumptions as in Theorem~\ref{main-sungt}, the sample complexity of SUN-DSBO-GT is 
	$\mathcal{O}\left(\epsilon^{-2}\right)$. Its transient iteration complexity is 
	$\mathcal{O}\left(\frac{n^{3}}{(1-\rho)^{8}}\right)$.
\end{corollary}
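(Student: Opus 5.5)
The plan mirrors the proof of Corollary~\ref{corollary-sunse}, using the rate in Theorem~\ref{mainthgtapp} (equivalently Theorem~\ref{main-sungt}). The starting point is the bound
\[
\frac{1}{K}\sum_{k=0}^{K-1}\mathbb{E}\|\nabla\Psi_{\mu_k}(\bar{x}^k,\bar{y}^k)\|^2=\mathcal{O}\Big(\frac{\mathcal{C}_2}{n^{1/2}K^{1/2}}\Big)+\mathcal{O}\Big(\frac{n(\delta_f^2+\delta_g^2)}{(1-\rho)^4K}\Big).
\]
Here the $(1-\rho)^{-4}$ factor comes from $M_5/(1-\rho)$. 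Since $\tau_5,\tau_6,\tau_7$ scale as $\tau_{2,3,4}/(1-\rho)^2$ and $\tau_{2,3,4}=\mathcal{O}((1-\rho)^{-1})$, we get $M_5=\mathcal{O}((1-\rho)^{-3})$. I will record this bookkeeping explicitly so that the stated exponent is justified.

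\emph{Sample complexity.} For fixed $n$ and $\rho$, and $K$ large enough, the first term dominates, so the average is $\mathcal{O}(K^{-1/2})$. Because the minimum over $k$ is at most the average, some iterate satisfies $\mathbb{E}\|\nabla\Psi_{\mu_k}(\bar{x}^k,\bar{y}^k)\|^2\le\epsilon$ once $K=\mathcal{O}(\epsilon^{-2})$. Each iteration uses a constant number of stochastic gradient calls per agent (one sample $\xi_{f,i}^k,\xi_{g,i}^k$ per agent, with no inner loop), so the sample complexity is $\mathcal{O}(\epsilon^{-2})$.

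\emph{Transient iteration complexity.} Linear speedup holds once the $n^{-1/2}K^{-1/2}$ term dominates, i.e.
\[
\frac{1}{n^{1/2}K^{1/2}}\gtrsim\frac{n}{(1-\rho)^4K}.
\]
Rearranging gives $K^{1/2}\gtrsim n^{3/2}/(1-\rho)^4$, hence $K\gtrsim n^3/(1-\rho)^8$. Unlike the SE case, there is no heterogeneity term involving $\sigma_f,\sigma_g$, so this single condition is the whole answer.

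\emph{Main obstacle: step-size admissibility.} The choice $\lambda_\theta^k=c_\theta n^{1/2}K^{-1/2}$ must satisfy conditions (\ref{eq:stepsizes-gt}) and (\ref{lydesgt1})--(\ref{lydesgt12}). These require $\lambda_\theta^k$ to be bounded by constants depending polynomially on $1-\rho$, which holds when $K\gtrsim n\cdot\mathrm{poly}((1-\rho)^{-1})$. I would verify that this polynomial threshold is itself dominated by $n^3/(1-\rho)^8$, so it does not enlarge the transient regime. If it is not dominated, the statement should be read, as in Corollary~\ref{corollary-sunse}, with that threshold absorbed into the order.
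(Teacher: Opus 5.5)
Your proposal is correct and takes the paper's route. The paper just repeats the argument of Corollary~\ref{corollary-sunse}: take $K=\mathcal{O}(\epsilon^{-2})$ from the dominant $\mathcal{O}(n^{-1/2}K^{-1/2})$ term, then solve $n^{-1/2}K^{-1/2}\gtrsim n(1-\rho)^{-4}K^{-1}$ for $K$. Your extra bookkeeping $M_5=\mathcal{O}((1-\rho)^{-3})$ and your step-size admissibility check, both omitted in the paper, go through: the tightest conditions, e.g.\ (\ref{lydesgt1}) and (\ref{lydesgt5}), only require $\lambda_\theta^k\lesssim(1-\rho)^4$, i.e.\ $K\gtrsim n/(1-\rho)^8$, which is dominated by $n^3/(1-\rho)^8$.
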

\begin{proof}
	The proof follows similarly to that of Corollary~\ref{corollary-sunse} and is omitted for brevity.
\end{proof}



\section{More details of SUN-DSBO}\label{moreofSUN-DSBO}

In Section~\ref{sunframework}, we present SUN-DSBO in its general form, wherein each local agent repeatedly performs the following steps:  
\begin{enumerate}[(I)]
	\item \textbf{Stochastic Gradient Computation:} 
	Each agent $i$ computes unbiased or biased stochastic estimators $\hat{D}_{\theta,i}^{k}$, $\hat{D}_{x,i}^{k}$, and $\hat{D}_{y,i}^{k}$ for the descent directions $\tilde{D}_{\theta,i}^{k}$, $\tilde{D}_{x,i}^{k}$, and $\tilde{D}_{y,i}^{k}$, as derived from the min-max reformulation (\ref{min-max}). These estimators are calculated using either vanilla mini-batch gradients or advanced techniques that incorporate acceleration and variance reduction: 
	\begin{subequations}\label{egrd-appendix}
		\begin{align}
			\tilde{D}_{\theta,i}^{k} &= \nabla_{y} g_{i}(x_i^{k}, \theta_i^{k}) + \frac{1}{\gamma} (\theta_i^{k} - y_i^{k}),  \label{egrda-appendix} \\
			\tilde{D}_{x,i}^{k} &= \mu_k \nabla_{x} f_{i}(x_i^{k}, y_i^{k}) + \nabla_{x} g_{i}(x_i^{k}, y_i^{k}) - \nabla_{x} g_{i}(x_i^{k}, \theta_i^{k}),  \label{egrdb-appendix} \\
			\tilde{D}_{y,i}^{k} &= \mu_k \nabla_{y} f_{i}(x_i^{k}, y_i^{k}) + \nabla_{y} g_{i}(x_i^{k}, y_i^{k}) - \frac{1}{\gamma}(y_i^{k} - \theta_i^{k}). \label{egrdc-appendix}
		\end{align}
	\end{subequations}
	
	\item \textbf{Gradient Estimator Update:} Communicate with neighbors and update the gradient estimators $\hat{D}_{\theta,i}^{k}$, $\hat{D}_{x,i}^{k}$, and $\hat{D}_{y,i}^{k}$ to $D_{\theta,i}^{k}$, $D_{x,i}^{k}$ and $D_{y,i}^{k}$ using decentralized techniques such as GT, EXTRA, and Exact-Diffusion (ED), as well as  mixing strategies~\citep{zhu2024sparkle}.
	
	\item \textbf{Local Variable Update:} Communicate with neighbors and update the dual and primal variables using decentralized techniques, such as GT:
	\begin{eqnarray}\label{updatesun-appendix}
		(\theta_i^{k+1},x_i^{k+1},y_i^{k+1})=\sum_{j=1}^nw_{ij}\Big(\theta_j^k-{\lambda_\theta^{k}} D_{\theta,j}^{k}, x_j^k-{\lambda_x^{k}} D_{x,j}^{k}, y_j^k-{\lambda_y^{k}} D_{y,j}^{k}\Big).
	\end{eqnarray}
\end{enumerate}

\begin{table}[ht]
	\caption{
		More specific instances of SUN-DSBO.
		Here, \(\Diamond \in \{x, y, \theta\}\) denotes the variable being tracked, and $(w_{ij})$ represents the weighting matrix of the communication network. 
	}
	\centering
	\renewcommand{\arraystretch}{1.7}  
	\setlength{\tabcolsep}{6pt}        
	\begin{tabular}{|p{3.2cm}|p{10.8cm}|}
		\hline
		\rowcolor[gray]{0.95}
		\multicolumn{2}{|p{14.2cm}|}{
			\textbf{Step I: Stochastic Gradient Computation}
		} \\
		\hline
		vanilla mini-batch & sample $\xi_{g,i}^{k}$ and $\xi_{f,i}^{k}$, and compute $\hat{D}_{\theta,i}^{k}$, $\hat{D}_{x,i}^{k}$, and $\hat{D}_{y,i}^{k}$ using  (\ref{gradoracle}) \\
		\hdashline
		momentum & sample and compute $\breve{D}_{\Diamond,i}^{k}$ using the vanilla mini-batch approach in   (\ref{gradoracle});  
		and update 
		$\hat{D}_{\Diamond,i}^{k}=(1-\rho_{\Diamond,i}^k)\hat{D}_{\Diamond,i}^{k-1}+ \rho_{\Diamond,i}^k \breve{D}_{\Diamond,i}^{k-1}$ with coefficient $\rho_{\Diamond,i}^k$
		\\
		\hdashline
		STORM & sample and compute $\breve{D}_{\Diamond,i}^{k}$ using the vanilla mini-batch approach in (\ref{gradoracle});
		and update 
		$\hat{D}_{\Diamond,i}^{k}=(1-\rho_{\Diamond,i}^k)
		\left(
		\hat{D}_{\Diamond,i}^{k-1} + \breve{D}_{\Diamond,i}^{k}
		-\breve{D}_{\Diamond,i}^{k-1}
		\right)
		+\rho_{\Diamond,i}^k \breve{D}_{\Diamond,i}^{k}$ with coefficient $\rho_{\Diamond,i}^k$
		\\
		\hline
		
		\rowcolor[gray]{0.95}
		\multicolumn{2}{|p{14.2cm}|}{\textbf{Step II: Gradient Estimator Update}
		} \\
		\hline
		ATC-GT~\citep{xu2015augmented} & $D_{\Diamond,i}^{k} = \sum_{j=1}^{n} w_{ij} \big(D_{\Diamond,j}^{k-1} + \hat{D}_{\Diamond,j}^{k} - \hat{D}_{\Diamond,j}^{k-1}\big)$ \\
		\hdashline
		Semi-ATC-GT~\citep{di2016next} & $D_{\Diamond,i}^{k} = \sum_{j=1}^{n} w_{ij} \big(D_{\Diamond,j}^{k-1} + \hat{D}_{\Diamond,j}^{k} - \hat{D}_{\Diamond,j}^{k-1}\big)$ \\
		\hdashline
		Non-ATC-GT~\citep{nedic2017achieving} & $D_{\Diamond,i}^{k} = \sum_{j=1}^{n} w_{ij} D_{\Diamond,j}^{k-1} + \hat{D}_{\Diamond,i}^{k} - \hat{D}_{\Diamond,i}^{k-1}$ \\
		\hline
		
		\rowcolor[gray]{0.95}
		\multicolumn{2}{|p{14.2cm}|}{\textbf{Step III: Local Variable Update}
		} \\
		\hline
		ATC-GT & $\Diamond_{i}^{k+1} = \sum_{j=1}^{n} w_{ij} \big(\Diamond_{j}^{k} - \lambda_{\Diamond}^{k} D_{\Diamond,j}^{k} \big)$ \\
		\hdashline
		Semi-ATC-GT & $\Diamond_{i}^{k+1} = \sum_{j=1}^{n} w_{ij} \Diamond_{j}^{k} - \lambda_{\Diamond}^{k} D_{\Diamond,i}^{k}$ \\
		\hdashline
		Non-ATC-GT &  $\Diamond_{i}^{k+1} = \sum_{j=1}^{n} w_{ij} \Diamond_{j}^{k} - \lambda_{\Diamond}^{k} D_{\Diamond,i}^{k}$ \\ 
		\hline
	\end{tabular}
	\label{tab:stepwise_algo_dashed}
\end{table}

Although we primarily focus on two specific instances of SUN-DSBO (SUN-DSBO-SE~\ref{sun-dsbo-se} and SUN-DSBO-GT~\ref{sun-dsbo-gt}), additional variants are provided in Table~\ref{tab:stepwise_algo_dashed}. 
Furthermore, to reduce the per-round communication cost, SUN-DSBO offers additional flexibility beyond the variants presented in Table~\ref{tab:stepwise_algo_dashed}. 
We remark that \\
\textbf{(a) Step II is not strictly necessary.} For example, in SUN-DSBO-SE~\ref{sun-dsbo-se}, each local agent communicates with its neighbors and updates both the dual and primal variables directly using the stochastic estimators $\hat{D}_{\theta,i}^{k}$, $\hat{D}_{x,i}^{k}$, and $\hat{D}_{y,i}^{k}$ computed in Step I, thereby skipping Step II. Consequently, this reduces the per-round communication cost. 
Moreover, in Step III, a non-ATC decentralized gradient descent (DGD)-type update can also be employed:  
\begin{align*}
	\text{Non-ATC-DGD:} \quad & \Diamond_{i}^{k+1} = \sum_{j=1}^{n} w_{ij} \Diamond_{j}^{k} - \lambda_{\Diamond}^{k} D_{\Diamond,i}^{k}.
\end{align*}

\textbf{(b) Steps II and III can be merged.}  
To reduce the per-round communication cost, the gradient estimation and local variable updates can be performed jointly in a single communication round using efficient decentralized techniques such as:
\begin{align*}
	\text{EXTRA \citep{shi2015extra}:} \quad & \Diamond_{i}^{k+1} = \Diamond_{i}^{k} + \sum_{j=1}^{n} w_{ij} \Diamond_{j}^{k} - \sum_{j=1}^{n} \tilde{w}_{ij} \Diamond_{j}^{k-1} - \lambda_{\Diamond}^{k}\left(\hat{D}_{\Diamond,i}^{k} - \hat{D}_{\Diamond,i}^{k-1}\right),\\
	&\qquad\quad\text{where}\  (\tilde{w}_{ij})\ \text{is a doubly stochastic matrix.}\\
	\text{ED \citep{yuan2018exact,yuan2020influence}:} \quad & \Diamond_{i}^{k+1} = \sum_{j=1}^{n} w_{ij}\Big( 2\Diamond_{j}^{k} - \Diamond_{j}^{k-1} - \lambda_{\Diamond}^{k}\big(\hat{D}_{\Diamond,j}^{k} - \hat{D}_{\Diamond,j}^{k-1}\big)\Big).
\end{align*}

\end{document}